\newcommand{\Z}{\mathbb{Z}}
\newcommand{\Zp}{\mathbb{Z}_{>0}}
\newcommand{\Q}{\mathbb{Q}}
\newcommand{\DI}[1]{\llbracket #1 \rrbracket}
\newcommand{\DIhw}{\DI{1, h} \times \DI{1, w}}
\newcommand{\bd}{\partial}
\newenvironment{wcells}[1]
  {\tabular{#1}}
  {\endtabular}
\newcommand{\e}{\square}
\renewcommand{\o}{\blacksquare}
 \newcommand{\purple}{\tikz{\path[draw=black,fill=violet] (0,0) rectangle (.22cm,.22cm);}}
\newcommand\WordshwA{\mathcal{W}_{h \times w}(A)}
\newcommand\WordshwAvar[2]{\mathcal{W}_{#1 \times #2}(A)}
\newcommand\Wordshw{\mathcal{W}_{h \times w}(\{\e,\o\})}
\newcommand\Wordshwshort{\mathcal{W}_{h \times w}}
\newcommand\Wordsdhwshort{\mathcal{W}^{\leq d}_{h \times w}}
\newcommand\Wordszhwshort{\mathcal{W}^{\leq 0}_{h \times w}}
\newcommand\Wordsohwshort{\mathcal{W}^{\leq 1}_{h \times w}}
\newcommand\Wordstwhwshort{\mathcal{W}^{\leq 2}_{h \times w}}
\newcommand\Wordsthhw{\mathcal{W}^{\leq 3}_{h \times w}(\{\e,\o\})}
\newcommand\Wordsthhwshort{\mathcal{W}^{\leq 3}_{h \times w}}
\newcommand{\emax}{\mathbf{\hat{e}_{max}}}
\newcommand\Wordsfhwshort{\mathcal{W}^{\leq 4}_{h \times w}}
\newcommand{\maxdhw}[3]{\mathrm{max}_{\leq #1}(#2,#3)}
\newcommand{\mdhw}[3]{m_{\leq #1}(#2,#3)}
\newcommand{\maxshw}[2]{\mathrm{max}_{s}(#1,#2)}
\newcommand{\Wordsdhwvar}[3]{\mathcal{W}^{\leq #1}_{#2\times #3}(\{\e,\o\})}
\newcommand{\Wordsdhwvarshort}[3]{\mathcal{W}^{\leq #1}_{#2\times #3}}
\newcommand{\MWordsdhwvarshort}[3]{\mathcal{MW}^{\leq #1}_{#2\times #3}}
\newcommand{\indic}{\mathbb{I}}
\newcommand{\hcat}{\overt}
\newcommand{\vcat}{\ominus}
\renewcommand{\bmod}{\ \mathrm{mod}\ }
\newcommand{\Mod}[1]{\ (\mathrm{mod}\ #1)}
\newcommand\caseLouis[3]{
	\draw [draw=black, fill=#3] (#1, #2) -- (#1 +1, #2) -- (#1 +1, #2 +1) -- (#1, #2 +1) -- cycle;
}
\newcommand\lcase[4]{
	\draw [draw=black, fill=#3] (#1, #2) -- (#1 +1, #2) -- (#1 +1, #2 +1) -- (#1, #2 +1) -- cycle;
	\node[] at (#1+0.5,#2+0.5){#4};
}
\newcommand\ccase[2]{
	\caseLouis{#1}{#2}{cyan}
}
\newcommand\grid[4]{
	\filldraw[step=1cm,gray,very thin,fill=white] (#3,#4) grid (#3+#1,#4+#2);
}
\author{Alexandre Blondin Massé and Alain Goupil and Raphael L'Heureux and Louis Marin }
\begin{document}
	
	\title{Maximal $2$-dimensional binary words of bounded degree}
	
	\author{Alexandre Blondin Massé\inst{1,3} \and
		Alain Goupil\inst{2,3} \and
		Raphael L'Heureux\inst{2} \and 
		Louis Marin\inst{4}}
	
	\authorrunning{A. Blondin Massé et al.}
	
	\institute{Université du Québec à Montréal, Montréal, Canada \and
		Université du Québec à Trois-Rivières, Trois-Rivières, Canada \and
		LACIM, Montréal, Canada \and
		Université Gustave Eiffel, LIGM, Champs-sur-Marne, France}
	
	\maketitle

	\begin{abstract}
		Let $d \in \{0, 1, 2, 3, 4\}$ and $W$ be a $2$-dimensional word of dimensions $h \times w$ on the binary alphabet $\{\e,\o\}$, where $h,w \in \Zp$.
		Assume that each occurrence of the letter $\o$ in $W$ is adjacent to at most $d$ letters $\o$ and let $|W|_{\o}$ be the  number of letters $\o$ in $W$.
		We provide an exact formula for the maximum value of $|W|_{\o}$ for fixed $(h, w)$.
		As a byproduct, we deduce an upper bound on the length of maximum snake polyominoes contained in a $h \times w$ rectangle. 
	\end{abstract}
	
	\section{Introduction}
	We adapt the terminology for $2$-dimensional words from~\cite{giammarresi1997two,morita2004two}.
	For $a,b \in \Zp$, let $\DI{a,b} = \{n \in \Zp \mid a \leq n \leq b\}$.
	Let $A$ be a finite alphabet and $h,w \in \Zp$.
	A \emph{$2$-dimensional word $W$ of dimension $h \times w$ on $A$} is a matrix of $h$ rows and $w$ columns with entries in $A$.
	The set of all $2$-dimensional words of dimensions $h \times w$ on $A$ is denoted by $\WordshwA$.
	Given $W\in \WordshwA$ and $a \in A$, we denote by $|W|_a$ the number of occurrences of $a$ in $W$. 
	For $i \in \DI{1,h}$ and $j \in \DI{1,w}$, the entry $a$ of $W$ at the intersection of the $i$-th row and $j$-th column is written $W[i,j]$ and is called \emph{the $a$-cell of $W$ at $(i,j)$}. 
	The \emph{horizontal concatenation} of $U \in \WordshwAvar{h}{w_1}$ and $V \in \WordshwAvar{h}{w_2}$  is the word $U \hcat V \in \WordshwAvar{h}{(w_1+w_2)}$ defined by
	\[(U \hcat V)[i,j] = \begin{cases}
		U[i,j], & \mbox{if $j \leq w_1;$} \\
		V[i,j - w_1], & \mbox{if $j > w_1.$}
	\end{cases}\]
	The \emph{vertical concatenation} $U\vcat V$ of $U \in \WordshwAvar{h_1}{w}$ and $V \in \WordshwAvar{h_2}{w}$, is defined similarly.

		Given $W \in \WordshwAvar{h}{w}$ and $m,n \in \Q$ such that $mh, nw \in \Z$, the \emph{$(m,n)$-th power of $W$} is the word $W^{m \times n} \in \WordshwAvar{mh}{nw}$ such that $W^{m \times n}[i,j] = W[(i - 1\bmod h) + 1, (j - 1\bmod w) + 1]$.
		The \emph{neighborhood of $W[i,j]$}, denoted by $N_W(i,j)$, is defined as
	
		\[N_W(i,j) = \{(i-1,j), (i+1,j), (i,j-1), (i,j+1)\} \cap (\DI{1,h} \times \DI{1,w})\]
		
		The $i$-th row of $W$ is denoted by $W[i]$ and the factor obtained by selecting all rows between $i$ and $i'$, both included, is denoted by $W\DI{i,i'}$.	
		For any $a \in A$, the \emph{$a$-degree} of  $W[i,j]$ is given by
		\[
		\deg_{a,W}(i,j) = \begin{cases}
			\sum_{(i',j') \in N_W(i,j)} \indic(W[i',j'] = a) & \mbox{if $W[i,j] = a$;} \\
			0, & \mbox{if $W[i,j] \neq a$,}
		\end{cases}
		\]
		
		where $\indic$ is the usual indicator function.
		\begin{figure}[t]
			\centering
			\input{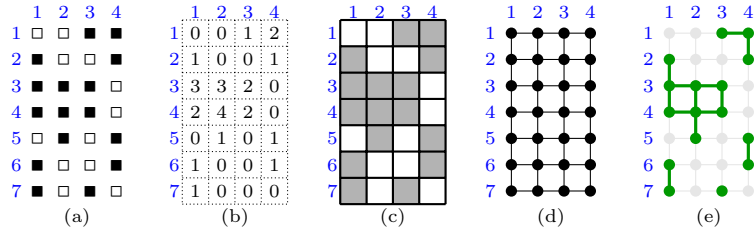}
			\caption{(a) A $7 \times 4$ word $W$  on the alphabet $\{\e,\o\}$.
				(b) The $\o$-degree word 
				of $W$.
				(c) The geometric representation of $W$.
				(d) The grid graph $G_{7,4}$. 
				(e) The subgraph (in green) of $G_{7,4}$ (in pale gray) induced by $W$. 
				Row and column indices of $W$ appear in blue.}\label{fig:2d-word}
		\end{figure}	
		For example, if $W$ is the word illustrated in Figure \ref{fig:2d-word}(a), then $\deg_{\o,W}(2,3) = 0$ and $\deg_{\o,W}(3,3) = 2$. 
		The \emph{$a$-degree word} of $W$, denoted by $\deg_a(W)$, is the $2$-dimensional word of dimensions $h \times w$ on the alphabet $\{0,1,2,3,4\}$ with entry at $(i,j)$ given by  $\deg_{a,W}(i,j)$ (Figure~\ref{fig:2d-word}(b)).
		In this paper, all binary words are taken on the alphabet $\{\e,\o\}$ and for brevity, we write $\Wordshwshort$ instead of $\Wordshw$ and ``degree'' instead of ``$\o$-degree''.  $|W|_{\o}$ is also called the \emph{area} of $W$.
		Given $d \in \DI{1,4}$, the set of all $2$-dimensional words of dimensions $h \times w$ with $\o$-cells of degree
		bounded by $d$ is denoted by $\Wordsdhwvarshort{d}{h}{w}$:
		\[\Wordsdhwvarshort{d}{h}{w} = \{W \in \Wordshwshort \mid \deg_{\o}(W)[i,j] \leq d, \mbox{for $(i,j) \in \DI{1,h} \times \DI{1,w}$}\}.\]
	
		For any $d \in \DI{1,4}$, $h, w \in \Zp$, let
		$\maxdhw{d}{h}{w} = \max\{|W|_{\o} : W \in \Wordsdhwshort\},$
		be the maximum number of filled cells of degree at most $d$ in a $h \times w$ 2D word.
		Clearly, $\maxdhw{d}{h}{w}$ is symmetric with respect to $(h,w)$ i.e. $\maxdhw{d}{h}{w} = \maxdhw{d}{w}{h}$.
		Moreover, let $G = (V,E)$ be a simple graph and $U \subseteq V$.
		Then $U$ is called a \emph{dominating set} of $G$ if, for every vertex $v \in V\backslash U$, there exists $u \in U$ such that $\{u,v\} \in E$, i.e. $v$ has at least one neighbor in $U$.
		The \emph{domination number of $G$}, denoted by $\gamma(G)$, is the minimal cardinality of a dominating set of $G$.
		
		The main result of this document is the following theorem.	
		\begin{figure}[t]
			\centering
			\input{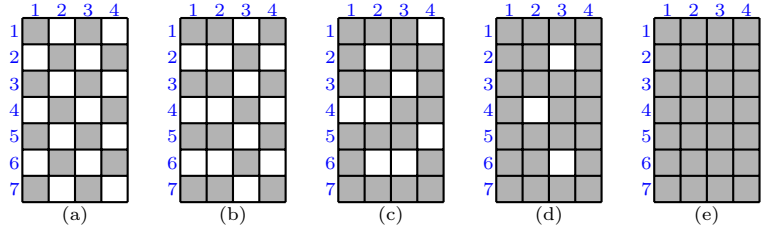}
			\caption{$7 \times 4$ $d$-full words, for (a) $d = 0$, (b) $d = 1$, (c) $d = 2$, (d) $d = 3$ and (e) $d = 4$.}\label{fig:full-words}
		\end{figure}
		
		\begin{theorem}\label{thm:main}
			Let $d \in \DI{1,4}$, $h, w \in \Zp$, $h \geq w$ and $\mdhw{d}{h}{w}$ be defined by
			
			\footnotesize
			\begin{eqnarray*}
				\mdhw{0}{h}{w} & = & \lceil hw / 2 \rceil, \\
				\mdhw{1}{h}{w} & = & \begin{cases}
					hw/2, & \mbox{if $h, w \equiv_2 0$;} \\
					(h - 1)w/2 + \lceil 2w / 3 \rceil,
					& \mbox{if $h \equiv_2 1$ and $w \equiv_2 0$;} \\
					h(w - 1)/2 + \lceil 2h / 3 \rceil,
					& \mbox{otherwise,} 
				\end{cases} \\
				\mdhw{2}{h}{w} & = & \begin{cases}\label{m(2,h,w)}
					hw, & \mbox{if $w = 1$ or $h = w = 2$;} \\
					3hw/4+1/2, & \mbox{if $h \equiv_2 1$, $h \geq 3$ and $w = 2$;} \\
					3hw/4, & \mbox{if $h \equiv_2 0$, $h \geq 4$ and $w = 2$;} \\
					2hw/3 + 2, & \mbox{if $w = 3$ or $h \equiv_3 w \equiv_3 0$;} \\
					2hw/3 + 4/3, & \mbox{if $w \geq 4$ and $h \equiv_3 w \nequiv_3 0$;} \\
					2hw/3 +  1, & \mbox{if $w \geq 4$,  $w \equiv_3 0$ and $h\nequiv_3 w$;} \\
					2hw/3 +  2/3, & \mbox{otherwise,}
				\end{cases} \\
				\mdhw{3}{h}{w} & = & \begin{cases}
					hw, & \mbox{if $1 \leq w \leq 2$;} \\
					hw - \gamma(G_{h-2,w-2}), & \mbox{otherwise,}
				\end{cases} \\
				\mdhw{4}{h}{w} & = & hw.
			\end{eqnarray*}
			\normalsize
			
			Then $\maxdhw{d}{h}{w} = \mdhw{d}{h}{w}$.
		\end{theorem}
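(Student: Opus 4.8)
The plan is to recast the question in graph terms: identifying the $\o$-cells of a word $W$ with vertices of the grid graph $G_{h,w}$, the number $\maxdhw{d}{h}{w}$ is the maximum size of a vertex set $S$ for which the induced subgraph $G_{h,w}[S]$ has maximum degree at most $d$. Two cases are then immediate. For $d=4$ the graph $G_{h,w}$ itself has maximum degree $4$, so $S=V(G_{h,w})$ is feasible and $\maxdhw{4}{h}{w}=hw$. For $d=0$ the feasible sets are the independent sets of $G_{h,w}$: the checkerboard pattern gives the bound $\lceil hw/2\rceil$ from below, and covering all but at most one cell of the $h\times w$ rectangle with dominoes (a near-perfect matching of $G_{h,w}$, which $S$ meets in at most one endpoint per domino) gives the matching bound from above. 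So the content is in $d\in\{1,2,3\}$.

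For $d=3$ I would reduce to domination. With $T=V(G_{h,w})\setminus S$, a cell $v\in S$ has degree $4$ in $G_{h,w}[S]$ exactly when $v$ is an interior cell (not on the border of the rectangle) and all four of its neighbours lie in $S$. Hence $S$ is feasible iff every interior cell is in $T$ or has a neighbour in $T$, and so $\maxdhw{3}{h}{w}=hw-\mu$, where $\mu$ is the least size of such a set $T$. When $w\le 2$ there are no interior cells, so $\mu=0$ and $\maxdhw{3}{h}{w}=hw$. When $w\ge 3$ I claim $\mu=\gamma(G_{h-2,w-2})$: on one hand any dominating set of the interior subgrid $G_{h-2,w-2}$ has the required property; on the other hand, given any feasible $T$, replacing each border cell of $T$ by the unique interior cell adjacent to it (and discarding border cells with no interior neighbour, such as the corners) produces a set $T'\subseteq V(G_{h-2,w-2})$ with $|T'|\le|T|$ that dominates $G_{h-2,w-2}$, whence $|T|\ge\gamma(G_{h-2,w-2})$.

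The substantial cases are $d\in\{1,2\}$, where the feasible sets induce, respectively, a matching (these are the dissociation sets of $G_{h,w}$) and a disjoint union of paths and cycles. For the lower bounds I would use the periodic ``$d$-full'' words of Figure~\ref{fig:full-words}. For $d=1$, alternate two kinds of rows along the longer side: a ``full'' row carrying a maximal arrangement with no three consecutive $\o$-cells, realized as horizontal dominoes and singletons at the $\lceil 2w/3\rceil$ positions not divisible by $3$, and a ``sparse'' row carrying $\lfloor w/3\rfloor$ singletons at the remaining positions; then no two $\o$-cells are vertically adjacent, each pair of consecutive rows carries exactly $w$ $\o$-cells, and a leftover row is taken full. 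Choosing the orientation that puts the period-$3$ structure along the shorter side, this reproduces every residue case and additive constant of $\mdhw{1}{h}{w}$; $d=2$ uses the analogous density-$\tfrac{2}{3}$ brick pattern. The very small widths $w\in\{1,2,3\}$ (and $h=w=2$), where the border dominates, must be treated separately, which is exactly why they appear explicitly in the statement.

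Matching these constructions from above is the technical heart. One starts from local constraints — for $d=1$, $S$ contains no three consecutive cells of any row or column (the middle one would already have degree $2$), and an $\o$-cell inside a horizontal $\o$-domino has no vertical $\o$-neighbour; for $d=2$, the analogous facts about runs and about $\o$-cells of line-degree $2$ — and upgrades them to a sharp bound on the number of $\o$-cells in each band of two consecutive rows (three, for $d=2$): at most $w$, or $w+1$ when $w$ is odd, in the $d=1$ case, together with the structural fact that a band attaining the surplus forces its neighbouring rows to be almost free of $\o$-cells. Summing the band bounds in amortised fashion over the whole rectangle, carefully accounting for the surpluses and the boundary rows, then yields exactly $\mdhw{d}{h}{w}$; equivalently the upper bound can be phrased as locating the dominant cycle of the column-to-column transfer digraph and bounding the transient. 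The main obstacle is making this amortisation airtight across every parity and residue class of $(h,w)$, so that the bound lands on the correct branch of the formula, and the $d=2$ analysis is the most delicate, since its feasible local configurations (paths and cycles of arbitrary length) are far richer than the dominoes of the $d=1$ case. Finally, the snake-polyomino consequence mentioned in the abstract follows at once: a snake polyomino of $\ell$ cells inside an $h\times w$ box induces a path in $G_{h,w}$, hence a subgraph of maximum degree at most $2$, so $\ell\le\maxdhw{2}{h}{w}=\mdhw{2}{h}{w}$.
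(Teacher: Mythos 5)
Your framework is sound, and for $d\in\{0,3,4\}$ your arguments are complete; for $d=3$ your reduction is arguably cleaner than the paper's, since you work directly with the complement $T$ and project its border cells onto their unique interior neighbours to obtain a dominating set of $G_{h-2,w-2}$ of no larger size, whereas the paper first massages a $3$-full word into one with a fully filled boundary by a local exchange argument. Your $d=0$ argument and your lower-bound constructions for $d=1,2$ coincide with the paper's (checkerboard plus domino partition; periodic density-$1/2$ and density-$2/3$ patterns). For the $d=1$ upper bound the paper partitions the rectangle into $2\times2$ and $1\times3$ tiles each holding at most two filled cells, plus a minimal-counterexample step peeling off the last two columns and the last two rows in the odd--odd case; your two-row band bound is an equivalent reformulation, but note that the odd--odd case still requires the extra argument showing the column surplus and the row surplus cannot both be attained (the paper gets a contradiction in the $2\times2$ corner).

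The genuine gap is $d=2$, which is not a side case but the bulk of the theorem and of the paper. Your plan --- local run constraints, a bound per band of three rows, amortised summation, ``locating the dominant cycle of the column-to-column transfer digraph'' --- would, as stated, only yield $|W|_\o\le 2hw/3+O(w)$: a band of three rows can legitimately carry $2w+2$ filled cells (a $2$-full $3\times w$ factor has full top and bottom rows), so per-band counting cannot by itself pin the global surplus down to the absolute constants $2$, $4/3$, $1$, $2/3$ of $\mdhw{2}{h}{w}$. The entire difficulty is that the excess $|W|_\o-2hw/3$ is bounded independently of $h$ and $w$, with its exact value determined by $(h\bmod 3,\,w\bmod 3)$. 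Establishing this is what the paper's machinery is for: the additive excess function, exhaustive base cases for $w\le 6$ and $(7,7)$, structural lemmas on $2$-full words of widths $3$, $4$, $5$ (benches, solitary pillars, the pillar partition of the middle column), and a six-way minimal-counterexample induction on the residues of $h$ and $w$ modulo $3$, in which one must prove that a factor attaining its maximal excess forces quantitatively large deficits in the adjacent rows (e.g.\ Corollary~\ref{cor4xh_n2(c1)} and Corollary~\ref{cor:5xhsature}). You acknowledge this obstacle yourself (``making this amortisation airtight across every parity and residue class''), but acknowledging it is not overcoming it; as written, the $d=2$ upper bound, and hence the theorem, remains unproved in your proposal.
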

		
		A $2$-dimensional word $W \in \Wordsdhwshort$ is called \emph{$d$-full} or maximal  if $|W|_{\o} = \maxdhw{d}{h}{w}$. The set of  {$d$-full words of degree at most $d$ is denoted $\MWordsdhwvarshort{d}{h}{w}$.
			In Figure~\ref{fig:full-words}, examples of $d$-full words of dimensions $7 \times 4$ are illustrated, for $d \in \DI{1,4}$.

			\paragraph{\bf Motivation.}
			
			The study of multidimensional words has generated  interest in the combinatorics on words community~\cite{carpi1988multidimensional,giammarresi1997two,morita2004two}.
			A famous conjecture, presented by Nivat in his invited talk at ICALP \cite{nivat}, stating that $2$-dimensional words with low rectangular complexity are  periodic, has generated extensive literature \cite{cyr2015nonexpansive,epifanio2003conjecture,sander2002rectangle,szabados2018nivat}.
			Recently, Mahalingam and Pandoh have considered  $2D$ generalizations of palindromes~\cite{mahalingam2022hv}.
			Theorem~\ref{thm:main} can be seen as a result on pattern avoidance \cite{berstel2009combinatorics} as words in $\Wordsdhwvarshort{d}{h}{w}$ avoiding specific $3 \times 3$ rectangles.
			Also, beyond its intrinsic interest, Theorem~\ref{thm:main} has consequences on the study of polyominoes.
			It is also convenient to interpret binary words in $\Wordsdhwvarshort{d}{h}{w}$ from geometric and graph-theoretic points of view.\\
			Let $W \in \Wordsdhwvarshort{d}{h}{w}$.
	
			$W$ can also be seen as a $h \times w$ rectangle in which  $\o$ cells  are filled unit cells, and  $\e$ cells are empty unit cells (Figure~\ref{fig:2d-word}(c)).
			A \emph{horizontal} (resp. \emph{vertical}) \emph{$n$-pillar in $W$} is a factor of the form $\o^{1\times n}$ (resp. $\o^{n \times 1}$) in $W$. An $n$-pillar is \emph{maximal} when it is not a proper factor of a larger pillar in its row (or column) and  it is a solitary pillar when it is maximal and its  adjacent cells are empty.
			For $n \geq 3$, a \emph{north $n$-bench} is a factor of the form $W = \substack{\o\\\o} \left(\substack{\e\\\o}\right)^{1 \times (n-2)} \substack{\o\\\o}$.
			The bottom filled row is called the \emph{seat} of the bench and the two extremal columns are  called the \emph{legs} of the bench. 
			\emph{South, east and west $n$-benches} are defined similarly, with the two legs indicating the cardinal direction. 
			The sequence $(|W[1]|_\o,|W[2]|_\o,\ldots , |W[h]|_\o)$ giving the number of filled cells in each row $W[i]$ of a word $W$ is called the \emph{row distribution} of $W$.
			The row distribution of the word in Figure \ref{fig:2d-word} is $(2,2,3,3,2,2,2)$.
			We denote by $W^t$ the transpose of the matrix $W$ so that $W^t[j]$ is the $j$-th column of $W$.
			
			$W \in \Wordsdhwvarshort{d}{h}{w}$ is also related to graph-theoretical concepts.
			Recall that the \emph{grid graph $G_{h,w}$ of dimensions $h \times w$} is the simple graph with set of vertices $\DI{1,h} \times \DI{1,w}$ and set of edges $\{(i,j),(i',j')\}$, where $i,i' \in \DI{1,h}$, $j,j' \in \DI{1,w}$ and $(i - i')^2 + (j - j')^2 = 1$ (Figure~\ref{fig:2d-word}(d)).
			The \emph{subgraph induced by $W$ in $G_{h,w}$}, denoted by $G[W]$, 
			
			is the subgraph of $G_{h,w}$ induced by the set of vertices $\{(i,j) \in \DI{1,h} \times \DI{1,w} : W[i,j] = \o\}$ (Figure~\ref{fig:2d-word}(e)).
			There exists an extensive literature studying the induced subgraphs of bounded degree, in particular induced subgraphs of planar graphs~\cite{amini2008degree}.
			Theorem~\ref{thm:main} provides an exact solution for the case of grid graphs where the induced subgraphs are linear forests of bounded degree $d$.

				Finally, the study of words in $\Wordsdhwvarshort{d}{h}{w}$ is useful to the study of polyominoes.
			
				A \emph{snake polyomino} (or simply a \emph{snake}) is a polyomino inducing a chain subgraph, while a \emph{snake forest} (also called a \emph{linear forest}) is a set of polyominoes inducing a forest of chains as a subgraph.
			
				Therefore, snakes and snake forests are subsets of $\Wordstwhwshort$ and Theorem \ref{thm:main} provides an upper bound for their maximal size in a $h\times w$ rectangle, which is tight for $w\in \DI{1,4}$.
				We can also use the construction of $2$-full words in the proof of Theorem \ref{thm:main} to establish a lower bound for the maximal size of  snakes  contained in a $h\times w$ rectangle.
				Figure \ref{fig:full-words}(c) shows  a snake polyomino of maximal size.
				The remainder of this paper is devoted to the proof of Theorem~\ref{thm:main}.
				
				\section{The cases $d \in \{0, 1, 3, 4\}$}
				
				The case $d=4$ is immediate. In this section, we study the cases  $d \in \{0, 1, 3\}$, keeping the case $d = 2$ for Section~\ref{sec:d=2}.
				The reasons behind this division are 
				(1) the cases $d \in \{0, 1\}$ are  easy to prove,
				(2) the case $d = 3$ is proved by establishing a relation with  the dominating set problem on grid graphs, which is  solved in ~\cite{gonccalves2011domination}, 
				(3) the case $d = 2$ is  involved and requires a thorough combinatorial study.
				
				We start with the case $d = 0$.
				
				\begin{lemma}\label{lem:d=0}
					For any $(h,w) \in \DIhw$, $\maxdhw{0}{h}{w} = \lceil hw / 2 \rceil$.
				\end{lemma}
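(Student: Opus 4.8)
The plan is to recognize $\maxdhw{0}{h}{w}$ as the independence number of the grid graph $G_{h,w}$ and then bound it from both sides. First I would observe that a cell $W[i,j]=\o$ has $\o$-degree $0$ precisely when no neighbor of $(i,j)$ in $G_{h,w}$ is filled; hence $W \in \Wordszhwshort$ if and only if $\{(i,j) : W[i,j]=\o\}$ is an independent set of $G_{h,w}$, and therefore $\maxdhw{0}{h}{w}$ equals the independence number $\alpha(G_{h,w})$.

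For the lower bound, I would color each cell $(i,j)$ by the parity of $i+j$. This is a proper $2$-coloring of the bipartite graph $G_{h,w}$, so each color class is independent, and the larger class has exactly $\lceil hw/2 \rceil$ cells. Filling exactly those cells (this is the word of Figure~\ref{fig:full-words}(a)) produces a word in $\Wordszhwshort$ with $\lceil hw/2\rceil$ filled cells, so $\maxdhw{0}{h}{w} \geq \lceil hw/2 \rceil$.

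For the upper bound, I would tile the $h \times w$ rectangle by dominoes with at most one leftover cell: pair $(i,2k-1)$ with $(i,2k)$ for all $i$ and $1 \le k \le \lfloor w/2\rfloor$; if $w$ is odd, additionally pair $(2k-1,w)$ with $(2k,w)$ for $1 \le k \le \lfloor h/2\rfloor$, leaving the single cell $(h,w)$ uncovered when $h$ is also odd. A short parity check shows this uses exactly $\lfloor hw/2 \rfloor$ dominoes and covers all of the rectangle except, when $hw$ is odd, one cell. Since the two cells of each domino are adjacent in $G_{h,w}$, any independent set contains at most one of them; summing over the $\lfloor hw/2\rfloor$ dominoes and adding the possible leftover cell gives $\alpha(G_{h,w}) \leq \lfloor hw/2\rfloor + \lceil hw/2\rceil - \lfloor hw/2\rfloor = \lceil hw/2 \rceil$. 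Combining the two bounds yields $\maxdhw{0}{h}{w} = \lceil hw/2\rceil$.

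There is no real obstacle here; the only point requiring (routine) care is the case distinction on the parities of $h$ and $w$ in the domino-tiling step, and keeping track of the single uncovered cell when $h$ and $w$ are both odd.
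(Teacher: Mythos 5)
Your proposal is correct and follows essentially the same route as the paper: the checkerboard pattern for the lower bound and a domino-plus-one-leftover-cell partition with a pigeonhole count for the upper bound. The reformulation via the independence number of $G_{h,w}$ is a harmless repackaging that does not change the argument.
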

				
				\begin{figure}
					\centering
					\input{tikz/d0.tikz}
					\caption{The word $W = \left(\substack{\o\e\\\e\o}\right)^{h/2 \times w/2}$ superimposed with an arbitrary dominoes/monomino tiling, when the dimensions are (a) $6 \times 6$ (b) $5 \times 5$.}\label{fig:d=0}
				\end{figure}
				
				\begin{proof}
					Let $W = \left(\substack{\o\e\\\e\o}\right)^{h/2 \times w/2} \in \Wordshwshort$ (see Figure~\ref{fig:d=0}).
					Since each $\o$-cell of $W$ is surrounded by $\e$-cells,
					$\deg_{\o,W}(i,j) = 0$ for all $(i,j) \in \DIhw$, which implies that $W \in \Wordszhwshort$.
					Since $|W|_\o = \lceil hw/2 \rceil$, we have  $\maxdhw{0}{h}{w} \geq \lceil hw/2 \rceil$.
					We prove that $\lceil hw/2 \rceil$ is an upper bound with a pigeonhole principle argument.
					Let $W \in \Wordsdhwvarshort{0}{h}{w}$.
					Then $W$ can be partitioned into $\lfloor hw/2 \rfloor$ dominoes and $hw \bmod 2 \in \{0,1\}$ monomino (see Figure~\ref{fig:d=0}).
					Since at most one cell of each domino can be filled and since a monomino contains at most one filled cell, there are at most $\lfloor hw/2 \rfloor + hw \bmod 2 = \lceil hw/2 \rceil$ filled cells in $W$, i.e. $\maxdhw{0}{h}{w} \leq \lceil hw/2 \rceil$.
					Hence, $\maxdhw{0}{h}{w} = \lceil hw/2 \rceil$.
				\end{proof}
				
				Due to lack of space, we only provide a sketch of the proof for the case $d = 1$.
				
				\begin{lemma}\label{lem:d=14}
					Theorem~\ref{thm:main} holds for $d=1$.
				\end{lemma}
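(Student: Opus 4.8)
The plan is the following. Since $\deg_\o(W)[i,j]\le 1$ is equivalent to $G[W]$ having maximum degree at most $1$, the $\o$-cells of $W$ decompose into pairwise non-adjacent monominoes and dominoes. Two elementary facts drive everything. \textbf{(F1)} Every $2\times2$ block of $W$ contains at most two $\o$-cells, since three of them form an L-tromino whose corner cell would have degree $2$ in $W$. \textbf{(F2)} Each row $W[i]$ and each column $W^t[j]$, read as a one-dimensional word, has degree at most $1$, hence contains no three consecutive $\o$'s; a binary word of length $n$ avoiding $\o\o\o$ has at most $\lceil 2n/3\rceil$ letters $\o$ (attained by $\o\o\e\o\o\e\cdots$), so $|W[i]|_\o\le\lceil 2w/3\rceil$ and $|W^t[j]|_\o\le\lceil 2h/3\rceil$.

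For the lower bound I would exhibit an explicit optimal word built from one template: let $C$ be the word whose columns of odd index carry the period-$3$ vertical pattern $\o\o\e$ repeated and whose columns of even index carry isolated $\o$-cells exactly in the rows left empty by both of their horizontal neighbours. A direct check gives $\deg_\o(C)\le1$, and counting column by column shows $|C|_\o=h(w-1)/2+\lceil 2h/3\rceil$ when $w$ is odd, while the transposed template (dense rows instead of dense columns) has $(h-1)w/2+\lceil 2w/3\rceil$ filled cells when $h$ is odd and $w$ even, and either version has $hw/2$ filled cells when $h$ and $w$ are both even. Hence $\maxdhw{1}{h}{w}\ge\mdhw{1}{h}{w}$ in all cases.

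For the matching upper bound I would separate cases by parity. If $h$ and $w$ are both even, partition the grid into $hw/4$ blocks of size $2\times2$ and apply \textbf{(F1)}: $\maxdhw{1}{h}{w}\le hw/2$. If exactly one of $h,w$ is even, say $w$ even and $h$ odd (if instead $h$ is even and $w$ odd, interchange the roles of rows and columns), group the rows $1,\dots,h-1$ into $(h-1)/2$ consecutive pairs, bound each $2\times w$ pair by $w$ via \textbf{(F1)}, and bound the leftover row $W[h]$ by $\lceil 2w/3\rceil$ via \textbf{(F2)}; the sum is exactly $(h-1)w/2+\lceil 2w/3\rceil$.

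The case $h,w$ both odd is the heart of the matter, and I would treat it by induction on $h+w$ (with $h\ge w$), the base $w=1$ being \textbf{(F2)}. The engine is a rigidity sub-lemma: a $2\times n$ word of degree $\le1$ with $n$ odd has at most $n+1$ filled cells, and $n+1$ is attained \emph{only} by the word with vertical dominoes in columns $1,3,\dots,n$ and empty even columns --- this follows from \textbf{(F1)} and the fact that $\{1,3,\dots,n\}$ is the unique independent set $S$ of the path on $n$ vertices with $|N(S)|<|S|$. Given $W$, split on $|W[1]|_\o+|W[2]|_\o$, which is at most $w+1$ by \textbf{(F1)}. If it is $\le w$, then $|W|_\o\le w+|W\DI{3,h}|_\o\le w+\mdhw{1}{h-2}{w}\le\mdhw{1}{h}{w}$, using $\mdhw{1}{h}{w}-\mdhw{1}{h-2}{w}=(w-1)+\bigl(\lceil 2h/3\rceil-\lceil(2h-4)/3\rceil\bigr)\ge w$ (with the symmetric reduction when $h=w$). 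If it equals $w+1$, the sub-lemma forces every cell $(3,j)$ with $j$ odd to be empty, so $|W[3]|_\o\le(w-1)/2$ and $|W|_\o\le(3w+1)/2+|W\DI{4,h}|_\o\le(3w+1)/2+\mdhw{1}{h-3}{w}=\mdhw{1}{h}{w}$, since $\mdhw{1}{h}{w}-\mdhw{1}{h-3}{w}=3(w-1)/2+2$ and the sub-word $W\DI{4,h}$, having an even number of rows, falls into the ``exactly one even'' case already settled. I expect the main obstacle to be precisely this both-odd analysis: pinning down the rigidity sub-lemma, and verifying the arithmetic identities linking $\mdhw{1}{h}{w}$ to $\mdhw{1}{h-2}{w}$ and $\mdhw{1}{h-3}{w}$ across all parity sub-cases, including the small instances where the peeled sub-word becomes degenerate and must be checked by hand.
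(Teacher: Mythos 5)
Your proposal is correct, and its lower bound and the two mixed-parity upper bounds coincide in substance with the paper's: the paper also exhibits a periodic $\o\o\e$-based word and tiles the rectangle with $2\times 2$ blocks (two $\o$-cells each) plus $1\times 3$ blocks and a remainder tile for the odd leftover line, which is exactly your (F1)/(F2) combination. The divergence is in the both-odd case, which is indeed the heart of the matter. The paper argues globally by minimal counterexample: writing $W=L\hcat R$ with $R$ the last two columns, the $2\times2$-tiling bound forces $|R|_\o=h+1$ (else $L$ already violates minimality for width $w-2$), and symmetrically the last two rows $D$ must satisfy $|D|_\o=w+1$; the two extremal partitions then force three filled cells in the bottom-right $2\times2$ corner, so some cell has degree $2$. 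Your route instead peels rows off the top with a rigidity lemma classifying the extremal $2\times w$ strips ($w+1$ cells only for vertical dominoes in the odd columns), and handles the extremal branch by showing row $3$ loses half its capacity before falling back to the already-settled even-height case. Both are sound; the paper's argument is shorter and needs no uniqueness statement, only the count $h+1$ in each of the two border strips, whereas yours buys explicit structural information about extremal configurations at the cost of the identities $\mdhw{1}{h}{w}-\mdhw{1}{h-2}{w}\geq w$ and $\mdhw{1}{h}{w}-\mdhw{1}{h-3}{w}=3(w-1)/2+2$ (which do check out, ceilings included) and the degenerate small cases you flag. Your rigidity lemma is correctly justified: for an independent set $S$ in the path $P_w$ with $w$ odd, $|N(S)|\leq |S|-1$ forces $S=\{1,3,\dots,w\}$, so the claimed uniqueness holds.
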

				
				\begin{proof}[sketched]
					This is proved using an argument similar to the one presented in the proof of Lemma~\ref{lem:d=0}, by partitioning the rectangle with $2 \times 2$ and $1 \times 3$ tiles.
					The detailed proof is available in the Appendix.
				\end{proof}	
				
				For the case $d = 3$, we need additional definitions.
				The \emph{boundary} $\bd(W)$ of $W$ is the set
				$\bd(W) = \{(i,j) \in \DI{1,h} \times \DI{1,w} \mid \mbox{$i \in \{1,h\}$ or $j \in \{1,w\}$}\}$.
				We say that $(i,j)\in \bd(W)$ is \emph{in a corner} of $W$ if $i \in \{1,h\}$ and $j \in \{1,w\}$.  Otherwise, $(i,j)$ is \emph{on the side} of $W$. 
				Since an expression for the domination number of any grid graph was recently proved in \cite{gonccalves2011domination}, we take advantage of that expression to reduce the problem.
				
				\begin{lemma}\label{lem:d=3}
					Theorem~\ref{thm:main} holds for $d = 3$.
				\end{lemma}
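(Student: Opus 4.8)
The plan is to reduce the case $d=3$ to a domination problem on the grid graph induced by the \emph{interior} cells of the rectangle. Call a cell $(i,j)$ of an $h\times w$ word \emph{interior} when $2\le i\le h-1$ and $2\le j\le w-1$, and note that every non-interior cell lies in $\bd(W)$ and therefore has at most three neighbours in $G_{h,w}$, so its $\o$-degree never exceeds $3$. Hence $W\in\Wordsthhwshort$ if and only if every interior $\o$-cell has an $\e$-cell among its grid neighbours; equivalently, the set $E(W)$ of $\e$-cells of $W$ must dominate the set $I$ of interior cells in $G_{h,w}$, in the sense that each cell of $I$ is in $E(W)$ or adjacent to a cell of $E(W)$. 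When $w\le2$ there are no interior cells, so the all-$\o$ word already lies in $\Wordsthhwshort$ and $\maxdhw{3}{h}{w}=hw$, which is the first case. From now on I assume $h\ge w\ge3$, so that the subgraph of $G_{h,w}$ induced by $I$ is isomorphic to $G_{h-2,w-2}$ through the shift $(i,j)\mapsto(i-1,j-1)$.

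For the inequality $\maxdhw{3}{h}{w}\ge hw-\gamma(G_{h-2,w-2})$, I take a minimum dominating set $D$ of $G_{h-2,w-2}$, read it back as a set $D\subseteq I$ with $|D|=\gamma(G_{h-2,w-2})$, and let $W$ be the word whose $\e$-cells are exactly the cells of $D$. Every interior $\o$-cell of $W$ is, by the choice of $D$, adjacent in $G_{h,w}$ to some cell of $D$, hence has an $\e$-neighbour, and every non-interior cell has degree at most $3$ automatically; thus $W\in\Wordsthhwshort$ and $|W|_\o=hw-\gamma(G_{h-2,w-2})$.

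For the reverse inequality, let $W\in\Wordsthhwshort$ and put $E=E(W)$. The key observation is that $\e$-cells on $\bd(W)$ are at most as useful for dominating $I$ as interior $\e$-cells: a corner cell of $\bd(W)$ has no interior neighbour, while a side (non-corner) cell of $\bd(W)$ has exactly one interior neighbour, where the hypothesis $h,w\ge3$ is used. I build a set $D$ from $E$ by keeping every interior cell of $E$, replacing every side cell of $E$ by the unique interior cell adjacent to it, and discarding every corner cell of $E$; then $|D|\le|E|$. Moreover $D$ dominates $G_{h-2,w-2}$: an interior cell $c\in E$ lies in $D$; an interior cell $c\notin E$ has an $\e$-neighbour $e$, and either $e$ is interior, so $e\in D$ and $e$ is adjacent to $c$ in $G_{h-2,w-2}$, or $e$ is a side boundary cell whose unique interior neighbour is $c$, so $c\in D$. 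Hence $|E|\ge|D|\ge\gamma(G_{h-2,w-2})$, and $|W|_\o=hw-|E|\le hw-\gamma(G_{h-2,w-2})$. Combining the two bounds gives $\maxdhw{3}{h}{w}=\mdhw{3}{h}{w}$, with the exact value of $\gamma(G_{h-2,w-2})$ supplied by~\cite{gonccalves2011domination}.

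The only real care needed is in the boundary bookkeeping of the last paragraph — counting interior neighbours of side and corner cells — and in the degenerate shapes $w=3$ and $h=w=3$, where $G_{h-2,w-2}$ collapses to a path, respectively a single vertex; in each of these the argument goes through unchanged. I do not expect a substantive obstacle, precisely because the difficult ingredient, the closed form for the domination number of an arbitrary grid graph, is imported rather than reproved.
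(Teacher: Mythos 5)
Your proof is correct, and it follows the same overall reduction as the paper --- both arguments identify $\maxdhw{3}{h}{w}$ with $hw-\gamma(G_{h-2,w-2})$ by observing that boundary cells have degree at most $3$ automatically, so the only constraint is that every filled interior cell needs an empty neighbour, and both import the closed form for $\gamma$ from the cited reference. Where you genuinely diverge is in the upper bound. The paper first proves a normalization statement: among all $3$-full words there is one whose boundary is entirely filled, established by an exchange argument (an empty side cell $(i,1)$ is swapped with its interior neighbour $(i,2)$, preserving the area and membership in $\Wordsthhwshort$ while decreasing the number of empty boundary cells); only for such a normalized extremal word does it conclude that the empty set is an interior dominating set. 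You instead work with an arbitrary $W\in\Wordsthhwshort$ and project its empty set onto the interior --- keeping interior $\e$-cells, replacing each side $\e$-cell by its unique interior neighbour, discarding corner $\e$-cells --- and verify directly that the resulting set dominates $G_{h-2,w-2}$ without ever modifying $W$. Your route is slightly cleaner in that it avoids checking that the exchanged word still lies in $\Wordsthhwshort$ and does not need to argue about an extremal word with a minimum number of empty boundary cells (a hypothesis the paper's proof invokes somewhat implicitly); the paper's route has the mild side benefit of exhibiting a canonical $3$-full word with full boundary, which makes the ``if and only if'' characterization of $3$-full words in terms of minimum dominating sets immediate. Your boundary bookkeeping (corners have no interior neighbour, side cells exactly one, valid as soon as $h,w\ge3$) is accurate, including in the degenerate cases $w=3$ and $h=w=3$.
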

				
				\begin{proof}
					If $1 \leq h \leq 2$ or $1 \leq w \leq 2$, then it suffices to define $W[i,j] = \o$ for all $(i,j) \in \DIhw$, which makes $W$ obviously $3$-full. Now assume that $h, w \geq 3$. 
					
					First, we show that there exists a $3$-full word $W \in \Wordshwshort$ such that $W[i,j] = \o$ for all $(i,j) \in \bd(W)$, i.e. the cells on the boundary of $W$ are filled. Arguing by contradiction, assume that no such word exists and let $W$ be a $3$-full word with an empty cell on the left side on $W$, i.e. there exists $i \in \DI{h}$ such that $W[i,1] = \e$. Clearly, the $\e$-cell cannot be in a corner of $W$, i.e. $i \neq 1, h$: If it is the case, then the word $W'$ obtained from $W$ by replacing the entry at $(i,1)$ by $\o$ would satisfy $W' \in \Wordsthhwshort$ and $|W'|_{\o} = |W|_\o + 1$, contradicting the assumption that $W$ is $3$-full. Hence, $i \neq 1,h$. Moreover, the cell at the right of $(i,1)$ must be filled, i.e. $W[i,2] = \o$: If it is not the case, then the word $W''$ obtained from $W$ by replacing the entry at $(i,1)$ by $\o$ would satisfy $W' \in \Wordsthhwshort$ and $|W'|_{\o} = |W|_{\o} + 1$, also contradicting the assumption that $W$ is $3$-full. Finally, observe that the word $W'''$ obtained from $W$ by replacing the entry at $(i, 1)$ by $\o$ and the entry at $(i,2)$ by $\e$ satisfies $W' \in \Wordsthhwshort$, $|W'|_{\o} = |W|_{\o}$ and has  one empty cell less than $W$ on its boundary, contradicting the assumption that the boundary of $W$ contains a minimum number of  empty cells. Hence, there exists a $3$-full word $W \in \Wordshwshort$ with its boundary contains only filled cells.
					
					To conclude, let $G_{h-2,w-2}$ be the grid subgraph with set of vertices $\DI{2,h-1} \times \DI{2,w-1}$ and let $U = \{(i,j) \in \DIhw \mid W[i,j] = \e\}$.
					We observe that $W \in \Wordsthhw$ only if $U$ is a dominating set of $G_{h-2,w-2}$.
					Therefore, $W$ is $3$-full if and only if $U$ is a minimum dominating set of $G_{h-2,w-2}$. \qed
				\end{proof}
				
				\section{The case $d = 2$}\label{sec:d=2}
				
				This section is devoted to the proof of the case $d = 2$.
				As a first step, we introduce a simple but useful concept that facilitates the discussion:
				\begin{definition}[Excess of a word]\label{def:excess}
					Let $W\in\Wordshwshort$.
					The \emph{excess of $W$}, denoted by $e(W)$, is defined by $e(W) = |W|_{\o} - 2hw/3$.
					The maximal excess that can be realized by a word of dimensions $h \times w$ of degree $d$ less or equal to $2$ is  $e_{max}(h,w) = max\{ e(W):W \in \Wordsdhwvarshort{2}{h}{w}\}$.
				\end{definition}

				In other words, the excess of $W$ is the surplus, in number of filled cells, that $W$ has when $2/3$ of the area of its bounding rectangle is filled, and the maximal excess that a word can have if it is contained in a $h\times w$ rectangle $R$ is $e_{max}(h,w)$.
				If $R$ is the bounding rectangle  of $W$, we sometimes write $e(R)$ instead of $e(W)$. 	
				
				An immediate property of the excess function is that it is additive with respect to horizontal and vertical concatenation:
				\begin{proposition}\label{prop:additivity} 
					Let $W_1\in\mathcal{W}_{h \times w_1},W_2\in\mathcal{W}_{h \times w_2}$ be two words with respective sizes $h\times w_1$ and $h\times w_2$.   Similarly let 
					$W_3\in\mathcal{W}_{h_1 \times w},W_4\in \mathcal{W}_{h_2 \times w}$
					be two words of respective sizes $h_1\times w$ and $h_2\times w$. Then 
					$e(W_1\hcat W_2) = e(W_1)+e(W_2)$ and
					$e(W_3\vcat W_4) = e(W_3)+e(W_4)$.
				\end{proposition}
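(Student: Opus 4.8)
The plan is to reduce both identities to two elementary observations: concatenation merely juxtaposes the two words without altering any cell, and the normalising term $2hw/3$ in the definition of the excess is linear in the area $hw$. First I would record that, by the piecewise definition of horizontal concatenation, every cell of $W_1 \hcat W_2$ is a copy of a cell of $W_1$ (in columns $1, \ldots, w_1$) or of a cell of $W_2$ (in columns $w_1+1, \ldots, w_1+w_2$), and this correspondence is a letter-preserving bijection between the cells of $W_1 \hcat W_2$ and the disjoint union of the cells of $W_1$ and $W_2$. Restricting to the $\o$-cells gives $|W_1 \hcat W_2|_{\o} = |W_1|_{\o} + |W_2|_{\o}$. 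Applying the same argument row-wise to $\vcat$ yields $|W_3 \vcat W_4|_{\o} = |W_3|_{\o} + |W_4|_{\o}$.

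Next I would note that $W_1 \hcat W_2$ has dimensions $h \times (w_1 + w_2)$, hence area $hw_1 + hw_2$, the sum of the areas of $W_1$ and $W_2$; similarly $W_3 \vcat W_4$ has dimensions $(h_1 + h_2) \times w$ and area $h_1 w + h_2 w$. Subtracting two thirds of these areas from the identities of the previous paragraph gives directly
$e(W_1 \hcat W_2) = |W_1|_{\o} + |W_2|_{\o} - \tfrac{2}{3}(hw_1 + hw_2) = e(W_1) + e(W_2)$
and the analogous computation for $e(W_3 \vcat W_4)$, which is the claim.

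I do not expect any real obstacle: the statement is a bookkeeping identity, and the only point that deserves a moment's care is that no $\o$-cell is created or destroyed at the seam between the two blocks, which is immediate from the case distinction in the definitions of $\hcat$ and $\vcat$. Once that is observed, the proof is a one-line calculation, so in the write-up I would simply state the two cell-counting equalities and combine them with the additivity of the area.
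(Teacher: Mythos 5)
Your proof is correct and is exactly the argument the paper has in mind: the paper states this proposition without proof, calling it an ``immediate property,'' and the intended justification is precisely your observation that $|{\cdot}|_{\o}$ and the area $hw$ are both additive under concatenation, so the excess $|W|_{\o} - 2hw/3$ is as well. Nothing is missing.
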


				Using Definition~\ref{def:excess}, the case $d = 2$ of Theorem~\ref{thm:main} can be reformulated:
				\begin{theorem}\label{thm:rephrased} Let $h,w\in \mathbb{Z}_{>0}$, with $h \geq w$. Then
					\label{thm:d=2}
					\begin{equation}\label{eq:emax}
						e_{max}(h,w) = \begin{cases}
							hw/3, & \mbox{if $w=1$ or $h=w=2$;}\\
							hw/12, & \mbox{if $h \equiv_2 0, h \geq 4$ and $w = 2$;}\\
							hw/12+1/2 & \mbox{if $h \equiv_2 1, h \geq 3$ and $w = 2$;}\\
							2 & \mbox{if $w = 3$ or $h\equiv_3 w\equiv_3 0$;}\\
							4/3 & \mbox{if $w \geq 4$ and $h\equiv_3 w \nequiv_3 0$;}\\
							1 & \mbox{if $w \geq 4$, $hw \equiv_3 0$ and $h \nequiv_3 w$;}\\
							2/3 & \mbox{otherwise.}
					\end{cases}\end{equation}
				
				\end{theorem}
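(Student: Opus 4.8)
<br>

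The plan is to prove Theorem~\ref{thm:rephrased} by establishing matching lower and upper bounds on $e_{max}(h,w)$, treating the ``thin'' cases ($w \in \{1,2,3\}$) separately from the ``thick'' case ($w \geq 4$), and within the thick case organizing by the residues of $h$ and $w$ modulo $3$. Note first that by Definition~\ref{def:excess} the statement $\maxdhw{2}{h}{w} = \mdhw{2}{h}{w}$ is literally equivalent to $e_{max}(h,w)$ having the claimed value, since $e(W) = |W|_\o - 2hw/3$ and the seven cases of $\mdhw{2}{h}{w}$ differ from $2hw/3$ by exactly the seven values listed; so it suffices to prove the reformulated version.

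\medskip
\noindent\textbf{Lower bounds (constructions).}
First I would handle $w = 1$: a single column $\o^{h \times 1}$ has every cell of degree $\leq 2$, so $e = h - 2h/3 = h/3$, and likewise the $2 \times 2$ full square has excess $4 - 8/3 = 4/3 = hw/3$. For $w = 2$ I would exhibit a periodic pattern of vertical period $4$ (or $3$) built from the brick $\left(\substack{\o\o\\\o\o\\\e\e}\right)$-type blocks realizing density close to $3/4$, so that the excess per block is constant; the exact constant $hw/12$ (resp.\ $hw/12 + 1/2$ in the odd case) comes from counting filled cells in each period and patching the last incomplete period. For the thick cases $w \geq 3$ the target excess is a small constant ($2$, $4/3$, $1$ or $2/3$), so the construction must fill essentially $2/3$ of the rectangle with a bounded surplus: I would tile the rectangle (or most of it) by $3 \times 3$ blocks of the form shown in Figure~\ref{fig:full-words}(c), each contributing exactly $2/3 \cdot 9 = 6$ filled cells and hence excess $0$, then add a bounded decoration near one corner or along one border strip to pick up the required $+2$, $+4/3$, $+1$ or $+2/3$. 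The case analysis on residues records exactly how cleanly the $3 \times 3$ tiling fits: when $h \equiv_3 w \equiv_3 0$ the whole rectangle tiles and a corner gadget yields $+2$; when only one dimension is $\equiv_3 0$ one gets a thin leftover strip of width $1$ or $2$ in which a snake pattern of near-maximal density must be fitted, etc.

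\medskip
\noindent\textbf{Upper bounds.}
This is where the real work lies. I expect the argument to use Proposition~\ref{prop:additivity} (additivity of excess under concatenation) together with a discharging / amortized counting scheme: partition any word $W \in \Wordsdhwvarshort{2}{h}{w}$ into horizontal bands of height $3$ (plus a leftover band of height $h \bmod 3$), bound the excess of each full band by analyzing $3 \times w$ words of degree $\leq 2$, and bound the leftover band separately. The key sub-lemma would be an upper bound for $e_{max}(3,w)$ and, more delicately, an argument that a word cannot simultaneously achieve the maximum excess in two adjacent bands --- i.e.\ that the ``surplus'' is essentially localizable, so that $e_{max}(h,w)$ does not grow with $h$ once $w \geq 3$. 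Concretely I would argue that in any $3 \times w$ strip, each solitary pillar, bench, or $2\times 2$ block has degree constraints forcing empty ``gaps'' at a controlled rate, giving a per-column amortized charge of at most $2$ filled cells with only $O(1)$ total slack; summing over columns and then over bands, and being careful about the interaction (shared rows) between consecutive height-$3$ bands, yields $|W|_\o \leq 2hw/3 + c$ with $c$ matching the case table. The main obstacle, and the place where the paper surely spends most of its length, is pinning down this constant $c$ exactly in every residue class: the bound must be tight, so the discharging has to account precisely for how much extra density is available near the boundary (corners and sides, via the benches/pillars that are allowed to touch $\bd(W)$) versus in the interior, and for the $w=2$ regime where the optimal density jumps from $2/3$ to $3/4$ and the whole analysis changes character. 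I would therefore structure Section~\ref{sec:d=2} as: (i) the $w \leq 2$ lemmas, (ii) a structural lemma on $3 \times w$ strips and their excess, (iii) the band-decomposition upper bound with the anti-concatenation-of-maxima argument, (iv) the matching constructions per residue class, and (v) assembling these into Theorem~\ref{thm:rephrased}.
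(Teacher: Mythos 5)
Your plan reproduces the paper's overall architecture — additivity of excess under concatenation, a decomposition of $W$ into horizontal bands, and the key idea that the surplus must be localizable because a band adjacent to a maximal band is forced to be deficient — but as written it has a genuine gap at exactly the point you flag as ``the real work.'' The quantitative heart of the paper is a family of statements of the form: if a factor of $W$ of width $3$, $4$ or $5$ is $2$-full, then the single row (or column) adjacent to it has excess at most $-5/3$ (resp.\ $-4/3$), because a maximal word of small width is rigid enough (unique up to symmetry for width $5$, forced benches and solitary pillars for widths $3$ and $4$) to force a counted number of degree-$2$ cells on its boundary row. These are Corollaries~\ref{cor4xh_n2(c1)}, \ref{cor:5xhsature} and \ref{cor4xhn(c4)} in the paper, and without some version of them your induction cannot close: your proposed uniform decomposition into bands of height $3$ fails outright, since $e_{max}(3,w)=2$ for every $w\geq 3$, so summing per-band bounds gives $O(h)$ rather than a constant, and the qualitative statement ``two adjacent bands cannot both be maximal'' is not enough — one needs the adjacent band to have \emph{strictly negative} excess of a specific magnitude, and one needs to split into bands whose heights realize the right residues mod $3$ (the paper uses splits of heights $3k+1$ and $3k+2$, or $4$ and $h-4$, chosen case by case so that the two sub-maxima already sum to the target).

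A second, smaller omission: the splitting argument only works when both pieces are wide and tall enough for the formula to be in its ``constant'' regime, which is why the paper must establish $w\in\{1,\dots,6\}$ and the sporadic case $(7,7)$ as base cases by separate (and individually lengthy) arguments before running the six-case minimal-counterexample induction for $h,w>6$. Your sketch treats $w\leq 3$ separately but does not identify that $w=4,5,6$ and $(7,7)$ also resist the generic argument. The lower-bound constructions you describe (period-$4$ bricks for $w=2$, excess-$0$ $3\times 3$ tiles plus a bounded corner gadget for $w\geq 3$) do match the paper's constructions in spirit and would be fine once made explicit.
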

				This equivalent form is particularly convenient in comparison with the definition of $\maxdhw{d}{h}{w}$, since the excess becomes bounded whenever $h, w \geq 3$.
			
				Finally, for $h, w \in \Zp$, let
				\begin{equation}
					\emax(h,w) = \mdhw{2}{h}{w} - 2/3(w\cdot h).
				\end{equation}
				Theorem \ref{thm:d=2} claims that $\emax(h,w)=e_{max}(h,w)$.
				The remainder of the paper is devoted to its proof.
				We begin with  the cases where the width $w$ is small.
				\begin{lemma}[Base cases]
					\label{lem:basecases}
					Theorem \ref{thm:d=2} holds for $(h,w) \in \left( \mathbb{Z}_{>0}\times \DI{1,6}\right)\cup (7,7)$. 
				\end{lemma}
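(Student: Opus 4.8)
The plan is to prove Theorem~\ref{thm:d=2} for the finitely described family $(h,w) \in (\mathbb{Z}_{>0} \times \DI{1,6}) \cup \{(7,7)\}$ by splitting into cases according to $w$. For each fixed $w \in \DI{1,6}$ the claim is an identity in the single parameter $h$ (with $h \geq w$ by the convention of the theorem, and the degenerate small cases $h < w$ reducible by the symmetry $\maxdhw{2}{h}{w} = \maxdhw{2}{w}{h}$), so the task reduces to (i) exhibiting, for each residue class of $h$ modulo the relevant period, an explicit word achieving the claimed excess — giving the lower bound $e_{max}(h,w) \geq \emax(h,w)$ — and (ii) proving the matching upper bound $e_{max}(h,w) \leq \emax(h,w)$.

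For the lower bound I would construct the optimal words periodically in $h$. For $w=1$ every cell can be filled (a $1 \times n$ pillar has all degrees $\leq 1 \leq 2$), giving excess $h/3$. For $w=2$ the extremal pattern is the ``brick'' word built from $2\times 2$ blocks $\left(\substack{\o\o\\\o\o}\right)$ separated appropriately, or more precisely a vertical power of a small tile realizing density $3/4$; concatenating copies and using the additivity Proposition~\ref{prop:additivity} reduces the computation of $e(W)$ to one tile plus a bounded correction, which accounts for the $h \equiv_2 0$ versus $h \equiv_2 1$ split and the $+1/2$. For $w = 3, 4, 5, 6$ the target excess is the bounded constant ($2$, $4/3$, $1$, or $2/3$ depending on the congruence of $h$ and $w$ mod $3$); here I would take a ``bulk'' word of density exactly $2/3$ — e.g. the vertical power of a $3 \times w$ tile with a full top and bottom row and a sparse middle row, contributing excess $0$ — and then splice in a bounded-size cap at the top (and/or bottom) that contributes the required surplus $2$, $4/3$, $1$ or $2/3$; additivity again makes the bookkeeping local. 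The special entry $(7,7)$ is then just a single finite word to display (excess $4/3$, since $7 \equiv_3 1 \equiv_3 7$ and $w = 7 \geq 4$), checkable by inspection.

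For the upper bound I would argue row by row (equivalently column by column for the narrow strips). The key tool is a local counting bound: in any word of degree $\leq 2$, one cannot have three consecutive fully-filled rows, and more quantitatively a window of three consecutive rows in a width-$w$ strip contains at most $2w$ filled cells except near the top/bottom boundary, where a small bounded bonus is possible. Formalizing this — e.g. by showing that if $W[i]$ and $W[i+1]$ are both all-$\o$ then $W[i-1]$ and $W[i+2]$ must be all-$\e$, and that any non-boundary triple of rows has at most $2w$ filled cells — yields $|W|_\o \leq 2hw/3 + (\text{boundary bonus})$, and a finite check of the possible boundary configurations for each $w \in \DI{1,6}$ pins the bonus to exactly the claimed constant. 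For $w = 1, 2$ the period-$2$ (rather than period-$3$) structure means I would instead use the $d=1$-style tiling argument: partition the strip into $2 \times 2$ blocks (each containing at most $4$ but, when degree $\leq 2$ is imposed globally, at most $3$ filled cells on average) plus a bounded remainder, matching the $3hw/4$ bound.

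The main obstacle I anticipate is the upper bound for the ``wide-ish'' narrow strips $w \in \{3,4,5,6\}$: proving that the excess over $2hw/3$ can be concentrated only in a bounded neighborhood of the boundary, and determining its exact value in each congruence case, requires a careful finite case analysis of how filled cells of degree $\leq 2$ can cluster near the top two or three rows without propagating downward. Everything else — the $w=1$ and $w=2$ computations, the explicit constructions, and the single word $(7,7)$ — is routine; the narrow-strip boundary analysis is where the real combinatorial work of this lemma lies, and it is presumably why these base cases are isolated before the general induction.
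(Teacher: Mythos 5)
Your lower-bound half (periodic tiles of density $2/3$ plus a bounded cap, with the additivity of excess doing the bookkeeping) is exactly what the paper does, and the $w=1$ case is as trivial as you say. The gap is in your upper bound. Your key tool --- that any three consecutive \emph{interior} rows of a degree-$\leq 2$ word of width $w$ contain at most $2w$ filled cells, so that all excess is confined to a bounded neighbourhood of the top and bottom --- is false for $w\in\{4,5,6\}$. The paper enumerates, up to symmetry, four inner $3\times 4$ factors with $9>8$ filled cells and ten inner $3\times 5$ factors with $11>10$ filled cells; all of them satisfy the degree constraint locally and all of them extend on both sides to valid words of $\Wordsdhwvarshort{2}{h}{w}$ (the paper even exhibits the legal excess-$0$ neighbour of one such $11$-cell tile). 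So interior windows of positive excess do occur, and the real content of the base cases is showing that whenever one occurs it forces a compensating deficit in an adjacent window. The paper proves this by a minimal-counterexample induction on $h$ combined with an exhaustive case analysis of which high-excess $3\times w$ configurations can be adjacent to which (and, for $w=5$, computer verification of the heights $h\leq 17$ that the induction does not reach). Your plan has no mechanism for this compensation step; ``a finite check of the possible boundary configurations'' does not cover it, because the offending configurations are not boundary configurations.

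A smaller instance of the same problem affects your $w=2$ argument: a $2\times 2$ block in a width-$2$ strip \emph{can} carry $4$ filled cells (it is a $4$-cycle, all degrees equal to $2$), so the per-block bound of $3$ holds only on average, and establishing that average already requires the global observation that a full $2\times 2$ block forces empty rows above and below it --- which is precisely the minimal-counterexample argument the paper gives for $w=2$. Likewise your upper bound for $(7,7)$ is not a self-contained finite check in the paper: it is derived by splitting $W$ into a $2\times 7$ and a $5\times 7$ factor and invoking the already-proved narrow-strip bounds together with a structural corollary about rows adjacent to $2$-full $4\times(3k+1)$ words. In short, the constructions are right, but the upper bounds need the adjacency/compensation analysis that your outline omits.
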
 
				
				\begin{proof}
					It is immediate that if $w=1$, then a $h$-pillar is both 2-full and element of $\Wordsdhwvarshort{2}{h}{1}$ which yields $e_{max} = h-2h/3 = h/3$.
					Due to space restriction, the proof of the other base cases are presented in the Appendix, except for the case $w = 4$ which is detailed in Subsection~\ref{ss:w=4}.
				\end{proof}

		\subsection{The subcase $w = 4$}\label{ss:w=4}
		
		Notice that
		\begin{equation}\label{eq:h=4}
			\emax(h, 4) = \begin{cases}
				4/3, & \mbox{if $h \equiv_3 1$;} \\
				1,   & \mbox{if $h \neq 3$ and $h \equiv_3 0$;} \\
				2/3, & \mbox{if $h \equiv_3 2$.}
			\end{cases}
		\end{equation}
		
		\begin{proposition}\label{prop4xhsc} 
			Let $h \geq 4$ be an integer. Then $e_{max}(h,4) = \emax(h,4)$.
			
		\end{proposition}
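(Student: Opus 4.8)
The plan is to prove the two inequalities $e_{max}(h,4)\ge\emax(h,4)$ and $e_{max}(h,4)\le\emax(h,4)$ separately, the first by an explicit construction and the second by a structural counting argument.

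For the lower bound I would take as a building block the $3\times 4$ band $B=\substack{\o\,\e\,\o\,\o\\\o\,\o\,\e\,\o\\\e\,\o\,\o\,\e}$. A direct inspection shows that $B\in\Wordsdhwvarshort{2}{3}{4}$, that $e(B)=0$ (it has exactly $8=2\cdot 12/3$ filled cells), and --- the reason for choosing this particular $B$ --- that every vertical stack $B^{k\times 1}$ still lies in $\Wordsdhwvarshort{2}{3k}{4}$: along each horizontal seam the column-$1$ filled cell of the lower copy meets no filled cell of the upper copy, while the filled cells in columns $2$, $3$, $4$ each acquire exactly one new neighbour, so all degrees remain $\le 2$. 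By Proposition~\ref{prop:additivity}, $e(B^{k\times 1})=0$; hence to realise the required excess $\emax(h,4)\in\{2/3,\,1,\,4/3\}$ I would glue a top cap and a bottom cap of bounded total height onto a stack of copies of $B$, the caps chosen according to $h\bmod 3$ from a short explicit list checked by hand --- for instance, for $h\equiv_3 1$ the boundary cycle of the $h\times 4$ rectangle already has excess $4/3$ when $h=4$, and the $7\times 4$ word of Figure~\ref{fig:full-words}(c) serves as a cap for $h=7$. Because only $O(1)$ new seams are created, verifying the degree bound and the excess of each word in the family is a finite computation, giving $e_{max}(h,4)\ge\emax(h,4)$ for every $h\ge 4$.

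For the upper bound, let $W\in\Wordsdhwvarshort{2}{h}{4}$ and split its columns into $\{1,2\}$ and $\{3,4\}$, obtaining words $W_L$ and $W_R$ of dimensions $h\times 2$. Since an induced subgraph of a graph of maximum degree $2$ again has maximum degree $2$, both $W_L$ and $W_R$ belong to $\Wordsdhwvarshort{2}{h}{2}$, and $|W|_\o=|W_L|_\o+|W_R|_\o$. The point is that $W_L$ and $W_R$ cannot both be close to the width-$2$ optimum $\mdhw{2}{h}{2}\approx 3h/2$: whenever column $2$ of $W_L$ supplies a cell of degree $2$ in some row $i$, the cell $W[i,3]$ is forced to be empty, and symmetrically. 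Making this trade-off quantitative is the technical heart of the argument: I would classify each index $i\in\DI{1,h}$ by the pattern of the pair $(W[i,2],W[i,3])$ together with how many filled neighbours these two cells already have inside their own width-$2$ strip, and then show by a finite case analysis over consecutive rows that the per-row contribution $|W_L[i]|_\o+|W_R[i]|_\o$ cannot exceed $8/3$ on average, with the admissible deviations bounded by a constant that, after summation over $i$, yields exactly the three values of $\emax(h,4)$ indexed by $h\bmod 3$. Equivalently, one could run a transfer-matrix enumeration over the finitely many ``profiles'' of a width-$4$ row equipped with its carried degree information.

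The difficult point is this last step. Unlike the cases $w\le 3$, no naive bound on the number of $\o$-cells in a single row, or in a band of three consecutive rows, is tight --- three consecutive rows can legitimately contain $9$ filled cells --- so the argument must track how surplus is distributed along the strip and exclude the configurations that would let it accumulate, i.e.\ show that any long stretch realising the maximal local density is forced to be a truncation of the periodic band $B$ above, whose excess has already been accounted for. The lower-bound construction and the row-by-row case enumerations are, by comparison, routine finite checks.
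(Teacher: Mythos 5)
Your lower bound is fine and close in spirit to the paper's: Lemma~\ref{lem:4xh} likewise builds $W_h$ by vertically concatenating explicit height-$3$ blocks of excess $0$ (the words $U,V,X,Y$) between caps chosen according to $h \bmod 6$, and your band $B$ together with a finite list of caps would serve the same purpose once the caps are actually written down and checked. The gap is entirely in the upper bound. What you give there is a plan, not a proof: the statement that ``the per-row contribution $|W_L[i]|_\o+|W_R[i]|_\o$ cannot exceed $8/3$ on average, with the admissible deviations bounded by a constant that, after summation over $i$, yields exactly the three values of $\emax(h,4)$'' \emph{is} the proposition to be proved, restated in the language of your column split. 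Splitting into two width-$2$ strips gives, via Proposition~\ref{prop:w=2}, only $|W|_\o \leq 2\,\mdhw{2}{h}{2} \approx 3h$, so you must extract a saving of about $h/3$ cells from the interaction across the seam between columns $2$ and $3$, \emph{and} you must pin the additive constant down to exactly $4/3$, $1$ or $2/3$ according to $h \bmod 3$. Your single structural observation (a cell of degree $2$ inside $W_L$ at $(i,2)$ forces $W[i,3]=\e$) points in the right direction but does not by itself yield a linear saving, let alone the exact constants; the proposed ``finite case analysis over consecutive rows'' or ``transfer-matrix enumeration over profiles'' is named but never carried out, and you yourself flag it as the difficult point. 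As it stands, the inequality $e_{max}(h,4)\leq\emax(h,4)$ is not established.

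For comparison, the paper closes the upper bound quite differently: it takes a minimal counter-example $W$ with $e(W)>\emax(h,4)$ and, splitting on $h\bmod 3$, cuts $W$ \emph{horizontally} into bands of heights $2$, $3$, $4$ or $5$ (Lemma~\ref{lem:nmce}), using the additivity of excess (Proposition~\ref{prop:additivity}), the minimality of $h$ to bound the excess of the complementary band, and a short exhaustive enumeration of the few height-$3$ and height-$4$ factors of large excess (Lemma~\ref{lem:proper}) to show each case is contradictory. If you want to salvage your column-based route you would need to actually exhibit the finite profile set and verify the resulting bound — at which point the horizontal-band induction is arguably the shorter path, since it only ever requires enumerating words of height at most $5$ and width $4$.
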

		
		The proof of Proposition~\ref{prop4xhsc} is combinatorial and requires the examination of several cases.
		We first introduce some lemmas.
		
		\begin{lemma}\label{lem:4xh}
			For any integer $h \geq 4$, $e_{max}(h, 4) \geq \emax(h, 4)$.
			Moreover, if $h >4$,  there exists a snake $S \in \Wordsdhwvarshort{2}{h}{4}$ such that $e(S) = \emax(h,4)$.
		\end{lemma}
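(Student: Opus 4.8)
The plan is to establish the inequality $e_{max}(h,4)\ge\emax(h,4)$ by exhibiting, for every integer $h\ge 4$, an explicit word $W_h\in\Wordsdhwvarshort{2}{h}{4}$ whose area equals the target $\mdhw{2}{h}{4}$, and to arrange that $W_h$ is a snake as soon as $h>4$. By~\eqref{eq:h=4}, the target area is $(8h+4)/3$, $(8h+3)/3$ or $(8h+2)/3$ according as $h\equiv_3 1,0,2$; in particular it increases by exactly $8$ whenever $h$ increases by $3$ within a fixed residue class. The base case $h=4$ is treated separately: take $W_4$ to be the word whose filled cells are exactly $\bd(W_4)$, i.e. the boundary ring of the $4\times4$ square. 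Each of these $12$ cells has exactly two filled neighbours (its two ring-neighbours), so $W_4\in\Wordsdhwvarshort{2}{4}{4}$ and $e(W_4)=12-32/3=4/3=\emax(4,4)$. Note that $W_4$ induces a $12$-cycle rather than a chain, which is precisely why the ``snake'' part of the statement is restricted to $h>4$.

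For $h>4$ I would use a construction periodic with period $3$ in the number of rows. Fix a base snake for the smallest admissible $h$ in each residue class: a $5\times4$ snake $B_5$ with $14$ filled cells (residue $2$), a $6\times4$ snake $B_6$ with $17$ filled cells (residue $0$), and the $7\times4$ snake of Figure~\ref{fig:full-words}(c), which has $20$ filled cells and induces a single chain (residue $1$). Then fix one $3\times4$ ``extension block'' $E$ with exactly $8$ filled cells whose induced subgraph is a single path, one endpoint in a prescribed cell of its top row and the other in a prescribed cell of its bottom row, and whose top and bottom rows are chosen to be compatible (in the precise sense spelled out below) with each other and with the relevant rows of the three base snakes. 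For $h$ in residue class $r$ with base $h_r\in\{5,6,7\}$, set $W_h := B_{h_r}$ with $k=(h-h_r)/3$ copies of $E$ spliced in at the designated seam. Then $W_h$ has dimensions $h\times4$ and $|W_h|_\o = |B_{h_r}|_\o + 8k = \mdhw{2}{h}{4}$, since $8k=\mdhw{2}{h}{4}-\mdhw{2}{h_r}{4}$; and, provided the gluing behaves as required, the chain of $B_{h_r}$ together with the $k$ path-segments contributed by the blocks fuse along the seams into a single chain, so $W_h$ is a snake.

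The verification then reduces to finitely many local checks on the building blocks $B_5,B_6,B_7,E$ and on their admissible vertical adjacencies: (i) no filled cell has degree $3$ or $4$, equivalently no $4$-cell ``T'' pattern (a cell together with three grid neighbours) and no $5$-cell ``plus'' pattern of filled cells occurs in any $3\times4$ or $4\times4$ window; (ii) the induced subgraph of each block, and of any two admissible blocks stacked together, is a chain; and (iii) the area count above. I expect (ii) at the seams to be the main obstacle: one must choose $E$ and the splicing position so that stacking $E$ below or above a copy of itself, or against the distinguished row of a base snake, produces exactly one new filled--filled adjacency across the seam — so that no cell on either side gains a third filled neighbour (which would simultaneously break the degree bound and create a branch vertex) and no short cycle is created (which would disconnect the rest of the path) — and so that the endpoints of the fused path land in the columns where the next block or the top/bottom cap expects them. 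These constraints essentially pin down $E$ up to the obvious symmetries and force a compatible normalisation of the top and bottom rows of $B_5,B_6,B_7$. Once such a block $E$ is fixed, the claim that $W_h$ is a snake for every $h>4$ follows by induction on $k$, and the area identity gives $e(W_h)=\emax(h,4)$, completing the proof.
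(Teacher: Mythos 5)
There is a genuine gap, and it sits exactly where you suspected trouble: the extension block $E$ you postulate does not exist, so the period-$3$ splicing construction for $h>4$ cannot be carried out as described. Suppose $E\in\Wordsdhwvarshort{2}{3}{4}$ has $8$ filled cells, induces a single path, and is such that stacking a copy of $E$ on top of another creates exactly one new filled--filled adjacency. Writing $T,B\subseteq\DI{1,4}$ for the sets of filled columns in the top and bottom rows of $E$, the seam condition forces $|T\cap B|=1$, hence $|T|+|B|=|T\cup B|+1\leq 5$ and the middle row carries at least $3$ filled cells. If the middle row is full, its two interior cells already have degree $2$, which empties columns $2$ and $3$ of both extreme rows and forces $T=B=\{1,4\}$, contradicting $|T\cap B|=1$. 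Otherwise $\{|T|,|B|\}=\{2,3\}$ and $T\cup B=\DI{1,4}$; moreover the unique seam column $j^{*}\in T\cap B$ must contain a path endpoint in \emph{both} extreme rows (else some cell reaches degree $3$ after stacking), and the new edge must join two endpoints (else a cycle or a branch vertex appears). Running through the handful of choices of $T$ with $|T|=3$ (the case $|B|=3$ is its vertical mirror), every configuration either disconnects the $8$ cells into two paths, creates a $4$-cycle, leaves an isolated cell, or places a degree-$2$ cell at $(1,j^{*})$ or $(3,j^{*})$. So no such $E$ exists, and the inductive step of your construction collapses. (Your base case $h=4$ and all of the area bookkeeping are fine.)

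The paper avoids this obstruction by working modulo $6$ rather than modulo $3$: it alternates two \emph{disconnected} $3\times 4$ blocks $U$ and $V$ (or $X$ and $Y$), each consisting of two disjoint paths with $8$ cells in total, so that each seam creates exactly two new adjacencies, each joining an endpoint to an endpoint; the word remains a linear forest throughout, and only the end caps ($A$, $B$, $C$, $D$, $\tilde D$, $E$ in the paper's notation) finally fuse the components into a single snake. If you want to salvage your argument, the minimal repair is to replace your single block $E$ by such a pair of alternating blocks (equivalently, a $6\times 4$ block with $16$ cells used with six residue classes of $h$), or to drop the requirement that $E$ be connected and redo the seam analysis with two new adjacencies per seam.
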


		\begin{proof}
			Let
			$$A = \begin{wcells}{cccc}
				$\o$ & $\o$ & $\o$ & $\o$ \\
				$\o$ & $\e$ & $\e$ & $\o$ \\
				$\e$ & $\o$ & $\o$ & $\o$ \\
				$\o$ & $\o$ & $\e$ & $\e$
			\end{wcells}, \enskip
			B = \begin{wcells}{cccc}
				$\o$ & $\o$ & $\o$ & $\o$ \\
				$\o$ & $\e$ & $\e$ & $\o$ \\
				$\o$ & $\e$ & $\o$ & $\o$ \\
				$\e$ & $\o$ & $\o$ & $\e$
			\end{wcells}, \enskip
			C = \begin{wcells}{cccc}
				$\o$ & $\e$ & $\e$ & $\o$ \\
				$\o$ & $\o$ & $\o$ & $\o$
			\end{wcells}, \enskip
			D = \begin{wcells}{cccc}
				$\o$ & $\o$ & $\e$ & $\o$ \\
				$\o$ & $\e$ & $\e$ & $\o$ \\
				$\o$ & $\o$ & $\o$ & $\o$
			\end{wcells}, \enskip
			\tilde{D} = \begin{wcells}{cccc}
				$\o$ & $\e$ & $\o$ & $\o$ \\
				$\o$ & $\e$ & $\e$ & $\o$ \\
				$\o$ & $\o$ & $\o$ & $\o$
			\end{wcells}, \enskip
			E = \begin{wcells}{cccc}
				$\o$ & $\o$ & $\o$ & $\o$ \\
				$\o$ & $\e$ & $\e$ & $\o$
			\end{wcells}, $$
			$$U = \begin{wcells}{cccc}
				$\o$ & $\e$ & $\o$ & $\o$ \\
				$\o$ & $\e$ & $\o$ & $\e$ \\
				$\o$ & $\e$ & $\o$ & $\o$
			\end{wcells}, \enskip
			V = \begin{wcells}{cccc}
				$\o$ & $\o$ & $\e$ & $\o$ \\
				$\e$ & $\o$ & $\e$ & $\o$ \\
				$\o$ & $\o$ & $\e$ & $\o$
			\end{wcells}, \enskip
			X = \begin{wcells}{cccc}
				$\o$ & $\o$ & $\e$ & $\o$ \\
				$\o$ & $\e$ & $\o$ & $\o$ \\
				$\o$ & $\e$ & $\o$ & $\e$
			\end{wcells}, \enskip
			Y = \begin{wcells}{cccc}
				$\o$ & $\e$ & $\o$ & $\o$ \\
				$\o$ & $\o$ & $\e$ & $\o$ \\
				$\e$ & $\o$ & $\e$ & $\o$
			\end{wcells}, \enskip
			W_4 = \begin{wcells}{cccc}
				$\o$ & $\o$ & $\o$ & $\o$ \\
				$\o$ & $\e$ & $\e$ & $\o$ \\
				$\o$ & $\e$ & $\e$ & $\o$ \\
				$\o$ & $\o$ & $\o$ & $\o$
			\end{wcells},$$
			and for each integer $h \geq 5$, let $W_h$ be defined by
			$$W_h = \begin{cases}
				A \vcat (U \vcat V)^{(h - 6) / 6 \times 1} \vcat C,
				& \mbox{if $h \equiv_6 0$;} \\
				B \vcat (X \vcat Y)^{(h - 7) / 6 \times 1} \vcat D,
				& \mbox{if $h \equiv_6 1$;} \\
				E \vcat (U \vcat V)^{(h - 2) / 6 \times 1},
				& \mbox{if $h \equiv_6 2$;} \\
				A \vcat (U \vcat V)^{(h - 9) / 6 \times 1} \vcat U \vcat C,
				& \mbox{if $h \equiv_6 3$;} \\
				B \vcat (X \vcat Y)^{(h - 10) / 6 \times 1} \vcat X \vcat \tilde{D},
				& \mbox{if $h \equiv_6 4$;} \\
				E \vcat (U \vcat V)^{(h - 5) / 6 \times 1} \vcat U,
				& \mbox{if $h \equiv_6 5$.}
			\end{cases}$$
			
			It suffices to observe that, for each integer $h \geq 4$, $W_h \in \Wordsdhwvarshort{2}{h}{d}$ and $e(W_h) = \emax(h,4)$.
			Moreover, $W_h$ is a snake for $h \neq 4$.
			\qed
		\end{proof}
		
		To prove that $e_{max}(h, 4) \leq \emax(h, 4)$, we proceed by contradiction: We assume that there exists a word $W \in \Wordsdhwvarshort{2}{h}{4}$, where $h$ is as small as possible, such that $e(W) > \emax(h, 4)$, and show that $W$ cannot exist.
		Such a word $W$ is called a \emph{minimal counter-example} (MCE).

		\begin{lemma}
			Let $h \geq 1$ be an integer.
			Then the following statements hold.
			\begin{enumerate}[(i)]
				\item $\emax(h, 4) \leq 2$;
				\item If $h \neq 1$ and $h \equiv_3 1$, then $\emax(h, 4) - \emax(h - 2, 4) = 2/3$;
				\item If $h \neq 3$ and $h \equiv_3 0$, then $\emax(h, 4) - \emax(h - 4, 4) = 1/3$;
				\item If $h \neq 3$, then $\emax(h, 4) \leq 4/3$;
				\item If $h \geq 4$ and $k$ is an integer such that $1 \leq k \leq h - 1$ and $h - k \neq 3$, then $\emax(h, 4) - \emax(h - k, 4) \geq -2/3$.
			\end{enumerate}
		\end{lemma}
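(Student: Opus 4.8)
The plan is to reduce all five statements to the explicit description of the function $h \mapsto \emax(h,4)$. Equation~\eqref{eq:h=4} already provides this function for $h \ge 4$: it equals $4/3$ if $h \equiv_3 1$, equals $1$ if $h \equiv_3 0$, and equals $2/3$ if $h \equiv_3 2$. To also cover $h \in \{1,2,3\}$, I would compute the three missing values directly from the defining identity $\emax(h,4) = \mdhw{2}{h}{4} - 8h/3$ together with the closed form of $\mdhw{2}{\cdot}{\cdot}$ from Theorem~\ref{thm:main} (invoking its symmetry when $h < 4$): this gives $\emax(1,4) = 4 - 8/3 = 4/3$, $\emax(2,4) = 6 - 16/3 = 2/3$, and $\emax(3,4) = 10 - 8 = 2$. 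Consequently, for every positive integer $h \ne 3$ the value $\emax(h,4)$ lies in $\{2/3,\,1,\,4/3\}$ and depends only on $h \bmod 3$ as above, while $\emax(3,4) = 2$ is the sole exceptional value. With this table at hand, each of (i)--(v) is a short verification.

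Concretely, (i) holds because every tabulated value is $\le 2$ (equality only at $h = 3$), and (iv) holds because discarding the value at $h = 3$ leaves $\{2/3,1,4/3\}$, all $\le 4/3$. For (ii), the hypotheses $h \equiv_3 1$ and $h \ne 1$ force $h \ge 4$, so $\emax(h,4) = 4/3$; since $h - 2 \equiv_3 2$ and $h - 2 \ge 2$ (and no integer $\equiv_3 2$ equals $3$), the table gives $\emax(h-2,4) = 2/3$, so the difference is $2/3$. For (iii), $h \equiv_3 0$ with $h \ne 3$ forces $h \ge 6$, so $\emax(h,4) = 1$; and $h - 4 \equiv_3 2$ with $h - 4 \ge 2$ gives $\emax(h-4,4) = 2/3$, so the difference is $1/3$. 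For (v), part~(iv) applies to $h - k$ (legitimately, since $1 \le h - k \le h - 1$ and $h - k \ne 3$), giving $\emax(h-k,4) \le 4/3$, while $h \ge 4$ forces $\emax(h,4) \ge 2/3$ from the table; subtracting gives $\emax(h,4) - \emax(h-k,4) \ge 2/3 - 4/3 = -2/3$.

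There is no genuine obstacle here: the lemma is pure bookkeeping once $\emax(\cdot,4)$ is tabulated. The only point requiring care is the outlier $\emax(3,4) = 2$ — one must check that every hypothesis that could let $h$ or $h - k$ equal $3$ has been explicitly ruled out in the relevant clause (it has: via $h \equiv_3 1$ in (ii), via $h \ne 3$ in (iii) and (iv), and via $h - k \ne 3$ in (v)), so that the uniform mod-$3$ description can be used throughout without exception.
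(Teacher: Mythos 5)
Your proposal is correct and takes the same route as the paper, which simply states that the lemma ``follows from the definition of $\emax(h,4)$''; you have merely written out the case-by-case verification in full, including the values $\emax(1,4)=4/3$, $\emax(2,4)=2/3$, $\emax(3,4)=2$ that the paper leaves implicit. Your attention to the outlier $h=3$ and to why each hypothesis excludes it is exactly the bookkeeping the paper's one-line proof presupposes.
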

		
		\begin{proof}
			Follows from the definition of $\emax(h, 4)$.
			\qed
		\end{proof}
		
		\begin{lemma}\label{lem:proper}
			Let $W$ be a MCE of height $h \geq 4$, with $h \equiv_3 2$, and $U \in \Wordsdhwvarshort{2}{3}{4}$  an inner factor of $W$.
			Then
			\begin{enumerate}[(i)]
				\item $U \neq \begin{wcells}{cccc}
					$\o$ & $\o$ & $\o$ & $\o$ \\
					$\o$ & $\e$ & $\e$ & $\o$ \\
					$\o$ & $\o$ & $\o$ & $\o$
				\end{wcells}$.
				\item $|U|_\o \leq 8$.
			\end{enumerate}
		\end{lemma}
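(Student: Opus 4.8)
The plan is to prove both items by the same ``delete four rows'' surgery, comparing the excess of the resulting word of height $h-4\equiv_3 1$ with $\emax(h-4,4)=4/3$.

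\medskip
\noindent\emph{Item (i).}
Assume for contradiction that the inner factor $U=W\DI{i,i+2}$ is the $3\times 4$ word exhibited in~(i); since $U$ is inner, rows $i-1$ and $i+3$ of $W$ exist. Each cell of row $i$ is a $\o$-cell that already has two $\o$-neighbours inside $U$ (e.g.\ $W[i,2]$ touches $W[i,1]$ and $W[i,3]$, and $W[i,1]$ touches $W[i,2]$ and $W[i+1,1]$), so $\deg_{\o,W}\le 2$ forces $W[i-1,j]=\e$ for every $j$; symmetrically, using row $i+2$, $W[i+3,j]=\e$ for every $j$. Deleting the four rows $i-1,i,i+1,i+2$ yields $W'=W\DI{1,i-2}\vcat W\DI{i+3,h}$; the only new vertical adjacencies created involve cells of the empty rows $i-1$ and $i+3$, so $W'\in\Wordsdhwvarshort{2}{h-4}{4}$, and from $|W'|_\o=|W|_\o-(0+4+2+4)$ one computes $e(W')=e(W)+\tfrac{32}{3}-10=e(W)+\tfrac23$. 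Since $W$ is a MCE with $h\equiv_3 2$, $e(W)>\emax(h,4)=\tfrac23$, hence $e(W')>\tfrac43$; but $h-4<h$ and $h-4\equiv_3 1$, so minimality of $W$ together with~\eqref{eq:h=4} gives $e(W')\le e_{max}(h-4,4)=\emax(h-4,4)=\tfrac43$ (reading $e_{max}(1,4)=\tfrac43$ when $h=5$), a contradiction.

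\medskip
\noindent\emph{Item (ii).}
First, a $3\times 4$ word of degree at most $2$ has at most $10$ filled cells (with $11$ or $12$ filled cells one of $U[2,2],U[2,3]$ would have three $\o$-neighbours), and $|U|_\o=10$ forces the two empty cells into columns $2,3$ of the middle row, i.e.\ $U$ is the word of~(i), which~(i) has just excluded for an inner factor. So it remains to rule out $|U|_\o=9$. Performing the same surgery with the three rows of $U$ together with one flanking row of $W$ carrying $\le 1$ filled cell removes a block of $c\le 10$ filled cells over four rows, so $e(W')=e(W)+\tfrac{32}{3}-c\ge e(W)+\tfrac23>\tfrac23+\tfrac23=\tfrac43=\emax(h-4,4)$, once again contradicting minimality --- provided the splicing preserves $\deg_\o\le 2$. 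It therefore suffices to show that every $U$ with $|U|_\o=9$ admits a flanking row of $W$ with at most one filled cell for which the surgery is legal. The middle row of $U$ cannot be full ($|U|_\o\le 4+2+2=8$ otherwise, since a full middle row forces columns $2,3$ of both outer rows empty), so either an outer row of $U$ is full --- say $U[1]$, in which case the degree bound forces columns $2,3$ of $W[i-1]$ and of $U[2]$ empty, and then $|U[2]|_\o+|U[3]|_\o=5$ together with the degrees (in $W$) of $U[1,1]$ and $U[1,4]$ forces $|W[i-1]|_\o\le 1$ --- or all three rows of $U$ have exactly $3$ filled cells, where an analogous inspection of the $\o$-cells of $U$ in an outer row again produces a light flanking row on one side of $U$.

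\medskip
I expect the verification that the surgery is legal in each configuration to be the main obstacle: the new vertical adjacency between the two rows flanking the deleted block must be shielded by the emptiness forced by $U$, and for certain shapes of $U$ one must delete $W[i+3]$ rather than $W[i-1]$ (or conversely) for this to hold; this is a finite but delicate case check. The boundary situations, in particular $h=5$ (where $W\DI{1,i-2}$ is the empty word), are handled directly, the excess estimate being unaffected.
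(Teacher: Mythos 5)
Your item (i) is sound and is essentially the paper's argument in different packaging: both rest on the observation that the two rows flanking the ring are forced to be empty, and your deletion of rows $i-1,\dots,i+2$ is legitimate precisely because the new vertical adjacency created by the splice is against the empty row $i+3$, so the spliced word really does stay in $\Wordsdhwvarshort{2}{h-4}{4}$. The paper obtains the same numerical contradiction without any surgery, by writing $W = P \vcat U' \vcat S$ with $U'$ the $5\times 4$ block of excess $-10/3$ and invoking additivity of the excess together with the minimality bounds on $e(P)$ and $e(S)$; that phrasing never has to certify that a glued word still has degree at most $2$.

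For item (ii) the obstacle you flag yourself is a genuine gap, not a routine verification. After deleting the three rows of $U$ together with one light flanking row, the splice identifies row $i-2$ with row $i+3$ (or row $i-1$ with row $i+4$) as vertical neighbours, and neither of these rows is controlled by the structure of $U$: the emptiness forced by the degree-$2$ cells of $U$ lives only in the immediately adjacent rows $i-1$ and $i+3$, not one row further out. A filled cell of degree $2$ in row $i-2$ landing above a filled cell in row $i+3$ produces a degree-$3$ cell in $W'$, so $W'\in\Wordsdhwvarshort{2}{h-4}{4}$ is unproved and the appeal to minimality collapses; there is no reason to expect the ``finite but delicate case check'' to succeed, because the offending configuration is perfectly compatible with everything you have established about $U$. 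The paper's proof never forms $W'$ at all: it keeps the height-$4$ block $U'$ (the nine-cell $U$ plus its light flanking row) inside $W$, writes $W = P \vcat U' \vcat S$, and bounds $e(W)=e(P)+e(U')+e(S)$ using the negativity of $e(U')$ together with the minimality bounds $e(P)\le\emax(h',4)$ and $e(S)\le\emax(h-h'-4,4)$, split into three subcases according to $h'\bmod 3$ since $\emax(\cdot,4)$ depends on the residue. That decomposition-plus-additivity step is the idea your proposal is missing. (Secondarily, your identification of a light flanking row is only sketched for the $(3,3,3)$ row distribution, whereas the paper enumerates the four nine-cell shapes up to symmetry; but this is minor next to the splicing problem.)
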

		
		\begin{proof}
			(i) Arguing by contradiction, assume the opposite.
			Then there exists a factor $U' \in \Wordsdhwvarshort{2}{5}{4}$ of $W$, having $U$ as an inner factor.
			Since all cells in the top and bottom rows of $U$ have degree $2$, then $|U'[1]|_\o = |U'[5]|_\o = 0$, so that $e(U') = -10/3$.
			Write $W = P \vcat U' \vcat S$.
			Then $e(W) = e(P) + e(U') + e(S) \leq 2 - 10/3 + 2 = 2/3 = \emax(h, 4)$, contradicting $e(W) > \emax(h, 4)$.
			
			(ii) We know from (i) that $|U|_\o \leq 9$.
			Again by contradiction, assume that $|U|_\o = 9$.
			Then $U$ belongs to the following set, up to symmetry:
			\[
			\left\{
			\begin{wcells}{cccc}
				$\e$ & $\o$ & $\o$ & $\o$ \\
				$\o$ & $\e$ & $\e$ & $\o$ \\
				$\o$ & $\o$ & $\o$ & $\o$
			\end{wcells}, \enskip
			\begin{wcells}{cccc}
				$\o$ & $\e$ & $\o$ & $\o$ \\
				$\o$ & $\e$ & $\e$ & $\o$ \\
				$\o$ & $\o$ & $\o$ & $\o$
			\end{wcells}, \enskip
			\begin{wcells}{cccc}
				$\o$ & $\o$ & $\o$ & $\o$ \\
				$\e$ & $\e$ & $\e$ & $\o$ \\
				$\o$ & $\o$ & $\o$ & $\o$
			\end{wcells}, \enskip
			\begin{wcells}{cccc}
				$\o$ & $\o$ & $\o$ & $\e$ \\
				$\o$ & $\e$ & $\o$ & $\o$ \\
				$\o$ & $\o$ & $\e$ & $\o$
			\end{wcells}
			\right\}
			\]
			Let $U'$ be a factor of height $4$ containing $U$, such that either $|U'[1]|_\o \leq 1$ or $|U'[4]|_\o \leq 1$.
			Such a factor exists since $U$ has at least one of its top or bottom row with at least $3$ cells of degree $2$.
			Therefore, $e(U') \leq -5/3$.
			Write $W = P \vcat U' \vcat S$, where $P$ has height $h'$.
			There are three subcases to consider according to the value of $h' \bmod 3$.
			If $h' \equiv_3 0$, then $e(W) = e(P) + e(U') + e(S) \leq 1 - 5/3 + 4/3 = 2/3 = \emax(h, 4)$, contradicting $e(W) > \emax(h, 4)$.
			If $h' \equiv_3 1$, then $e(W) = e(P) + e(U') + e(S) \leq 4/3 - 5/3 + 1 = 2/3 = \emax(h, 4)$, contradicting $e(W) > \emax(h, 4)$.
			Finally, if $h' \equiv_3 2$, then $e(W) = e(P) + e(U') + e(S) \leq 2/3 - 5/3 + 2/3 = -1/3$, also contradicting $e(W) > \emax(h, 4) = 2/3$.
			\qed
		\end{proof}

		\begin{lemma}\label{lem:nmce}
			There does not exist any MCE of height $h \geq 4$.
		\end{lemma}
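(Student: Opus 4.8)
The plan is to argue by contradiction. Assume an MCE exists and let $W\in\Wordsdhwvarshort{2}{h}{4}$ be one of minimum height; since the widths $w\le 3$ are already settled by the base cases, necessarily $h\ge 4$. The minimality of $h$ is the engine: by it, together with Lemma~\ref{lem:4xh} and the base cases, every vertical factor $V$ of $W$ of height $h'<h$ obeys $e(V)\le e_{max}(h',4)=\emax(h',4)$, while by Proposition~\ref{prop:additivity} one has $e(W)=e(V_1)+e(V_2)$ whenever $W=V_1\vcat V_2$. I would then split into three cases according to $h\bmod 3$ and, in each, exhibit a vertical decomposition of $W$ into shorter factors on which Proposition~\ref{prop:additivity}, the bound just stated, and the arithmetic facts (i)--(v) established above force $e(W)\le\emax(h,4)$, contradicting $e(W)>\emax(h,4)$.

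The case $h\equiv_3 1$ is a one-liner: since $h-2\equiv_3 2$ we have $\emax(h-2,4)=2/3$, so writing $W=W\DI{1,2}\vcat W\DI{3,h}$ gives $e(W)\le\emax(2,4)+\emax(h-2,4)=2/3+2/3=4/3=\emax(h,4)$. For $h\equiv_3 0$ (hence $h\ge 6$, so $\emax(h,4)=1$) I would peel off four rows: here $e(W)\in\mathbb{Z}$ and $e(W)>1$ give $e(W)\ge 2$, and combining $\emax(h-4,4)=2/3$, $\emax(4,4)=4/3$, and $e(W\DI{1,4})=e(W)-e(W\DI{5,h})$ forces $e(W\DI{1,4})=4/3$, i.e.\ $W\DI{1,4}$ has exactly $12$ filled cells. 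The next step is a finite check: every $4\times4$ word of degree $\le 2$ with $12$ filled cells has, up to the reflection swapping top and bottom, at least three cells of degree $2$ in its bottom row, so the row of $W$ immediately below $W\DI{1,4}$ carries at most one filled cell; hence $e(W\DI{1,5})\le 13-40/3=-1/3$ and $e(W)=e(W\DI{1,5})+e(W\DI{6,h})\le -1/3+\emax(h-5,4)\le -1/3+4/3=1=\emax(h,4)$, a contradiction (the value $h=6$ causes no difficulty, the trailing factor then being a single row of excess at most $\emax(1,4)=4/3$).

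The case $h\equiv_3 2$ (hence $h\ge 5$) is where the real work lies, and it is the reason Lemma~\ref{lem:proper} is proved beforehand. Here $\emax(h,4)=2/3$ and $e(W)+1/3\in\mathbb{Z}$, so $e(W)>2/3$ forces $e(W)\ge 5/3$. Write $h=3k+2$. For $h\ge 11$ I would use the decomposition $W=W\DI{1,5}\vcat B_1\vcat\cdots\vcat B_{k-3}\vcat W\DI{h-5,h}$ into the two horizontal boundary strips and $k-3$ inner $3\times4$ blocks $B_i$; by Lemma~\ref{lem:proper} each $B_i$ has at most $8$ filled cells, hence $e(B_i)\le 0$, while $e(W\DI{1,5})\le\emax(5,4)=2/3$ and $e(W\DI{h-5,h})\le\emax(6,4)=1$. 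Additivity then gives $e(W)\le 2/3+0+1=5/3$, so $e(W)=5/3$ and all these bounds are tight: $W\DI{1,5}$ is a $5\times4$ word of excess $2/3$, $W\DI{h-5,h}$ a $6\times4$ word of excess $1$, and every inner $3\times4$ factor of $W$ has exactly $8$ filled cells. This rigidity confines each boundary strip to a short finite list --- for instance $e(W\DI{1,5})=2/3$ together with $|W\DI{3,5}|_\o\le 8$ forces $|W\DI{1,2}|_\o=6$, so $W\DI{1,2}$ is an extremal $2\times4$ word --- and a case analysis of these lists against the degree-$2$ constraint at the junction between a boundary strip and its neighbouring $8$-cell block, with Lemma~\ref{lem:proper} used to forbid a dense block from sitting in the interior, shows that no such $W$ can exist. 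The remaining heights $h\in\{5,8\}$, for which this decomposition is unavailable, are disposed of by a short direct argument.

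In all three cases a contradiction is reached, so no MCE exists; combined with Lemma~\ref{lem:4xh} this yields $e_{max}(h,4)=\emax(h,4)$ for every $h\ge 4$, i.e.\ Proposition~\ref{prop4xhsc}. I expect the case $h\equiv_3 2$ to be the main obstacle: the target excess $2/3$ is so small that additivity together with the inner-block bound of Lemma~\ref{lem:proper} only just pins $W$ down, and the finishing step --- a careful, case-heavy combinatorial analysis of both horizontal boundary strips and of how they interlock with the adjacent $8$-cell blocks --- is genuinely delicate.
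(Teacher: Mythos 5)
Your overall strategy is the paper's: take a minimal counterexample, split on $h \bmod 3$, and use additivity of the excess over vertical decompositions together with Lemma~\ref{lem:proper}. Your cases $h \equiv_3 1$ and $h \equiv_3 0$ coincide with the paper's argument (the $2+(h-2)$ split, then the $4+(h-4)$ split forcing a $12$-cell top block, the enumeration of such blocks, and the $5+(h-5)$ re-split). The problem is the case $h \equiv_3 2$, which you yourself identify as the crux and where your argument has a genuine gap. Your decomposition $W\DI{1,5} \vcat B_1 \vcat \cdots \vcat B_{k-3} \vcat W\DI{h-5,h}$ only yields $e(W) \leq 2/3 + 0 + 1 = 5/3$, so the extremal configuration $e(W) = 5/3$ survives and must be killed by the ``case analysis of these lists against the degree-$2$ constraint at the junction'' that you describe but do not carry out. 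That analysis is not a routine finish: the $5 \times 4$ words of excess $2/3$ and the $6 \times 4$ words of excess $1$ do not form an obviously short list, and nothing you have proved pins down how they meet the adjacent $8$-cell blocks. The residual heights $h \in \{5,8\}$ are likewise only promised, not handled. As written, the lemma is not proved in this case.

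The paper's proof shows the extra work is avoidable by choosing the boundary blocks to have height $4$ rather than $5$ and $6$: write $W = P \vcat U_1 \vcat \cdots \vcat U_k \vcat S$ with $P,S$ of height $4$. A $4 \times 4$ block has excess in $\{\dots,-2/3,1/3,4/3\}$, and the enumeration you already performed in the case $h \equiv_3 0$ (every degree-$\leq 2$ word of dimensions $4\times 4$ with $12$ cells has at least three degree-$2$ cells on each of its outer rows, so the adjacent row of $W$ carries at most one filled cell) rules out $e(P) = 4/3$ and $e(S) = 4/3$ exactly as before, via the minimality of $W$. Hence $e(P), e(S) \leq 1/3$, and with $e(U_i) \leq 0$ from Lemma~\ref{lem:proper} one gets $e(W) \leq 1/3 + 0 + \cdots + 0 + 1/3 = 2/3 = \emax(h,4)$ outright, with no extremal case left over. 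If you rework your third case along these lines you recover a complete proof; otherwise you must actually execute the junction analysis you have only sketched.
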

		
		\begin{proof}
			By contradiction, assume that there exists a word $W$ of height $h \geq 4$ that is a MCE.
			There are three cases to consider, according to the value of $h \bmod 3$.
			
			Case $h \equiv_3 1$.
			Let $W = P \vcat S$, where $P$ has height $h - 2$ and $S$ has height $2$.
			Then $e(S) \leq 2/3$.
			Therefore $e(P) = e(W) - e(S) > \emax(h, 4) - 2/3 = \emax(h - 2, 4) + 2/3 - 2/3 = \emax(h - 2, 4)$, contradicting the minimality of $W$.
			
			Case $h \equiv_3 0$.
			Let $W = P \vcat S$, where $P$ has height $h - 4$ and $S$ has height $4$.
			First, assume that $e(S) \leq 1/3$.
			Then $e(P) = e(W) - e(S) > \emax(h, 4) - 1/3 > \emax(h - 4, 4) + 1/3 - 1/3 = \emax(h - 4, 4)$, contradicting the minimality of $W$.
			Hence, $e(S) \geq 4/3$.
			By exhaustive enumeration, this implies that $S$ belongs to the following set, up to symmetry:
			\[
			S \in \left\{
			\begin{wcells}{cccc}
				$\o$ & $\o$ & $\o$ & $\o$ \\
				$\o$ & $\e$ & $\e$ & $\o$ \\
				$\o$ & $\e$ & $\e$ & $\o$ \\
				$\o$ & $\o$ & $\o$ & $\o$
			\end{wcells}, \enskip
			\begin{wcells}{cccc}
				$\e$ & $\o$ & $\o$ & $\o$ \\
				$\o$ & $\o$ & $\e$ & $\o$ \\
				$\o$ & $\e$ & $\e$ & $\o$ \\
				$\o$ & $\o$ & $\o$ & $\o$
			\end{wcells}, \enskip
			\begin{wcells}{cccc}
				$\o$ & $\o$ & $\o$ & $\e$ \\
				$\o$ & $\e$ & $\o$ & $\o$ \\
				$\o$ & $\o$ & $\e$ & $\o$ \\
				$\e$ & $\o$ & $\o$ & $\o$
			\end{wcells}
			\right\}.
			\]
			Write $W = P' \vcat S'$, where $P'$ has height $h - 5$ and $S'$ has height $5$.
			Then $|S'[1]|_\o \leq 1$, since $S$ has at least $3$ cells of degree $2$ on each of its side.
			Therefore, $e(S') \leq -1/3$.
			But $e(P') = e(W) - e(S') > \emax(h, 4) + 1/3 = \emax(h - 5, 4) - 1/3 + 1/3 = \emax(h - 5, 4)$, contradicting the minimality of $W$.
			
			Case $h \equiv_3 2$.
			Write $W = P \vcat U_1 \vcat \cdots \vcat U_k \vcat S$, where both $P$ and $S$ have height $4$ and $U_i$ has height $3$ for $i = 1,2,\ldots,k$.
			Using an argument similar to the previous subcase, we have $e(P), e(S) \leq 1/3$.
			Moreover, by Lemma~\ref{lem:proper}, $e(U_i) = 0$ for $i = 1,2,\ldots,k$.
			Hence, $e(W) = 1/3 + 0 + \ldots + 0 + 1/3 = 2/3$, contradicting $e(W) > \emax(h, 4)$.
			\qed
		\end{proof}
		
		Proposition~\ref{prop4xhsc} follows from Lemmas~\ref{lem:4xh} and \ref{lem:nmce}.

	\subsection{The general case $w > 6$}
	
	\begin{lemma}
		\label{lem:w>6lowbound}
		Let $W \in \MWordsdhwvarshort{2}{h}{w}$ such that $h,w > 6$. Then
		\[
		e(W) \geq \emax(h,w).
		\]
	\end{lemma}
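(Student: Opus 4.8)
The statement is the lower-bound half of Theorem~\ref{thm:d=2} in the generic regime. Since $W$ is maximal, $e(W) = e_{max}(h,w) = \maxdhw{2}{h}{w} - 2hw/3$, so it is enough to exhibit, for every pair $h,w > 6$, one word $Z \in \Wordsdhwvarshort{2}{h}{w}$ with $e(Z) \ge \emax(h,w)$; then $e(W) \ge e(Z) \ge \emax(h,w)$ because $W$ has maximal area. Reading the value of $\emax$ off Theorem~\ref{thm:d=2}, for $h,w \ge 4$ it depends only on the residues of $h$ and $w$ modulo $3$, it lies in $\{2/3,\,1,\,4/3,\,2\}$, and it is symmetric in $h,w$; so I would assume $h \ge w \ge 7$ and split into cases according to $(h \bmod 3, w \bmod 3)$.

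The plan is to build $Z$ by concatenation, reusing --- one dimension higher --- the mechanism of Lemma~\ref{lem:4xh}. First I would fix a small finite library of legal (degree $\le 2$) tiles together with ``standard'' boundary rows and columns: (i) \emph{bulk tiles} of dimensions $3 \times w'$ of excess $0$ whose repeated vertical concatenation stays in $\Wordsdhwvarshort{2}{\cdot}{w'}$, built by chaining the three height-$3$ column patterns (all filled / all empty / filled, empty, filled) so that no two ``all filled'' columns are adjacent --- exactly the role played by the alternating pairs $U \vcat V$ and $X \vcat Y$ in Lemma~\ref{lem:4xh}; (ii) the analogous width-$3$ bulk tiles for horizontal concatenation; and (iii) \emph{caps} of heights (resp.\ widths) $1,2,3,4$ carrying a prescribed nonnegative excess, generalizing the blocks $A,B,C,D,\tilde{D},E$ there. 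Using additivity of the excess (Proposition~\ref{prop:additivity}), a vertical stack $\mathrm{cap}_{\mathrm{top}} \vcat (\text{bulk})^{k} \vcat \mathrm{cap}_{\mathrm{bot}}$ has excess $e(\mathrm{cap}_{\mathrm{top}}) + e(\mathrm{cap}_{\mathrm{bot}})$, which I would choose equal to $\emax(h,w)$; composing such a stack with horizontal concatenations of width-$3$ bulk tiles (and symmetrically in the other direction) produces $Z$ for arbitrary $h,w > 6$. Equivalently, one may start from a maximal word on a base case $(h^*,w^{**})$ with $h^* \in \{7,8,9\}$ and $w^{**} \in \{4,5,6\}$ --- all covered by Lemma~\ref{lem:basecases}, and with excess already equal to $\emax(h^*,w^{**}) = \emax(h,w)$ --- and inflate it to dimensions $h \times w$ by repeatedly appending excess-$0$ bulk strips of width or height $3$, the excess being preserved at each step by Proposition~\ref{prop:additivity}.

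The main obstacle, and the only place real work is needed, is to show that every seam of the assembly is legal: along each shared row or column of two consecutive tiles no cell acquires a third filled neighbour. This is what forces the precise shape of the tile library and the case split on $(h \bmod 3, w \bmod 3)$ --- just as in the $w=4$ subsection the alternation $U\vcat V$ versus $X\vcat Y$ and the choice between the caps $D$ and $\tilde{D}$ are dictated by seam compatibility. Once the tiles and their standard boundaries are fixed, verifying $e(Z) = \emax(h,w)$ is a one-line count via Proposition~\ref{prop:additivity} and the tile areas, and verifying $Z \in \Wordsdhwvarshort{2}{h}{w}$ reduces to finitely many local checks inside the tiles and across their seams.
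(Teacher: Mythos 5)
Your reduction is exactly the paper's: since $W$ is $2$-full, it suffices to exhibit one word $Z\in\Wordsdhwvarshort{2}{h}{w}$ with $e(Z)=\emax(h,w)$, and the paper, like you, produces $Z$ by an explicit construction. But the paper's witness is a genuinely two-dimensional tiling --- a ``$Q$-shape'' of excess $1$ placed in the top-left corner, excess-$0$ copies of the $Q$-shape with one cell removed filling the $3k_1\times 3k_2$ core, and the leftover $i$ boundary rows and $j$ boundary columns copied from earlier rows and columns with a small adjustment in the bottom-right $i\times j$ corner --- whereas you propose a one-dimensional assembly of caps and excess-$0$ bulk strips. You yourself flag the tile library and the seam verification as ``the only place real work is needed''; that, however, is the entire content of the lemma, and the proposal never produces the tiles, so as written there is a gap rather than a proof.

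Moreover, two parts of the sketch would fail as stated. First, a single $3\times w'$ bulk tile built from the three column patterns (all filled, all empty, $(\o,\e,\o)$) cannot be repeated verbatim: an all-filled column becomes a full column whose interior cells already have vertical degree $2$, forcing both horizontal neighbours to be all-empty columns, and a short count then shows excess $0$ is unreachable for $w'\geq 4$. One needs the all-filled columns to migrate between consecutive layers, as in $U\vcat V$ and $X\vcat Y$ of Lemma~\ref{lem:4xh}, and designing such an alternating pair for every width and residue class is nontrivial --- it is precisely what the paper's uniform $Q$-shape tiling avoids having to do. Second, the variant ``inflate a maximal base case by appending excess-$0$ strips'' runs directly against the paper's own Corollaries~\ref{cor4xh_n2(c1)}, \ref{cor:5xhsature} and \ref{cor4xhn(c4)}: the boundary column of a $2$-full word of width $3$, $4$ or $5$ is saturated with degree-$2$ cells, so any adjacent column is forced to have excess at most $-4/3$ or $-5/3$, and an excess-$0$ strip cannot in general be glued onto a maximal core. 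The witness has to be designed globally, not grown from a maximal sub-word; the strategy is right, but the construction --- which is the lemma --- is missing.
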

	
	\begin{proof} We  prove that for $i,j\leq 2$, $h=3k_1+j,w=3k_2+i$, there exist words $W \in \Wordsdhwvarshort{2}{h}{w}$ such that $e(W)=\emax(h,w)$. In a $h\times w$ rectangle  $R$, start by inserting a $Q$-shape of excess $1$ in the top left corner (purple cells in Figure \ref{7x7maxb}). Then concatenate   this $Q$-shape with the $Q$-shape from which the top left corner cell has been removed $k_1$ times horizontally and $k_2$ times vertically. Repeat this insertion in order to fill $R$ except for the bottom $i$ rows and the right $j$ columns. Then fill the remaining bottom $i$ rows with rows $R[3+i'],1\leq i'\leq i$ and the right $j$ columns with columns $R^t[3+j'],1\leq j'\leq j$. Then fill the bottom right $i\times j$ rectangle with $0,1$ or $3$ cells (purple cells) in bottom right of $R$ (Figures \ref{7x7maxb} to \ref{9x9maxb}). This produces a $h\times w$ word $W \in \Wordsdhwvarshort{2}{h}{w}$ with $e(W)=\emax(W)$.
		\begin{figure*}[h!]
			~ 
			\begin{subfigure}[t]{0.17\textwidth}
				\centering
				\includegraphics[height=0.42in]{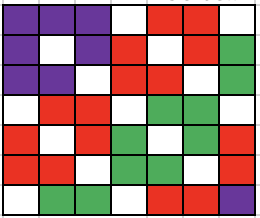}
				\caption{ $7\times 7$}
				\label{7x7maxb}
			\end{subfigure}%
			~ 
			\begin{subfigure}[t]{0.2\textwidth}
				\centering
				\includegraphics[height=0.48in]{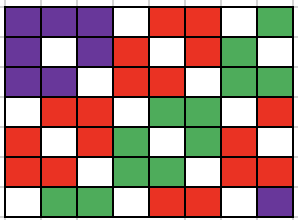}
				\caption{ $7\times 8$}
				\label{7x8maxb}
			\end{subfigure}%
			~ 
			\begin{subfigure}[t]{0.19\textwidth}
				\centering
				\includegraphics[height=.48in]{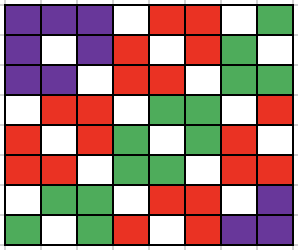}
				\caption{ $8\times 8$}
				\label{8x8maxb}
			\end{subfigure}%
			~ 
			\begin{subfigure}[t]{0.20\textwidth}
				\centering
				\includegraphics[height=0.54in]{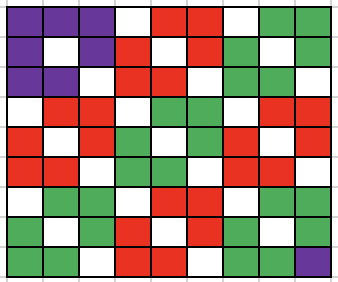}
				\caption{ $9\times 9$}
				\label{9x9maxb}
			\end{subfigure}%
			\begin{subfigure}[t]{0.21\textwidth}
				\centering
				\includegraphics[height=0.60in]{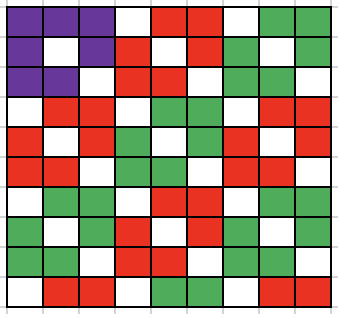}
				\caption{ $10\times 9$}
				\label{10x9max}
			\end{subfigure}%
			\caption{Some 2-full words}
			\label{hxwmax}
		\end{figure*}
	\end{proof}
	
	The following lemma completes the proof of Theorem \ref{thm:d=2}.
	It is separated in 6 cases.
	Due to space restriction, we only provide the complete proofs of the first 5 cases, and leave the sixth, and more technical, cases in the Appendix.
	Before that, we need additional results.
	
	\begin{corollary}\label{cor4xh_n2(c1)}
		Let $k\geq 2$ and $W\in \Wordsdhwvarshort{2}{(3k+1)}{5}$ such that $W^t\DI{1,4}\in\MWordsdhwvarshort{2}{3k+1}{4}$. Then $e(W^t[5])\leq -5/3$.
	\end{corollary}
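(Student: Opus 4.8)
The plan is to pin down $e(W^t[5])$ up to its residue modulo $1$ by a short excess computation, and then eliminate the single remaining borderline value by a structural argument about the last two columns.

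\emph{Step 1 (excess bookkeeping).} Regard $W$ as the word $V\in\Wordsdhwvarshort{2}{(3k+1)}{4}$ formed by its columns $1$--$4$, placed next to its fifth column $c=W^t[5]\in\Wordsdhwvarshort{2}{(3k+1)}{1}$. By Proposition~\ref{prop:additivity} and the transpose-invariance of the excess, $e(W)=e(V)+e(c)$. Since the columns $1$--$4$ of $W$ form a $2$-full $(3k+1)\times 4$ word and $3k+1\equiv_3 1$ with $3k+1\ge 4$, Proposition~\ref{prop4xhsc} together with \eqref{eq:h=4} gives $e(V)=e_{max}(3k+1,4)=\emax(3k+1,4)=4/3$. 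On the other hand $W\in\Wordsdhwvarshort{2}{(3k+1)}{5}$, and since $5\le 6$, $3k+1\ge 5$ and $5(3k+1)\not\equiv 0\pmod 3$, Lemma~\ref{lem:basecases} and \eqref{eq:emax} give $e(W)\le e_{max}(3k+1,5)=\emax(3k+1,5)=2/3$. Hence $e(c)=e(W)-4/3\le -2/3$. As $e(c)=|c|_\o-\tfrac{2}{3}(3k+1)=|c|_\o-2k-\tfrac{2}{3}$ with $|c|_\o\in\Z$, the excess $e(c)$ lies in $\{-\tfrac23,-\tfrac53,-\tfrac83,\dots\}$, so it only remains to show $e(c)\ne -2/3$, i.e. $|c|_\o\ne 2k$.

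\emph{Step 2 (the interface between the last two columns).} Assume for contradiction that $|c|_\o=2k$, and let $I=\{\,i\in\DI{1,3k+1}:W[i,5]=\o\,\}$, so $|I|=2k$. If $i\in I$ and $W[i,4]=\o$, then the cell $(i,4)$ already has the filled neighbour $(i,5)$, so $\deg_{\o,W}(i,4)\le 2$ forces it to have at most one filled neighbour among $(i-1,4),(i+1,4),(i,3)$, i.e. degree at most $1$ inside $V$. Therefore every row of $I$ is either empty in column $4$ or carries a column-$4$ cell of $V$-degree at most $1$; writing $d_2$ for the number of cells of the last column of $V$ that are filled and of degree exactly $2$ in $V$, the set $I$ is disjoint from those $d_2$ rows, so $2k=|I|\le(3k+1)-d_2$, that is $d_2\le k+1$. (This step uses only $|c|_\o=2k$ and $\deg_\o\le 2$, so if Step~3 is strengthened to a direct bound on $|c|_\o$ the congruence remark in Step~1 becomes unnecessary.)

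\emph{Step 3 (the crux).} It remains to prove that the last column of \emph{every} $2$-full $(3k+1)\times 4$ word contains at least $k+2$ filled cells of degree exactly $2$; this contradicts $d_2\le k+1$ and completes the proof. I would deduce this from the structural analysis underlying Proposition~\ref{prop4xhsc} and the $w=4$ base cases: a $2$-full $(3k+1)\times 4$ word decomposes vertically, up to the symmetries of the rectangle, into the blocks $B,D,\tilde{D},X,Y$ of Lemma~\ref{lem:4xh}, whose last columns are almost entirely filled; inspecting the degrees inside each block and at the $\Theta(k)$ block interfaces shows that at most one degree-$2$ cell of the last column is lost per interface, which yields $d_2\ge k+2$ (tight precisely when $3k+1\equiv_6 1$, matching the sharpness of the constant $-5/3$). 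The main obstacle is making this last count valid for \emph{arbitrary} maximal words rather than only for the explicit family of Lemma~\ref{lem:4xh}; it hinges on the essentially complete classification of $2$-full $(3k+1)\times 4$ words contained in that analysis.
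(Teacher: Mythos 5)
Your Steps 1 and 2 are correct (and, as you note yourself, Step 1 is ultimately superfluous: once one knows that column $4$ of the $2$-full word $V=W^t\DI{1,4}$ contains at least $k+2$ filled cells of $V$-degree $2$, each such cell directly forces an empty cell in column $5$, giving $|W^t[5]|_\o\le 2k-1$ and hence $e(W^t[5])\le -5/3$ without any parity discussion). The reduction you arrive at — that the boundary column of every word in $\MWordsdhwvarshort{2}{(3k+1)}{4}$ carries at least $k+2$ cells of degree exactly $2$ — is precisely the count the paper makes; so far the two arguments coincide in substance.

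The genuine gap is Step 3, which is the entire content of the corollary. You justify $d_2\ge k+2$ by asserting that an \emph{arbitrary} $2$-full $(3k+1)\times 4$ word decomposes into the explicit blocks $B,D,\tilde{D},X,Y$ of Lemma~\ref{lem:4xh}. That lemma only exhibits one family of maximal words; no such classification is proved anywhere (and the conjecture in the conclusion about flips of hook snakes suggests the set of maximal words is not literally this family), so the step "inspect the degrees inside each block and at the interfaces" has nothing to stand on. The paper instead derives the count for arbitrary maximal words from two structural facts proved from scratch: Proposition~\ref{prop:h=3k+1n(ri)} pins down the row distribution ($|W[3t+1]|_\o=2$, $|W[3t+2]|_\o=|W[3t+3]|_\o=3$ in the interior), which after cutting $V$ into $k-1$ blocks of height $3$ plus one of height $4$ yields at least one degree-$2$ cell of the boundary column per block, hence $k$ in total; and Proposition~\ref{prop:4xhbench} shows every such word contains a bench with its seat on each boundary column, whose seat contributes $3$ degree-$2$ cells to one block and thus raises the total to $k+2$. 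To close your argument you would need to either prove your decomposition claim (which is at least as hard as these two propositions) or reprove statements of their strength directly; as written, the crux is assumed rather than established.
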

	
	\begin{corollary}\label{cor:5xhsature}
		Let $k\geq1$ and $W\in\Wordsdhwvarshort{2}{(3k+2)}{6}$ such that $W^t\DI{1,5}\in\MWordsdhwvarshort{2}{(3k+2)}{5}$. Then $e(W^t[6]) \leq -4/3$.
	\end{corollary}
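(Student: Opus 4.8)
\textbf{Proof plan for Corollary~\ref{cor:5xhsature}.}
The strategy is to argue by contradiction: suppose $e(W^t[6]) > -4/3$. Since column $6$ has height $3k+2$, this means the number of filled cells in the last column is at least $2(3k+2)/3 - 4/3 + 1/3 = 2k$, i.e.\ at least $2k$ of the $3k+2$ cells of $W^t[6]$ are $\o$. The plan is to combine this lower bound with the degree-$2$ constraint and the assumption that $W^t\DI{1,5}$ is $2$-full in order to produce a word of $5$ columns and height $3k+2$ whose excess exceeds $\emax(3k+2,5)$, or alternatively a word of $6$ columns with too much area, contradicting either the maximality hypothesis or Lemma~\ref{lem:basecases}.

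First I would recall, from the base-case analysis (Lemma~\ref{lem:basecases}), the exact value of $e_{max}(3k+2,5)$ and of $e_{max}(3k+2,6)$; since $6 \equiv_3 0$ and $3k+2 \equiv_3 2$, and $5 \equiv_3 2$, these fall into the ``otherwise'' branch of \eqref{eq:emax}, giving $e_{max}(3k+2,5) = 2/3$ and $e_{max}(3k+2,6) = 2/3$ as well (both $h,w \ge 4$, $hw \nequiv_3 0$). Then, writing $W = W^t\DI{1,5} \hcat W^t[6]$ viewed column-wise, additivity of excess (Proposition~\ref{prop:additivity}) gives $e(W) = e(W^t\DI{1,5}) + e(W^t[6]) = e_{max}(3k+2,5) + e(W^t[6]) > 2/3 - 4/3 = -2/3$. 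So it suffices to show $e(W) \le -2/3$ for every $W \in \Wordsdhwvarshort{2}{(3k+2)}{6}$ whose first five columns are $2$-full — but this is exactly $e(W) \le \emax(3k+2,6) - 4/3$ being impossible only if column $6$ is forced to be sparse. The real content is therefore a local argument: the cells of column $6$ that are $\o$ must interact with the $\o$-cells of column $5$, and since $W^t\DI{1,5}$ is already $2$-full, column $5$ is ``busy'' — most of its cells have degree $2$ already within the first five columns, so adjoining a filled neighbor in column $6$ would push their degree to $3$, a contradiction.

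The key steps in order: (1) fix the values $e_{max}(3k+2,5) = e_{max}(3k+2,6) = 2/3$ from the base cases; (2) express $e(W^t[6])$ as $|W^t[6]|_\o - 2(3k+2)/3$ and translate the hypothesis $e(W^t[6]) > -4/3$ into $|W^t[6]|_\o \ge 2k$; (3) analyze column $5$: since $W^t\DI{1,5}$ is $2$-full of width $5$ and height $3k+2 \equiv_3 2$, use the structural description of $2$-full words of width $5$ (established in the $w=5$ base case, presumably a near-periodic pattern with period $3$) to bound the number of cells of column $5$ whose $\o$-degree, computed within $W^t\DI{1,5}$, is at most $1$ — call this set of ``available'' rows $A$, with $|A| \le 2$ or some small constant, or more precisely $|A| \le (3k+2)/3 + O(1)$; (4) observe that any $\o$-cell in column $6$ at row $i$ forces $W[i,5]$ either to be $\e$ or to lie in $A$ (else $\deg_\o$ of $W[i,5]$ exceeds $2$), and also that two vertically adjacent $\o$-cells in column $6$ force the boundary cells of column $5$ to be $\e$ there; (5) conclude from a counting argument that $|W^t[6]|_\o$ cannot reach $2k$, contradiction.

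The main obstacle is step (3)--(4): I need a sufficiently tight description of what a $2$-full word of dimensions $(3k+2)\times 5$ looks like along its last column, because the bound on $|W^t[6]|_\o$ must beat $2k$, which is about two-thirds of the column — so I cannot afford to lose more than an additive constant. This likely requires either having proved in the $w=5$ base case that all $2$-full words of that width are (up to symmetry and small perturbations at the ends) vertical powers of a fixed $3\times 5$ block in which column $5$ has a rigid pattern, or else a direct amortized/charging argument that each $\o$ in column $6$ can be charged to a distinct ``defect'' in column $5$ relative to the $2$-full maximum, of which there are at most $k + O(1)$. I would expect the paper to lean on Corollary~\ref{cor4xh_n2(c1)} or the analogous width-$5$ structure lemma as a black box here, reducing the present corollary to the same kind of case split on $i \bmod 3$ used in the proof of Lemma~\ref{lem:nmce}. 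Everything else — the excess bookkeeping via Proposition~\ref{prop:additivity} and the base-case values — is routine.
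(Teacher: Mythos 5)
Your overall strategy is the right one and matches the paper's: since $W^t\DI{1,5}$ is $2$-full, its fifth column is ``busy'', and every cell of column $5$ that already has $\o$-degree $2$ inside $W^t\DI{1,5}$ forces the adjacent cell of column $6$ to be empty; counting these forced empty cells bounds $|W^t[6]|_\o$ and hence $e(W^t[6])$. However, the decisive quantitative step is exactly the one you flag as the ``main obstacle'' and do not carry out: you need that column $5$ of a $2$-full $(3k+2)\times 5$ word contains at least $k+2$ cells of degree $2$, so that $|W^t[6]|_\o \leq (3k+2)-(k+2) = 2k$ and $e(W^t[6]) \leq 2k - 2(3k+2)/3 = -4/3$. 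The paper obtains this from Proposition~\ref{prop:5xhunicity}, which shows the $2$-full word of dimensions $(3k+2)\times 5$ is \emph{unique} up to symmetry and is built by stacking a fixed periodic $3\times 5$ block; one then verifies $k+2$ degree-$2$ cells in column $5$ for $k=2$ and checks that each additional stacked block contributes exactly one more. Without that rigidity result (or an equivalent amortized count), your plan does not close, and your proposed weaker bounds ($|A|\leq 2$ or $|A| \leq (3k+2)/3 + O(1)$) are not obviously in the right form: what is needed is precisely that the set of rows where column $5$ has degree $\leq 1$ or is empty has size at most $2k$, with no additive slack.

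There is also a small arithmetic slip in step (2): since $|W^t[6]|_\o$ is an integer and $e(W^t[6]) = |W^t[6]|_\o - 2k - 4/3$, the negation of the conclusion gives $|W^t[6]|_\o \geq 2k+1$, not $\geq 2k$; correspondingly you must show $|W^t[6]|_\o \leq 2k$ (which is attainable, with excess exactly $-4/3$), not that it ``cannot reach $2k$''. As written, your target inequality would exclude the extremal configuration that actually occurs.
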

	
	\begin{corollary}\label{cor4xhn(c4)}
		Let $W\in \Wordsdhwvarshort{2}{h}{4}$ such that $W^t\DI{1,3}\in \MWordsdhwvarshort{2}{h}{3}$. Then
		\begin{enumerate}[(i)]
			\item $|W^t[4]|_\e \geq\lfloor (h-2)/3\rfloor + 3$;
			\item If $h \equiv_3 1$, then $|W^t[4]|_\e \geq \lfloor (h-2)/3 \rfloor + 4$;
			\item If $h \equiv_3 2$, then $|W^t[4]|_\e \geq\lfloor (h-2)/3 \rfloor + 5$.
		\end{enumerate}
	\end{corollary}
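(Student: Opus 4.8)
My plan is to reduce the corollary to the structure of maximal words of width three; this is where essentially all the content lies, and once that structure is available the rest is bookkeeping. Set $V = W^t\DI{1,3}$, so $V \in \MWordsdhwvarshort{2}{h}{3}$ and $|V|_\o = 2h + 2$. First I would invoke the description of maximal $h \times 3$ words obtained in the base-case analysis (proof of Lemma~\ref{lem:basecases}, the case $w = 3$): every such $V$ is a vertical concatenation of ``frame blocks'' --- each block being the $k \times 3$ word with filled boundary and $\e$-cells along the interior of its middle column, with $k \geq 3$ --- separated by single all-$\e$ rows. I only need two consequences. \textbf{(a)} Every $\o$-cell of column $3$ of $V$ has $\o$-degree exactly $2$ in $V$: an interior cell of the right column of a block has both its vertical neighbours filled, while a cell at the top or bottom of that column has one vertical neighbour and the adjacent filled corner of the block. \textbf{(b)} The $\e$-cells of column $3$ of $V$ occur only in the rows separating consecutive blocks; since each block occupies at least three rows there are at most $\lfloor (h - 3)/4 \rfloor$ such rows, hence $|W^t[3]|_\e \leq \lfloor (h - 3)/4 \rfloor$.

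Next I would use \textbf{(a)} to bound column $4$. If $W[i,4] = \o$ and $W[i,3] = \o$, then $W[i,3]$ would have $\o$-degree at least $3$ in $W$, contradicting $W \in \Wordsdhwvarshort{2}{h}{4}$; so $W[i,4] = \o$ forces $W[i,3] = \e$. Combined with \textbf{(b)}, this gives
\[
|W^t[4]|_\o \;\leq\; |W^t[3]|_\e \;\leq\; \left\lfloor \tfrac{h - 3}{4} \right\rfloor ,
\qquad\text{hence}\qquad
|W^t[4]|_\e \;=\; h - |W^t[4]|_\o \;\geq\; h - \left\lfloor \tfrac{h - 3}{4} \right\rfloor .
\]
It then remains only to verify the elementary inequalities $h - \lfloor (h - 3)/4 \rfloor \geq \lfloor (h - 2)/3 \rfloor + 3$ in general, with the stronger right-hand side $\lfloor (h - 2)/3 \rfloor + 4$ when $h \equiv_3 1$ and $\lfloor (h - 2)/3 \rfloor + 5$ when $h \equiv_3 2$; since the left-hand side grows like $3h/4$ and each right-hand side like $h/3$, this reduces to a finite check on $h \bmod 12$, valid for the range of $h$ in which the corollary is later applied.

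The main obstacle is the structural input behind \textbf{(a)} and \textbf{(b)}: showing that a maximal $h \times 3$ word neither ``dents'' one of its frame blocks nor has a component that is a path rather than a cycle. After that the argument is routine --- and, in fact, the bound obtained here, of order $3h/4$, is much stronger than what the corollary claims, which is exactly why the weaker stated form suffices downstream. \qed
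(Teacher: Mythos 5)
Your proof rests on a structural claim about $2$-full $h\times 3$ words that is false, and everything downstream collapses with it. You assert that every $V \in \MWordsdhwvarshort{2}{h}{3}$ is a concatenation of boundary ``frames'' separated by all-$\e$ rows, whence \textbf{(a)} every $\o$-cell of $W^t[3]$ has degree exactly $2$ in $V$ and \textbf{(b)} the $\e$-cells of $W^t[3]$ lie only in separator rows, giving $|W^t[3]|_\e \le \lfloor (h-3)/4\rfloor$. But the paper's own Figure~\ref{3x6s} exhibits an \emph{atomic} $2$-full $6\times 3$ word that is a snake (a path, not a cycle): concretely, the word with rows $(\o,\o,\o)$, $(\o,\e,\o)$, $(\e,\o,\o)$, $(\o,\o,\e)$, $(\o,\e,\o)$, $(\o,\o,\o)$ has $14 = 2\cdot 6+2$ cells, all of degree at most $2$. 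Its third column has an $\e$-cell in row $4$, an interior row of an atomic word with no separator rows at all, so \textbf{(b)} fails ($|W^t[3]|_\e = 1 > \lfloor 3/4\rfloor = 0$); and the cell at $(5,3)$ has degree $1$, so \textbf{(a)} fails. The zigzag factor $(\e,\o,\o)$ over $(\o,\o,\e)$ responsible for this is exactly the block occurring in the regular expression in the proof of Proposition~\ref{prop3xhmax_bench1}, so such configurations are generic, not exceptional. You flagged ``no dents, no path components'' as the main obstacle and then asserted it; it is precisely where the argument breaks.

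Consequently your key inequality $|W^t[4]|_\o \le |W^t[3]|_\e \le \lfloor (h-3)/4\rfloor$ is false: appending the column $(\e,\e,\e,\o,\o,\e)$ to the word above yields a valid $W\in\Wordsdhwvarshort{2}{6}{4}$ with $W^t\DI{1,3}$ $2$-full and $|W^t[4]|_\o = 2$, where your bound would force $0$. Note that this example has $|W^t[4]|_\e = 4 = \lfloor(6-2)/3\rfloor+3$, so the stated corollary is tight and there is no room for a bound of order $3h/4$; the ``much stronger'' estimate you obtain is simply not true. The paper's proof is of a different nature: it cuts the rows into $\lfloor(h-2)/3\rfloor+2$ blocks and shows, via Proposition~\ref{prop3xhe(W[4])} (no $3$-pillar in the interior of $W^t[4]$) together with Lemma~\ref{lem3xhmax_r1r_2}, that each block meets column $4$ in at least one $\e$-cell, then extracts the extra $\e$-cells of (i)--(iii) from the bench guaranteed by Proposition~\ref{prop3xhmax_bench1}, Lemma~\ref{lem:hx3_n+1bench} and the row-insertion description of Corollary~\ref{corol:hequiv1,2}. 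To salvage your approach you would need a correct characterization of which cells of $W^t[3]$ are saturated; counting only those cannot be done block-by-block with frames.
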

	
	\begin{lemma}
		\label{lem:w>6upbound}
		Let $W \in \MWordsdhwvarshort{2}{h}{w}$ such that $h,w > 6$. Then
		\[
		e(W) \leq \emax(h,w).
		\]
	\end{lemma}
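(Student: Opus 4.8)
The plan is to prove the upper bound $e(W) \le \emax(h,w)$ by strong induction on $h+w$; combined with Lemma~\ref{lem:w>6lowbound} this gives $e_{max}(h,w) = \emax(h,w)$ for all $h,w>6$, completing Theorem~\ref{thm:d=2}. Since $e_{max}$ and $\emax$ are invariant under transposition we may assume $h \ge w$, hence $h \ge w \ge 7$; as $(7,7)$ is a base case (Lemma~\ref{lem:basecases}), we may assume $h \ge 8$. The basic reduction is a vertical split $W = P \vcat S$ with $S$ of height $s \le 6$: by Proposition~\ref{prop:additivity}, $e(W) = e(P) + e(S)$; the strip $S$ is handled by Lemma~\ref{lem:basecases}, and $P$ has dimensions $(h-s)\times w$ with $(h-s)+w < h+w$, so $e(P) \le e_{max}(h-s,w) = \emax(h-s,w)$ by the induction hypothesis together with Lemma~\ref{lem:w>6lowbound} (or by Lemma~\ref{lem:basecases} when $h-s \le 6$); symmetrically for a horizontal split. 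The argument then proceeds by cases according to the pair $(h \bmod 3, w \bmod 3)$, which has six classes up to transposition, namely $(0,0),(1,1),(2,2),(0,1),(0,2),(1,2)$, with target values $\emax = 2,\,4/3,\,4/3,\,1,\,1,\,2/3$; these are the six cases in the statement.

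For the three ``balanced'' classes a single split suffices. Taking $s=4$ when $(h,w)\equiv(0,0)$ or $(2,2)$ and $s=5$ when $(h,w)\equiv(1,1)$, one reads off from the base cases that $e_{max}(s,w)$ equals $1$, $2/3$, $2/3$ respectively, while $\emax(h-s,w)$ equals $1$, $2/3$, $2/3$; in each case these add up exactly to $\emax(h,w)$, so $e(W) = e(P)+e(S) \le \emax(h,w)$ at once. For the remaining classes $(0,1)$, $(0,2)$, $(1,2)$ this fails: every admissible split, horizontal or vertical, leaves a remainder whose worst-case excess is too large, the obstruction being that a thin strip tends to be full along the rows it shares with its neighbour. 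These three cases I would treat exactly in the style of Lemma~\ref{lem:nmce}, as a minimal counter-example argument. Fix a split $W = P \vcat S$ whose block $P$ lands in the target residue class, and distinguish on the size of $e(S)$: if $e(S) \le \beta := \emax(h,w) - \emax(h-s,w)$ we are done; since $e(S) \in \frac13\Z$, otherwise $e(S) \ge \beta + \frac13$, which forces $S$ to be near-maximal.

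The crux is then that a near-maximal strip must be saturated along the row it shares with $P$: that row consists almost entirely of cells of degree $2$, so the adjacent row of $P$ is almost empty. I would make this quantitative through the transposed forms of Corollaries~\ref{cor4xh_n2(c1)}, \ref{cor:5xhsature} and \ref{cor4xhn(c4)} — which bound how much excess can survive next to a maximal sub-block of width at most $6$ — applied after first showing that a near-maximal strip of width $w$ must contain such a maximal sub-block. Re-splitting $W = P' \vcat S'$ with $S'$ of height $s+1$, so that $S'$ absorbs the sparse row, one obtains $e(S') \le \beta'$ with $\emax(h-s-1,w) + \beta' = \emax(h,w)$, and the induction hypothesis applied to $P'$ closes the case; the minimal-counter-example framing (as in Section~\ref{ss:w=4}) keeps the residue and height bookkeeping under control, in particular for $h$ close to $8$, where $s$ must be chosen small.

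The main obstacle is precisely this structural analysis of near-maximal thin strips of width $w > 6$. In contrast to the base cases $w \le 6$ there is no bounded list of extremal strips to enumerate, so one must argue robustly — for strips whose excess is merely within $\frac13$ of the maximum rather than exactly maximal — that such a strip contains a maximal sub-block whose neighbouring column is controlled by the corollaries above, and then bound carefully the excess carried outside that sub-block. The tightest instance is the generic class $(1,2)$, where $\emax = 2/3$ leaves no slack at all, so every inequality in the chain has to be sharp; for this reason that case is deferred to the appendix.
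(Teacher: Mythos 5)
Your proposal follows essentially the same route as the paper: split off a thin strip, use additivity of the excess within a minimal-counterexample/induction on the dimensions, dispatch the three balanced residue classes by a single tight split into two pieces whose maximal excesses sum to $\emax(h,w)$, and attack the unbalanced classes by observing that the tight split forces the thin block to be exactly $2$-full and then invoking Corollaries~\ref{cor4xh_n2(c1)}, \ref{cor:5xhsature} and \ref{cor4xhn(c4)} to show the adjacent row is sparse before re-splitting. The only places where your sketch is lighter than the paper's argument are that the corollaries apply directly to the exactly $2$-full thin block (no separate ``near-maximal strip contains a maximal sub-block'' step is needed), the leftover-of-height-$3$ anomaly in the $(0,2)$ class (where $\emax(3,w)=2$ breaks the generic bound and Corollary~\ref{cor4xhn(c4)}(iii) must be called in), and the $(1,2)$ class, which the paper likewise defers to the appendix.
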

	
	The proof of these results are included in the Appendix.
	Moreover, by combining Lemma \ref{lem:basecases}, Lemma \ref{lem:w>6lowbound} and Lemma \ref{lem:w>6upbound}, we have a proof of Theorem \ref{thm:d=2} and, by extension, Theorem \ref{thm:main}.
	
	\begin{proof}
		We prove that there exists no $W \in \Wordsdhwvarshort{2}{h}{w}$ such that $e(W) > \emax(h,w)$. 
		There are 6 cases up to symmetry and we prove  each of them next.
		
		\subsubsection{Case 1.}
	
		Let $W \in \Wordsdhwvarshort{2}{3k_1}{3k_2}$ with $k_1,k_2\geq3$. By minimal counterexample, assume that $W$ is minimal such that $e(W)>\emax(h,w)$. We have that $\emax(3k_1,3k_2)=2$ so we assume $e(W) = 3$.

		Write $W=T_1 \vcat T_2$ where each factor $T_1,T_2$ has respective height $3k_{1,1}+1, 3k_{1,2}+2$ for some $k_{1,1}\geq 1,k_{1,2}\geq 1$.
		By minimality hypothesis, we have $e(T_i) \leq \emax(3k_{1,1}+1,3k_2) = \emax(3k_{1,2}+2,3k_2) = 1$, so that $e(W) = e(T_1) + e(T_2) \leq 1 + 1 = 2$, in contradiction with the hypothesis $e(W)=3$.
		
		\subsubsection{Case 2.}
		Let $W \in \Wordsdhwvarshort{2}{(3k_1+2)}{(3k_2+2)}$ with $k_1,k_2\geq2$. By minimal counterexample, assume that $W$ is minimal such that $e(W)>\emax(h,w)$. We have that $\emax(3k_1+2,3k_2+2)=4/3$ so we assume $e(W) = 7/3$.

		Write $W=T_1\vcat T_2$, where each factor $T_1,T_2$ has respective height $3k_{1,1}+1, 3k_{1,2}+1$ for some $k_{1,1},k_{1,2}\geq1$.
		We have by minimality hypothesis $e(T_i) \leq \emax(3k_{1,1}+1,3k_2+2) = \emax(3k_{1,2}+1,3k_2+2) = 2/3$, which implies $e(W) = e(T_1)+e(T_2) \leq 2/3 + 2/3 = 4/3$, in contradiction with the hypothesis $e(W)=7/3$.
		
		\subsubsection{Case 3.}
		Let $W \in \Wordsdhwvarshort{2}{(3k_1+1)}{(3k_2+1)}$ with $k_1,k_2\geq2$. By minimal counterexample, assume that $W$ is minimal such that $e(W)>\emax(h,w)$. We have that $\emax(3k_1+1,3k_2+1)=4/3$.
		Therefore, we may assume $e(W) = 7/3$.
		
		The case  $k_1=k_2=2$ is proved as one of  the base cases in Lemma \ref{lem:basecases} ($\Wordsdhwvarshort{2}{7}{7}$).
		
		Assume $k_1\geq3$ and write $W=T_1\vcat T_2$, where the factors $T_1,T_2$ have respective heights $3k_{1,1}+2, 3k_{1,2}+2$ for some $k_{1,1},k_{1,2}\geq1$.
		By minimality hypothesis $e(T_i) \leq \emax(3k_{1,1}+2,3k_2+1) = \emax(3k_{1,2}+2,3k_2+1) = 2/3$, so that $e(W) =e(T_1)+e(T_2) \leq 2/3+2/3 = 4/3$, in contradiction with the hypothesis $e(W)=7/3$.
		
		\subsubsection{Case 4.}
		Let $W \in \Wordsdhwvarshort{2}{(3k_1+1)}{3k_2}$ with $k_1\geq2,k_2\geq3$.
		By minimal counterexample, assume that $W$ is minimal such that $e(W)>\emax(h,w)$. We have that $\emax(3k_1+1,3k_2)=1$ so we assume $e(W) = 2$.

		Now write $W = W^t\DI{1,4}\vcat W^t\DI{5,3k_2}$.
		By minimality hypothesis, we have $e(W^t\DI{1,4})\leq 4/3$ and $e(W^t\DI{5,3k_2})\leq 2/3$.
		Therefore, $e(W^t\DI{1,4})=4/3$ and $e(W^t\DI{5,3k_2})= 2/3$.
		But, by Corollary \ref{cor4xh_n2(c1)}, $e(W^t\DI{1,4})=4/3$ implies $e(W^t[5])\leq -5/3$.
		Thus $e(W^t\DI{1,5})\leq-1/3$, so that $e(W^t\DI{6,3k_2})\geq7/3$, in contradiction with the minimality hypothesis.

		\subsubsection{Case 5.}
		Let $W \in \Wordsdhwvarshort{2}{3k_1}{(3k_2+2)}$ with $k_1\geq3,k_2\geq2$.
		By minimal counterexample, assume that $W$ is minimal such that $e(W)>\emax(h,w)$.
		We have that $\emax(3k_1,3k_2+2)=1$ so we assume $e(W) = 2$.
		Now let $W=W\DI{1,5}\vcat W\DI{6,3k_1}$.
		By minimality hypothesis  we have 
		$e(W\DI{1,5}) \leq  4/3$ and $e(W\DI{6,3k_1}) \leq \emax(3(k_1-2)+1,3k_2+2) = 2/3$.
		This implies $e(W) =e(W\DI{1,5})+e(W\DI{6,3k_1}) \leq 4/3+2/3 = 2$.
		Hence, both $W\DI{1,5}$ and $W\DI{6,3k_1}$ are 2-full.
		Therefore, since $W\DI{1,5}$ is 2-full, by Corollary \ref{cor:5xhsature}, we deduce that $e(W[6]) \leq -4/3$.
		Moreover, since $e(W\DI{6,3k_1}) = 2/3$, we must have $e(W\DI{7,3k_1}) \geq 2$.
		If $k_1>3$, then $\emax(3(k_1-2),3k_2+2) = 1$, which means $e(W\DI{7,3k_1}) \geq 2 > 1 = \emax(3(k_1-2),3k_2+2)$, contradicting the minimality hypothesis.
		If $k_1 = 3$, then $W\DI{7,3k_1} = W\DI{7,9}$ and must be 2-full with $e(W\DI{7,9}) = 2$ which then forces $e(W[6]) = -4/3$.
		However, Corollary \ref{cor4xhn(c4)} $iii)$ stipulates that, if $W\DI{7,9}$ is 2-full, then $W[6]$ contains at least $\lfloor \frac{(3k_2+2)-2}{3}\rfloor + 5 = k_2 + 5$ empty cells.
		This means $e(W[6])\leq -13/3$, which is a contradiction. 
		
	\subsubsection{Case 6.}
	Let $W \in \Wordsdhwvarshort{2}{3k_1+2}{3k_2+1}$ with $k_1,k_2\geq2$. By minimal counterexample, assume that $W$ is minimal such that $e(W)>\emax(h,w)$. We have that $\emax(3k_1+2,3k_2+1)=2/3$ so we assume $e(W) = 5/3$. Partition $W$ as $W\DI{1,4}\vcat W\DI{5,3k_1+2}$.
	We have by minimality of $W$ that
	$$e(W\DI{1,4}),e(W\DI{5,3k_1+2}) \leq 4/3.$$
	There are two cases to consider : either $e(W\DI{1,4}) = 4/3$ and $e(W\DI{5,3k_1+2}) = 1/3$ or $e(W\DI{1,4}) = 1/3$ and $e(W\DI{5,3k_1+2}) = 4/3$.
	The proof that these two cases lead to contradictions is given in the Appendix.

\end{proof}

\paragraph{\bf 4. Conclusion}\label{conclusion}

The results presented in this paper are opening  to several questions. One of these is  in the following. 
\begin{conjecture}
For $h>w\geq 5$ and $h\equiv_3 w$, 2-full words  $W\in \MWordsdhwvarshort{2}{h}{w}$ are unique up to symmetries of the plane plus a local symmetry (a flip) on hook snakes.
\end{conjecture}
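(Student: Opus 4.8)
The plan is to establish, by induction on the area $hw$, a structural statement slightly stronger than the conjecture: for every $(h,w)$ with $\min(h,w)\ge 5$, the $2$-full words of dimensions $h\times w$ form a single orbit under the symmetries of the rectangle together with the hook-snake flips when $h\equiv_3 w$, and a short, explicitly described family of orbits when $h\not\equiv_3 w$ (the regimes $\emax(h,w)\in\{1,2/3\}$). Carrying those other regimes along is unavoidable, because the tight slicings used in Cases~1--6 of the proof of Theorem~\ref{thm:d=2} cut a word into bands whose residues modulo $3$ differ from that of the original; consequently one also has to upgrade the base analyses for $w\in\{4,5,6\}$ (Subsection~\ref{ss:w=4} and Lemma~\ref{lem:basecases}) from ``$\emax$ is attained'' to ``the attaining words are explicitly listed'', which for $w\le 6$ is a finite column-by-column (transfer-matrix) enumeration of exactly the type already carried out there.

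The engine of the induction is a rigidity/gluing lemma that makes precise the tightness already exploited implicitly in Cases~1--6. Whenever $W=W\DI{1,h_1}\vcat W\DI{h_1+1,h}$ realizes $e(W)=\emax(h,w)$ and $\emax(h,w)=\emax(h_1,w)+\emax(h-h_1,w)$, equality propagates, so both bands are $2$-full; in addition the two interface rows $W[h_1]$ and $W[h_1+1]$ obey the local constraint that a degree-$2$ cell in one forbids a filled cell directly opposite it in the other. I would catalogue which pairs (bottom row of a $2$-full $h_1\times w$ word, top row of a $2$-full $(h-h_1)\times w$ word) are compatible in this sense, and then, using the explicit $Q$-shape constructions of Lemma~\ref{lem:w>6lowbound}, show that against the periodic interior produced by that tiling the only compatible gluings differ by a symmetry of the rectangle or by flipping a hook snake located at an end of the cut.

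With these two ingredients the induction runs smoothly: given a $2$-full $W$ of dimensions $h\times w$ with $\min(h,w)>6$, apply the slicing of the appropriate one of Cases~1--6 to write $W=T_1\vcat T_2$ with $\emax(h,w)=\emax(h_1,w)+\emax(h_2,w)$; by the gluing lemma each $T_i$ is $2$-full of strictly smaller area, hence classified by the induction hypothesis (after transposing if its smaller dimension is $\le 6$, which is where the upgraded base cases enter), and the interface is then forced up to the permitted operations. Running the symmetric vertical slicing controls the transverse direction, and specializing the resulting list of orbits to $h\equiv_3 w$ collapses it to the single orbit claimed in the conjecture, the hook-snake flip being the only residual freedom.

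The main obstacle will be the geometric part of the gluing lemma near the sides and corners of the rectangle: one must prove that the periodic $Q$-shape interior is genuinely rigid, that the hook-snake flip really is a symmetry of $2$-fullness, and --- the hardest point --- that \emph{no other} local rearrangement of a canonical word preserves maximality. This reduces to an exhaustive classification of how a band of two or three rows of prescribed excess can abut the periodic interior while simultaneously touching two sides of the rectangle, with the $2\times2$ and $3\times3$ corner windows being the delicate configurations; it is essentially the same flavour of combinatorial case analysis as the proof of Proposition~\ref{prop4xhsc}, but now carried out in a fixed-width strip against a periodic two-dimensional background rather than along a single column, and repeated once per congruence regime.
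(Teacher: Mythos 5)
First, note that the statement you are proving is stated in the paper only as a \emph{conjecture}; the authors offer no proof, so there is no argument of theirs to compare yours against. What you have written is a research plan rather than a proof, and it leaves the genuinely hard content unestablished. The central ``rigidity/gluing lemma'' is described but never proved, and its conclusion --- that the only gluings compatible with the periodic interior differ by a rectangle symmetry or a hook-snake flip --- is essentially the conjecture itself restated at an interface; asserting that a catalogue ``would show'' this is where the actual mathematics would have to happen. Moreover, your claim that ``equality propagates, so both bands are $2$-full'' presupposes the exact additivity $\emax(h,w)=\emax(h_1,w)+\emax(h_2,w)$ for the chosen cut, which fails for several of the slicings used in Cases~1--6 of the proof of Theorem~\ref{thm:d=2}: there the bounds on the two bands only sum to the required total in some subcases (Case~6, for instance, splits into $e(W\DI{1,4})\in\{1/3,4/3\}$ precisely because neither band is forced to be extremal), so a maximal word need not decompose into two maximal bands, and the induction does not launch as stated.

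The base of the induction is also not in place. The paper classifies $2$-full words only in one narrow situation (Proposition~\ref{prop:5xhunicity}, width $5$ and height $\equiv_3 2$); for the other widths and residues it merely exhibits maximal words, and your plan to ``upgrade'' Subsection~\ref{ss:w=4} and Lemma~\ref{lem:basecases} to full classifications is a substantial unperformed computation, not a routine one --- the number of maximal words in the regimes $\emax(h,w)\in\{1,2/3\}$ is not obviously bounded by a short list, and the whole induction collapses if it is not. Finally, the hook-snake flip is never defined precisely, and the two facts you would need about it --- that it preserves $2$-fullness, and that no \emph{other} local rearrangement does --- are exactly the delicate corner/side analyses you defer. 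In short, the skeleton is a reasonable way one might attack the conjecture, consistent in spirit with the excess-additivity machinery of Section~\ref{sec:d=2} and the constructions of Lemma~\ref{lem:w>6lowbound}, but every load-bearing step is still open, so this cannot be accepted as a proof.
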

Thanks to Theorem \ref{thm:main}, the  length of maximal snakes in a $h\times w$ rectangle can be given upper and lower bounds but the question of finding their exact length in terms of $h$ and $w$ is open.
Theorem  \ref{thm:main} raises analogous  questions   for polycubes : is there a maximal ratio analogous to $2/3$ for sets of cubic cells of bounded degree  in a given rectangular parallelepiped. This question  is open to our knowledge. 
Theorem \ref{thm:main}  also lead to the question of enumerating $d$-full words with respect to width $w$.  
More interesting questions rise when languages with more than two letters  are used as  labels of the unit cells in words $W$.
\bibliographystyle{plain} 
\bibliography{MaximalWordsOfBoundedDegree}

\newpage
\section{Appendix}\label{appendix}
In this appendix, we provide the proofs omitted in the main paper.
Most of them are quite long and technical.
Some even introduce notions only relevant in the scope of themselves.
The appendix is separated into 4 sections.
In the first section, we give complete proofs of Theorem \ref{thm:main} for $d \in \{1,4\}$.
In the second section, we prove Lemma \ref{lem:basecases} for the remaining base cases.
In the third section, we extract useful properties of 2-full words in $\Wordsdhwvarshort{2}{h}{w}$ for $w=3,4,5$ to prove Corollaries \ref{cor4xh_n2(c1)}, \ref{cor:5xhsature}, \ref{cor4xhn(c4)}. In the final section, we provide the rest of the complete proof of \textbf{case 6} of Lemma \ref{lem:w>6upbound}.

\subsection{Proof of Theorem \ref{thm:main} for $d\in \{1,4\}$.}

The case $d = 4$ is straightforward.

\begin{lemma}\label{lem:d=4}
For any $(h,w) \in \mathbb{Z}_{>0}^2$, $max_4(h, w) = hw$.
\end{lemma}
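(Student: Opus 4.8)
The plan is to exhibit the obvious extremal word and observe that it cannot be beaten. First I would let $W \in \Wordshwshort$ be the word defined by $W[i,j] = \o$ for every $(i,j) \in \DIhw$. For each such $(i,j)$ we have $|N_W(i,j)| \leq 4$ by definition of the neighborhood (a cell of the $h \times w$ grid has at most four grid-neighbors), and hence $\deg_{\o,W}(i,j) = \sum_{(i',j') \in N_W(i,j)} \indic(W[i',j'] = \o) \leq |N_W(i,j)| \leq 4$. Therefore $\deg_{\o}(W)[i,j] \leq 4$ for all $(i,j)$, which is exactly the condition $W \in \Wordsdhwvarshort{4}{h}{w}$.

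Next I would note that $|W|_{\o} = hw$, so $\maxdhw{4}{h}{w} \geq hw$. For the matching upper bound, every word $W' \in \Wordsdhwvarshort{4}{h}{w}$ has $|W'|_{\o} \leq hw$ simply because $W'$ has exactly $hw$ entries in total; hence $\maxdhw{4}{h}{w} \leq hw$. Combining the two inequalities gives $\maxdhw{4}{h}{w} = hw$, as claimed.

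There is no real obstacle here: the only thing worth spelling out is the elementary bound $\deg_{\o,W}(i,j) \leq |N_W(i,j)| \leq 4$, which makes the all-$\o$ word admissible, and the trivial observation that no word of dimension $h \times w$ can contain more than $hw$ occurrences of $\o$.
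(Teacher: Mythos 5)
Your proof is correct and follows exactly the paper's argument: take the all-$\o$ word, note that every cell has $\o$-degree at most $4$ since a grid cell has at most four neighbors, and observe that $hw$ is a trivial upper bound on $|W|_{\o}$. The only difference is that you spell out the bound $\deg_{\o,W}(i,j) \leq |N_W(i,j)| \leq 4$, which the paper leaves implicit with the word ``clearly.''
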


\begin{proof}
On one hand, let $W \in \Wordshwshort$ be such that $W[i,j] = 1$ for all $(i,j) \in \DIhw$.
Clearly, $|W|_\o = hw$ and $W \in \Wordsfhwshort$, so that $\maxdhw{4}{h}{w} \geq hw$.
On the other hand, $\maxdhw{4}{h}{w} \leq hw$ since there are $hw$ letters in any $2$-dimensional word of dimensions $h \times w$.
Hence, $\maxdhw{4}{h}{w} = hw$.
\end{proof}

The case $d = 1$ is slightly more complicated, but can be proved with elementary combinatorial arguments.

\begin{lemma}\label{lem:d=1}
For any $(h,w) \in \DIhw$, where $h \geq w$,
\[\maxdhw{1}{h}{w} = \begin{cases}
	hw/2, & \mbox{if $(h, w) \equiv_2 (0,0)$;} \\
	(h - 1)w/2 + \lceil 2w / 3 \rceil,
	& \mbox{if $(h, w) \equiv_2 (1,0)$;} \\
	h(w - 1)/2 + \lceil 2h / 3 \rceil,
	& \mbox{otherwise.}
\end{cases}\]
\end{lemma}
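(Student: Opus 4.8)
The plan is to translate the degree constraint into the language of matchings and then prove matching lower and upper bounds. First, $W \in \Wordsohwshort$ if and only if $G[W]$ has maximum degree at most $1$, i.e.\ the set of $\o$-cells of $W$ decomposes into isolated cells and horizontal or vertical dominoes, no two of which are adjacent. Two elementary facts drive everything: \emph{(a)} any $2\times 2$ factor of a word in $\Wordsohwshort$ contains at most $2$ letters $\o$, since three $\o$'s in a $2\times 2$ block would give one of them two $\o$-neighbours inside the block; \emph{(b)} any $1\times 3$ or $3\times 1$ factor contains at most $2$ letters $\o$, for the same reason. Cutting a single row of length $n$ into $\lfloor n/3\rfloor$ consecutive length-$3$ windows plus $n\bmod 3$ leftover cells, \emph{(b)} gives that such a row contains at most $2\lfloor n/3\rfloor + (n\bmod 3) = \lceil 2n/3\rceil$ letters $\o$, and likewise for columns.

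For the lower bound, when $h\equiv_2 w\equiv_2 0$ there is nothing to do: a $0$-full word is in particular in $\Wordsohwshort$, so $\maxdhw{1}{h}{w}\ge\maxdhw{0}{h}{w}=hw/2$ by Lemma~\ref{lem:d=0}. For the other two branches I would exhibit explicit periodic words of the claimed area, in the spirit of Figure~\ref{fig:d=0}: fill the bulk of the rectangle with a density-$\tfrac12$ brick pattern of pairwise non-adjacent horizontal dominoes (fundamental $2\times 3$ cell $\substack{\o\o\e\\\e\e\o}$, in which every $\o$ has degree exactly $1$), and devote the single leftover row (when $h$ is odd and $w$ even) or leftover column (when $w$ is odd) to the period-$3$ pattern $(\o\o\e)^{*}$, which contributes the remaining $\lceil 2w/3\rceil$ (resp.\ $\lceil 2h/3\rceil$) cells; the junction line and the two ends of the strip require a bounded amount of bookkeeping to keep the degree at most $1$.

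The upper bound is immediate in three of the four configurations by partitioning the rectangle and applying \emph{(a)} and the row/column bound. If $h\equiv_2 w\equiv_2 0$, tile $W$ with $hw/4$ blocks of size $2\times 2$, so $|W|_\o\le hw/2$. If $h\equiv_2 1$ and $w\equiv_2 0$, tile rows $1,\dots,h-1$ with $2\times 2$ blocks and treat row $h$ as a $1\times w$ factor, giving $|W|_\o\le (h-1)w/2+\lceil 2w/3\rceil$. If $w\equiv_2 1$ and $h\equiv_2 0$, symmetrically tile columns $1,\dots,w-1$ with $2\times 2$ blocks and treat column $w$ as an $h\times 1$ factor, giving $|W|_\o\le h(w-1)/2+\lceil 2h/3\rceil$. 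In each case the bound equals the claimed value and, combined with the constructions above, settles that case.

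The remaining configuration, $h\equiv_2 w\equiv_2 1$ with $h\ge w\ge 3$, is where the real work lies, and I expect it to be the main obstacle: neither dimension is even, so no $2\times 2$ tiling is available, and a single leftover strip is not enough — already an $h\times 2$ strip can carry $h+1>h$ letters $\o$. I would argue by induction on $w$ (the base case $w=1$ being trivial, and $w=3$ checked directly, since for $h=w=3$ the value $5$ does not arise from any tiling by $2\times 2$, $1\times 3$ and one $1\times 1$ tile). Writing $|W|_\o = |W'|_\o + c_w$, where $W'$ is the restriction of $W$ to its first $w-1$ columns (to which the already proved case $h\equiv_2 1,\ w-1\equiv_2 0$ applies) and $c_w$ is the number of $\o$-cells in column $w$, the point is that the naive estimate $|W'|_\o + c_w\le (h-1)(w-1)/2+\lceil 2(w-1)/3\rceil+\lceil 2h/3\rceil$ overshoots the target by $\lceil 2(w-1)/3\rceil-(w-1)/2>0$. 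So one must exploit the adjacency between columns $w-1$ and $w$: every $\o$-cell of column $w$ lying in a horizontal domino forces its neighbour in column $w-1$ to be empty, and every horizontal domino straddling columns $w-1$ and $w$ has both endpoints of degree $1$, which in turn constrains column $w-2$. Quantifying this trade-off between $c_w$ and the number of $\o$-cells column $w-1$ can carry, and feeding it into the case-$h\equiv_2 1$ estimate for $W'$, should collapse the total to exactly $h(w-1)/2+\lceil 2h/3\rceil$, finishing the proof.
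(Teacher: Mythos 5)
Your facts \emph{(a)} and \emph{(b)} and the resulting tilings settle the three cases where at least one of $h,w$ is even exactly as the paper does ($2\times 2$ blocks on the even part, the leftover row or column cut into length-$3$ windows), and your lower-bound constructions are the paper's periodic word in all but name (the paper uses a single $2\times 6$-periodic pattern for every parity, which removes the ``bounded bookkeeping'' you defer at the junction line). The genuine gap is the fourth case, $h\equiv_2 w\equiv_2 1$, which you correctly identify as the crux but do not prove: peeling off one column overshoots the target by $\lceil 2(w-1)/3\rceil-(w-1)/2$, a deficit that grows linearly in $w$, and the promised trade-off between $c_w$ and columns $w-1$, $w-2$ is never quantified --- ``should collapse'' is exactly where the proof has to happen, and it is far from clear that an analysis confined to the last three columns can recover a loss of order $w/6$.

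The paper closes this case differently and much more cheaply. Take a counterexample $W$ of minimal dimensions and write $W=L\hcat R$ with $R$ of width $2$. Tiling $R$ by $(h-1)/2$ blocks of size $2\times 2$ plus one $1\times 2$ block in the last row gives $|R|_\o\le h+1$; if $|R|_\o\le h$, then $|L|_\o>\maxdhw{1}{h}{w}-h=\maxdhw{1}{h}{w-2}$ (both widths are odd, so the same branch of the formula applies and the difference is exactly $h$), contradicting minimality. Hence $|R|_\o=h+1$ and every tile of $R$ is filled to capacity, in particular both cells of row $h$ in the last two columns are $\o$. Symmetrically, writing $W=U\vcat D$ with $D$ of height $2$ forces both cells of column $w$ in the last two rows to be $\o$. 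The bottom-right $2\times 2$ square then contains three $\o$-cells, so its corner cell has degree $2$, a contradiction. If you want to keep an inductive framing, replace your single-column step by this two-column (and two-row) step: the identity $\maxdhw{1}{h}{w}-\maxdhw{1}{h}{w-2}=h$ makes the exchange exact instead of leaving an overshoot to be argued away.
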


\begin{figure}
\centering
\input{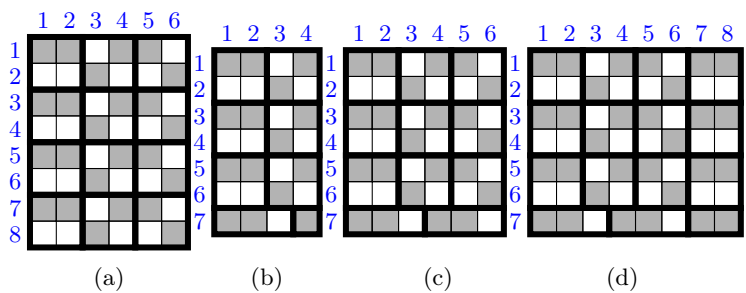}
\caption{The word $W$ defined in the proof of Lemma~\ref{lem:d=1}, superimposed with some tilings using as many as possible $2 \times 2$ tiles, some $1 \times 3$ tiles and at most one $1 \times (h \bmod 3)$ tile. (a) The case $8 \times 6$ (b) The case $7 \times 4$. (c) The case $7 \times 6$. (d) The case $7 \times 8$.}\label{fig:d=1}
\end{figure}

\begin{proof}
There are four cases to consider: (i) $(h, w) \equiv_2 (0,0)$, (ii) $(h, w) \equiv_2 (1,0)$, (iii) $(h, w) \equiv_2 (0,1)$ and (iv) $(h, w) \equiv_2 (1,1)$.

(i) Let $W \in \Wordshwshort$ be defined by
\begin{equation}\label{eq:w-def}
	W = \left(\substack{\o\o\e\o\o\e \\ \e\e\o\e\e\o}\right)^{h/2 \times w/6}.
\end{equation}
The word $W$ is illustrated in Figure~\ref{fig:d=1}(a) when $(h,w) = (8,6)$.
On one hand $|W|_{\o} = hw/2$ and $\deg(W[i,j]) \leq 1$ for each $(i,j) \in \DIhw$.
Therefore,
$\maxdhw{1}{h}{w} \geq hw/2$.
On the other hand, let $W \in \Wordsohwshort$.
Then $W$ can be partitioned into $hw/4$ tiles of dimensions $2 \times 2$ as in Figure~\ref{fig:d=1}(a).
Since each tile may contain at most two  $\o$-cells, we have $|W|_{\o} \leq 2 \cdot hw / 4 = hw/2$, which implies $\maxdhw{1}{h}{w} \leq hw/2$.
Hence, $\maxdhw{1}{h}{w} = hw/2$.

(ii) Let $W \in \Wordshwshort$ be defined by Equation~\eqref{eq:w-def}.
The word $W$ is illustrated in Figure~\ref{fig:d=1}(b-d) when $(h,w) \in \{(7,4), (7,6), (7,8)\}$.
Then $|W|_{\o} = (h - 1)w/2 + \lceil 2h/3 \rceil$.
Moreover, $\deg(W[i,j]) \leq 1$ for each $(i,j) \in \DIhw$, which implies $W \in \Wordsohwshort$ and shows that $\maxdhw{1}{h}{w} \geq (h - 1)w/2 + \lceil 2h/3 \rceil$.
Now, assume that $W \in \Wordsohwshort$.
Then $W$ can be partitioned into $h(w - 1)/4$ tiles of dimensions $2 \times 2$, $\lfloor h / 3 \rfloor$ tiles of dimensions $1 \times 3$ and $\indic(h \bmod 3 \neq 0) \in \{0,1\}$ tile of dimension $1 \times (h \bmod 3)$, as illustrated in Figure~\ref{fig:d=1}(b-d), which are respectively occupied with at most $2$, $2$ and $h \bmod 3$  $\o$-cells .
This implies
\[|W|_{\o} \leq 2 \cdot h(w - 1)/4 + 2 \cdot \lfloor h / 3 \rfloor + h \bmod 3 = (h - 1)w/2 + \lceil 2h/3 \rceil,\]
so that $\maxdhw{1}{h}{w} \leq (h - 1)w/2 + \lceil 2h/3 \rceil$.
Hence, $\maxdhw{1}{h}{w} = (h - 1)w/2 + \lceil 2h/3 \rceil$.

(iii) Follows from the fact that $\maxdhw{1}{h}{w} = \maxdhw{1}{w}{h}$.

(iv) Let $W$ be defined by Equation~\eqref{eq:w-def}.
Then $|W|_\o = h(w - 1)/2 + \lfloor 2h/3 \rfloor$, which implies $\maxdhw{1}{h}{w} \geq h(w - 1)/2 + \lfloor 2h/3 \rfloor$.
It remains to prove that $\maxdhw{1}{h}{w} \leq h(w - 1)/2 + \lfloor 2h/3 \rfloor$.
Arguing by contradiction, assume that there exists a word $W$ of minimal dimensions such that $|W|_\o > \maxdhw{1}{h}{w}$.
Write $W = L \hcat R$, where $R$ has width $2$.
Then, $|R|_\o \leq 2(h - 1)/2 + 2 = h + 1$: It suffices to partition $R$ into $(h - 1) / 2$ tiles of dimensions $2 \times 2$ and $1$ tile of dimension $1 \times 2$, noting that at most $2$ cells can fill each tile.
But if $|R|_\o \leq h$, then $|L|_\o = |W|_\o - |R|_\o > \maxdhw{1}{h}{w} - h = \maxdhw{1}{h}{w - 2}$, contradicting the minimality of $W$.
Hence, $|R|_\o = h + 1$.
Now, write $W = U \vcat D$, where $D$ has width $2$.
A similar argument leads to $|D|_\o = w + 1$.
This implies that the lower right corner of $W$ of dimensions $2 \times 2$ has $3$ filled cells, i.e. one of the cell has degree at least $3$, which is impossible.
\end{proof}

\subsection{The remaining base cases of Lemma \ref{lem:basecases}}

\subsubsection{\textbf{$w=2.$}}

\begin{proposition}
\label{prop:w=2}
\begin{align*}
	e_{max}(h,2) = \begin{cases}
		h/6 + 1/2 &\mbox{ if $h$ is odd},\\
		h/6 &\mbox{ if $h$ is even},
	\end{cases}
\end{align*}
or equivalently
\begin{align*}
	\maxdhw{2}{h}{2}=
	\begin{cases}
		\frac {3\cdot 2h+2} 4 &\mbox{ if $h$ is odd},\\
		\frac {3\cdot2h} 4 &\mbox{ if $h$ is even}.
	\end{cases}
\end{align*}
\end{proposition}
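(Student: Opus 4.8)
The plan is to prove the two inequalities $e_{max}(h,2)\ge\emax(h,2)$ and $e_{max}(h,2)\le\emax(h,2)$ separately, working throughout with $h\ge 3$: the case $h=1$ is the $1$-pillar already handled, and $h=w=2$ (for which in fact $e_{max}=4/3$, not $1/3$) falls under the first clause of Theorem~\ref{thm:main}, so the displayed formula is to be read with this proviso. I would first record the elementary fact, immediate from the closed form, that $\emax(h,2)=\emax(h-4,2)+2/3$ for all $h\ge 7$, together with $e(Z)=|Z|_\o-4r/3$ for a word $Z$ of dimensions $r\times 2$; these are the only arithmetic identities the argument needs.

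For the lower bound I would exhibit explicit maximal words. Let $P=\left(\substack{\o\o \\ \o\e \\ \o\o \\ \e\o}\right)$ and, for $r\in\DI{3,6}$, let $P_r$ be the $r\times 2$ word formed by the first $r$ of the rows $\o\o,\ \o\e,\ \o\o,\ \e\o,\ \o\o,\ \o\e$ (so $P_4=P$). A routine check of the at most two abutting rows at each interface shows that each $P_r$ lies in $\Wordsdhwvarshort{2}{r}{2}$, that every $P_r$ starts with a row $\o\o$, that $P$ ends with $\e\o$, and that placing a row $\e\o$ directly above a row $\o\o$ never creates a cell of degree $>2$; hence for $h=4k+r$ with $k\ge 0$, $r\in\DI{3,6}$, the word $W_h$ obtained by stacking $k$ copies of $P$ above $P_r$ belongs to $\Wordsdhwvarshort{2}{h}{2}$. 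By Proposition~\ref{prop:additivity}, $e(W_h)=k\,e(P)+e(P_r)=2k/3+\emax(r,2)=\emax(h,2)$, so $e_{max}(h,2)\ge\emax(h,2)$; one also sees that $W_h$ induces a single chain, i.e. it is a snake.

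The upper bound rests on one key lemma: every $W\in\Wordsdhwvarshort{2}{4}{2}$ satisfies $|W|_\o\le 6$, equivalently $e(W)\le 2/3$. To prove it I would view $G[W]$ as an induced subgraph of maximum degree $\le 2$ in the $4\times 2$ ladder $G_{4,2}$: its four interior vertices $(2,1),(2,2),(3,1),(3,2)$ form a $4$-cycle, and each of them has exactly one further neighbour, namely a corner of the grid; thus an interior vertex in $W$ must either be deleted or have a deleted neighbour, and a single deletion secures this for at most three of the four interior vertices (deleting an interior vertex settles it and its two cycle-neighbours, deleting a corner settles one interior vertex), so $W$ omits at least two vertices and $|W|_\o\le 6$. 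With this lemma I would conclude by induction on $h$. The base cases $h\in\DI{3,6}$ are done by hand, using the lemma and the remark that three consecutive rows $\o\o$ force a cell of degree $3$ (so a $3\times 2$ factor has $\le 5$ filled cells); the one requiring a little more is $h=6$: writing $W=W\DI{1,4}\vcat W\DI{5,6}$, if rows $5$ and $6$ are both $\o\o$ then $\deg_{\o,W}(5,1),\deg_{\o,W}(5,2)\le 2$ forces row $4$ empty, so $|W|_\o\le 5+0+4=9=\maxdhw{2}{6}{2}$, while otherwise rows $5$–$6$ contribute $\le 3$ and rows $1$–$4$ contribute $\le 6$. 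For $h\ge 7$ I would argue by minimal counterexample: if $e(W)>\emax(h,2)$ with $h$ least, split $W=P'\vcat S$ with $S$ its last four rows; passing to a vertical factor only lowers degrees, so $P'\in\Wordsdhwvarshort{2}{h-4}{2}$ and $S\in\Wordsdhwvarshort{2}{4}{2}$, and then $e(W)=e(P')+e(S)\le\emax(h-4,2)+2/3=\emax(h,2)$ by minimality and the key lemma, a contradiction; the equivalent statement for $\maxdhw{2}{h}{2}$ follows by adding $4h/3$.

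The main obstacle is the upper bound, and within it the key lemma that a $4\times 2$ block carries excess at most $2/3$; everything else is bookkeeping, the only delicate point being the residual case $h=6$, where the naive two-row split loses by one and one must use the boundary of the rectangle (and, separately, $h=2$ is a genuine exception to the stated formula).
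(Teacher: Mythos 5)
Your proof is correct, but it takes a genuinely different route from the paper's. The paper's upper bound is a pigeonhole on a partition of the $h\times 2$ rectangle into $2\times 2$ dominoes: if $|W|_\o$ exceeded $\lceil 3\cdot 2h/4\rceil$, some $2\times 2$ block would be completely filled, and the paper then argues locally about the position of that block (interior versus top), using the forced empty rows above and below it to split $W$ into three factors and contradict minimality of $h$. You instead isolate a single key lemma — every $4\times 2$ word of degree at most $2$ has at most $6$ filled cells, proved by a neat counting argument on the four degree-$3$ interior vertices of the ladder $G_{4,2}$ — and then run a clean step-$4$ induction with base cases $h\in\DI{3,6}$, which matches the period-$4$ structure of your explicit extremal words and the identity $\emax(h,2)=\emax(h-4,2)+2/3$. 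Your approach is closer in spirit to the paper's treatment of $w=4$ (periodic constructions plus additivity of excess) and localizes all the combinatorial work in one lemma, at the cost of having to handle the slightly awkward residual case $h=6$ by hand; the paper's approach avoids base cases beyond very small $h$ but requires the case analysis on where the filled $2\times 2$ block sits. Your observation that the displayed formula fails for $h=2$ (where $e_{max}(2,2)=4/3$, covered by the $h=w=2$ clause of Theorem~\ref{thm:main}) is accurate — the paper's statement of the proposition is indeed silently restricted to $h\geq 3$ (or $h=1$, where the formula happens to hold) — and flagging it is appropriate rather than a defect of your argument.
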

\begin{proof}
Observe that it is easy to construct words $W\in \Wordsdhwvarshort{2}{h}{2}$  such that $|W|_{\o}=\lceil \frac{3\cdot2h}{4}\rceil$ as shown in Figure  \ref{2xhred} so that $\maxdhw{2}{h}{2}\geq \lceil \frac{3\cdot2h}{4}\rceil$.
Assume $h$ is minimal such that there exists $W\in\mathcal{W}^{2}_{h \times 2}$ with $|W|_{\o}=\lceil \frac{3\cdot2h}{4}\rceil+1$. Then by the pigeon hole principle there is a filled $2\times 2$ factor $W\DI{i,i+1}$ of $W$. If  $W\DI{i,i+1}$ is in the interior of $W$ then $W[i-1],W[i+2]$ are empty. Factor $W$ as the vertical concatenation of the three factors $W=W_T\vcat W\DI{i-1,i+2} \vcat W_B$ where $W_T$ is the top and $W_B$ the bottom factor. Then
\begin{align}\label{eqTB}
	|W_T|_\o + |W_B|_\o= \lceil \frac {3\cdot2h} 4\rceil +1-4=\lceil \frac {3\cdot2(h-4)} 4\rceil+3
\end{align}
But by the minimality hypothesis 
\begin{align*}
	|W_T|_\o\leq \lceil \frac {3\cdot2h(W_T)} 4\rceil  \mbox{ and }|W_B|_\o\leq \lceil \frac {3\cdot2h(W_B)} 4\rceil 
	\Rightarrow |W_T|_\o +|W_B|_\o < \lceil \frac {3\cdot2(h-4)} 4\rceil+3
\end{align*}
in contradiction with Equation \eqref{eqTB}.
\begin{figure*}[h!]
	\centering
	\begin{subfigure}[t]{0.31\textwidth}
		\centering
		\includegraphics[height=0.6in]{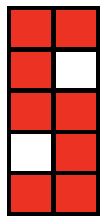}
		\caption{$ 5\times 2$ word, \\ $8$ cells}\label{2x5red}
	\end{subfigure}%

	\begin{subfigure}[t]{0.31\textwidth}
		\centering
		\includegraphics[height=0.8in]{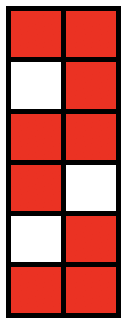}
		\caption{$ 6\times 2$ word, \\ $9$ cells}\label{2x6fs}
	\end{subfigure}%
	~ 
	\caption{ $h\times 2$ words}
	\label{2xhred}
\end{figure*}
So $W\DI{i,i+1}$ is not in the interior of $W$. If $i=1$ and  $W\DI{i,i+1}$ is at the top of $W$ then $W[i+2]$ is empty and using minimality hypothesis again we have 
\begin{align*}
	|W\DI{i+3,h}|_\o\leq \lceil \frac {3\cdot2(h-3)} 4\rceil < \lceil \frac {3\cdot2(h)} 4\rceil+1-4
\end{align*}
in contradiction with the hypothesis $|W|_{\o}=\lceil \frac{3\cdot2h}{4}\rceil+1$.
So  $W\DI{i,i+1}$ cannot be at the top of $W$ and $W$ cannot exist.
\end{proof}
\subsubsection{\textbf{$w=3.$}}

\begin{proposition}
\label{prop3xhfssize}
For  integers $h\geq 3$, the area $\maxdhw{2}{h}{3}$ of 2-full words  $W\in\Wordsdhwvarshort{2}{h}{3}$  is equal to the length $\maxshw{h}{3}$ of maximal snakes in $h\times 3$ rectangles and is equal to : 
\begin{align}
\label{prop3xhellfs}
\maxdhw{2}{h}{3}&=\maxshw{h}{3}=  2h +2
\end{align}
Equivalently, the excess of 2-full words $W\in\MWordsdhwvarshort{2}{h}{3}$  is $e_{max}(h,3)=2$.
\end{proposition}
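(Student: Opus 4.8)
The plan is to prove the two bounds $e_{max}(h,3)\ge 2$ and $e_{max}(h,3)\le 2$ separately, and then to observe that the extremal construction used for the first bound can be taken to be a snake; combined with the trivial inequality $\maxshw{h}{3}\le\maxdhw{2}{h}{3}$, this yields the chain of equalities $\maxdhw{2}{h}{3}=\maxshw{h}{3}=2h+2$ of Equation~\eqref{prop3xhellfs} (equivalently $e_{max}(h,3)=2$).

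For the lower bound $e_{max}(h,3)\ge 2$ the quickest witness is the word $W$ filling exactly the boundary $\bd(W)$ of the $h\times 3$ rectangle: every filled cell then lies on the perimeter and has exactly two filled neighbours, so $W\in\Wordsdhwvarshort{2}{h}{3}$ with $|W|_\o=2h+2$, i.e. $e(W)=2$. To obtain a \emph{snake} of the same area, I would instead define, for each residue of $h$ modulo a small integer, an explicit family $W_h$ obtained by vertical concatenation ($\vcat$) of $3\times 3$ ``hook'' blocks of excess $2$ together with suitable caps in the top and bottom rows, in complete analogy with the family built in Lemma~\ref{lem:4xh}; checking that each $W_h$ lies in $\Wordsdhwvarshort{2}{h}{3}$, induces a chain, and has excess exactly $2$ is a bounded, mechanical verification relying on Proposition~\ref{prop:additivity}.

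For the upper bound $e_{max}(h,3)\le 2$ I would argue by minimal counter-example, in the style of Lemma~\ref{lem:nmce}. Suppose $h$ is minimal such that some $W\in\Wordsdhwvarshort{2}{h}{3}$ has $e(W)\ge 3$ (note $e(W)=|W|_\o-2h\in\Z$, so ``$>2$'' means ``$\ge 3$''); the cases $h\in\{3,4,5\}$ are settled by inspection. Several reductions are cheap. The word $W$ has no empty interior row: such a row splits $W=P\vcat R\vcat S$ with $R$ empty of width $3$, whence $e(W)=e(P)+e(S)-2\le 2+2-2=2$ by additivity. It has no empty extremal row, for deleting one raises the excess and contradicts minimality. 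It has no empty middle column, for then $|W|_\o\le 2h$. And it has no empty extremal column, for then $W$ restricts to a word of $\Wordsdhwvarshort{2}{h}{2}$ of excess $e(W)+2h/3\ge 3+2h/3$, strictly larger than $e_{max}(h,2)\le h/6+1/2$, contradicting Proposition~\ref{prop:w=2}. Next, inspect the top two rows $W\DI{1,2}$, a $2\times 3$ word, hence of excess at most $e_{max}(2,3)=e_{max}(3,2)=1$, i.e. $|W\DI{1,2}|_\o\le 5$. If $|W\DI{1,2}|_\o\le 4$, then $e(W\DI{3,h})=e(W)-e(W\DI{1,2})\ge 3>2$, contradicting minimality. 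If $|W\DI{1,2}|_\o=5$, then, since a corner cannot be the unique empty cell of a degree-$\le 2$ word of dimensions $2\times 3$, the factor $W\DI{1,2}$ equals $\left(\substack{\o\o\o\\\o\e\o}\right)$ or $\left(\substack{\o\e\o\\\o\o\o}\right)$; in the latter the degree-$2$ cells of the full second row force all of row $3$ to be empty, which is impossible. Pushing the remaining configuration (and, by the symmetric argument at the bottom, its mirror) one or two rows deeper, each time balancing with the already-known values $e_{max}(k,3)=2$, $e_{max}(k,2)$ and $e_{max}(k,1)=k/3$ for $k<h$ and using that $W$ has no empty row or column, eliminates the last cases.

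The step I expect to be the main obstacle is exactly this last one. Because $e_{max}(k,3)$ is \emph{constant} in $k$, the crude ``split into two factors and note the shorter one is lighter'' argument that settles $w=4$ does not apply here, so one is forced into a genuine, though finite, case analysis of the possible top (and bottom) strips of a hypothetical minimal counter-example, showing that each such strip leaks strictly more excess near the boundary than can ever be recovered in the interior. Everything else is bookkeeping with Proposition~\ref{prop:additivity} and the small base cases.
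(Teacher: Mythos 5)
Your overall strategy coincides with the paper's (an explicit construction for the lower bound, a minimal counter-example for the upper bound), and your preliminary reductions are all correct: the boundary ring does realize excess $2$, the forced forms $W[1]=(\o,\o,\o)$, $W[2]=(\o,\e,\o)$ (and their mirror at the bottom) are exactly what the paper also establishes. But there is a genuine gap at the decisive step of the upper bound: you leave the elimination of the remaining configurations to ``a genuine, though finite, case analysis'' obtained by ``pushing the configuration one or two rows deeper,'' and you flag it yourself as the main obstacle. As stated this does not close: after $W[1]=(\o,\o,\o)$ and $W[2]=(\o,\e,\o)$ the cells of $W[2]$ have degree $1$, so row $3$ is essentially unconstrained, and no bounded-depth inspection of the top and bottom strips can terminate, since the interior of a putative counter-example may be arbitrarily long and, as you correctly observe, the constancy of $e_{max}(k,3)$ kills the naive two-factor split. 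The deferred case analysis is precisely the content of the proposition.

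The paper closes this gap not by analysing boundary strips but by locating the \emph{first interior row of nonzero excess}. Let $i$ with $1<i<h$ be minimal such that $e(W[i])\neq 0$, so $e(W[i])\in\{-2,-1,1\}$ and $e(W\DI{1,i-1})=1$. If $e(W[i])<0$, then $e(W\DI{i+1,h})=e(W)-1-e(W[i])\geq 3$, contradicting the minimality of $h$. If $e(W[i])=1$, then $W[i]=(\o,\o,\o)$; the row above must be $(\o,\e,\o)$ (otherwise the middle cell of $W[i]$ has degree $3$), whence every cell of $W[i]$ already has degree $2$ and $W[i+1]=(\e,\e,\e)$; then $e(W\DI{1,i+1})=0$ and $e(W\DI{i+2,h})\geq 3$, again contradicting minimality. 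Hence every interior row has excess $0$ and $e(W)\leq e(W[1])+e(W[h])\leq 2$. Replacing your final paragraph with this argument completes the proof; also note that your boundary ring is a cycle, not a snake, so the separate snake family you sketch (the paper extends a $6\times 3$ snake by inserting $(\o,\e,\o)$ rows) is indeed needed to obtain $\maxshw{h}{3}=2h+2$ and not just $\maxdhw{2}{h}{3}=2h+2$.
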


\begin{proof}
Let $W\in \Wordsdhwvarshort{2}{h}{3}$.
First observe that words and snakes of length ${2h} +2$ do exist as shown in Figure \ref{hx3words}.  In Figures  \ref{3x3} and \ref{3x6s}, a 2-full word and a maximal snake  are shown and can be extended to a word and a snake of any height $h\geq 3$ and area $2h+2$ by inserting a row of two cells between  rows $W[1]$ and $W[2]$. Figure \ref{3x13fs} shows a maximal forest of snakes.
\begin{figure*}[h!]
\centering
\begin{subfigure}[t]{.3\textwidth}
	\centering
	\includegraphics[height=.25in]{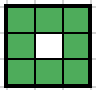}
	\caption{cycle}\label{3x3}
\end{subfigure}%
~
\begin{subfigure}[t]{0.3\textwidth}
	\centering
	\includegraphics[height=0.5in]{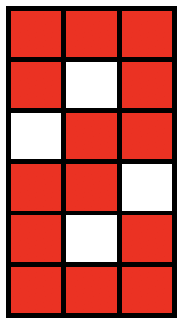}
	\caption{$6\times 3$ snake}\label{3x6s}
\end{subfigure}%
~
\begin{subfigure}[t]{0.3\textwidth}
	\centering
	\includegraphics[height=1.1in]{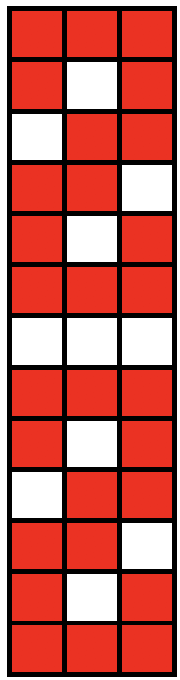}
	\caption{Forest of snakes}\label{3x13fs}
\end{subfigure}%
\caption{2-full $h\times 3$ words}
\label{hx3words}
\end{figure*}
We prove equation \eqref{prop3xhellfs} by minimal counter-example.
Assume that $h$ is minimal such that there exists a $h\times 3$ word $W$ of excess $e({\displaystyle  W})\geq 3$.

Consider the factorization $W=W\DI{1,i-1} \vcat W[i] \vcat W\DI{i+1,h}$ for all $ 1< i < h$.
First observe that $e(W[1])=1$ or else $e(W\DI{2,h})\geq3$ which contradicts the minimality of $h$.
We claim that $e(W[i])=0$ for $2\leq i\leq h-1$.

Assume that this is not the case and  let $W[i]$ be the first row from the top of $W$ such that $e(W[i])\neq0$, we must then have $e(W[i])\in\{-2,-1,1\}$.
If $e(W[i])\in\{-1,-2\}$, then the minimality of $i$ implies $e(W) = e(W\DI{1,i-1})+e(W[i])+ e(W\DI{i+1,h})$.

But $W[1]=(\o,\o,\o)$ and $e(W[j])=0$ for all $j$, $1 < j < i$, so that $3 = 1+e(W[i])+ e(W\DI{i+1,h})$.
Threfore $3 \leq  e(W[i+1,h])$, by assumptions on $W$ and $W[i]$, which contradicts the minimality of $h$.

If $e(W[i])=1$, then $W[i]=(\o,\o,\o)$ and by minimality of $i$, $e(W[j])=0$ for all $j$ such that $1<j<i$.
We must then have $W[i-1]=(\o,\e,\o)$ which implies $W[i+1]=(\e,\e,\e)$ because every cell in $W[i]$ has degree $2$. This implies  that $e(W\DI{i+2,h})=e(W)-e(W\DI{1,i+1})=e(W)\geq3$ which contradicts the minimality of $h$. This contradiction shows that for all $1<i<h$ we have $e(W[i])=0$ so that the only rows of excess $1$ are $W[1]$ and $W[h]$ concluding the proof.
\end{proof}

\subsubsection{\textbf{$w=5.$}}

Lemma \ref{lem:basecases} states that $e_{max}(h,5) = \emax(h,5)$ for all values of $h\geq 5$. We first prove the following lemma which says that, for all values of $h\geq5$, there exists a word in $\Wordsdhwvarshort{2}{h}{5}$ of excess $\emax(h,5)$.

\begin{lemma}
\label{lem:w=5lowbound}
$e_{max}(h,5) \geq \emax(h,5)\quad \forall h\geq5$.
\end{lemma}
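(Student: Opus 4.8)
The goal is to exhibit, for every $h \ge 5$, a word $W \in \Wordsdhwvarshort{2}{h}{5}$ whose excess equals $\emax(h,5)$; since $e_{max}(h,5)$ is the maximum over all such words, this gives the stated inequality. Following the pattern already used for $w=4$ in Lemma~\ref{lem:4xh}, the natural approach is an explicit modular construction: I would first write out $\emax(h,5)$ case by case according to the residue $h \bmod 3$ (which is what governs the formula for $w = 5 \ge 4$), and then produce, for each residue class, a family of building-block words of small height (heights $3$ and maybe $4$ or $5$) that can be stacked vertically. Concretely, the plan is to pick a "period" block $P$ of height $3$ (or $6$) with $e(P) = 0$ — for instance a block of the form $\substack{\o\o\o\o\o \\ \o\e\e\e\o \\ \o\e\o\e\o}$-type pattern whose three rows contain $4+2+4 = 10 = 2\cdot 3 \cdot 5 /3$ filled cells so that its excess vanishes — together with "cap" blocks $T$ (top) and $B$ (bottom) of small height that absorb the constant surplus $\emax(h,5) \in \{2/3, 1, 4/3\}$, and then define
\[
W_h = T \vcat P^{k \times 1} \vcat B,
\]
where $k$ and the exact choice of $T,B$ depend on $h \bmod 3$ (and possibly $h \bmod 6$, if a single period block of height $3$ cannot be iterated while keeping all degrees $\le 2$; then one alternates two blocks $P, P'$ as in the $w=4$ case).

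The key steps, in order, would be: (1) record $\emax(h,5)$ explicitly for $h \equiv_3 0, 1, 2$ using Equation~\eqref{eq:emax} with $w = 5$; (2) design the periodic block(s) of excess $0$ and verify that vertical concatenation of copies keeps every $\o$-cell of degree at most $2$ — in particular checking the interface rows where two blocks meet; (3) design the top and bottom caps for each residue class so that the total height is exactly $h$ and the total excess, by additivity (Proposition~\ref{prop:additivity}), is $e(T) + 0 + \cdots + 0 + e(B) = \emax(h,5)$; (4) verify degree bounds at the two cap/period interfaces. For small exceptional heights ($h = 5, 6, 7$ and one or two more) where the generic stacking does not yet apply, I would just display the words directly and check them by inspection. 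It is also worth arranging, where possible, that the constructed words are snakes (as done for $w = 4$), though that is not needed for this lemma.

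The main obstacle I anticipate is step (2) together with the cap design: the excess-$2/3$ and excess-$4/3$ cases force the caps to be rather tightly filled near the boundary rows (which are "free" of the degree constraint on one side), and one must make sure the chosen patterns simultaneously (a) hit the exact cell counts dictated by $\emax$, (b) respect $\deg_{\o} \le 2$ everywhere, and (c) tile together cleanly across interfaces for all residues mod $6$. This is a finite but fiddly case analysis; the cleanest write-up is to present the block matrices explicitly (in the \texttt{wcells} format used throughout the paper) and then assert that a routine check confirms membership in $\Wordsdhwvarshort{2}{h}{5}$ and the excess value, exactly as Lemma~\ref{lem:4xh} does for $w = 4$.
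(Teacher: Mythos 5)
Your proposal follows essentially the same route as the paper: a case split on $h \bmod 3$, an explicit height-$3$ excess-$0$ block of ten filled cells repeated vertically between small caps, additivity of excess (Proposition~\ref{prop:additivity}), and direct verification of the small heights. The paper's proof is exactly this construction (using a block with row counts $(3,4,3)$ rather than the full-top-row pattern you sketch, which would create degree-$3$ cells at the interface when iterated), so only the concrete block matrices remain to be filled in.
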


\begin{proof}
We look at each congruence of $h \mod 3$ separately.

Let $h = 3k+2,k\geq1$. Then
\[
\emax(h,5) = \emax(3k+2,5)=4/3.
\]
The cases $k=1,2$ can be verified manually. For $k\geq3$, a word $W' \in \Wordsdhwvarshort{2}{(3k+2)}{5}$ such that $e(W') = 4/3$ does exist as shown in Figure \ref{subfig:5x8} where we see that the red $3 \times 5$ factor can be concatenated to itself to produce words $W'$ of excess $e(W') = 4/3$ of arbitrary height $3k + 2$.

\begin{figure}
\centering
\begin{subfigure}[t]{0.23\textwidth}\centering
	\input{tikz/5x5.tikz}
	\caption{The unique  $W \in \MWordsdhwvarshort{2}{5}{5}$ with 18 cells.}
	\label{subfig:5x5}
\end{subfigure}
\begin{subfigure}[t]{0.23\textwidth}\centering
	\input{tikz/5x8.tikz}
	\caption{The unique  $W \in \MWordsdhwvarshort{2}{8}{5}$ with 28 cells.}
	\label{subfig:5x8}
\end{subfigure}
\begin{subfigure}[t]{0.23\textwidth}\centering
	\input{tikz/5x6.tikz}
	\caption{A word $W \in \MWordsdhwvarshort{2}{6}{5}$. It contains 21 cells}
	\label{subfig:5x6}
\end{subfigure}
\begin{subfigure}[t]{0.23\textwidth}\centering
	\input{tikz/5x7.tikz}
	\caption{A word $W \in \MWordsdhwvarshort{2}{7}{5}$. It contains 24 cells}
	\label{subfig:5x7}
\end{subfigure}
\caption{}
\label{fig:5x3k+2}
\end{figure}

Now let $h = 3k,k\geq2$. Then
\[
\emax(h,5) = \emax(3k,5)=1.
\]
The case $k=2$ can be verified manually. For $k\geq3$, a word $W \in \Wordsdhwvarshort{2}{(3k)}{5}$ such that $e(W) = 1$   is shown in Figure \ref{subfig:5x6} where  the red $3 \times 5$ factor of excess $0$ can be concatenated to itself to produce words $W$ of excess $e(W) = 1$ of arbitrary height $3k$.

Finally, let $h = 3k+1,k\geq2$. Then
\[
\emax(h,5) = \emax(3k+1,5) = 2/3.
\]
The case $k=2$ can be verified manually. For $k\geq3$, a word $W \in \Wordsdhwvarshort{2}{(3k+1)}{5}$ such that $e(W) = 2/3$ does exist as shown in Figure \ref{subfig:5x7} where we see that the red $3 \times 5$ factor can be concatenated to itself to produce words $W$ of excess $e(W) = 2/3$ of arbitrary height $3k+1$.

For each congruence of $h \mod 3$, we can construct a word in $\Wordsdhwvarshort{2}{h}{5}$ with excess $\emax(h,5)$. This means that for all $h\geq5$, $e_{max}(h,5) \geq \emax(h,5)$.
\end{proof}

We now need to prove that there exists no words in $\Wordsdhwvarshort{2}{h}{5}$ with excess greater than $\emax(h,5)$.

We will need the following results.

\begin{proposition}
\label{prop:5xhmin2factor}
Let $W \in \Wordsdhwvarshort{2}{(3k+2)}{5}$ $k\geq1$ with $e(W)\geq4/3$ be such that no $5\times h'$ factor $h'<3k+2$ has excess greater than $\emax(h',5)$. $W$ cannot contain a $5 \times 3$ factor of excess $-2$.
\end{proposition}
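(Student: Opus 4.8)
The plan is to argue by contradiction. Suppose the conclusion fails, so $W$ contains a factor $U = W\DI{i,i+2}$ spanning three consecutive rows and all five columns with $e(U) = -2$; since $U$ has dimensions $3\times5$, this means exactly $|U|_\o = 8$. I want to show that $|U|_\o=8$, together with the saturation hypothesis on proper vertical factors and $e(W)\geq 4/3$, is impossible.

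First I would dispose of the case where $U$ is flush with the top or bottom of $W$. If $i = 1$, write $W = U \vcat S$ with $S = W\DI{4,3k+2}$ of height $3k-1 \equiv_3 2$. Since $S$ is a proper vertical factor, the hypothesis together with the explicit value $\emax(3k-1,5) = 4/3$ (valid also for $k=1$, where $S$ has height $2$ and $\emax(2,5)=4/3$) gives $e(S)\leq 4/3$, so by Proposition~\ref{prop:additivity} one gets $e(W) = e(U)+e(S) \leq -2 + 4/3 = -2/3 < 4/3$, a contradiction; the case $i+2 = 3k+2$ is symmetric. So from now on $U$ is interior: $W = P \vcat U \vcat S$ with $P = W\DI{1,i-1}$ and $S = W\DI{i+3,3k+2}$ both non-empty, of heights $h_P, h_S \geq 1$ satisfying $h_P + h_S = 3k-1 \equiv_3 2$. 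Then Proposition~\ref{prop:additivity} and the hypothesis already give the crude bound $e(W) \leq \emax(h_P,5) + \emax(h_S,5) - 2$. Reading off the values of $\emax(\cdot,5)$ — equal to $2/3$, $1$, $4/3$ according as the height is $\equiv_3 1$, $\equiv_3 0$, $\equiv_3 2$, with the small exceptions $\emax(1,5)=5/3$, $\emax(2,5)=4/3$, $\emax(3,5)=2$ — I would check that this crude bound is already strictly less than $4/3$ in every case except: (a) $h_P = h_S = 1$, which forces $k=1$ and $W$ of dimensions $5\times5$; and (b) one of the two parts has height exactly $3$ while the other has height $\equiv_3 2$.

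The remaining two families of edge cases I would handle using the degree constraint. In case (b), say $h_P = 3$ (the case $h_S=3$ being symmetric), so $P = W\DI{1,3}$ and $U = W\DI{4,6}$; since $|U|_\o = 8 > 2\cdot 3$, $U$ must have a ``dense'' extreme row, and I would use that every filled cell of that row already having degree $2$ within $U$ forces its vertical neighbour outside $U$ to be empty, so that replacing $U$ by the corresponding $4\times5$ factor $U'$ produces $e(U')$ small enough — ideally $e(U')\leq -10/3$ — that re-decomposing $W$ around $U'$ (the two outer parts now having heights summing to $\equiv_3 1$, hence $\emax$-values summing to at most $8/3$) yields $e(W) \leq 8/3 - 10/3 < 4/3$, again a contradiction. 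Case (a) I would settle by direct inspection: $e(W) = 4/3$ would force $W[1]$ and $W[5]$ to be full and $|U|_\o = 8$; but then $W[1,2],W[1,3],W[1,4]$ and $W[5,2],W[5,3],W[5,4]$ all have degree $2$, so $W[2,j]=W[4,j]=\e$ for $j\in\{2,3,4\}$, whence $|U|_\o\leq 2+5+2 = 9$, and a short check of the degree constraints in columns $1$ and $5$ and in row $3$ then forces $|U|_\o < 8$, a contradiction.

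The step I expect to be the main obstacle is the degree argument of the previous paragraph: proving cleanly that a degree-$\leq2$ word of dimensions $3\times5$ with exactly $8$ filled cells always has one extreme row dense enough in locally degree-$2$ cells to force its outer neighbour row inside $W$ to be nearly empty. Enumerating the row-distributions $(a,b,c)$ with $a+b+c=8$ is finite but delicate, and the nearly balanced cases — where both extreme rows are small (e.g. $(2,4,2)$) or an extreme row is empty — may require a sharper count of the boundary degrees, an extension by two rows rather than one, or a choice to extend $U$ toward the larger outer part. Everything else is bookkeeping with Proposition~\ref{prop:additivity} and the closed form for $\emax(\cdot,5)$.
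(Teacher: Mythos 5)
Your skeleton is the same as the paper's: factor $W = P \vcat U \vcat S$ around the offending factor $U$ (so $|U|_\o = 8$), invoke additivity of the excess and the hypothesis that proper factors do not exceed $\emax$, and observe that $e(W) \leq \emax(h_P,5)+\emax(h_S,5)-2$ kills everything except the two tight configurations you list; your boundary case and your $5\times 5$ case (a) are correct. The gap is in case (b), exactly where you feared. The claim that $U$ must have a ``dense'' extreme row all of whose filled cells already have degree $2$ inside $U$ is false: a $3\times 5$ word of degree at most $2$ with $8$ filled cells can have row distribution $(2,4,2)$ — take the extreme rows filled at columns $\{1,3\}$ and $\{3,5\}$ and the middle row at $\{1,2,4,5\}$ — and then the extreme rows are so weakly constrained that the adjacent row of $W$ outside $U$ can carry $4$ filled cells. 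That gives $e(U') = -4/3$ instead of the $-10/3$ you need, and the re-decomposition around $U'$ only yields $e(W) \leq 4/3+4/3-4/3 = 4/3$, which is not a contradiction with $e(W)\geq 4/3$. None of your proposed fallbacks (two-row extension, choosing the side) obviously rescues this, because the obstruction lives in the middle row of $U$, not in its boundary rows.

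The fix is already sitting in your own computation, and it is what the paper does. In case (b) the bound $\emax(3,5)+\emax(h_S,5)-2 = 2+4/3-2 = 4/3$ is attained, so $e(P)=2$ and $e(S)=4/3$ are \emph{forced}; in particular $P$ must be the unique $2$-full word of $\MWordsdhwvarshort{2}{3}{5}$, whose row adjacent to $U$ is a full row consisting entirely of degree-$2$ cells. Hence the adjacent row of $U$ is empty, the other two rows of $U$ form a $2$-full $2\times 5$ factor, and that factor's outer row contains at least three degree-$2$ cells, so the first row of $S$ has at most two filled cells, i.e.\ excess at most $-4/3$. Removing that row from $S$ (which is $2$-full of excess $4/3$) leaves a factor of height $\equiv_3 1$ with excess at least $8/3 > 5/3 \geq \emax$, the desired contradiction. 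In short: you must exploit the forced saturation of the height-$3$ part $P$ and propagate degree constraints from $P$ through $U$ into $S$, rather than trying to extract the contradiction from the internal structure of $U$ alone.
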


\begin{proof}
Let $T$ be a $5\times 3$ factor of $W$ of excess $-2$. We can partition $W$ as $W = W_{top} \vcat T \vcat W_{bottom}$ which yields
\[
e(W_{top}\vcat T \vcat W_{bottom}) \geq 4/3.
\]
This implies $e(W_{top}) + e(W_{bottom}) \geq 4/3+2 = 10/3$. If $W_{top}$ is the empty word, then $e(W_{bottom}) = 10/3$ which contradicts the minimality hypothesis. Similarly, $W_{bottom}$ cannot be empty. Now since $h = 3k+2$ and $T$ has height 3, either both $W_{top}$ and $W_{bottom}$ have height congruent to $1 \mod 3$ or one has height congruent to $2 \mod 3$ and the other has height congruent to $0 \mod 3$. If both $W_{top}$ and $W_{bottom}$ have height congruent to $1 \mod 3$, then by minimality of $h$, we have $e(W_{top}),e(W_{bottom})\leq 5/3$ which implies $e(W_{top})+e(W_{bottom}) \leq 10/3$. Note however that $e(W_i) = 5/3$ is only possible when $W_i$ is a single row. So, in this case, $e(W_{top})+e(W_{bottom}) = 10/3$ implies both $W_{top}$ and $W_{bottom}$ are a single row which yields $h=5$. But Figure \ref{subfig:5x5} shows the only possible word in $\Wordsdhwvar{2}{5}{5}$ with excess greater or equal to $4/3$ which does not contain a $5 \times 3$ factor of excess $-2$. This situation is ruled out. Now assume, without loss of generality, that $W_{top}$ has height congruent to $2 \mod 3$ and that $W_{bottom}$ has height congruent to $0 \mod 3$. From the minimality of $h$, we have that $e(W_{top}) \leq 4/3$ and $e(W_{bottom}) \leq 2$. This yields $e(W_{top}) + e(W_{bottom}) \leq 10/3$. Note, however, that $e(W_{bottom}) = 2$ is only possible when $W_{bottom}$ is in $\MWordsdhwvarshort{2}{3}{5}$ which consists of a single word illustrated in Figure \ref{fig:5x3}. Observe that the top row of $W_{bottom}$ must be a full row of degree 2 cells. This forces the bottom row of $T$ to be empty which implies the top two rows of $T$ to be a 2-full $5 \times 2$ factor. This force, in turn, the top row of $T$ to contain at least three degree 2 cells which implies that the bottom row of $W_{top}$ is of excess at most $-4/3$. So $W_{top}$ is a factor of excess $4/3$ with a bottom row of excess at most $-4/3$. This means the factor $W_{top}$ minus it's bottom row is a factor of excess at least $8/3$. This contradicts the minimality of $h$. This last situation is thus ruled out meaning the assumption that $W$ can have a $5 \times 3$ factor of excess $-2$ is rejected.
\end{proof}

\begin{figure}
\centering
\input{tikz/5x3.tikz}
\caption{The unique word in $\MWordsdhwvarshort{2}{3}{5}$}
\label{fig:5x3}
\end{figure}

\begin{lemma}
\label{lem:5x3k+2upbound}
Let $W \in \Wordsdhwvarshort{2}{(3k+2)}{5}$ $k\geq1$ be such that no $h'\times 5$ factor,
$h'<3k+2$, has excess greater than $\emax(h',5)$. Then $e(W) \leq \emax(3k+2,5)$.
\end{lemma}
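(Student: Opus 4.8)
The plan is to deduce the bound $e(W) \le \emax(3k+2,5) = 4/3$ directly from the additivity of the excess (Proposition~\ref{prop:additivity}) together with the hypothesis of the lemma, after isolating the small case $k=1$. First I would note that $3k+2 \equiv_3 2 \equiv_3 5$ with $5 \ge 4$ and $3k+2 \nequiv_3 0$, so Equation~\eqref{eq:emax} gives $\emax(3k+2,5) = 4/3$; and I would record that $\emax(h',5) = 2/3$ for every integer $h' \ge 4$ with $h' \equiv_3 1$ — for $h' = 4$ this is $\emax(4,5) = 2/3$ (the value $\mdhw{2}{5}{4} = 14$, using the symmetry of $\mdhw{2}{\cdot}{\cdot}$), and for $h' \ge 7$ it is the ``otherwise'' branch of Equation~\eqref{eq:emax}, since then $h' \nequiv_3 5$ and $5h' \equiv_3 2 \ne 0$.

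For $k \ge 2$ the argument is then immediate. Decompose $W = W\DI{1,4} \vcat W\DI{5,3k+2}$ as the vertical concatenation of a $4 \times 5$ factor and a $(3k-2) \times 5$ factor. Since $k \ge 2$, both factors are nonempty and of height strictly less than $3k+2$, so the hypothesis of the lemma applies to each. Their heights $4$ and $3k-2$ are both $\ge 4$ and $\equiv_3 1$, so by the preceding paragraph $e(W\DI{1,4}) \le 2/3$ and $e(W\DI{5,3k+2}) \le 2/3$. Proposition~\ref{prop:additivity} then yields $e(W) = e(W\DI{1,4}) + e(W\DI{5,3k+2}) \le 2/3 + 2/3 = 4/3 = \emax(3k+2,5)$, which is the claim.

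It remains to treat $k = 1$, i.e. $W \in \Wordsdhwvarshort{2}{5}{5}$; this is the only genuine difficulty, since no two-block horizontal split suffices here (heights summing to $5$ force a block of height $1$ or $2$, and $\emax(1,5) = 5/3$, $\emax(2,5) = 4/3$ are too large). A word with $e(W) > 4/3$ would have $|W|_\o \ge 19$ among $25$ cells; together with the degree bound this is extremely rigid and forces a nearly full $3 \times 5$ horizontal band whose top and bottom rows consist of degree-$2$ cells, hence an empty neighbouring row — which one then bounds away to contradict $e(W) > 4/3$. Equivalently, this bounded case can be settled by exhaustive enumeration, the unique extremal word being that of Figure~\ref{subfig:5x5}; the structural input needed to dispense with the enumeration is exactly of the kind provided by Proposition~\ref{prop:5xhmin2factor}, which forbids overly sparse $3 \times 5$ sub-bands in a dense word. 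Apart from this $5 \times 5$ verification (and the routine tabulation of the small values of $\emax(\cdot,5)$), the lemma is pure bookkeeping with Proposition~\ref{prop:additivity}, so the main obstacle is concentrated entirely in the base case $k = 1$.
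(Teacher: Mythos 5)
Your argument for $k \ge 2$ is correct and genuinely different from the paper's: you exploit the arithmetic coincidence that $\emax(3k+2,5)=4/3$ splits exactly as $2/3+2/3$, where $2/3=\emax(4,5)=\emax(3k-2,5)$ (both heights being $\equiv_3 1$), so the bound falls out of Proposition~\ref{prop:additivity} and the hypothesis in two lines. The paper instead peels off $W\DI{1,5}$ and $W\DI{3k,3k+2}$, shows their excesses are at most $1/3$ and $1$ respectively (using the uniqueness of the extremal $5\times 5$ and $3\times 5$ words), and then runs a long case analysis over the ten $3\times 5$ interior tiles of excess $\ge 1$ of Figure~\ref{fig:5x3probleme}, supported by Proposition~\ref{prop:5xhmin2factor}. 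What the longer route buys is reusable structure: essentially the same tile analysis is recycled in Corollary~\ref{cor:5xhTile} and in the lemmas for $h\equiv_3 0,1$, where no clean two-block split exists (as you correctly observe for $5\times 5$, and as one can check for $3k$ and $3k+1$ as well); your shortcut is specific to the residue $2$ case. On the base case $k=1$: your structural sketch ("a nearly full $3\times 5$ band with an empty neighbouring row") is not a proof as written, and the honest content of that paragraph is the appeal to exhaustive enumeration of $5\times 5$ words with $19$ or more filled cells. That is acceptable here — the paper itself settles the $5\times 5$ configuration space by inspection (Figure~\ref{subfig:5x5}, and the explicit "verified manually" statements for small $k$), and indeed the paper's own proof of this lemma tacitly assumes $k\ge 2$, since its opening step $e(W\DI{1,5})\le 4/3$ invokes the hypothesis on a factor that is all of $W$ when $k=1$ — but you should state the enumeration as the actual argument rather than leaving it as an "equivalently".
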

\begin{proof}
Assume by contradiction that there exists $W \in \Wordsdhwvarshort{2}{3k+2}{5}$ with no factor of height $h'<h$ of excess greater than $\emax(h',5)$ but with $e(W) = \emax(3k+2,5)+1 = 7/3$.

We know $e(W\DI{1,5}) \leq 4/3$. Assume $e(W\DI{1,5}) = 4/3$. Then, as can be observed in Figure \ref{subfig:5x5}, the bottom row of $W\DI{1,5}$ contains 3 cells of degree 2. This implies $|W[6]|_\o\leq 2$  and $e(W[6]) \leq -4/3$. 

The excess of the top 6 rows of $W$ is then $e(W\DI{1,6}) \leq 0$. This implies that $e(W\DI{7,3k+2}) \geq 7/3$ in contradiction with the minimality of $h$. We thus have $e(W\DI{1,5}) \leq 1/3$.

We know from Proposition \ref{prop3xhfssize} that $e(W\DI{3k,3k+2}) \leq 2$. Assume $e(W\DI{3k,3k+2}) = 2$. Then, as can be observed in Figure \ref{fig:5x3}, the top row of $W\DI{3k,3k+2}$ contains 5 cells of degree $2$. 
This implies that $e(W\DI{3k-3,3k-1}) \leq -2$ which from Proposition \ref{prop:5xhmin2factor} is impossible. We thus have $e(W\DI{3k,3k+2})\leq1$.

Since $e(W\DI{1,5}) \leq 1/3$ and $e(W\DI{3k,3k+2}) \leq 1$, we must have that at least one of $W\DI{6,8},\dots,W\DI{3k-3,3k-1}$ has excess greater or equal to 1.
Figure \ref{fig:5x3probleme} shows all possible words in $\Wordsdhwvarshort{2}{3}{5}$ with excess greater or equal to 1 up to symmetry.

\begin{figure}
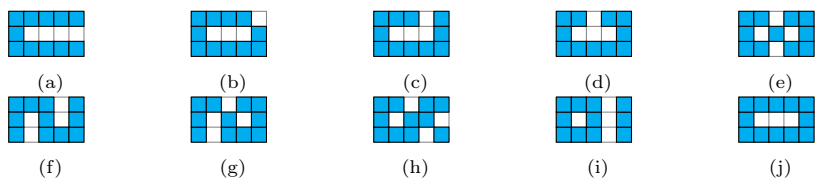

\centering
\begin{subfigure}[t]{0.19\textwidth}\centering
\input{tikz/5x3A.tikz}
\caption{}
\label{subfig:5x3A}
\end{subfigure}
\begin{subfigure}[t]{0.19\textwidth}\centering
\input{tikz/5x3B.tikz}
\caption{}
\label{subfig:5x3B}
\end{subfigure}
\begin{subfigure}[t]{0.19\textwidth}\centering
\input{tikz/5x3C.tikz}
\caption{}
\label{subfig:5x3C}
\end{subfigure}
\begin{subfigure}[t]{0.19\textwidth}\centering
\input{tikz/5x3D.tikz}
\caption{}
\label{subfig:5x3D}
\end{subfigure}
\begin{subfigure}[t]{0.19\textwidth}\centering
\input{tikz/5x3E.tikz}
\caption{}
\label{subfig:5x3E}
\end{subfigure}
\begin{subfigure}[t]{0.19\textwidth}\centering
\input{tikz/5x3F.tikz}
\caption{}
\label{subfig:5x3F}
\end{subfigure}
\begin{subfigure}[t]{0.19\textwidth}\centering
\input{tikz/5x3G.tikz}
\caption{}
\label{subfig:5x3G}
\end{subfigure}
\begin{subfigure}[t]{0.19\textwidth}\centering
\input{tikz/5x3H.tikz}
\caption{}
\label{subfig:5x3H}
\end{subfigure}
\begin{subfigure}[t]{0.19\textwidth}\centering
\input{tikz/5x3I.tikz}
\caption{}
\label{subfig:5x3I}
\end{subfigure}
\begin{subfigure}[t]{0.19\textwidth}\centering
\input{tikz/5x3J.tikz}
\caption{}
\label{subfig:5x3J}
\end{subfigure}
\caption{All the possible $3\times 5$ factors of excess greater than 1.}
\label{fig:5x3probleme}
\end{figure}

Let $W\DI{3j,3j+2}$ be the first factor among $W\DI{3i,3i+2}$, $i \in \{2,3,\dots,k-1\}$ such that $e(W\DI{3j,3j+2}) \geq 1$. $W\DI{3j,3j+2}$ must be of one of the forms in Figure \ref{fig:5x3probleme}. It is useful, in most cases, to study the $5\times5$ factor $W\DI{3j-1,3j+3}$. We have that $3j-2 \equiv_3 1$ and $3k+2 - (3j+3) = 3(k-j) -1 \equiv_3 2$. By hypothesis, we then have that $e(W\DI{1,3j-2}) \leq \emax(3j-2,5) = 2/3$ and $e(W\DI{3j+4,3k+2}) \leq \emax(3(k-j)-1) = 4/3$. Since $e(W\DI{1,3j-2})+e(W\DI{3j+4,3k+2}) \leq 6/3 = 2$ and $e(W) = 7/3$, we must have that $e(W\DI{3j-1,3j+3}) \geq 1/3$. This yields the condition
\[
|W[3j-1]|_\o+ |W[3j+3]|_\o \geq 7 - e(W\DI{3j,3j+2}).
\]
If $W\DI{3j,3j+2}$ is of the form \ref{subfig:5x3J} then we must have $|W[3j-1]|_\o+ |W[3j+3]|_\o \geq 5$. However, both $W[3j]$ and $W[3j+2]$ have 5  cells of degree 2. This means that  both $W[3j-1]$ and $W[3j+3]$ can have 0 $\o$-cells.

If $W\DI{3j,3j+2}$ is of any of the other forms, then we must have $|W[3j-1]|_\o+ |W[3j+3]|_\o \geq 6$. It is easy to verify that none of the forms allow 6 $\o$-cells  or more on $W[3j-1]$ and $W[3j+3]$, except for form \ref{subfig:5x3E}.

In this case, observe that both $W[3j]$ and $W[3j+2]$ have a degree 2 cell at each end. This means that $W[3j-1]$ and $W[3j+3]$ each contain at most $3$ cells. The only way for $W\DI{3j-1,3j+3}$ to have excess $1/3$ is for both $W[3j-1]$ and $W[3j+3]$ to contain 3 cells. So $W\DI{3j-1,3j+3}$ will be of the form shown in Figure \ref{subfig:5x5probleme}. Now, both $W[3j-1]$ and $W[3j+3]$ have 3 cells of degree 2. This means $W[3j-2]$ and $W[3j+4]$ will both contain at most 2 cells each. There is now two cases to consider. The first case is if $3j+4 = 3k+1$. In this case, the bottom row can be full and the bottom most 7 rows of $W$ will be of the form as seen in Figure \ref{subfig:5x7probleme}. Since the 8\textsuperscript{th} row from the bottom (i.e. $W[3j-2]$) contains at most 2 cells, the factor of the bottom 8 rows of $W$ will have excess at most $-2/3$. This forces $W\DI{1,3j-3}$ to have excess at least $7/3+2/3 = 9/3 = 3$. This is obviously in contradiction with the minimality of $h$.
Then we simply observe that the $5 \times 7$ factor $W\DI{3j-2,3j+4}$ is of excess $-7/3$. This forces $e(W\DI{1,3j-3}) + e(W\DI{3j+5,3k+2}) \geq 14/3$. But $e(W\DI{1,3j-3}) \leq \emax(3j-3,5) = 1$ and $e(W\DI{3j+5,3k+2}) \leq \emax(3k+2-(3j+4),5) = \emax(3(k-j)-2,5) = 2/3$. So $e(W\DI{1,3j-3}) + e(W\DI{3j+5,3k+2}) \leq 5/3$ which is a contradiction.

We have proved that it is impossible for any factor $W\DI{3i,3i+2}$, $i\in \{2,3,\dots,i-1\}$ to be of excess greater or equal than 1. Since $e(W\DI{1,5}) \leq 1/3$ and $e(W\DI{3k,3k+2}) \leq 1$, this implies that  it is impossible for $W$ to be of excess $7/3$.
\end{proof}

\begin{figure}
\begin{subfigure}[t]{0.49\textwidth}\centering
\centering
\input{tikz/5x5probleme.tikz}
\caption{$5\times 5$ factor forced in case \ref{subfig:5x3E}}
\label{subfig:5x5probleme}
\end{subfigure}
\begin{subfigure}[t]{0.49\textwidth}\centering
\centering
\input{tikz/5x7probleme.tikz}
\caption{$5\times7$ factor.}
\label{subfig:5x7probleme}
\end{subfigure}
\caption{}
\label{subfig:5x5casE}
\end{figure}

The following results give valuable information on words in $\MWordsdhwvarshort{2}{(3k+2)}{5}$ which will help us for the other lemmas.

\begin{corollary}
\label{cor:5xhTile}
For $W \in \MWordsdhwvarshort{2}{(3k+2)}{5}$ $k\geq2$, we call $W\DI{6,8},W\DI{9,10},\dots,W\DI{3k-3,3k-1}$ the interior tiles of $W$. Assume $W$ is such that it has no $5\times h'$ factor $h'<3k+2$ of excess greater than $\emax(h',5)$. Then
\begin{enumerate}[(i)]
\item no interior tile of $W$ has excess greater than or equal to $1$.
\item $W\DI{1,5}$ has excess $1/3$ and $e(W\DI{3k,3k+2})=1$.
\item no interior tile of $W$ has excess $-1$.
\end{enumerate}
\end{corollary}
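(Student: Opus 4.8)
\medskip
\noindent\textbf{Proof strategy.}
The plan is to exploit the additive partition of $W$ into its top $5\times 5$ block, its $k-2$ interior tiles, and its bottom $3\times 5$ block, and to reduce the three assertions to a handful of numerical bounds. Write
\[
W \;=\; W\DI{1,5} \vcat T_1 \vcat \cdots \vcat T_{k-2} \vcat W\DI{3k,3k+2},
\]
where $T_i = W\DI{3i+3,\,3i+5}$ is the $i$-th interior tile, of height $3$. Since $W$ is $2$-full, Lemmas~\ref{lem:5x3k+2upbound} and~\ref{lem:w=5lowbound} give $e(W) = e_{\max}(3k+2,5) = \emax(3k+2,5) = 4/3$, so Proposition~\ref{prop:additivity} yields the identity $4/3 = e(W\DI{1,5}) + \sum_{i=1}^{k-2} e(T_i) + e(W\DI{3k,3k+2})$. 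Moreover each $T_i$ is a $3\times 5$ word, so $e(T_i) = |T_i|_\o - 10 \in \Z$.

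The heart of the argument is to establish the three bounds $(\mathrm{A})\ e(W\DI{1,5})\le 1/3$, $(\mathrm{B})\ e(W\DI{3k,3k+2})\le 1$, and $(\mathrm{C})\ e(T_i)\le 0$ for every $i$. Granting these, the corollary follows by bookkeeping: the identity above together with $(\mathrm{A})$ and $(\mathrm{B})$ forces $\sum_i e(T_i)\ge 0$, and since each $e(T_i)$ is a nonpositive integer it must vanish, which is exactly $(i)$ and $(iii)$; substituting back gives $e(W\DI{1,5}) + e(W\DI{3k,3k+2}) = 4/3$, so $(\mathrm{A})$ and $(\mathrm{B})$ must both hold with equality, which is $(ii)$.

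Bounds $(\mathrm{B})$ and $(\mathrm{C})$ would be obtained by the mechanism already used in the proof of Lemma~\ref{lem:5x3k+2upbound}: a factor-row all of whose cells have degree $2$ forces the adjacent row of $W$ to be empty, which either pushes a proper $5$-wide sub-factor past its own $\emax$ or creates a $5\times 3$ factor of excess $\le -2$, contradicting Proposition~\ref{prop:5xhmin2factor} (applicable here since $e(W)=4/3\ge 4/3$ and, by hypothesis, no proper $5$-wide factor of $W$ exceeds its $\emax$). For $(\mathrm{B})$: if $e(W\DI{3k,3k+2})=2$ then by Proposition~\ref{prop3xhfssize} this block is the word of Figure~\ref{fig:5x3}, whose top row is a full row of degree-$2$ cells, so $W[3k-1]$ is empty and $e(W\DI{3k-3,3k-1})\le \emax(2,5)-10/3 = -2$ --- impossible (for $k=2$, use $\emax(4,5)=2/3$ instead to get $e(W\DI{1,5})\le -8/3$, hence $e(W)\le -2/3$, a contradiction). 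For $(\mathrm{C})$: if some $e(T_j)\ge 1$ then $e(T_j)\in\{1,2\}$ and $T_j$ is one of the configurations of Figure~\ref{fig:5x3probleme} or Figure~\ref{fig:5x3}; propagating the forced empty cells into the rows $W[3j-1]$ and $W[3j+3]$ and analysing the window $W\DI{3j-1,3j+3}$ as in the second half of the proof of Lemma~\ref{lem:5x3k+2upbound}, but now also feeding in bounds $(\mathrm{A})$ and $(\mathrm{B})$ on the top and bottom blocks, exhibits a $5$-wide factor of $W$ violating the hypothesis.

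The main obstacle is bound $(\mathrm{A})$: in Lemma~\ref{lem:5x3k+2upbound} the inequality $e(W\DI{1,5})\le 1/3$ came for free from the slack in the larger budget $7/3$, whereas here the budget is exactly $4/3$, so ruling out $e(W\DI{1,5})=4/3$ needs a structural argument. I would argue that if $e(W\DI{1,5})=4/3$ then, by uniqueness of the $2$-full $5\times 5$ word (Figure~\ref{subfig:5x5}), its bottom row carries three consecutive cells of degree $2$, forcing those cells of $W[6]$ to be empty; hence $|W[6]|_\o\le 2$, $e(W[6])\le -4/3$, and $e(W\DI{7,3k+2}) = 4/3 - 4/3 - e(W[6]) \ge 4/3 = \emax(3(k-2)+2,5)$, so $W\DI{7,3k+2}$ is itself $2$-full --- but the prescribed shape of the first two rows of a $2$-full $5$-wide word is incompatible with the cells of $W[6]$ already pinned down. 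Making this last clash precise --- nailing down enough of the boundary structure of $2$-full $5$-wide words --- is the delicate point: it is a short direct check for small $k$ (Figures~\ref{subfig:5x5}--\ref{subfig:5x8}), and in general it rests on the structural analysis of $2$-full $5$-wide words developed in this section.
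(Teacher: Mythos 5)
Your bookkeeping reduction is sound and matches in spirit how the paper derives parts (ii) and (iii) once part (i) is available, and your bound (B) — ruling out $e(W\DI{3k,3k+2})=2$ via the full degree-$2$ row of the word in Figure~\ref{fig:5x3} and Proposition~\ref{prop:5xhmin2factor} — is exactly the paper's argument. The difficulty is that bounds (A) and (C) carry essentially all of the content of the corollary, and neither of your sketches closes.

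For bound (C), which is literally statement (i), the claim that the "mechanism of Lemma~\ref{lem:5x3k+2upbound}" transfers is not correct. That mechanism rests on $e(W\DI{3j-1,3j+3}) \ge e(W) - \emax(3j-2,5) - \emax(3(k-j)-1,5) = e(W) - 2$, which with the lemma's budget $e(W)=7/3$ gives $|W[3j-1]|_\o + |W[3j+3]|_\o \ge 7 - e(T_j)$, i.e.\ at least $6$ for a tile of excess $1$. Here $e(W)=4/3$, so the same computation only gives at least $5$, and some configurations in Figure~\ref{fig:5x3probleme} do admit five filled cells in the two adjacent rows (form~\ref{subfig:5x3H}, for instance, permits at most $2$ next to its row $(\o,\o,\o,\e,\o)$ and at most $3$ next to its row $(\o,\o,\e,\o,\o)$, totalling exactly $5$), so the window count yields no contradiction. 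Feeding in (A) and (B) does not repair this for tiles away from the boundary, since the prefix and suffix flanking the window are then generic factors with heights $\equiv_3 1$ and $\equiv_3 2$ and the same $\emax$ bounds as before. This is precisely why the paper's proof of (i) is not a local count but a minimal-counterexample induction on $k$ with a long case analysis: it takes the last tile of excess $\ge 1$, shows each form forces an adjacent $3\times 5$ factor of excess $\le -1$ (or a forced chain of excess-$0$ factors terminating in one), establishes $e(W\DI{3k,3k+2})=1$ along the way, and then eliminates every admissible position of the compensating excess-$(-1)$ tile form by form. Separately, your bound (A) is circular as stated: the "structural analysis of $2$-full $5$-wide words" you invoke to produce the clash between the pinned cells of $W[6]$ and the top of the $2$-full word $W\DI{7,3k+2}$ is Proposition~\ref{prop:5xhunicity}, which is proved later using this very corollary and Corollary~\ref{cor:5xhTileH}. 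The paper sidesteps this by proving (i) first and then killing $e(W\DI{1,5})=4/3$ in (ii) as a short consequence of (i): the three degree-$2$ cells in $W[5]$ force $e(W\DI{6,8})\le -1$, hence $e(W\DI{1,8})\le 1/3$, which together with $e(W\DI{3k,3k+2})=0$ would require an interior tile of excess $\ge 1$. If you want to keep your ordering, you would have to recast the whole statement as an induction on $k$ so that the structure of $W\DI{7,3k+2}$ is available; as written, the proposal leaves the hardest part unproved.
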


\begin{proof}

We prove $i)$ by minimal counterexample. Let $h = 3k+2$ be minimal such that there exists $W \in \MWordsdhwvarshort{2}{(3k+2)}{5}$  with at least one interior tile of excess $\geq 1$. Let $W\DI{3i,3i+2}$ be the last interior tile that is of excess $\geq 1$.

If $W\DI{3i,3i+2}$ is of the form \ref{subfig:5x3B}, \ref{subfig:5x3C}, \ref{subfig:5x3D} or \ref{subfig:5x3J}, then either $W[3i]$ or $W[3i+2]$ is a full row of degree 2 cells. A $3\times 5$ factor adjacent to such a row has excess at most $-2$. Furthermore, if $W\DI{3i,3i+2}$ is of the form \ref{subfig:5x3J} then both adjacent $3\times5$ factors have excess at most $-2$. This is impossible by Proposition \ref{prop:5xhmin2factor} which means $W\DI{3i,3i+2}$ cannot be of any of those forms. 

If $W\DI{3i,3i+2}$ is of the form \ref{subfig:5x3A}, then  $W[3i]$ and $W[3i+2]$ contain 4 degree 2 cells. Any $3\times 5$ factor adjacent to $W\DI{3i,3i+2}$ is of excess at most $-1$.

If $W\DI{3i,3i+2}$ is of the form \ref{subfig:5x3F}, \ref{subfig:5x3G}, \ref{subfig:5x3H} or \ref{subfig:5x3I}, then either $W[3i]$ or $W[3i+2]$ is of the form $(\o,\o,\o,\e,\o)$ where the first three $\o$ cells are degree 2. A $3\times5$ factor adjacent to such row has excess at most $-1$.

Finally, if $W\DI{3i,3i+2}$ is of the form \ref{subfig:5x3E}, the only $3\times 5$ factor of excess 0 that can be adjacent to it is shown in red in Figure \ref{fig:5x6probleme}. This new factor has at one end a full row of degree 2 cell. This means that a $3 \times 5$ factor adjacent to this row has excess at most $-2$, a $2 \times 5$ factor adjacent to this row is of excess at most $-5/3$ and a $1 \times 5$ factor adjacent to this row is of excess at most $-10/3$. This means $W\DI{3i,3i+2}$ is adjacent to either a $3\times 5$ factor of excess $-1$, a $6\times 5$ factor of excess $-2$, a $5\times5$ factor of excess $-5/3$ or a $4\times 5$ factor of excess $-10/3$. By symmetry, this is true for both sides of $W\DI{3i,3i+2}$.

For all forms of $W\DI{3i,3i+2}$ we have a $3\times 5$ factor of excess at most $-1$ that is forced.

\begin{figure}
\centering
\input{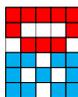}
\caption{The only $3\times5$ factor of excess 0 (in red) that can be adjacent to a factor of the form \ref{subfig:5x3E} (in cyan).}
\label{fig:5x6probleme}
\end{figure}

Observe that we must have $e(W\DI{3k,3k+2})\geq 0$ for if $e(W\DI{3k,3k+2})\leq -1$, then $e(W\DI{1,3k-1}) \geq 7/3$ which is impossible by Lemma \ref{lem:5x3k+2upbound}. Now let $e(W\DI{3k,3k+2})=0$. If $W\DI{3i,3i+2}$ is of any form other than \ref{subfig:5x3E}, then $e(W\DI{3(i+1),3(i+1)+2}) \leq -1 \implies e(W\DI{1,3i+2})\geq 7/3$ which is impossible and $e(W\DI{3(i-1),3(i-1)+2}) \leq -1 \implies e(W\DI{1,3(i-1)-1}) = 4/3$ and $e(W\DI{1,3i+2}) = 4/3$ which implies $W\DI{1,3i+2}$ is a 2-full word in $\Wordsdhwvarshort{2}{(3i+2)}{5}$ with an interior tile of excess 1 in contradiction with the minimality of $k$. 

If $W\DI{3i,3i+2}$ is of the form \ref{subfig:5x3E}, then it is either adjacent to a $3\times5$ factor of excess $-1$ which is impossible from the argument above, or both of its adjacent $3\times 5$ factors are of excess 0. If $i+1 < k$, then $e(W\DI{3(i+2),3(i+2)+2}) \leq -2$ which implies $e(W\DI{3i,3(i+2)+2}) \leq -1$ and $e(W\DI{1,3i-1}) = 7/3$ which is impossible. $i+1 = k$ is thus forced. Now, $e(W\DI{3(i-2),3(i-2)+2}) = -2$ which implies $e(W\DI{3(i-2),3k+2})=-1$ which finally implies $e(W\DI{1,3(i-2)-1}) = 7/3$. This is impossible. In all cases, $W\DI{3k,3k+2}$ must have excess 1.

At this point, we have proved that if $W \in \MWordsdhwvarshort{2}{(3k+2)}{5}$ has at least one interior tile of excess 1, (in this case $W\DI{3i,3i+2}$), then $W$ has at least one interior tile of excess $-1$ and $e(W\DI{3k,3k+2}) = 1$.

First assume there exists $j>0$ such that $i+j <k$ and $e(W\DI{3(i+j),3(i+j)+2}) \leq -1$ (i.e. there is an interior tile of excess $-1$ below $W\DI{3i,3i+2}$). 

Observe that there can be only one $3\times 5$ factor below $W\DI{3i,3i+2}$
of negative excess because this would imply $e(W\DI{1,3i+2}) \geq7/3$ which is impossible. Moreover if there is one $j>0$ such that $e(W\DI{3(i+j),3(i+j)+2}) \leq -1$ then $e(W\DI{1,3i+2}) = 4/3$ and there exists no $3\times 5$ interior tile of excess $\leq-1$ above by minimality of $k$. So we have $e(W\DI{3(i+j),3k+2})=0$. If $W\DI{3i,3i+2}$ is of the form \ref{subfig:5x3E}, then $e(W\DI{3(i+1),3(i+1)+2}) = 0$ implies $e(W\DI{3(i+2),3(i+2)+2}) \leq -2$ which is impossible by Proposition \ref{prop:5xhmin2factor}. This implies $W\DI{3i,3i+2}$, regardless of its form, must be adjacent to a $3\times 5$ factor of excess $-1$. This means that $e(W\DI{3(i+1),3(i+1)+2}) = -1$ and thus $j=1$.

We now have that $e(W\DI{1,3i+2}) = 4/3$ which from Lemma \ref{lem:5x3k+2upbound} means $W\DI{1,3i+2} \in \MWordsdhwvarshort{d}{3i+2}{5}$. This means, by minimality of $h$, that $W\DI{1,3i+2}$ has no interior tile of excess 1 or greater.

So we have the following picture:
$$e(W\DI{1,5}) = 1/3, e(W\DI{3k,3k+2}) =1,$$
$$e(W\DI{3i,3i+2}) = 1, e(W\DI{3(i+1),3(i+1)+2}) = -1$$
and all other interior tiles of $W$ have excess 0. We claim that no forms of $W\DI{3i,3i+2}$ satisfy the conditions above. 

Forms \ref{subfig:5x3B}, \ref{subfig:5x3C}, \ref{subfig:5x3D} and \ref{subfig:5x3J} were ruled out at the beginning of the proof. Forms \ref{subfig:5x3A}, \ref{subfig:5x3F} and \ref{subfig:5x3I} force both $W\DI{3(i+1),3(i+1)+2}$ and $W\DI{3(i-1),3(i-1)+2}$ to be of excess at most $-1$ which is not permitted. If $W\DI{3i,3i+2}$ is of the forms \ref{subfig:5x3E} or \ref{subfig:5x3G}, then $e(W\DI{3(i-1),3(i-1)+2}) = 0$ implies $e(\DI{3(i-2),3(i-2)+2}) = -2$ (Figure \ref{fig:5x6probleme}). Finally, if $W\DI{3i,3i+2}$ is of the form \ref{subfig:5x3H}, then $e(W\DI{3(i+1),3(i+1)+2}) = -1$ implies $e(W\DI{3(i+2),3(i+2)+2}) \leq -1$ which is not permitted (Figure \ref{fig:5xhEdge}). No possible forms of $W\DI{3i,3i+2}$ satisfy the conditions. This means there cannot be an interior tile of excess $-1$ below $W\DI{3i,3i+2}$.

Finally, observe that if there are no interior tile of excess $-1$ below $W\DI{3i,3i+2}$, then the factor $W\DI{3i,3k+2}$ must have excess exactly $2>1=\emax(3(k-i+1),5)$ which contradicts the hypothesis. This completes the proof of $i)$

\begin{figure}

\centering
\input{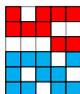}
\caption{In red, one  possible $3\times 5$ factor of excess $-1$  adjacent to  a factor of form \ref{subfig:5x3H}. This factor and any other possible factor has an adjacent $3\times 5$ factor above of excess at most $-1$.}

\label{fig:5xhEdge}
\end{figure}

To prove $ii)$, observe that if $e(W) = 4/3$ and $W$ has no interior tile of excess of excess 1 (from $i)$) then $e(W\DI{1,5}) + e(W\DI{3k,3k+2}) = 4/3$. If $e(W\DI{3k,3k+2}) = 2$, then it is of the form seen in Figure \ref{fig:5x3}. This means $W[3(k-1)+2]$ is empty thus $e(W\DI{3(k-1),3(k-1)+2}) \leq 2$ which is impossible by Proposition \ref{prop:5xhmin2factor}. If $e(W\DI{1,5}) = 4/3$, then it must be of the form seen in Figure \ref{subfig:5x5} and $e(W\DI{3k,3k+2}) = 0$. Observe that $W[5] = (\o,\e,\o,\o,\o)$ where the three last cells are of degree 2. This force $e(W\DI{6,8})\leq-1$. But this yields $e(W\DI{1,8}) = 1/3$ which, along with $e(W\DI{3k,3k+2}) = 0$ implies one interior tile of $W$ has excess at least 1 which is impossible. Only $e(W\DI{1,5}) = 1/3, e(W\DI{3k,3k+2})$ is possible which is what was needed.

$iii)$ is immediate from $i)$ and $ii)$.

\end{proof}

\begin{corollary}
\label{cor:5xhTileH}
For $k \geq 2$, up to symmetry, the unique $3 \times 5$ factor of excess $1$ at the top and bottom of a 2-full $W \in \MWordsdhwvarshort{2}{(3k+2)}{5}$ is the factor in Figure \ref{subfig:5x3H}.
\end{corollary}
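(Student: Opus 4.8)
The plan is to combine Corollary~\ref{cor:5xhTile} with the vertical reflection symmetry of $W$, and then to run a finite case analysis over the forms of Figure~\ref{fig:5x3probleme}. Fix $W \in \MWordsdhwvarshort{2}{(3k+2)}{5}$ with $k \geq 2$. Since $W$ is 2-full, Lemma~\ref{lem:basecases} guarantees that no factor of $W$ of dimensions $h' \times 5$ has excess exceeding $\emax(h',5)$, so Corollary~\ref{cor:5xhTile} applies to $W$; a vertical reflection being a symmetry of the grid, it also applies to the vertical mirror image $\tilde{W}$ of $W$. Part~(ii) of that corollary gives $e(W\DI{1,5}) = 1/3$ and $e(W\DI{3k,3k+2}) = 1$; applying it to $\tilde{W}$ adds $e(W\DI{3k-2,3k+2}) = 1/3$ and $e(W\DI{1,3}) = 1$. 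Write $T := W\DI{1,3}$. By that same symmetry it suffices to prove that $T$ is, up to symmetry, the factor of Figure~\ref{subfig:5x3H}; the claim for $W\DI{3k,3k+2}$ then follows by applying the conclusion to $\tilde{W}$.

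Next I would pin down the row $W[4]$ just below $T$. By Proposition~\ref{prop:additivity}, $e(W\DI{4,5}) = e(W\DI{1,5}) - e(T) = -2/3$, hence $|W\DI{4,5}|_\o = 6$. Moreover $W\DI{5,3k+2}$ has height $3k-2 \equiv_3 1$, so $e(W\DI{5,3k+2}) \leq \emax(3k-2,5) = 2/3$ (for $k=2$ reading the $4\times 5$ factor as a $5\times 4$ one); together with $e(W\DI{4,3k+2}) = e(W) - e(T) = 1/3$ this yields $e(W[4]) = e(W\DI{4,3k+2}) - e(W\DI{5,3k+2}) \geq -1/3$, that is $|W[4]|_\o \geq 3$, and consequently $|W[5]|_\o = 6 - |W[4]|_\o \leq 3$.

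The heart of the argument is the inspection of the possibilities for $T$. Since $e(T) = 1$, the word $T$ is, up to symmetry, one of the factors of Figure~\ref{fig:5x3probleme}; it is not form~\ref{subfig:5x3J}, whose excess is $2$, so $T$ is one of forms~\ref{subfig:5x3A}--\ref{subfig:5x3I}, and up to a horizontal reflection one is left only with the choice of which extreme row of $T$ is the row $W[3]$ adjacent to $W[4]$. For each such placement, the condition that every $\o$-cell of $W[3]$ have degree at most $2$ forces a prescribed subset of $W[4]$ to be empty; a direct inspection shows that at most $3$ cells of $W[4]$ may be filled in every placement, and at most $2$ --- which already contradicts $|W[4]|_\o \geq 3$ --- for forms~\ref{subfig:5x3A}, \ref{subfig:5x3B}, \ref{subfig:5x3F}, \ref{subfig:5x3I}, for the placements of \ref{subfig:5x3C} and \ref{subfig:5x3D} with a full extreme row as $W[3]$, for \ref{subfig:5x3G} with $W[3] = \o\e\o\o\o$, and for \ref{subfig:5x3H} with $W[3] = \o\o\o\e\o$. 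In the remaining placements of \ref{subfig:5x3C}, \ref{subfig:5x3D}, \ref{subfig:5x3E} and \ref{subfig:5x3G}, exactly $3$ cells of $W[4]$ may be filled, so $W[4]$ is forced to a unique configuration which in each case is a horizontal block of three cells whose two end cells already have degree $2$; this in turn forces at most $2$ cells of $W[5]$ to be filled, contradicting $|W[5]|_\o = 3$. Only form~\ref{subfig:5x3H} with $W[3] = \o\o\e\o\o$ (and middle row $\o\e\o\o\e$) survives, and there one checks directly that the forced row $W[4]$ and the row $W[5]$ admit consistent completions. Hence $T$ is form~\ref{subfig:5x3H} up to symmetry, and applying this to $\tilde{W}$ shows that the bottom factor $W\DI{3k,3k+2}$ is form~\ref{subfig:5x3H} as well, which proves the corollary.

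I expect the main obstacle to be this last step. Although it is only a finite verification, it is delicate: forms~\ref{subfig:5x3C}, \ref{subfig:5x3D}, \ref{subfig:5x3E} and \ref{subfig:5x3G} are not eliminated by the cell count on $W[4]$ alone and are disposed of only after invoking the auxiliary identity $|W[5]|_\o = 6 - |W[4]|_\o$, while form~\ref{subfig:5x3H} genuinely behaves differently in its two vertical orientations, so that the placement of its row $\o\o\e\o\o$ versus $\o\o\o\e\o$ against $W[4]$ must be kept track of carefully.
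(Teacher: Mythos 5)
Your proof is correct, and it reaches the conclusion by a route that differs from the paper's in its elimination mechanism. Both arguments reduce to the same finite catalogue (Figure~\ref{fig:5x3probleme}) and discard everything but form~\ref{subfig:5x3H}, but the paper does so by tile-excess bookkeeping: it argues that every other form forces an adjacent $3\times 5$ factor of excess at most $-1$ (possibly after one intermediate factor of excess $0$), contradicting the prohibitions of Corollary~\ref{cor:5xhTile}(i)/(iii). You instead extract the sharp values $e(W\DI{1,3})=1$ and $e(W\DI{1,5})=1/3$ from part~(ii) of that corollary applied to $W$ and to its vertical mirror, combine them with $e(W\DI{5,3k+2})\le\emax(3k-2,5)=2/3$ to get the row counts $|W[4]|_\o\ge 3$ and $|W[4]|_\o+|W[5]|_\o=6$, and then kill each candidate form by direct degree forcing on rows $4$ and $5$ only. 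I checked the cell counts for all placements of forms~\ref{subfig:5x3A}--\ref{subfig:5x3I} and they are as you state (form~\ref{subfig:5x3A} in fact allows at most one cell in $W[4]$, even better than your bound), and the surviving orientation of~\ref{subfig:5x3H} with $W[3]=\o\o\e\o\o$ is exactly the one realized in Figure~\ref{subfig:5x8}. What your version buys is locality and verifiability: you only ever inspect the two rows immediately below the top tile, and you avoid leaning on the ``no adjacent factor of excess $-1$'' claim for $W\DI{4,6}$, which is not literally one of the interior tiles $W\DI{6,8},\dots,W\DI{3k-3,3k-1}$ named in Corollary~\ref{cor:5xhTile}, so the paper's appeal to that corollary requires a little extra interpretation that your argument sidesteps. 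The cost is that your case analysis is somewhat longer and depends on the exact identity $|W\DI{4,5}|_\o=6$, but every step of it is sound.
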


\begin{proof}
Figure \ref{fig:5x3probleme} shows all $3\times 5$ factors of excess 1 up to symmetry. All factors other than \ref{subfig:5x3H} are discarded as a top tile of $W$ because they either admit adjacent $3\times 5$ factors of excess at most $-1$ or a factor of excess 0 that itself has an adjacent factor of excess at most $-1$ (Figures \ref{fig:5xhEdgeB} and \ref{fig:5x6probleme} for examples). The presence of these factors of excess at most $-1$ are in contradiction with Corollary \ref{cor:5xhTileH}. 
By symmetry, the same argument is true for the bottom $3\times 5$ factor of $W$.
\end{proof}

\begin{figure}

\centering
\input{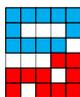}
\caption{In red, the unique $3\times 5$ factor of excess 0 that can be adjacent to a factor of form \ref{subfig:5x3B}. We see that any $3\times 5$ factor below it has excess at most $-1$.}

\label{fig:5xhEdgeB}
\end{figure}

We will use the last results to prove the following lemmas.

\begin{lemma}
\label{lem:5x3kupbound}
Let $W \in \Wordsdhwvarshort{2}{3k}{5}$ $k\geq 2$ be such that no $5\times h'$ factor $h'<3k$ has excess greater than $\emax(h',5)$. Then $e(W) \leq \emax(3k,5)$.
\end{lemma}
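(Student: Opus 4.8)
The plan is to argue by contradiction, following the same scheme as Lemma~\ref{lem:5x3k+2upbound}. Since $2\cdot(3k)\cdot 5/3 = 10k$ is an integer, the excess of any $3k\times 5$ word is an integer, so a counterexample $W$ would satisfy $e(W) = \emax(3k,5) + 1 = 2$. I would peel off the top five rows: $W\DI{1,5}$ is a factor of height $5 < 3k$, so the hypothesis gives $e(W\DI{1,5}) \le \emax(5,5) = 4/3$. As the excess of a $5\times 5$ word lies in $1/3 + \Z$, there are exactly two cases to handle, namely $e(W\DI{1,5}) = 4/3$ and $e(W\DI{1,5}) \le 1/3$.

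Case $e(W\DI{1,5}) = 4/3$. Then $W\DI{1,5}$ is maximal, hence (by the classification in Lemma~\ref{lem:basecases}, see Figure~\ref{subfig:5x5}) the unique such word, whose bottom row $(\o,\e,\o,\o,\o)$ already contains three cells of degree $2$; therefore $W[6,3] = W[6,4] = W[6,5] = \e$, so $|W[6]|_\o \le 2$ and $e(W[6]) \le -4/3$, giving $e(W\DI{1,6}) \le 0$. If $k = 2$, then $W\DI{1,6} = W$ and $e(W) \le 0$, contradicting $e(W) = 2$. If $k \ge 4$, then $e(W\DI{7,3k}) = e(W) - e(W\DI{1,6}) \ge 2$ while $W\DI{7,3k}$ has height $3(k-2)\equiv_3 0$ and $\emax(3(k-2),5) = 1 < 2$, contradicting the hypothesis. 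If $k = 3$, then $W\DI{7,9}$ is a $3\times 5$ factor of excess $\ge 2 = \emax(3,5)$, hence the unique maximal word of Figure~\ref{fig:5x3}, whose top row consists of degree-$2$ cells; this forces $W[6]$ to be empty, and then $e(W) = 4/3 - 10/3 + 2 = 0$, a contradiction.

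Case $e(W\DI{1,5}) \le 1/3$. Then $e(W\DI{6,3k}) = e(W) - e(W\DI{1,5}) \ge 5/3$, while $W\DI{6,3k}$ has height $3k-5 = 3(k-2)+1 \equiv_3 1$. For $k \ge 3$ this contradicts the hypothesis, since $\emax(3(k-2)+1,5) = 2/3 < 5/3$. For $k = 2$, the factor $W\DI{6,3k}$ is the single row $W[6]$, so $5/3 \le e(W[6]) \le \emax(1,5) = 5/3$; hence $e(W[6]) = 5/3$ and $e(W\DI{1,5}) = 1/3$, and $W[6]$ is a full row. Then $W[5,2], W[5,3], W[5,4]$ must be empty, so $|W[5]|_\o \le 2$, $e(W[5]) \le -4/3$, and $e(W\DI{1,4}) = e(W\DI{1,5}) - e(W[5]) \ge 1/3 + 4/3 = 5/3 > 2/3 = \emax(4,5)$, again a contradiction. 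Since all cases are impossible, $e(W) \le 1 = \emax(3k,5)$.

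The routine ingredient is the bookkeeping of the values $\emax(h',5)$ for heights $h' \equiv_3 0,1,2$ near $3k$. The points demanding real care — and what I expect to be the main obstacle — are, first, identifying exactly which extremal $5\times 5$ and $3\times 5$ words can occur and which degree-$2$ cells they impose on an adjacent row (this leans on the classification established in Lemma~\ref{lem:basecases} and Corollary~\ref{cor:5xhTile}), and second, disposing of the short heights $k = 2$ and $k = 3$, where the peeled-off bands are too short for the inductive hypothesis and the excess must instead be computed exactly.
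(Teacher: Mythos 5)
Your proof is correct, but it follows a genuinely different decomposition from the paper's. The paper handles $k=2,3,4,5$ by computer verification and, for $k\geq 6$, writes $W=W\DI{1,8}\vcat W\DI{9,10}\vcat W\DI{11,3k}$ so that the two outer bands have height $\equiv_3 2$; it then invokes the structural Corollaries~\ref{cor:5xhTile} and \ref{cor:5xhTileH} (which describe the boundary rows of $2$-full $(3k+2)\times 5$ words) to bound the number of filled cells that can fit in the middle $2\times 5$ band, ruling out each distribution of excess between the two outer bands. You instead peel off only the top $5\times 5$ block, exploit the integrality of the excess of a $3k\times 5$ word, and reduce everything to the uniqueness of the extremal $5\times 5$ word (Figure~\ref{subfig:5x5}) and of the extremal $3\times 5$ word (Figure~\ref{fig:5x3}), together with the arithmetic of $\emax(\cdot,5)$. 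This buys a uniform treatment of all $k\geq 2$ with no computer enumeration beyond the two small extremal configurations, and it avoids the machinery of Corollaries~\ref{cor:5xhTile} and \ref{cor:5xhTileH} entirely; the paper's route, by contrast, recycles structural facts it needs anyway for the cases $h\equiv_3 1$ and $h\equiv_3 2$. Two small points of hygiene: a counterexample a priori satisfies $e(W)\geq 2$ rather than $e(W)=2$, though every step of your argument uses only the lower bound, so nothing breaks; and the unique maximal $5\times 5$ word occurs only up to symmetry, but each of its symmetric images has three degree-$2$ cells in every boundary row, so your bound $|W[6]|_\o\leq 2$ survives.
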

\begin{proof}
Let $h = 3k$, $k \geq 2$. Then
$$\emax(h,5) = \emax(3k,5) = 1.$$

We now prove that no $W \in \Wordsdhwvarshort{2}{3k}{5}$ with $k\geq2$ has excess $e(W) \geq 2$. The cases $k=2,3,4,5$ can be verified with a computer program.

Let $k \geq 6$ and $W \in \Wordsdhwvarshort{2}{3k}{5}$ be minimal such that $e(W) = 2$. We factorize $W$ as $W=W\DI{1,8}\vcat W\DI{9,10}\vcat W\DI{11,3k}$. Observe that $8 = 3(2)+2$ and $3k-10 = 3(k-4)+2$. So Corollaries \ref{cor:5xhTile} and \ref{cor:5xhTileH} apply to both $W\DI{1,8}$ and $W\DI{11,3k}$. If the factors $W\DI{1,8}$ and $W\DI{11,3k}$ are 2-full, they have excess $4/3$ so that in order to satisfy $e(W)=2$ we must have $|W\DI{9,10}|_{\o} = 6$. However, from Corollary \ref{cor:5xhTileH}, $W\DI{1,8}$ has 3 degree 2 cells on its bottom row and $W\DI{11,3k}$ has 3 degree 2 cell on its top row 
which leaves room for at most 4 $\o$ cells.

If $e(W\DI{1,8})=1/3$ and $e(W\DI{11,3k}) = 4/3$, then we must have $|W\DI{9,10}|_{\o} = 7$ which is again impossible because there is room for at most 6 $\o$ cells in  $W\DI{9,10}$.

Likewise, if $e(W\DI{11,3k}) = e(W\DI{1,8})=1/3$, then we must have $|W\DI{9,10}|_{\o} = 8$ which is impossible because it implies rows $W[8]$ and $W[11]$ both have at most 2 $\o$ cells which contradicts $e(W\DI{11,3k}) = e(W\DI{1,8})=1/3$.

Finally, if $e(W\DI{1,8})=-2/3$ and $e(W\DI{11,3k}) = 4/3$ then we must have $|W\DI{9,10}|_{\o} = 8$ which is impossible because $W\DI{11,3k}$ being 2-full implies there is at most room for 6 $\o$ cells in $W\DI{9,10}$.

All cases have been covered up to symmetry.

\end{proof}

\begin{lemma}
\label{lem:5x3+1kupbound}
Let $W \in \Wordsdhwvarshort{2}{3k+1}{5}$ $k\geq 2$ be such that no $5\times h'$ factor $h'<3k+1$ has excess greater than $\emax(h',5)$. Then $e(W) \leq \emax(3k,5)$.
\end{lemma}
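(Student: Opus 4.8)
The plan is to argue by contradiction, following the pattern of Lemmas~\ref{lem:5x3k+2upbound} and~\ref{lem:5x3kupbound}. First observe that $\emax(3k+1,5)=2/3$ and that, since the area of a word of dimensions $3k+1\times 5$ is an integer, its excess lies in $\frac13\Z$ and is congruent to $2/3$ modulo $1$; more generally a vertical factor of height $h'$ has excess congruent to $-2h'/3$ modulo $1$. Hence a counterexample $W$ satisfies $e(W)\geq 5/3$. If $e(W)\geq 8/3$ then, writing $W=W\DI{1,3k-3}\vcat W\DI{3k-2,3k+1}$, the hypothesis gives $e(W\DI{3k-2,3k+1})\leq\emax(5,4)=2/3$, so $e(W\DI{1,3k-3})\geq 8/3-2/3=2>1=\emax(3k-3,5)$ (for $k\geq 3$), contradicting the hypothesis; the finitely many small $k$ being checked by computer as in Lemma~\ref{lem:5x3kupbound}, we may assume $e(W)=5/3$ and $k$ large.

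The idea is to cut $W$ so that the piece next to the cut is governed by the width-$5$ structure results already available. Split $W=W\DI{1,8}\vcat W\DI{9,3k+1}$: both factors have height congruent to $2$ modulo $3$, so Corollaries~\ref{cor:5xhTile} and~\ref{cor:5xhTileH}, Proposition~\ref{prop:5xhmin2factor} and Lemma~\ref{lem:5x3k+2upbound} all apply to them (their hypotheses being inherited from that of $W$). Since $e(W\DI{1,8})\leq 4/3$ and $e(W\DI{9,3k+1})\leq 4/3$ by hypothesis, and since the two excesses sum to $5/3$ and are each congruent to $1/3$ modulo $1$, we get $\{e(W\DI{1,8}),e(W\DI{9,3k+1})\}=\{1/3,4/3\}$. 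The problem is invariant under the vertical flip of the rectangle, which swaps the roles of the two factors, so we may assume $W\DI{1,8}$ is $2$-full, i.e. $e(W\DI{1,8})=4/3$.

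Next, Corollary~\ref{cor:5xhTileH} forces the bottom $3\times 5$ tile $W\DI{6,8}$ of $W\DI{1,8}$ to be, up to a horizontal or vertical flip, the factor of Figure~\ref{subfig:5x3H}. Inspecting the possible orientations, the interface row $W[8]$ carries either three cells of degree $2$ (then $|W[9]|_{\o}\leq 2$, so $e(W\DI{1,9})\leq 0$ and $e(W\DI{10,3k+1})\geq 5/3>\emax(3k-8,5)=2/3$, a contradiction) or two cells of degree $2$ (then $|W[9]|_{\o}\leq 3$, forcing $W\DI{1,9}$ to be $2$-full of excess $1$ and $W\DI{10,3k+1}$ to be $2$-full of excess $2/3$). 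In the latter situation one pins down $W[9]$ and $W\DI{7,9}$ exactly and continues as in Lemmas~\ref{lem:5x3k+2upbound} and~\ref{lem:5x3kupbound}: applying Corollary~\ref{cor:5xhTile} to the extremal and interior tiles of the $2$-full bands, Proposition~\ref{prop:5xhmin2factor} to exclude a $5\times 3$ sub-factor of excess $-2$, and, whenever a two-row band turns out to be $2$-full, the classification of $2\times 5$ $2$-full words from Proposition~\ref{prop:w=2} (Figure~\ref{2x5red}); in each branch one exhibits a contiguous band of rows whose excess exceeds $\emax$ of its height, contradicting the hypothesis. Since every branch fails, $e(W)=5/3$ is impossible, and therefore $e(W)\leq\emax(3k+1,5)$.

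The hard part is the same as for the other two residues of $h$ modulo $3$: the elementary subadditivity of $\emax(\cdot,5)$ only delivers $e(W)\leq 5/3$, which is precisely the value that must be excluded, so the final $1/3$ has to be recovered by a delicate, figure-driven count of the degree-$2$ cells straddling the horizontal cut, together with a case analysis over the orientations of the forced shape of Figure~\ref{subfig:5x3H} and over which of the short bands end up $2$-full. This part is routine but long, which is why it mirrors, rather than shortens, the proofs of Lemmas~\ref{lem:5x3k+2upbound} and~\ref{lem:5x3kupbound}.
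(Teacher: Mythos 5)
Your overall strategy is the paper's: reduce to $e(W)=5/3$, cut off a band of height $\equiv_3 2$, and exploit the structure of $2$-full words of dimensions $(3k'+2)\times 5$ through Corollaries~\ref{cor:5xhTile} and~\ref{cor:5xhTileH}. Your two-piece cut $W=W\DI{1,8}\vcat W\DI{9,3k+1}$ is in fact tidier than the paper's three-piece cut $W\DI{1,8}\vcat W\DI{9,11}\vcat W\DI{12,3k+1}$, because the congruence of the excess modulo $1$ forces $\{e(W\DI{1,8}),e(W\DI{9,3k+1})\}=\{1/3,4/3\}$ outright and spares the residual outer-excess pairs (such as $(4/3,-2/3)$) that the paper has to dismiss separately. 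Your reduction to $e(W)=5/3$ and your first branch (three degree-$2$ cells on the interface row, hence $|W[9]|_\o\leq 2$ and a band of height $\equiv_3 1$ of excess at least $5/3$) are correct, modulo the computer check for small $k$ that both you and the paper invoke.

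The gap is the second branch. Having allowed the bottom tile of the $2$-full $W\DI{1,8}$ to be either vertical orientation of the factor of Figure~\ref{subfig:5x3H}, you are left with the case where $W[8]$ carries only two degree-$2$ cells, so that $|W[9]|_\o\leq 3$ and both $W\DI{1,9}$ and $W\DI{10,3k+1}$ are forced to be $2$-full; at that point you say one ``pins down $W[9]$ and $W\DI{7,9}$ exactly and continues as in'' the neighbouring lemmas. That is not a proof: no contradiction is exhibited, and the tools you list do not obviously supply one (in particular, Proposition~\ref{prop:w=2} only gives the maximal area of $h\times 2$ words, not a classification of $2$-full $2\times 5$ factors). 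The branch can, however, be closed for free: by Proposition~\ref{prop:5xhunicity} the $2$-full word of dimensions $(3k'+2)\times 5$ is unique up to symmetry, and its extremal rows (see Figures~\ref{subfig:5x8} and~\ref{subfig:5xhfinB}) always carry exactly three cells of degree $2$ --- this is exactly the fact the paper uses when it asserts that $W\DI{1,8}$ ``has 3 degree 2 cells on its bottom row''. With that observation your second branch is vacuous and your first branch alone finishes the argument; as written, though, the proof is incomplete.
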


\begin{proof}
Let $h = 3k +1$, $k \geq 2$. Then
$$\emax(h,5) = \emax(3k+1,5) = 2/3$$

We now prove that no $W \in \Wordsdhwvarshort{2}{3k+1}{5}$ with $k\geq2$ has excess $e(W) \geq 5/3$. The cases $k=2,3,4,5$ can be verified with a computer program.

Let $k\geq 6$ and $W \in \Wordsdhwvarshort{2}{3k+1}{5}$ be minimal such that $e(W)=5/3$. We partition $W$ into $W\DI{1,8}\vcat W\DI{9,11}\vcat W\DI{12,3k+1}$. Observe that $8 = 3(2)+2$ and $3k+1-11 = 3(k-4)+2$. So Corollaries \ref{cor:5xhTile} and \ref{cor:5xhTileH} apply to both $W\DI{1,8}$ and $W\DI{12,3k+1}$. If both $W\DI{1,8}$ and $W\DI{12,3k+1}$ are 2-full, they both have excess $4/3$ so in order to have $e(W) = 5/3$, we must have $|W\DI{9,11}|_{\o} = 9$. However, from Corollary \ref{cor:5xhTileH}, $W\DI{1,8}$ has 3 degree 2 cell on its bottom row and $W\DI{12,3k+1}$ has 3 degree 2 cell on its top row which leaves room for at most 8 $\o$ cells.

If $e(W\DI{1,8})=4/3$ and $e(W\DI{12,3k+1})=1/3$, then we must have $|W\DI{9,12}|_{\o} = 10$. $W\DI{10,11}$ can have at most 8 $\o$ cells so $W[9]$ must contain at least 2 $\o$ cells. But since $W\DI{1,8}$ is 2-full, $W[9]$ can have at most 2 full cells one of which is degree 2. Both these constraints together creates at least one $\o$ cell of degree 3 which is not permitted (Figure \ref{fig:5xhdeg3}).

\begin{figure}
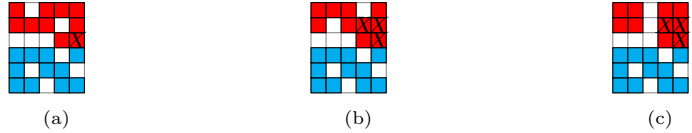

\begin{subfigure}[t]{0.32\textwidth}\centering
\centering
\input{tikz/5xhdeg3A.tikz}
\caption{}
\label{subfig:5xhdeg3A}
\end{subfigure}
\begin{subfigure}[t]{0.32\textwidth}\centering
\centering
\input{tikz/5xhdeg3B.tikz}
\caption{}
\label{subfig:5xhdeg3B}
\end{subfigure}
\begin{subfigure}[t]{0.32\textwidth}\centering
\centering
\input{tikz/5xhdeg3C.tikz}
\caption{}
\label{subfig:5xhdeg3C}
\end{subfigure}
\caption{Marked in red are rows 9,10,11. Marked with an $X$ are the $\o$ cells of degree 3 that are forced.}
\label{fig:5xhdeg3}
\end{figure}

If $e(W\DI{1,8}) = e(W\DI{12,3k+1}) = 1/3$, then we must have $|W\DI{9,12}|_{\o} = 10$. This means $W\DI{9,12}$ must be of one of the forms in Figure \ref{fig:5x3probleme} except \ref{subfig:5x3J}. Observations from the proof of Corollary \ref{cor:5xhTile} tells us at least one of $W\DI{6,8}$ and $W\DI{12,14}$ has excess at most $-1$. We also observe that both $W[9]$ and $W[11]$ have at least 2 degree 2 cells. If $e(W\DI{6,8}) = -1$ then $W\DI{1,5}$ is 2-full and $W[5]$ has 3 degree 2 cells. If $e(W\DI{12,14})=-1$ then $W\DI{15,3k+1}$ is 2-full and $W[15]$ has three degree 2 cells. This means one of $W\DI{6,8}$ and $W\DI{15,3k+1}$ has at least 9 $\o$ cells but is in between a row of 3 degree 2 cells and a row of 2 degree 2 cells. This is impossible.

All other cases up to symmetry are easily excluded and we have proved that $e(W) = 5/3$ is impossible. 

\end{proof}

We are now ready to prove the following corollary.

\begin{corollary}
\label{cor:w=5}
$e_{max}(h,5) = \emax(h,5) \quad \forall h\geq 5$.
\end{corollary}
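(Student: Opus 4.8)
The plan is to derive Corollary~\ref{cor:w=5} by combining the lower bound already proved in Lemma~\ref{lem:w=5lowbound} with a matching upper bound obtained by strong induction on $h$. Since Lemma~\ref{lem:w=5lowbound} gives $e_{max}(h,5) \geq \emax(h,5)$ for every $h \geq 5$, it remains only to establish the reverse inequality, that is, that no $W \in \Wordsdhwvarshort{2}{h}{5}$ satisfies $e(W) > \emax(h,5)$.

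I would argue by strong induction on $h \geq 5$. Fix $h \geq 5$ and assume $e_{max}(h',5) \leq \emax(h',5)$ for all $h'$ with $5 \leq h' < h$ (this is vacuous when $h = 5$). For $h' < 5$ the inequality $e_{max}(h',5) \leq \emax(h',5)$ also holds: by the symmetry $\maxdhw{2}{h'}{5} = \maxdhw{2}{5}{h'}$ one has $e_{max}(h',5) = e_{max}(5,h')$, and the word of dimensions $5 \times h'$ with $h' \leq 4$ is covered by the base cases of width at most $4$, treated in the earlier subsections. Hence, for any $W \in \Wordsdhwvarshort{2}{h}{5}$, every vertical factor of $W$ of width $5$ and height $h' < h$ has excess at most $\emax(h',5)$ — and this is exactly the hypothesis demanded by Lemmas~\ref{lem:5x3kupbound}, \ref{lem:5x3+1kupbound} and \ref{lem:5x3k+2upbound}.

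It then suffices to split according to $h \bmod 3$ and apply the corresponding lemma: Lemma~\ref{lem:5x3k+2upbound} when $h \equiv_3 2$ (valid since then $h \geq 5$, i.e. $k \geq 1$), Lemma~\ref{lem:5x3kupbound} when $h \equiv_3 0$ (valid since then $h \geq 6$, i.e. $k \geq 2$), and Lemma~\ref{lem:5x3+1kupbound} when $h \equiv_3 1$ (valid since then $h \geq 7$, i.e. $k \geq 2$). In each case the conclusion is $e(W) \leq \emax(h,5)$, which closes the induction; together with Lemma~\ref{lem:w=5lowbound} this yields $e_{max}(h,5) = \emax(h,5)$ for all $h \geq 5$.

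The genuinely hard work has already been carried out inside the preceding lemmas — the $3 \times 5$ tile case analysis, the rigidity of saturating top and bottom factors (Corollaries~\ref{cor:5xhTile} and \ref{cor:5xhTileH}), and the exclusion of a $5 \times 3$ factor of excess $-2$ (Proposition~\ref{prop:5xhmin2factor}) — so this last step is essentially organizational. The only point requiring a little care is non-circularity: every conditional hypothesis invoked in Lemmas~\ref{lem:5x3kupbound}--\ref{lem:5x3k+2upbound} and in Corollaries~\ref{cor:5xhTile}, \ref{cor:5xhTileH} refers only to factors of strictly smaller height, and the small cases $k \in \{2,3,4,5\}$ disposed of by direct computation inside Lemmas~\ref{lem:5x3kupbound} and \ref{lem:5x3+1kupbound} are self-contained, so the recursion bottoms out and no statement is used to prove itself.
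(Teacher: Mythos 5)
Your proof is correct and follows essentially the same route as the paper: the paper phrases it as a minimal counterexample (the minimal bad height $h$ guarantees that every shorter factor satisfies the bound, so the three congruence-class lemmas apply), which is just the contrapositive form of your strong induction. Your added remarks — that factors of height $h'<5$ reduce by symmetry to the earlier base cases of width at most $4$, and that the lower bounds $k\geq 1$ resp.\ $k\geq 2$ in the three lemmas are met for every $h\geq 5$ in the corresponding residue class — are details the paper leaves implicit, and they check out.
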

\begin{proof}
Lemma \ref{lem:w=5lowbound} yields $e_{max}(h,5) \geq \emax(h,5)$. Now, by contradiction, let $h$ be minimal such that there exists $W \in \Wordsdhwvarshort{2}{h}{5}$ with $e(W) > \emax(h,5)$. By Lemma \ref{lem:5x3kupbound}, $h \neq 3k$ for any $k$. By Lemma \ref{lem:5x3+1kupbound}, $h \neq 3k+1$ for any $k$. Finally, by Lemma \ref{lem:5x3k+2upbound}, $h \neq 3k+2$ for any $k$. This is impossible which means $e_{max}(h,5) = \emax(h,5)$.
\end{proof}

Note that $w=5$ is the smallest width such that maximal snakes are of smaller area than 2-full words.

\subsubsection{\textbf{$w=6.$}}
Recall that for $h\geq 6$ we have 
\begin{align*}
\emax(h,6)=
\begin{cases}
2 &\mbox{ if } h\equiv_3 0,\\
1 &\mbox{ if } h\equiv_3 1,2
\end{cases}
\end{align*}

We will need the following propositions.
\begin{proposition}\label{prope(hx6)=2W[1]}
Let $k\geq 2,\; h=3k, h'<h$ and $W\in \Wordsdhwvarshort{2}{h}{6}$  with $e(W)=2$ such that every $h'\times 6$ factor $W'$ of $W$ has excess $e(W')\leq \emax(W')$. Then 
$$|W[1]|_\o=5,\; |W[2]|_\o=4,\; |W[3]|_\o=4.$$
\end{proposition}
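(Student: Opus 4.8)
The plan is to squeeze the excesses of the top layers of $W$ between the bound coming from the complementary tail factor and the value $\emax$, then read off the three row sizes; the size of row $3$ needs a finite case analysis combined with an induction on $k$.

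\emph{Forced layer excesses.} Split $W = W\DI{1,2} \vcat W\DI{3,h}$. The two factors have heights $2 \equiv_3 2$ and $3k-2 \equiv_3 1$, so by the hypothesis on factors of $W$ and the explicit formula for $\emax$ we have $e(W\DI{1,2}) \le \emax(2,6) = 1$ and $e(W\DI{3,h}) \le \emax(3k-2,6) = 1$ (with $\emax(2,6) = \emax(4,6) = 1$ taking care of $k = 2$). Since by Proposition~\ref{prop:additivity} these excesses add to $e(W) = 2$, both equal $1$; in particular $W\DI{1,2}$ is a maximal $2\times 6$ word. Applying the same reasoning to $W = W\DI{1,4} \vcat W\DI{5,h}$ (heights $4 \equiv_3 1$ and $3k-4 \equiv_3 2$, with $\emax(4,6) = \emax(3k-4,6) = 1$) gives $e(W\DI{1,4}) = e(W\DI{5,h}) = 1$.

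\emph{Rows $1$ and $2$.} Writing $e(W) = e(W[1]) + e(W\DI{2,h})$ and using $e(W\DI{2,h}) \le \emax(3k-1,6) = 1$ we get $e(W[1]) \ge 1$, i.e.\ $|W[1]|_{\o} \ge 5$. If $|W[1]|_{\o} = 6$, then each cell $(1,j)$ with $2 \le j \le 5$ has both horizontal neighbours filled, hence degree $2$, so $W[2,j] = \e$ there; thus $|W[2]|_{\o} \le 2$ and $e(W\DI{1,2}) \le 2 + (-2) = 0$, contradicting $e(W\DI{1,2}) = 1$. Hence $|W[1]|_{\o} = 5$, so $e(W[1]) = 1$, and then $e(W[2]) = e(W\DI{1,2}) - e(W[1]) = 0$, i.e.\ $|W[2]|_{\o} = 4$.

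\emph{Row $3$.} From $e(W\DI{1,4}) = e(W\DI{1,2}) + e(W[3]) + e(W[4]) = 1$ we obtain $|W[3]|_{\o} + |W[4]|_{\o} = 8$, so it suffices to show $|W[3]|_{\o} = 4$; we argue by induction on $k$. The maximal $2\times 6$ word $W\DI{1,2}$ has row sizes $5$ and $4$, and by Proposition~\ref{prop:w=2} ($\maxdhw{2}{2}{6} = 9$) a direct check shows that, up to a horizontal reflection, there are exactly four such words. For each of them, the cells of $W[2]$ already of degree $2$ inside $W\DI{1,2}$ must be empty in $W[3]$; combining the resulting upper bound on $|W[3]|_{\o}$ with $|W[3]|_{\o} + |W[4]|_{\o} = 8$, with the degree saturation this forces further down onto rows $4$ and $5$, and with the bounds $e(W\DI{5,h}) \le \emax(3k-4,6) = 1$ and $e(W\DI{6,h}) \le \emax(3k-5,6) = 1$ (for $k \ge 3$), three of the four words lead to a contradiction. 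In the surviving word (rows $\o\o\e\o\o\o$ and $\o\e\o\o\e\o$, up to reflection) only $W[3,4]$ is forced empty, so $|W[3]|_{\o} \le 5$; and $|W[3]|_{\o} = 5$ forces $W[3] = \o\o\o\e\o\o$, which saturates three cells of $W[4]$ and gives $|W[4]|_{\o} \le 2 \ne 3$, while $|W[3]|_{\o} = 3$ makes $W\DI{4,h}$ a maximal $3(k-1)\times 6$ word of excess $2$, so by the induction hypothesis (and the classification of its top $2\times 6$ block just used) its rows $4$ and $5$ are as above, and propagating their saturation upward leaves room for at most two filled cells in $W[3]$ — a contradiction. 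Hence $|W[3]|_{\o} = 4$. For the base case $k = 2$ the word $W$ is $6\times 6$; applying the previous step to the vertical flip of $W$ gives $|W[6]|_{\o} = 5$, $|W[5]|_{\o} = 4$, and the case analysis above (which for $k=2$ eliminates the three exceptional blocks by direct row counting) makes both $W\DI{1,2}$ and $W\DI{5,6}$ the surviving word, so $|W[3]|_{\o}, |W[4]|_{\o} \le 5$ and neither can equal $5$; with sum $8$ this forces $|W[3]|_{\o} = |W[4]|_{\o} = 4$.

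\emph{Main obstacle.} The work is concentrated in the last paragraph: carrying out the case analysis over the four maximal $2\times 6$ blocks, propagating the degree-$2$ constraint downward to eliminate three of them and upward to rule out $|W[3]|_{\o} = 3$ in the survivor, and checking that the induction on $k$ closes — the difficulty being that in the surviving block the sizes of rows $1$ and $2$ alone do not determine $|W[3]|_{\o}$.
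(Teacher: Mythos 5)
Your proof is correct in outline and, for rows $1$ and $2$, is essentially the paper's argument: both establish $|W\DI{1,2}|_\o = 9$ by playing $e(W\DI{1,2})\le\emax(2,6)=1$ against $e(W\DI{3,h})\le\emax(3k-2,6)=1$, then rule out $|W[1]|_\o\le 4$ and $|W[1]|_\o=6$ exactly as you do. For row $3$ you take a genuinely different and heavier route. The paper argues directly on the value of $|W[3]|_\o$: if $|W[3]|_\o=5$ then $W\DI{1,3}$ is the unique $16$-cell configuration, which forces $|W[4]|_\o\le 2$, hence $e(W\DI{1,4})\le 0$ and $e(W\DI{5,h})\ge 2$ with height $\equiv_3 2$, a contradiction; if $|W[3]|_\o=3$ then $e(W\DI{4,h})=2$, the proposition applied to $W\DI{4,h}$ gives $|W[4]|_\o=5$, $|W[5]|_\o=4$, and the row distribution $(3,5,4)$ is impossible by inspection (Figure~\ref{hx6_45254}). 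You instead derive the extra identity $|W[3]|_\o+|W[4]|_\o=8$ from $e(W\DI{1,4})=1$ (a nice addition the paper does not use), classify the four maximal $2\times 6$ top blocks, eliminate three of them, and handle the survivor by an induction on $k$. I checked your unverified claims and they do hold: there are indeed four blocks up to reflection, the three non-survivors can all be killed (though for two of them this requires pushing the saturation down to row $5$ and invoking $e(W\DI{6,h})\le\emax(3k-5,6)$, which you assert but do not carry out), and in the survivor both $|W[3]|_\o=5$ and, via the induction, $|W[3]|_\o=3$ fail as you describe. Two small omissions: you never treat $|W[3]|_\o=2$ (it forces a full row $W[4]$, which empties rows $3$ and $5$ and immediately breaks $e(W\DI{5,h})=1$, so it is easy), and the inductive appeal to "the classification of its top $2\times 6$ block" should be stated as a separate lemma applied to $W\DI{4,h}$ rather than folded into the induction hypothesis, since the proposition's statement only records row sizes, not configurations. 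Net effect: your proof works, but the case analysis you defer in the last paragraph is the bulk of the labor, and the paper's direct excess bookkeeping on $|W[3]|_\o$ reaches the same conclusion with noticeably less inspection.
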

\begin{proof}
We first prove that $|W\DI{1,2}|_\o=9$. Indeed $|W\DI{1,2}|_\o>9$ is  impossible and 
\begin{align*}
|W\DI{1,2}|_\o\leq 8 &\Rightarrow  e(W\DI{1,2})\leq 0 \Rightarrow e(W\DI{3,h})\geq 2 \mbox{ and } h(W\DI{3,h})\equiv_3 1
\end{align*}
so that $e(W\DI{3,h})>\emax(W\DI{3,h})$ in contradiction with our hypothesis. We must then have $|W\DI{1,2}|_\o=9$. 
Next we prove that $|W[1]|_\o=5$. If $|W[1]|_\o=6$ then $|W[2]|_\o\leq 2$, $|W\DI{1,2}|_\o\leq 8$ which was just proved to be impossible. Also
\begin{align*}
|W[1]|_\o\leq 4 &\Rightarrow  e(W\DI{2,h})\geq2
\end{align*}
in contradiction with our hypothesis.  So we must have 
$|W[1]|_\o=5$ and $|W[2]|_\o=4$. Next we prove that $|W[3]|_\o=4$. We have
\begin{align*}
|W[3]|_\o= 3 &\Rightarrow  e(W\DI{1,3})=0 \Rightarrow  e(W\DI{4,h})= 2, \\
&\Rightarrow |W[4]|_\o=5 \mbox{ and }|W[5]|_\o=4.
\end{align*}
But $|W[3]|_\o= 3 ,|W[4]|_\o=5, |W[5]|_\o=4$ is impossible by inspection (Figure \ref{hx6_45254}). Since the hypothesis $|W[3]|_\o \leq 2$ is also rejected, we must have $|W[3]|_\o\geq 4$. But 
\begin{align*}
|W[3]|_\o= 5 &\Rightarrow W\DI{1,3} \mbox{ is unique and implies } |W[4]|_\o\leq 2, \\
&\Rightarrow e(W\DI{1,4})\leq 0,\\
&\Rightarrow e(W\DI{5,h})\geq 2 \mbox{ and } h(W\DI{5,h})\equiv_3 2
\end{align*}
in contradiction with our hypothesis. Since  $|W[3]|_\o=6$ is impossible when $|W[2]|_\o=4$ we must have  $|W[3]|_\o=4$ and the proof is complete. 
\end{proof}
Theorem \ref{thm:main} for $h\times 6$ is proved through a sequence of lemmas and propositions that cover all.cases. 

\begin{proposition} \label{prop_emax>=ê}
For $h\geq 6$ let $e_{max}(h,6)$ be the maximal excess of a word $W\in \Wordsdhwvarshort{2}{h}{6}$ and let $e_{max,fs}(h,6)$ be the maximal excess of a $h\times 6$ rectangle of forest of snakes.. Then
\begin{align*}
e_{max}(h,6)\geq
\begin{cases}
\emax(h,6) &\mbox{ if } h\geq 6\\
e_{max,fs}(h,6)&\mbox{ if } h\geq 7.
\end{cases}
\end{align*}
\end{proposition}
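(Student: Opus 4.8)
The proof is purely constructive. For each residue of $h$ modulo $3$ I will exhibit a word $W_h \in \Wordsdhwvarshort{2}{h}{6}$ with $e(W_h) = \emax(h,6)$, which immediately gives $e_{max}(h,6) \geq \emax(h,6)$; and for $h \geq 7$ I will take $W_h$ so that the induced subgraph $G[W_h]$ is a linear forest, giving the sharper fact $e_{max,fs}(h,6) \geq \emax(h,6)$. (The displayed second line $e_{max}(h,6) \geq e_{max,fs}(h,6)$ is then immediate, a forest of snakes being in particular a word of degree at most $2$; it is $e_{max,fs}(h,6) \geq \emax(h,6)$ that carries the content and is used downstream.)

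Since $2hw/3 = 4h$ when $w = 6$, the targets are $|W_h|_\o = 4h+2$ for $h \equiv_3 0$ and $|W_h|_\o = 4h+1$ for $h \equiv_3 1,2$. Following the template of Lemma~\ref{lem:4xh}, I would first fix a \emph{neutral} block $Z \in \Wordsdhwvarshort{2}{3}{6}$ with $e(Z) = 0$, i.e. $|Z|_\o = 8$, whose top and bottom rows are mutually compatible under vertical stacking: iterating $Z$ must keep every cell of degree at most $2$ and, for the forest variant, must neither close a cycle nor create a branch point in the induced subgraph. By Proposition~\ref{prop:additivity}, $e(Z^{k\times 1}) = 0$ for all $k$. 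I would then choose, for each residue, a top cap $A$ and a bottom cap $B$ of small height, compatible with $Z$ on the side where they meet it, whose heights add up to the correct residue modulo $3$ and with $e(A) + e(B) = \emax(h,6)$ (for instance excesses $(1,1)$ when $h \equiv_3 0$ and $(1,0)$ when $h \equiv_3 1,2$, the cap heights adjusted so that $h$ minus their sum is a nonnegative multiple of $3$). Setting
$$W_h = A \vcat Z^{k \times 1} \vcat B$$
for the appropriate $k$, compatibility guarantees $W_h \in \Wordsdhwvarshort{2}{h}{6}$, and additivity of the excess gives $e(W_h) = e(A) + e(B) = \emax(h,6)$.

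The finitely many small heights for which this generic scheme does not yet apply, essentially $h \in \{6,7,8\}$ (plus $h = 6$ for the forest variant, where an excess-$2$ word in a $6 \times 6$ square cannot induce a linear forest), I would settle by explicit words displayed in a figure, exactly as in Subsection~\ref{ss:w=4} and in the $w=5$ base cases; this is also why the second line of the statement requires $h \geq 7$. The resulting constructions can be sanity-checked against Proposition~\ref{prope(hx6)=2W[1]}: in an excess-$2$ word the first row must carry five $\o$-cells and the next two four each.

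I expect the main obstacle to be the seam bookkeeping: one must pin down the boundary rows at the two interfaces (bottom of $A$ against the two rows of $Z$, and symmetrically for $B$) so that no repeated stacking ever creates a cell of degree $3$ or $4$, and, for $h \geq 7$, never a cycle or a degree-$3$ vertex of $G[W_h]$. Finding a single neutral $3 \times 6$ block that tiles with itself and with all of the caps at once is the delicate point; should no such block exist, the fallback is to use a $6 \times 6$ period as the repeating unit, which loosens the compatibility constraints at the price of a few more base cases checked by hand.
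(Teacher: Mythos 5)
Your construction is essentially the paper's own proof: the paper likewise stacks alternating copies of a single excess-$0$ tile $T\in\Wordsdhwvarshort{2}{3}{6}$, uses the top $i$ rows of $T$ as the residual block when $h\equiv_3 i$ with $i\in\{1,2\}$, and adds one extra cell at the top (and also at the bottom when $h\equiv_3 0$) to realize the excess $\emax(h,6)$, the resulting words being forests of snakes for $h\geq 7$ and the case $h=6$ being displayed separately. The one caveat is that the paper actually exhibits the tile and the finished words in Figure~\ref{hx6max}, whereas your write-up defers the existence of a self-compatible neutral block and caps (which you rightly identify as the delicate point) to an unproduced figure; since that existence is the entire content of the proposition, a complete version of your argument must display $Z$, $A$ and $B$ explicitly.
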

\begin{proof}
Figure \ref{hx6max} shows the case $h=6$ and illustrates a recipe for the construction of $(3k+i)\times 6$ forests of snakes W for $k\geq 2,i\in{0,1,2}$ with excess $e(W)=\emax(h,6)$. Blue and red copies of the same $3\times 6$ tile $T$ of excess $0$ are stacked at the bottom of each other in alternating order to create a $3k\times6$ rectangle. For $i\in\{1,2 \}$,
the bottom  $i\times 6$ tile is made of the top $i$ rows of  $T$. For $i=0$, the bottom row is $W[3]$. A
black cell is added to the top and bottom row when $i=0$ and only to the top when $i\in\{1,2 \}$. 
We thus have $e_{max}(h,6)\geq \emax(h,6)$.
\begin{figure*}[h!]
\begin{subfigure}[t]{.16\textwidth}
\centering
\includegraphics[height=.48in, valign=t]{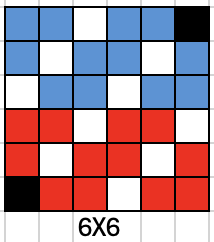}
\end{subfigure}%
~ 
\begin{subfigure}[t]{.16\textwidth}
\centering
\includegraphics[height=.56in,valign=t]{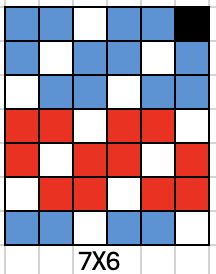}
\end{subfigure}%
~ 
\begin{subfigure}[t]{.16\textwidth}
\centering
\includegraphics[height=.62in,valign=t]{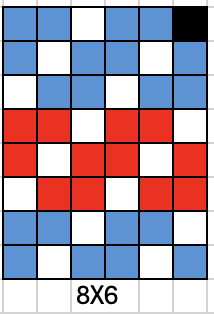}
\end{subfigure}%
~ 
\begin{subfigure}[t]{.16\textwidth}
\centering
\includegraphics[height=.68in,valign=t]{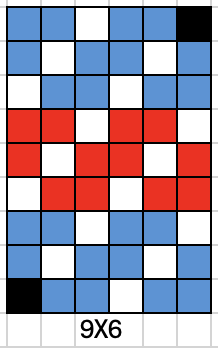}
\end{subfigure}%
~ 
\begin{subfigure}[t]{.16\textwidth}
\centering
\includegraphics[height=.8in,valign=t]{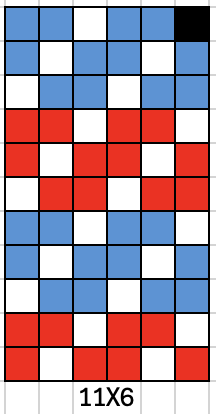}
\end{subfigure}%
~ 
\caption{  $h\times 6$ 2-full words}
\label{hx6max}
\end{figure*}
\end{proof}

\paragraph{\bf Case 1. $\mathbf{h\equiv_3 0}$}
\begin{lemma}\label{lem6xhe_maxh=0} For $h\geq 7$ let $h$  be minimal. such that 
$W\in \Wordsdhwvarshort{2}{h}{6}$ satisfies $e(W)=\emax(h,6)+1$. Then
$$ h\equiv_3 0 \Rightarrow |W[1]|_\o=|W[h]|_\o=6.$$
\end{lemma}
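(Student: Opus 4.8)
The plan is to peel off the top (resp. bottom) row of $W$ and play the additivity of the excess against the minimality of $h$. Note first that the hypotheses $h \geq 7$ and $h \equiv_3 0$ force $h \geq 9$, so when we remove one row the remaining factor still has height $\geq 8 \geq 7$, i.e. it lies in the range where the minimality of $h$ can be applied.

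Concretely, I would write $W = W[1] \vcat W\DI{2,h}$ and invoke Proposition~\ref{prop:additivity} to get $e(W) = e(W[1]) + e(W\DI{2,h})$. The factor $W\DI{2,h}$ has height $h-1$ with $7 \leq h-1 < h$ and $h-1 \equiv_3 2$, so by minimality of $h$ its excess cannot exceed $\emax(h-1,6) = 1$; hence $e(W\DI{2,h}) \leq 1$. On the other hand the case hypothesis $h \equiv_3 0$ gives $e(W) = \emax(h,6)+1 = 3$ (here it is worth recording that for $w = 6$ the excess is integer-valued, since $2hw/3 = 4h \in \Z$, so $\emax(h,6)+1$ really is the first value above the predicted maximum). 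Combining the two, $e(W[1]) = e(W) - e(W\DI{2,h}) \geq 3 - 1 = 2$. Since a width-$6$ row has at most $6$ filled cells, $e(W[1]) = |W[1]|_\o - 4 \leq 2$, so equality holds throughout and $|W[1]|_\o = 6$.

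The bottom row is handled identically: writing $W = W\DI{1,h-1} \vcat W[h]$, the factor $W\DI{1,h-1}$ again has height $h-1 \equiv_3 2$ with $7 \leq h-1 < h$, so $e(W\DI{1,h-1}) \leq \emax(h-1,6) = 1$, whence $e(W[h]) \geq 2$ and $|W[h]|_\o = 6$. I do not expect a genuine obstacle here; the only thing that needs care is the bookkeeping modulo $3$ — one must check that deleting a single row lands the remaining factor in the residue class with $\emax(\cdot,6) = 1$ and keeps its height $\geq 7$, both of which are guaranteed precisely because $h \equiv_3 0$ together with $h \geq 7$ yields $h \geq 9$.
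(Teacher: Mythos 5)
Your proof is correct and takes essentially the same route as the paper's: peel off the top (resp.\ bottom) row, use additivity of the excess, and apply the minimality of $h$ to the height-$(h-1)$ factor, whose height is $\equiv_3 2$ so that $\emax(h-1,6)=1$, forcing $e(W[1])\geq 2$ and hence $|W[1]|_\o=6$. The paper phrases this as a contradiction starting from $|W[1]|_\o\leq 5$ whereas you argue directly, but the content is identical.
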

\begin{proof}
\begin{align*}
h\equiv_3 0  \mbox{ and } |W[1]|_\o \leq 5 &\Rightarrow e(W[1])\leq 1,\\
&\Rightarrow e(W\DI{2,h})\geq 2 \mbox{ and } h(W\DI{2,h}) \equiv_3 2,\\
&\Rightarrow e(W\DI{2,h})>\emax(h-1,6)
\end{align*}
in contradiction with the minimality hypothesis on $h$. By symmetry we also have $|W[h]|_\o=6$.
\end{proof}

\begin{lemma}\label{lem6xhe_maxh=0b} For $h\geq 7$ let $h$  be minimal such that 
$W\in \Wordsdhwvarshort{2}{h}{6}$ satisfies $e(W)=\emax(h,6)+1$. Then
$$ h\not\equiv_3 0 $$
\end{lemma}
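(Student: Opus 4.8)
The plan is to argue by contradiction, leaning on the preceding Lemma~\ref{lem6xhe_maxh=0}. Suppose $h \equiv_3 0$. Since $h \geq 7$ and $h \equiv_3 0$, we in fact have $h \geq 9$, hence $h - 2 \geq 7$ and $h - 2 \equiv_3 1$; these arithmetic facts are exactly what will let the minimality hypothesis on $h$ be applied to the bottom $h-2$ rows. By Lemma~\ref{lem6xhe_maxh=0}, the top row of $W$ is entirely filled, so $|W[1]|_\o = 6$ and $e(W[1]) = 6 - 4 = 2$.

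The key step is to convert the fullness of $W[1]$ into sparsity of $W[2]$. For every interior column $j \in \{2,3,4,5\}$, the cell $(1,j)$ already has both of its horizontal neighbours $(1,j-1)$ and $(1,j+1)$ filled, so $\deg_{\o,W}(1,j) \geq 2$; since $W \in \Wordsdhwvarshort{2}{h}{6}$ this degree equals $2$, which forces $W[2,j] = \e$. Hence the only cells of the second row that can possibly be filled are $(2,1)$ and $(2,6)$, so $|W[2]|_\o \leq 2$ and $e(W[2]) \leq -2$. By additivity of the excess (Proposition~\ref{prop:additivity}), $e(W\DI{1,2}) = e(W[1]) + e(W[2]) \leq 0$.

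To conclude, I would write $W = W\DI{1,2} \vcat W\DI{3,h}$ and apply additivity once more: $e(W\DI{3,h}) = e(W) - e(W\DI{1,2}) \geq e(W) = \emax(h,6) + 1 = 3$, using $\emax(h,6) = 2$ for $h \equiv_3 0$. But $W\DI{3,h} \in \Wordsdhwvarshort{2}{(h-2)}{6}$ with $h - 2 \geq 7$ and $h - 2 \equiv_3 1$, so the minimality of $h$ forces $e(W\DI{3,h}) \leq \emax(h-2,6) = 1$, a contradiction. Therefore $h \not\equiv_3 0$. Given Lemma~\ref{lem6xhe_maxh=0}, there is no real obstacle here: the only points needing care are the degree-$2$ argument that empties the interior of $W[2]$, and the check that $h \geq 9$ so that $W\DI{3,h}$ genuinely is a shorter word to which the minimal-counterexample hypothesis applies.
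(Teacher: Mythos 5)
Your proof is correct and follows essentially the same route as the paper's: invoke Lemma~\ref{lem6xhe_maxh=0} to get $|W[1]|_\o = 6$, deduce $|W[2]|_\o \leq 2$ from the degree bound, conclude $e(W\DI{1,2}) \leq 0$ and hence $e(W\DI{3,h}) \geq \emax(h-2,6)+2$, contradicting minimality. The only difference is that you spell out the degree-$2$ argument emptying the interior of $W[2]$ and the arithmetic check $h \geq 9$, both of which the paper leaves implicit.
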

\begin{proof} Assume $ h\equiv_3 0 $. Since $|W[1]|_\o=6$ from Lemma \ref{lem6xhe_maxh=0}, we have
\begin{align*}
|W[1]|_\o=6 &\Rightarrow |W[2]|_\o\leq 2,\\
&\Rightarrow e(W\DI{1,2})\leq 0,\\
&\Rightarrow e(W\DI{3,h})\geq \emax(h,6)+1= \emax(h-2,6)+2
\end{align*}
in contradiction with the minimality hypothesis of $h$. So $h\equiv_3 0$ is rejected. 
\end{proof}
\paragraph{\bf Case 2. $\mathbf{h\equiv_3 1}$}
\begin{lemma}\label{lem6xhe_maxh=1} For $h\geq 6$ let $h$  be minimal such that 
$W\in \Wordsdhwvarshort{2}{h}{6}$ satisfies $e(W)=\emax(h,6)+1$. Then
$$ h\equiv_3 1 \Rightarrow |W[1]|_\o=5,\;|W[2]|_\o= 4 \mbox{ and } |W[3]|_\o=4$$
\end{lemma}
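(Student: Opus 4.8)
The plan is to argue exactly in the style of Proposition~\ref{prope(hx6)=2W[1]} and Lemmas~\ref{lem6xhe_maxh=0} and~\ref{lem6xhe_maxh=0b}. Since $h \equiv_3 1$ we have $\emax(h,6) = 1$, so the hypothesis reads $e(W) = 2$, and by minimality of $h$ every $h' \times 6$ factor $W'$ of $W$ with $h' < h$ satisfies $e(W') \leq \emax(h',6)$. First I would recover $|W\DI{1,2}|_\o = 9$: the value $|W\DI{1,2}|_\o \geq 10$ is incompatible with the degree-$\leq 2$ condition on a $2 \times 6$ block (an interior cell all of whose three neighbours are filled has degree $3$), while $|W\DI{1,2}|_\o \leq 8$ gives $e(W\DI{1,2}) \leq 0$, hence $e(W\DI{3,h}) \geq 2$; but $h - 2 \equiv_3 2$, so $\emax(h-2,6) = 1 < 2$, contradicting minimality.

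Next I would pin down $|W[1]|_\o = 5$. If $|W[1]|_\o \leq 3$ then $e(W[1]) \leq -1$, so $e(W\DI{2,h}) = e(W) - e(W[1]) \geq 3$; but $h - 1 \equiv_3 0$, so $\emax(h-1,6) = 2 < 3$, a contradiction. If $|W[1]|_\o = 6$ then, as in Lemma~\ref{lem6xhe_maxh=0b}, each of the four interior cells of the full row $W[1]$ already has degree $2$ from its horizontal neighbours, forcing the cell directly below it to be empty; hence $|W[2]|_\o \leq 2$ and $|W\DI{1,2}|_\o \leq 8$, contradicting the previous step. This leaves $|W[1]|_\o \in \{4,5\}$. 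To exclude $|W[1]|_\o = 4$: then $e(W[1]) = 0$, so $e(W\DI{2,h}) = 2 = \emax(h-1,6)$, and since $h - 1 = 3k$ with $k \geq 2$, Proposition~\ref{prope(hx6)=2W[1]} applies to $W\DI{2,h}$ and gives $|W[2]|_\o = 5$, $|W[3]|_\o = |W[4]|_\o = 4$, so that $W\DI{2,4}$ is a $(5,4,4)$-block. I would then check, by inspecting the degrees any such block imposes on its top row $W[2]$, that no row of four $\o$-cells can be placed above it without either creating a cell of degree $3$ or making some prefix $W\DI{1,j}$ with $j \in \{3,4,5\}$ have excess strictly larger than $\emax(j,6)$ — a contradiction with minimality. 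Hence $|W[1]|_\o = 5$, and then $|W[2]|_\o = 9 - 5 = 4$.

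Finally I would pin down $|W[3]|_\o = 4$. From $|W[1]|_\o = 5$, $|W[2]|_\o = 4$ we get $e(W\DI{1,3}) = |W[3]|_\o - 3$. If $|W[3]|_\o \leq 2$ then $e(W\DI{4,h}) \geq 3 > 1 = \emax(h-3,6)$ (as $h - 3 \equiv_3 1$); if $|W[3]|_\o = 3$ then $e(W\DI{1,3}) = 0$ and $e(W\DI{4,h}) = 2 > 1 = \emax(h-3,6)$; if $|W[3]|_\o = 6$ then the full row $W[3]$ forces $|W[2]|_\o \leq 2$, against $|W[2]|_\o = 4$. All these are contradictions, so $|W[3]|_\o \in \{4,5\}$. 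To rule out $|W[3]|_\o = 5$: then $e(W\DI{1,3}) = 2 = \emax(3,6)$, so $W\DI{1,3}$ is a maximal $3 \times 6$ block with row profile $(5,4,5)$ — of which there is essentially one, as noted in the proof of Proposition~\ref{prope(hx6)=2W[1]} — while $e(W\DI{3,h}) = 1 = \emax(h-2,6)$ makes $W\DI{3,h}$ a maximal $(h-2)\times 6$ word with $h - 2 \equiv_3 2$; examining the degrees of the bottom row $W[3]$ of that block, one checks that $W\DI{3,h}$ can carry too few $\o$-cells in its first rows, and propagating this constraint one row at a time produces a short factor of excess above its $\emax$. Hence $|W[3]|_\o = 4$, as claimed.

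The routine portion is the excess bookkeeping in the first two paragraphs and in the easy subcases of the third. The genuinely delicate steps are the exclusions of $|W[1]|_\o = 4$ and of $|W[3]|_\o = 5$: there the excess identities alone are consistent, so one must exploit the rigid shape of maximal $3\times 6$ and $(h-1)\times 6$ blocks (Proposition~\ref{prope(hx6)=2W[1]} and Proposition~\ref{prop3xhfssize}, plus the near-uniqueness noted in the former's proof) together with a careful local analysis of how the degree of a newly attached row propagates. I expect these two subcases to require a handful of explicit $3\times 6$ configurations, in the spirit of the $w = 4$ and $w = 5$ arguments, and the small heights $h = 7$ (and the degenerate $3 \times 6$ block, to which Proposition~\ref{prope(hx6)=2W[1]} does not apply) to be settled by direct enumeration.
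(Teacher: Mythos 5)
Your proposal follows essentially the same route as the paper's proof: first force $|W\DI{1,2}|_\o = 9$ by excess bookkeeping against the minimality of $h$, then pin down $|W[1]|_\o$, $|W[2]|_\o$ and $|W[3]|_\o$ by eliminating the alternatives, with the two delicate exclusions ($|W[1]|_\o = 4$ and $|W[3]|_\o = 5$) reduced to inspecting the rigid top rows of maximal $3k\times 6$ and $3\times 6$ blocks. The configuration checks you defer are exactly what the paper carries out via Proposition~\ref{prope(hx6)=2W[1]} and the infeasible row distributions $(5,4,5,2,5,4)$ of Figure~\ref{hx6_45254}, and they do close both subcases, so your argument goes through.
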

\begin{proof} 
Assume that $|W\DI{1,2}|_\o\leq 8$
Then $e(W\DI{1,2})\leq 0$, so that $e(W\DI{3,h})\geq \emax(h,6)+1=2$ and $h(W\DI{3,h})\equiv_3 2$, in contradiction with the minimality hypothesis.
So $|W\DI{1,2}|_\o \geq 9$. Since $|W\DI{1,2}|_\o=10$ is impossible, 
we must have $|W\DI{1,2}|_\o=9$. Moreover  $|W[1]|_\o=6 \Rightarrow |W\DI{1,2}|_\o\leq 8$ which is impossible. So $|W[1]|_\o\leq 5$. Now
$|W[1]|_\o=4$ and $|W[2]|_\o=5$ imply $|W[3]|_\o\leq 2 \Rightarrow e(W\DI{1,3})\leq -1$.
Therefore, $e(W\DI{4,h})\geq \emax(6,h)+2$, in contradiction with the minimality hypothesis.
So we have  $|W[1]|_\o=5$ and  $|W[2]|_\o=4$. 
Next, on one hand $|W[3]|_\o\leq 3$ implies $e(W\DI{1,3})\leq 0 \Rightarrow e(W\DI{4,h})\geq \emax(h,6)+1=2$ and $h(W\DI{4,h})\equiv_3 1$, in contradiction with the minimality hypothesis.
On the other hand $|W[3]|_\o= 5$ implies that $W\DI{1,3}$ is unique and $|W[4]|_\o\leq 2$, so that $e(W\DI{5,h})\geq 2$ and $h(W\DI{5,h})\equiv_3 0$.
Therefore, $|W[5]|_\o=5, |W[6]|_\o=4$, from Proposition \ref{prope(hx6)=2W[1]}.
But this is in contradiction with $|W[1]|_\o=5$ and $|W[2]|_\o=4$ (Figure \ref{hx6_45254}).
\end{proof}

\begin{lemma}\label{lem6xhe_maxh=1b} For $h\geq 6$ let $h$  be minimal such that 
$W\in \Wordsdhwvarshort{2}{h}{6}$ satisfies $e(W)=\emax(h,6)+1$. Then
$$ h\equiv_3 1 \Rightarrow  |W[i]|_\o=4 \; \forall \; i: 1<i<h$$
\end{lemma}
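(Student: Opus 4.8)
The plan is to pin down the entire row distribution $(|W[1]|_\o,\dots,|W[h]|_\o)$ of $W$, using additivity of the excess (Proposition~\ref{prop:additivity}) together with the minimality of $h$, and then to remove the last bit of ambiguity with Proposition~\ref{prope(hx6)=2W[1]} and a short inspection. Since $h\equiv_3 1$ we have $e(W)=\emax(h,6)+1=2$, hence $|W|_\o=4h+2$. Put $S_i=|W\DI{1,i}|_\o$, so that $e(W\DI{1,i})=S_i-4i$ and $e(W\DI{i+1,h})=4i+2-S_i$. From Lemma~\ref{lem6xhe_maxh=1}, applied both to $W$ and to its vertical flip (which again has width $6$, height $h$ and excess $\emax(h,6)+1$, since a reflection preserves degrees, area, and the height of every factor), we obtain $|W[1]|_\o=|W[h]|_\o=5$ and $|W[2]|_\o=|W[3]|_\o=|W[h-1]|_\o=|W[h-2]|_\o=4$; in particular $S_1=5$, $S_3=13$ and $S_{h-3}=4h-11$.

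Next I would record, for each $1\le i\le h-1$, the two bounds forced by the minimality of $h$: namely $S_i\le 4i+\emax(i,6)$ (as $W\DI{1,i}$ is not a counterexample) and $S_i\ge 4i+2-\emax(h-i,6)$ (as $W\DI{i+1,h}$ is not a counterexample), where for the finitely many indices with $i\le 5$ or $h-i\le 5$ the value $\emax(\,\cdot\,,6)$ is the one provided by the base cases (Lemma~\ref{lem:basecases}, via Propositions~\ref{prop:w=2}, \ref{prop3xhfssize}, \ref{prop4xhsc} and Corollary~\ref{cor:w=5}), and where for $i=1$ one uses the sharper $S_1=5$ instead. Because $h\equiv_3 1$, these bounds yield $S_i=4i+1$ whenever $i\equiv_3 2$, $S_i\in\{4i+1,4i+2\}$ whenever $i\equiv_3 0$, and $S_i\in\{4i,4i+1\}$ whenever $i\equiv_3 1$. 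Thus $|W[i]|_\o=S_i-S_{i-1}=4$ for all $1<i<h$ will follow once we show that $S_i=4i+1$ for every $1\le i\le h-1$, i.e. once we rule out $S_i=4i+2$ for $i\equiv_3 0$ and $S_i=4i$ for $i\equiv_3 1$.

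The vertical flip interchanges the two excluded possibilities, so it suffices to treat $S_i=4i+2$ with $i\equiv_3 0$; here $i\ge 6$ (as $S_3=13$) and $i\le h-4$ (as $S_{h-1}=4h-3$ is already pinned). Then $e(W\DI{1,i})=2=\emax(i,6)$, so $W\DI{1,i}$ is $2$-full of height $\equiv_3 0$, and Proposition~\ref{prope(hx6)=2W[1]} applied to its flip forces $|W[i-2]|_\o=|W[i-1]|_\o=4$ and $|W[i]|_\o=5$. Since $i+1\equiv_3 1$ and $S_{i+1}\in\{4(i+1),4(i+1)+1\}$, we get $|W[i+1]|_\o\in\{2,3\}$. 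If $|W[i+1]|_\o=2$ then $e(W\DI{1,i+1})=0$, hence $e(W\DI{i+2,h})=2$; as $W\DI{i+2,h}$ has height $\equiv_3 0$ this is only possible if $W\DI{i+2,h}$ is $2$-full, so Proposition~\ref{prope(hx6)=2W[1]} gives $|W[i+2]|_\o=5$ and the block $W\DI{i-1,i+3}$ has row distribution $(4,5,2,5,4)$, which is impossible by inspection (Figure~\ref{hx6_45254}) --- the small boundary case $i=h-4$ being excluded directly instead, since there $W\DI{i+2,h}$ has known excess $1\ne 2$. If $|W[i+1]|_\o=3$, one propagates the same dichotomy downward one tile at a time ($S_{i+2}=4(i+2)+1$ is already pinned, so $|W[i+2]|_\o=4$, after which the analysis of $S_{i+3}$ recurs exactly as it did at $S_i$); since $S_{h-3}=4h-11$ is pinned this propagation must terminate, and combining it with a direct examination of the admissible shapes of the bottom row of a $2$-full height-$(\equiv_3 0)$ width-$6$ word again reduces every residual configuration to a forbidden short block, the key one being $(4,5,2,5,4)$ of Figure~\ref{hx6_45254}. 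I expect this last step --- organizing the downward propagation and checking by hand that each residual local row distribution is genuinely forbidden for degree-$2$ words of width $6$ --- to be the main obstacle, the figures in the text recording precisely these checks.
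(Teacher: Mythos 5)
Your framework (partial sums $S_i=|W\DI{1,i}|_\o$, the two-sided bounds from minimality, and the flip reduction to the single case $S_i=4i+2$ with $i\equiv_3 0$) is sound and correctly reduces the lemma to excluding that one case; the subcase $|W[i+1]|_\o=2$ is then handled exactly as in the paper, ending at the forbidden distribution $(4,5,2,5,4)$ of Figure~\ref{hx6_45254}. The gap is the subcase $|W[i+1]|_\o=3$. The row distribution $(5,4,\dots,4,5,3,4,\dots,4,5)$ — one interior row of $5$ immediately followed by one row of $3$, all other interior rows equal to $4$ — has total $4h+2$ and satisfies \emph{every} partial-sum constraint you derive: $S_j=4j+1$ for all $j\not\equiv_3 0$ with $j\neq i+1$... in fact for all $j\neq i$, and $S_i=4i+2$ only at the single index $i$. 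In particular $S_{h-3}=4h-11$ holds for this configuration, so your claim that the downward propagation ``must terminate'' in a contradiction is false; the propagation simply returns to the pinned track at $S_{i+2}$ and never meets a forbidden block. Figure~\ref{hx6_45254} does not apply here, since the residual local distribution is $(4,4,5,3,4)$, not $(4,5,2,5,4)$.

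What is needed to close this branch is a structural (not counting) fact: since $e(W\DI{1,i})=2$ forces $W\DI{1,i}$ to be $2$-full of height $\equiv_3 0$, its bottom row of $5$ cells (together with the row of $4$ above it, cf.\ Figure~\ref{6x2n=9}) necessarily contains at least four cells of degree $2$, whence $|W[i+1]|_\o\leq 2$ and the value $3$ is impossible outright. This is precisely how the paper argues (``$|W[i]|_\o=5\Rightarrow|W[i+1]|_\o\leq 2$''), collapsing both of your subcases into the single terminal configuration $(4,5,2,5,4)$ or $(5,4,5,2,5,4)$. You flag this step as the main obstacle but the reduction you propose does not supply it, so as written the proof is incomplete.
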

\begin{proof} 
Let $1<i<h$ be the smallest integer such that $ |W[i]|_\o \neq 4$.
Then $|W[i]|_\o= 3$ so that $e(W\DI{1,i})=0$, which implies $e(W\DI{i+1,h})=2$, $|W[i+1]|_\o=5$ and $|W[i+2]|_\o=4$ from Proposition \ref{prope(hx6)=2W[1]}, in contradiction with $|W[i]|_\o= 3$.
Now, if $|W[i]|_\o= 5$, then $e(W\DI{1,i})=2$, so that $|W[i+1]|_\o\leq 2$.
Therefore, $e(W\DI{1,i+1})\leq 0$ and $e(W\DI{i+2,h})=2$, which implies $|W[i+2]|_\o=5, |W[i+3]|_\o=4$, from Proposition \ref{prope(hx6)=2W[1]}.
This implies that the row distribution of  $W\DI{i-1,i+3}$ is $(4,4,5,2,5,4)$ or $(5,4,5,2,5,4)$ 
But these two  row distributions are not feasible as shown in Figure \ref{hx6_45254}
and the proof is complete. 
\begin{figure*}[h!]
\begin{subfigure}[t]{.2\textwidth}
\centering
\includegraphics[height=.48in, valign=t]{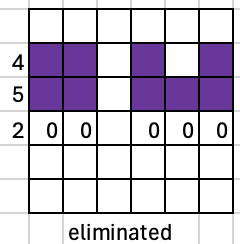}
\caption{}\label{45254a}
\end{subfigure}%
~ 
\begin{subfigure}[t]{.2\textwidth}
\centering
\includegraphics[height=.48in,valign=t]{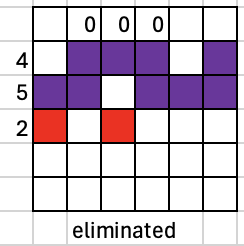}
\caption{}\label{45254b}
\end{subfigure}%
~ 
\begin{subfigure}[t]{.2\textwidth}
\centering
\includegraphics[height=.48in,valign=t]{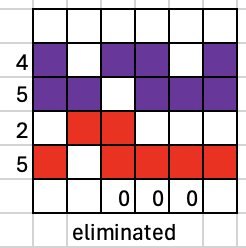}
\caption{}\label{45254c}
\end{subfigure}%
~ 
\begin{subfigure}[t]{.2\textwidth}
\centering
\includegraphics[height=.48in,valign=t]{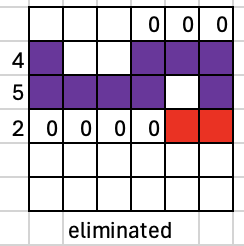}
\caption{}\label{45254d}
\end{subfigure}%
\caption{  row distributions $(4,4,5,2,5,4)$ and $(5,4,5,2,5,4)$}
\label{hx6_45254}
\end{figure*}
\end{proof}

\begin{proposition}\label{prop6xh_h_equiv1}
For $h\geq 6$ let  $W\in \Wordsdhwvarshort{2}{h}{6}$. Then
$$h\equiv_3 1 \Rightarrow e(W)\leq \emax(h,6)$$ 
\end{proposition}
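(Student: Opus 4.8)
The plan is to argue by a minimal counterexample, first pinning down the row distribution via Lemmas~\ref{lem6xhe_maxh=1} and~\ref{lem6xhe_maxh=1b}, and then running a row-by-row propagation driven by the degree bound. So suppose the statement fails and pick $W \in \Wordsdhwvarshort{2}{h}{6}$ with $e(W) > \emax(h,6)$ and $h$ minimal; then $h \equiv_3 1$ and $h \geq 7$. Since $2\cdot 6h/3 = 4h \in \Z$, the excess is an integer, so $e(W) \geq \emax(h,6)+1 = 2$. To reduce to $e(W) = 2$, write $W = W\DI{1,2} \vcat W\DI{3,h}$: a $2 \times 6$ word has excess at most $\emax(6,2) = 1$ (Proposition~\ref{prop:w=2}), and $e(W\DI{3,h}) \leq \emax(h-2,6) = 1$ by minimality of $h$ (note $h - 2 \equiv_3 2$), whence $e(W) \leq 2$. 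Thus $e(W) = 2$, and applying Lemmas~\ref{lem6xhe_maxh=1} and~\ref{lem6xhe_maxh=1b} to $W$ and to its vertical mirror gives row distribution $(5, 4, 4, \ldots, 4, 5)$.

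Next I would use the degree bound in its sharp local form: a filled cell whose two horizontal neighbours are filled has both vertical neighbours empty, and dually. Applied to the unique empty cell of $W[1]$: if it lies in column $1$ or $2$ (equivalently, by the left--right mirror, column $5$ or $6$), then at least three cells of $W[2]$ are forced empty (or some cell of $W[2]$ is forced to degree $3$), which is impossible since $W[2]$ has four filled cells among six columns. Hence the empty cell of $W[1]$ lies in column $3$ or $4$, and up to the mirror $W[1] = \o\o\e\o\o\o$; the same argument then forces $W[2] = \o\e\o\o\e\o$ (each of the other four candidates for $W[2]$ creates a degree-$3$ cell or forces $\geq 3$ empties in $W[3]$). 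Symmetrically, $W[h]$ is a $5$-cell row empty in column $3$ or $4$ and $W[h-1] = \o\e\o\o\e\o$.

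Then I would propagate downward: at each step a short finite case analysis (using a bounded number of look-ahead rows, in the spirit of the dichotomies already invoked in Proposition~\ref{prope(hx6)=2W[1]}) eliminates all but one configuration of the next row, forcing $W$ into the period-$3$ staircase $W[3k+1] = \o\o\e\o\o\e$ for $k \geq 1$, $W[3k+2] = \o\e\o\o\e\o$, $W[3k+3] = \e\o\o\e\o\o$. This pattern has exactly one $5$-cell row, namely $W[1]$, whereas both $W[1]$ and $W[h]$ must be $5$-cell rows; equivalently, the pattern forced from the top and the reflected, shifted pattern forced from the bottom can agree on their overlap only when $h \equiv_3 0$ or $h \equiv_3 2$, contradicting $h \equiv_3 1$. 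The few smallest heights, for which the general propagation has too little runway (in practice $h = 7$, and $h = 10$ if needed), are handled by direct computation.

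The hard part will be the downward propagation itself: one must control the branching, i.e.\ check that every "wrong" choice for the next row collapses --- after at most a few further rows --- into an impossible configuration (too few admissible columns left for a $4$-cell row, or a forced degree-$3$ cell). This is mechanical but lengthy and is best organized with the aid of figures.
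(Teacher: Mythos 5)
Your proposal follows essentially the same route as the paper's own proof: a minimal counterexample of excess $\emax(h,6)+1=2$, the row distribution $(5,4,\dots,4,5)$ extracted from Lemmas~\ref{lem6xhe_maxh=1} and~\ref{lem6xhe_maxh=1b}, a forced row-by-row propagation into the unique period-$3$ staircase $W[3k+2]=(\o,\e,\o,\o,\e,\o)$, $W[3k]=(\e,\o,\o,\e,\o,\o)$, $W[3k+1]=(\o,\o,\e,\o,\o,\e)$, and the concluding contradiction that a $5$-cell row can only occur at a position $\equiv_3 0$, so $W[h]$ cannot carry $5$ cells when $h\equiv_3 1$. One small imprecision: when the empty cell of $W[1]$ lies in column $2$, only two cells of $W[2]$ are forced empty (row $2$ is then determined as $\o\o\o\e\e\o$ with no degree-$3$ cell) and the contradiction only appears one row further down, so your local dichotomy needs one extra step of look-ahead there --- otherwise the argument matches the paper's.
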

\begin{proof}By minimal counterexample. Assume $h\equiv_3 1$ and $h>6$ is minimal such that there is a $h\times 6$ word  $W$ with $e(W)=\emax(h,6)+1$. Since  $h\equiv_3 1$ we know from Lemma \ref{lem6xhe_maxh=1} and Lemma \ref{lem6xhe_maxh=1b} that $|W[1]|_\o =5$ and $|W[i]|_\o =4$ for all $1<i<h$. Up to symmetry, there are 
four $2\times 6$ words $W'$ such that $e(W'[1])=5, e(W'[2])=4$ and they appear in Figure \ref{6x2n=9} where only the right-most rectangle admits a third row $W[3]$ with $|W[3]|_\o=4$. The $3\times 6$ rectangle in Figure \ref{6x3outa} is excluded because it does not admit a row $W[4]$ with $|W[4]|_\o =4$. The $h\times 6$ rectangles in Figures \ref{6x3101a}, \ref{6x3101b}, \ref{6x3101c} with $W[3]=(\purple,\e,\purple,\ldots) $ show the  $3$ possibilities for $W[4]$ and they are all excluded because they do not admit one, two  or three supplementary rows with $4$ cells. 
\begin{figure*}[h!]
\centering
\begin{subfigure}[t]{0.82\textwidth}
\centering
\includegraphics[height=.3in]{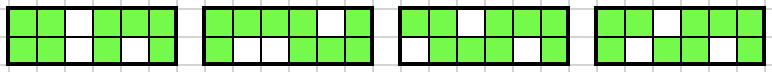}
\caption{}\label{6x2n=9}
\end{subfigure}%
\\
\begin{subfigure}[t]{0.16\textwidth}
\centering
\includegraphics[height=.38in,valign=t]{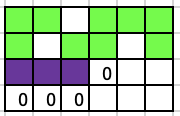}
\caption{impossible}\label{6x3outa}
\end{subfigure}%
\begin{subfigure}[t]{0.16\textwidth}
\centering
\includegraphics[height=.48in,valign=t]{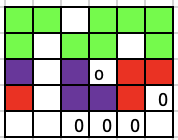}
\caption{impossible}\label{6x3101a}
\end{subfigure}%
~
\begin{subfigure}[t]{0.16\textwidth}
\centering
\includegraphics[height=.57in,valign=t]{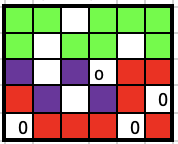}
\caption{impossible}\label{6x3101b}
\end{subfigure}%
~
\begin{subfigure}[t]{0.16\textwidth}
\centering
\includegraphics[height=.57in,valign=t]{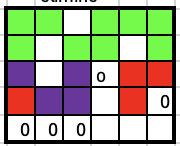}
\caption{impossible}\label{6x3101c}
\end{subfigure}%
~
\begin{subfigure}[t]{0.16\textwidth}
\centering
\includegraphics[height=.48in,valign=t]{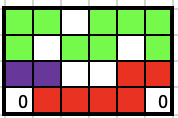}
\caption{impossible}\label{6xhr3=110}
\end{subfigure}%
\begin{subfigure}[t]{0.16\textwidth}
\centering
\includegraphics[height=.92in,valign=t]{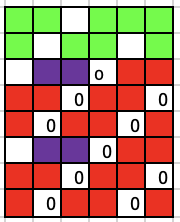}
\caption{}\label{6xhr3=011}
\end{subfigure}%
\caption{$h\times 6$ rectangles with $|W[1,2]|_\o =9$ }\label{6xhmax}
\end{figure*}
The $h\times 6$ rectangle in Figure \ref{6xhr3=110} with  $W[3]=(\purple,\purple,\ldots)$   (Figure \ref{6xhr3=110}) is eliminated for the same reason. 
The only remaining possibility is $W[3]=(\e,\o,\o,\e,\o,\o)$ (Figure \ref{6xhr3=011}) and it allows only $W[4]=(\o,\o,\e,\o,\o,\e)$ and $W[5]=(\o,\e,\o,\o,\e,\o)$. But then $W[5]=W[2]$ and either $|W[6]|_\o =5$ or $W[6]=W[3]$. But  $W[6]=W[3]$ does not admit  $|W[7]|_\o=5$. We thus obtain a periodic sequence of rows of period $3$ such that $W[3k+2]=W[2]=(\o,\e,\o,\o,\e,\o),W[3k]=W[3]=(\e,\o,\o,\e,\o,\o),W[3k+1]=W[4]=(\o,\o,\e,\o,\o,\e)$ with $|W[3k]|_\o =5$ as the only possibility for a row of $W$ to have $5$ filled cells. So we must have $|W[3k+1]|_\o =4$ and the proof is complete. 
\end{proof}
\paragraph{\bf Case 3. $\mathbf{h\equiv_3 2}$}

\begin{lemma}\label{lem6xhe_maxh=0c} For $h\geq 6$ and  $W\in \Wordsdhwvarshort{2}{h}{6}$, let $h$  be minimal. such that  $e(W)=\emax(h,6)+1$. Then
$$ h\equiv_3 2 \Rightarrow |W[1]|_\o= 5 \mbox{ and } |W[i]|_\o= 4 \mbox{ for all } 1<i<h$$
\end{lemma}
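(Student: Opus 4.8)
The plan is to mimic the template of the case $h\equiv_3 1$ (Lemmas~\ref{lem6xhe_maxh=1} and~\ref{lem6xhe_maxh=1b}), tracking residues modulo~$3$ for the shift $h\equiv_3 2$. Since $h\equiv_3 2$, we have $\emax(h,6)=1$, so the counterexample $W$ satisfies $e(W)=2$ and $h\ge 8$. By minimality of $h$, every factor $W\DI{i,i'}$ of $W$ of height $h'<h$ satisfies $e(W\DI{i,i'})\le\emax(h',6)$, where $\emax(h',6)=2$ if $h'\equiv_3 0$ and $\emax(h',6)=1$ otherwise (for $h'<6$ this uses $e_{max}(h',6)=\emax(h',6)$ from the base cases). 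The argument will repeatedly invoke additivity of the excess (Proposition~\ref{prop:additivity}), Proposition~\ref{prope(hx6)=2W[1]}, and the fact --- contained in the analysis behind Proposition~\ref{prope(hx6)=2W[1]}, and checked by hand for height~$3$ --- that in a $2$-full $3k\times 6$ word both extremal rows have their empty cell in column $3$ or $4$, and therefore contain exactly four cells of degree~$2$, including the two end cells.

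\emph{The first row.} Writing $W=W[1]\vcat W\DI{2,h}$ with $h-1\equiv_3 1$ gives $e(W\DI{2,h})\le 1$, hence $e(W[1])=e(W)-e(W\DI{2,h})\ge 1$, i.e.\ $|W[1]|_\o\ge 5$. Suppose $|W[1]|_\o=6$. Then columns $2,\dots,5$ of row~$1$ have degree~$2$, so $W[2,2]=\dots=W[2,5]=\e$ and $e(W\DI{1,2})\le 0$; splitting $W=W\DI{1,2}\vcat W\DI{3,h}$ with $h-2\equiv_3 0$ forces $e(W\DI{3,h})=2$, so $W\DI{3,h}$ is $2$-full of height divisible by~$3$, and by Proposition~\ref{prope(hx6)=2W[1]} its first row $W[3]$ has $W[3,1]$ and $W[3,6]$ of degree~$2$ inside $W\DI{3,h}$; hence $W[2,1]=W[2,6]=\e$ as well, so $|W[2]|_\o=0$, $e(W\DI{1,2})=-2$ and $e(W\DI{3,h})=4$, contradicting $e(W\DI{3,h})=2$. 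Therefore $|W[1]|_\o=5$, and by the top--bottom reflection $|W[h]|_\o=5$.

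\emph{The interior rows.} Suppose some row $W[i]$ with $1<i<h$ has $|W[i]|_\o\ne 4$ and take $i$ minimal with this property; by the previous paragraph and minimality of $i$ we have $e(W\DI{1,i-1})=1$, and $|W[i-1]|_\o=4$ when $i>2$. If $|W[i]|_\o=6$, then $|W[i-1]|_\o\le 2$ (or $|W[1]|_\o\le 2$ when $i=2$), impossible. If $|W[i]|_\o=5$, then $e(W\DI{1,i})=2$; for $i\not\equiv_3 0$ this exceeds $\emax(i,6)=1$, contradicting minimality of $h$, while for $i\equiv_3 0$ (via Proposition~\ref{prope(hx6)=2W[1]} for $i\ge 6$, and by inspection for $i=3$) the word $W\DI{1,i}$ is $2$-full, so its bottom row $W[i]$ has four cells of degree~$2$, forcing $|W[i+1]|_\o\le 2$, hence $e(W\DI{1,i+1})\le 0$ and $e(W\DI{i+2,h})\ge 2$ although $h-i-1\equiv_3 1$ gives $\emax(h-i-1,6)=1$, a contradiction. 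If $|W[i]|_\o\le 3$, then $e(W\DI{1,i})\le 0$, so $e(W\DI{i+1,h})\ge 2$; for $h-i\not\equiv_3 0$ this exceeds $\emax(h-i,6)=1$, while for $h-i\equiv_3 0$ (so $h-i\ge 3$) the word $W\DI{i+1,h}$ is $2$-full, so its top row $W[i+1]$ has four cells of degree~$2$, forcing $|W[i]|_\o\le 2$, whence $e(W\DI{1,i})\le -1$ and $e(W\DI{i+1,h})\ge 3>2=\emax(h-i,6)$, again a contradiction. As every case is impossible, all interior rows have exactly $4$ filled cells, which finishes the proof.

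\emph{Expected main obstacle.} The excess bookkeeping is routine once the residues are organized. The real work --- and the only genuinely combinatorial part --- is the finite configuration analysis behind the two ingredients used above: that the extremal rows of a $2$-full $3k\times 6$ word have exactly four degree-$2$ cells (a small strengthening of Proposition~\ref{prope(hx6)=2W[1]}), and the case check ruling out $|W[1]|_\o=6$ (equivalently $|W\DI{1,2}|_\o=8$). These are of the same nature as the infeasibility of the row distributions $(4,4,5,2,5,4)$ and $(5,4,5,2,5,4)$ used in the $h\equiv_3 1$ case, and can be dealt with in the same way (Figure~\ref{hx6_45254}).
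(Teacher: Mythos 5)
Your proof is correct and takes essentially the same route as the paper's: the same excess bookkeeping pins down $|W[1]|_\o=5$, the same appeal to Proposition~\ref{prope(hx6)=2W[1]} on the adjacent $2$-full block of height divisible by $3$, and the same case analysis on the minimal $i$ with $|W[i]|_\o\neq 4$, the only real difference being that you spell out the structural fact the paper dismisses with ``by inspection''. One caveat: that auxiliary fact is false as stated at height $3$ --- the $2$-full $3\times 6$ word whose first and third rows are full and whose middle row is $(\o,\e,\e,\e,\e,\o)$ has no empty cell in its extremal rows --- but such a row then has six degree-$2$ cells rather than four, so the bound $|W[i]|_\o\leq 2$ you actually extract still holds and the argument is unaffected.
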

\begin{proof} 
\begin{align*}
|W[1]|_\o\leq 4 &\Rightarrow e(W\DI{2,h})\geq \emax(h-1,6) +1
\end{align*}
in contradiction with the minimality hypothesis. 
\begin{align*}
|W[1]|_\o= 6  &\Rightarrow |W[2]|_\o\leq 2 \Rightarrow e(W\DI{3,h})=2
\end{align*}
which forces $|W[2]|_\o= 2$.  But from Proposition \ref{prope(hx6)=2W[1]} we have $|W[3]|_\o=5$ and 
$|W[4]|_\o=4$ which, together with $|W[1]|_\o= 6$, implies $|W[2]|_\o\leq 1$ which is impossible. So we must have
$|W[1]|_\o= 5$. Now assume that $1<i$ is minimal such that $|W[i]|_\o\neq4$. 
\begin{align*}
|W[i]|_\o= 3  &\Rightarrow e(W\DI{1,i})\leq 0 \Rightarrow e(W\DI{i+1,h})\geq 2 \mbox{ and } 
h(W\DI{i+1,h})\equiv_3 0,\\
&\Rightarrow |W[i+1]|_\o= 5 \mbox{ and }  |W[i+2]|_\o= 4,\\
&\Rightarrow |W[i]|_\o\leq 2
\end{align*}
in contradiction with the hypothesis. 
\begin{align*}
|W[i]|_\o= 5  &\Rightarrow e(W\DI{1,i})=2  \mbox{ and } i\equiv_3 0,\\
&\Rightarrow |W[i+1]|_\o\leq 2 \mbox{ by inspection, } \\
&\Rightarrow e(W\DI{1,i+1})=0 \\
&\Rightarrow e(W\DI{i+2,h})= \emax(h,6)+1 \mbox{ and } h-(i+1)\equiv_3 1,
\end{align*}
in contradiction with the minimality hypothesis. The other possibility $|W[i]|_\o=6$ is easily discarded with similar arguments and we have proved that $|W[i]|_\o=4$. 
\end{proof}
\begin{proposition}\label{prop6xh_h_equiv2}
For $h\geq 6$ let  $W\in \Wordsdhwvarshort{2}{h}{6}$. Then
$$h\equiv_3 2 \Rightarrow e(W)\leq \emax(h,6)$$ 
\end{proposition}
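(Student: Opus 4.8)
The plan is to argue by minimal counterexample, following the same pattern as the proof of Proposition~\ref{prop6xh_h_equiv1}. Suppose the statement fails for some $h\equiv_3 2$ and choose $h$ minimal with this property; among all counterexamples $W\in\Wordsdhwvarshort{2}{h}{6}$ of height $h$, pick one minimizing $|W|_\o$. Since $w=6$ forces $2hw/3=4h\in\Z$, the excess is an integer, and $\emax(h,6)=1$ for $h\equiv_3 2$; because deleting any $\o$-cell only relaxes the degree constraint, minimality of $|W|_\o$ forces $e(W)=\emax(h,6)+1=2$. Note $h\ge 8$, since $6,7\not\equiv 2\pmod 3$. By Lemma~\ref{lem6xhe_maxh=0c}, $|W[1]|_\o=5$ and $|W[i]|_\o=4$ for all $1<i<h$; applying that lemma to the vertical reflection of $W$ (again a minimal counterexample of height $h$) gives moreover $|W[h]|_\o=5$. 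Thus $W$ has row distribution $(5,4,4,\dots,4,5)$.

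Next I would re-run, essentially verbatim, the finite analysis from the proof of Proposition~\ref{prop6xh_h_equiv1}. Because $|W[1]|_\o=5$ and $|W[2]|_\o=4$, the factor $W\DI{1,2}$ is, up to horizontal symmetry, one of the four $2\times 6$ words of Figure~\ref{6x2n=9}, and the only one of them admitting a third row with $4$ filled cells forces $W[3]=(\e,\o,\o,\e,\o,\o)$, then $W[4]=(\o,\o,\e,\o,\o,\e)$, then $W[5]=(\o,\e,\o,\o,\e,\o)=W[2]$. Iterating downward, the rows of $W$ are forced to be periodic of period $3$ from row $2$ on: for $2\le j\le h-1$, the row $W[j]$ equals $(\o,\e,\o,\o,\e,\o)$, $(\e,\o,\o,\e,\o,\o)$ or $(\o,\o,\e,\o,\o,\e)$ according as $j\equiv 2,0,1\pmod 3$, each of these three patterns having exactly $4$ filled cells.

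Finally I would invoke the top--bottom symmetry to reach the contradiction. Applying the same chain of deductions to the vertical reflection of $W$, whose top row $W[h]$ has $5$ cells, forces $W[h-1]=(\o,\e,\o,\o,\e,\o)$ (this pattern is a palindrome, so the horizontal symmetry introduces no ambiguity). But the period-$3$ structure just established makes $W[h-1]$ equal to $(\o,\e,\o,\o,\e,\o)$ only when $h-1\equiv 2\pmod 3$, i.e.\ when $h\equiv 0\pmod 3$, contradicting $h\equiv_3 2$. Hence no such $W$ exists, which proves $e(W)\le\emax(h,6)$ for every $W\in\Wordsdhwvarshort{2}{h}{6}$ with $h\equiv_3 2$. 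The only delicate step is the finite enumeration of the admissible $2\times 6$ and $3\times 6$ row patterns that pins down the period-$3$ structure; this coincides with the corresponding step of Proposition~\ref{prop6xh_h_equiv1} and can simply be cited, so the genuinely new ingredient here is only the congruence bookkeeping $h\equiv 0\pmod 3$ in the last paragraph.
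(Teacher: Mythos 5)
Your proposal is correct and follows essentially the same route as the paper: it invokes Lemma~\ref{lem6xhe_maxh=0c} for the row distribution $(5,4,\dots,4,5)$, reuses the finite row-forcing analysis of Proposition~\ref{prop6xh_h_equiv1} to get the period-$3$ structure, and concludes that the terminal $5$-cell row is only compatible with $h\equiv_3 0$. Your variant of the last step (reflecting $W$ vertically and comparing the two forced values of $W[h-1]$, using that $(\o,\e,\o,\o,\e,\o)$ is a palindrome) is an equivalent way of phrasing the paper's observation that $|W[h]|_\o=5$ is only possible when $h\equiv_3 0$.
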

\begin{proof}
The proof is similar to the proof of Proposition \ref{prop6xh_h_equiv1}: the sequence of rows $W[i]$ is unique and is periodic of period $3$. The only value $h$ for which $|W[h]|_\o=5$ is possible is when  $h\equiv_3 0$.
\end{proof}

\begin{corollary}
\label{cor:w=6}
For all $h\geq 6$ and  $W\in \Wordsdhwvarshort{2}{h}{6}$ we have 
$$ e_{max}(h,6)= \emax(h,6)
$$
\end{corollary}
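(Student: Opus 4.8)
The plan is to obtain the corollary by combining the results already established in this subsection; the corollary is in effect their bookkeeping conclusion. The inequality $e_{max}(h,6)\ge\emax(h,6)$ for $h\ge6$ is exactly Proposition~\ref{prop_emax>=ê}, so I only need the reverse inequality $e_{max}(h,6)\le\emax(h,6)$, i.e., that no word $W\in\Wordsdhwvarshort{2}{h}{6}$ with $h\ge6$ satisfies $e(W)>\emax(h,6)$.

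For this I would argue by minimal counterexample. Suppose such words exist; let $h\ge6$ be the least height admitting one, and among all witnesses of height $h$ pick $W$ with $|W|_\o$ minimal. Since $e(W)=|W|_\o-4h\in\Z$, we get $e(W)\ge\emax(h,6)+1$, and in fact $e(W)=\emax(h,6)+1$: otherwise, turning a single $\o$-cell of $W$ into an $\e$-cell keeps the word in $\Wordsdhwvarshort{2}{h}{6}$ (degrees only decrease), keeps its height, and drops $e$ by $1$, which would still exceed $\emax(h,6)$, contradicting the minimality of $|W|_\o$. This places $W$ in exactly the situation postulated by the Case~1--3 lemmas.

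Then I would split on $h\bmod3$. If $h\equiv_3 1$, Proposition~\ref{prop6xh_h_equiv1} gives $e(W)\le\emax(h,6)$, contradicting $e(W)=\emax(h,6)+1$; if $h\equiv_3 2$, Proposition~\ref{prop6xh_h_equiv2} does the same. If $h\equiv_3 0$ (the least such value $h=6$ being dispatched by direct inspection, so $h\ge7$ may be assumed), Lemma~\ref{lem6xhe_maxh=0} forces $|W[1]|_\o=|W[h]|_\o=6$ and then Lemma~\ref{lem6xhe_maxh=0b} yields a contradiction. Since all three residues are excluded, no counterexample exists, so $e_{max}(h,6)\le\emax(h,6)$; with Proposition~\ref{prop_emax>=ê} this is the asserted equality for every $h\ge6$. (The residue $h\equiv_3 0$ can alternatively be closed without Lemmas~\ref{lem6xhe_maxh=0}--\ref{lem6xhe_maxh=0b}: writing $W=W\DI{1,2}\vcat W\DI{3,h}$ one has $e(W\DI{1,2})\le e_{max}(2,6)=1$ by Proposition~\ref{prop:w=2} and $e(W\DI{3,h})\le\emax(h-2,6)=1$ since $h-2\equiv_3 1$ and $h-2<h$, so additivity, Proposition~\ref{prop:additivity}, gives $e(W)\le2=\emax(h,6)$.)

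Regarding the main obstacle: essentially none remains at the level of the corollary. Every delicate step — pinning down the row distribution $(|W[1]|_\o,\ldots,|W[h]|_\o)$, showing each interior row carries exactly $4$ filled cells, and forcing the rows into a period-$3$ pattern — has already been done in the preceding lemmas and propositions. The only mild care needed here is the normalization to excess exactly $\emax(h,6)+1$ and the handling of the small value $h=6$, both routine.
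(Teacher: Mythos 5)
Your proposal is correct and follows essentially the same route as the paper: the lower bound is Proposition~\ref{prop_emax>=ê}, and the upper bound is a minimal counterexample normalized to excess $\emax(h,6)+1$, dispatched by residue of $h$ modulo $3$ via Lemma~\ref{lem6xhe_maxh=0b} and Propositions~\ref{prop6xh_h_equiv1}, \ref{prop6xh_h_equiv2}. Your explicit integrality/cell-removal normalization and the separate treatment of $h=6$ are details the paper leaves implicit, but they do not change the argument.
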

\begin{proof}
From Proposition \ref{prop_emax>=ê} we know that $e_{max}(h,6)\geq \emax(h,6)$ and 
from Lemma \ref{lem6xhe_maxh=0b} and Propositions \ref{prop6xh_h_equiv1}, \ref{prop6xh_h_equiv2} we have $e_{max}(h,6)\leq \emax(h,6)$. 
\end{proof}

\subsubsection{\textbf{$(h,w) = (7,7).$}}

\begin{proposition}
\label{prop:7x7}
$e_{max}(7,7) = \emax(7,7) = 4/3.$
\end{proposition}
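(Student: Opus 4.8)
The plan is to prove the two inequalities $e_{max}(7,7)\ge \emax(7,7)=4/3$ and $e_{max}(7,7)\le 4/3$ separately; the first is cheap and the second is the real content. For the lower bound it suffices to exhibit one word: the $7\times 7$ word $W\in\Wordsdhwvarshort{2}{7}{7}$ of excess $4/3$ drawn in Figure~\ref{7x7maxb} (equivalently, the construction used in the proof of Lemma~\ref{lem:w>6lowbound} already produces such a word, since $7>6$). So the whole argument reduces to showing $e_{max}(7,7)\le 4/3$.

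Suppose for contradiction that $W\in\Wordsdhwvarshort{2}{7}{7}$ has $e(W)>4/3$. Since $|W|_\o$ is an integer and $e(W)=|W|_\o-98/3$, this forces $e(W)\ge 7/3$, i.e. $|W|_\o\ge 35$, i.e. at most $14$ empty cells. Now I would run four \enquote{$5+2$} excess slices using the already proved narrow base cases. Writing $W=W\DI{1,2}\vcat W\DI{3,7}$, Proposition~\ref{prop:additivity} gives $e(W)=e(W\DI{1,2})+e(W\DI{3,7})\le e_{max}(7,2)+e_{max}(7,5)=5/3+2/3=7/3$, using Proposition~\ref{prop:w=2} (for the $2\times 7$ factor) and Corollary~\ref{cor:w=5} (for the $5\times 7$ factor), together with the symmetry of $e_{max}$. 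Hence equality holds everywhere, so $|W\DI{1,2}|_\o=11$; the symmetric bottom slice gives $|W\DI{6,7}|_\o=11$, and the two horizontal slices give $|W^t\DI{1,2}|_\o=|W^t\DI{6,7}|_\o=11$. In particular rows $3,4,5$ carry exactly $35-11-11=13$ filled cells, and likewise columns $3,4,5$.

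Next I would analyze the $2\times 7$ block $W\DI{1,2}$, which has $11$ of its $14$ cells filled. A full row is impossible, since a full row forces its neighbouring row(s) to be empty except at the two extremities, incompatible with the other row of the pair carrying $\ge 5$ cells. So $\{|W[1]|_\o,|W[2]|_\o\}=\{5,6\}$. Tracking the cells of horizontal degree $2$ in the row with $6$ filled cells (these force their vertical neighbours to be empty), a short case analysis on the position of the unique hole shows: (a) $|W[2]|_\o=6$ is impossible, because it would force row $3$ to contain at most one filled cell, so rows $4,5$ would need $\ge 12$ cells, exceeding $\maxdhw{2}{2}{7}=11$; and (b) when $|W[1]|_\o=6$, the hole of row $1$ can only be in column $4$ (columns $1,2,6,7$ create too many forced empties in row $2$; columns $3$ and $5$ create a cell of degree $3$), whence $W[1]=\o\o\o\e\o\o\o$, $W[2]=\o\e\o\o\o\e\o$, and $W[3,3]=W[3,4]=W[3,5]=\e$. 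Applying the vertical mirror to $W\DI{6,7}$ and the transpose to the two column blocks yields three further rigid copies on the other sides. Collecting all the forced empty cells — the four side midpoints $(1,4),(7,4),(4,1),(4,7)$; the eight near-corner cells $(2,2),(2,6),(6,2),(6,6)$ and (from the columns) $(2,2),(2,6),(6,2),(6,6)$ which coincide, so rather $(2,2),(2,6),(6,2),(6,6)$; and the central cells $(3,3),(3,4),(3,5),(5,3),(5,4),(5,5),(4,3),(4,5)$ — produces $1+2+3+4+3+2+1=16$ distinct empty cells, contradicting $|W|_\e\le 14$. Hence $e_{max}(7,7)\le 4/3$, and with the lower bound $e_{max}(7,7)=\emax(7,7)=4/3$.

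The main obstacle I expect is the rigidity analysis of the $2\times 7$ boundary blocks in the third paragraph: one must be careful that a cell acquires degree $2$ already from its two horizontal neighbours (so the exact hole position controls which vertical neighbours are forced empty), and the step ruling out $|W[2]|_\o=6$ is only available after the excess slices have delivered $|W\DI{1,2}|_\o=|W\DI{6,7}|_\o=11$ and the count $13$ for rows $3,4,5$. Everything else — the slices, the mirror/transpose invocations, and the final cell count — is elementary, but the four symmetric side-analyses must be sequenced so they are used without circularity.
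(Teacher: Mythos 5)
Your proof is correct, and while it opens with the same move as the paper's --- slicing off the top two rows and using $e(W)=e(W\DI{1,2})+e(W\DI{3,7})\le e_{max}(7,2)+e_{max}(7,5)=5/3+2/3=7/3$ via Proposition~\ref{prop:additivity} and the $w=2$, $w=5$ base cases, then exploiting the rigidity of the $2\times 7$ block with $11$ filled cells --- it finishes in a genuinely different way. The paper stops after one slice: it cites the uniqueness of the $2$-full $2\times 7$ word (Figure~\ref{fig:2x7Max}) to get $e(W[3])\le -2/3$, concludes that $W\DI{4,7}$ must be a $2$-full $4\times 7$ word, and then invokes Corollary~\ref{cor4xh_n2(c1)} (a row adjacent to a $2$-full $(3k+1)\times 4$ word has excess at most $-5/3$) to reach the contradiction; this is short but leans on the whole $w=4$ structure theory developed in the appendix. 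You instead run the slice on all four sides, re-derive the forced configuration $\o\o\o\e\o\o\o$ over $\o\e\o\o\o\e\o$ of each boundary block from scratch (your cases (a) and (b) are both sound: the inner-row-heavy orientation is killed by $|W\DI{4,5}|_\o\le\maxdhw{2}{2}{7}=11$ against the required $13$ cells in the middle strip, and the hole position is pinned to the midpoint by the degree-$2$/degree-$3$ analysis), and close with a purely arithmetic count of $16$ forced empty cells against the $14$ available. Your version is more self-contained --- it needs nothing beyond the $w\in\{2,5\}$ base cases and elementary degree bookkeeping --- at the price of the four-fold symmetry bookkeeping and the explicit classification of the $11$-cell $2\times 7$ block that the paper gets for free from Figure~\ref{fig:2x7Max}. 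Both arguments are valid and non-circular, since the $w=2$ and $w=5$ cases are established independently of $(7,7)$.
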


Figure \ref{fig:7x7} shows a $7\times7$ word of excess $4/3$.

Assume $W \in \Wordsdhwvarshort{2}{7}{7}$ and $e(W) = 7/3$. Partition $W$ as the vertical concatenation  $W=W\DI{1,2}\vcat W\DI{3,7}$. We know from Proposition \ref{prop4xhsc} that $e(W\DI{3,7}) \leq \emax(5,7) = 2/3$ and from Proposition \ref{prop:w=2} $e(W\DI{1,2}) \leq \emax(2,7) = 5/3$. So in order for $W$ to have excess $7/3$, $W\DI{3,7}$ and $W\DI{1,2}$ must both be 2-full. $W\DI{1,2}$ 2-full implies that it is unique up to symmetry and of the form shown in Figure \ref{fig:2x7Max}. We observe that $W[2]$ has at least 3 degree 2 cells in $W\DI{1,2}$ which means $|W[3]|_\o \leq 4$ implying $e(W[3]) \leq -2/3$. In total, we have that $e(W\DI{1,3}) \leq 1$ is forced. This means, in order for $W$ to have excess $7/3$, that $e(W\DI{1,3}) = 1$ and that $e(W\DI{4,7}) = 4/3$ and is thus 2-full. But by Corollary \ref{cor4xh_n2(c1)} a row adjacent to a 2-full $4 \times (3k+1)$ word has excess at most $-5/3$ contradicting $e(W[3]) = 1$. 

This is a contradiction.
\begin{figure*}[h!]
\centering
\includegraphics[height=.2in]{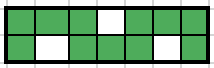}
\caption{2-full $2\times 7$ word}
\label{fig:2x7Max}
\end{figure*}

\begin{figure}
\centering
\input{tikz/7x7.tikz}
\caption{A word in $\MWordsdhwvarshort{2}{7}{7}$}
\label{fig:7x7}
\end{figure}

\subsubsection{Proof of Lemma \ref{lem:basecases}}

\begin{proof}
Propositions \ref{prop4xhsc}, \ref{prop:w=2}, \ref{prop3xhfssize}, \ref{prop:7x7} along with Corollaries \ref{cor:w=5}, \ref{cor:w=6} together prove that $e_{max}(h,w) = \emax(h,w)$ for all $(h,w) \in \Zp \times \{1,2,\dots,6\}\cup (7,7)$.
\end{proof}


\subsection{Properties of $d$-full words.}
\subsubsection{ Properties of words $W\in \MWordsdhwvarshort{2}{h}{3}$}

We introduce the concept of \emph{atomic words} in the context of words in $\Wordsdhwvarshort{d}{h}{w}$.

\begin{definition}[Atomic word]
We say that a word $W \in \Wordsdhwvarshort{2}{h}{w}$ is atomic if there is no $i$, $1\geq i \geq w$ such that $W[i] = \e^w$.
\end{definition}

Figures \ref{3x3} and \ref{3x6s} show examples of atomic words. Figure \ref{3x13fs} is not an atomic word and it is obtained by the concatenating of two copies of the atomic word in Figure \ref{3x6s} with $\e^3$ in between.

\begin{lemma} \label{lem3xhmax_r1r_2} 
Let $W\in \MWordsdhwvarshort{2}{h}{3}$. Then \\
$i)$ $W[1]=(\o,\o,\o),\; W[2]=(\o,\e,\o)$,\\
$ii)$ If $W$ is atomic then for all $1<i<h$, $|W[i]|_\o=2$,\\
$iii)$  If $W$ is atomic then  there is no interior $2\times 2$ filled square in $W$,\\
$iv)$ If $W$ is atomic then  there is no pair of consecutive empty cells in $W^t[1]$ and $W^t[3]$. 

\end{lemma}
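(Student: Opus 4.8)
The plan is to prove the four items in the order (i), (ii), (iii), (iv), each using the previous ones, and throughout to use Proposition~\ref{prop3xhfssize}, which gives $\maxdhw{2}{h'}{3}=2h'+2$ (so the maximal excess is $\emax(h',3)=2$) for $h'\ge 3$, and maximal excess at most $1$ for $h'\in\{1,2\}$.

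For (i) I would run a strong induction on $h$. The case $h=3$ is handled by inspection: a word in $\MWordsdhwvarshort{2}{3}{3}$ has a single $\e$-cell, and since the centre of the full $3\times 3$ square has degree $4$ and a mid-side cell has degree $3$, that $\e$-cell must be the centre, giving $W[1]=(\o,\o,\o)$, $W[2]=(\o,\e,\o)$. For $h\ge 4$ I would first show $W[1]=(\o,\o,\o)$: if $|W[1]|_\o\le 2$ then $e(W\DI{2,h})\ge 2=\emax(h-1,3)$, so $W\DI{2,h}$ is $2$-full and $e(W[1])=0$; the induction hypothesis forces $W[2]=(\o,\o,\o)$, the degree of $W[2,2]$ then forces $W[1,2]=\e$ and hence $W[1]=(\o,\e,\o)$, and the degrees of $W[2,1],W[2,3]$ force $W[3]=\e^3$; but then $|W\DI{4,h}|_\o=|W\DI{2,h}|_\o-3=2h-3>\maxdhw{2}{h-3}{3}$, a contradiction. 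Once $W[1]=(\o,\o,\o)$ is known, the degree of $W[1,2]$ gives $W[2,2]=\e$; if moreover $|W[2]|_\o\le 1$, then $e(W\DI{3,h})=e(W)-e(W\DI{1,2})\ge 2$, which already contradicts $\emax(h-2,3)$ when $h\le 4$ and for $h\ge 5$ makes $W\DI{3,h}$ $2$-full, so by induction $W[3]=(\o,\o,\o)$ and $W[4]=(\o,\e,\o)$; but in the only surviving subcase $W[2]=(\o,\e,\e)$ the degree of $W[3,1]$ forces $W[4,1]=\e$, contradicting $W[4]=(\o,\e,\o)$. Hence $|W[2]|_\o=2$, and with $W[2,2]=\e$ this gives $W[2]=(\o,\e,\o)$.

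For (ii) I would first note that, by symmetry, (i) also yields $W[h]=(\o,\o,\o)$ and $W[h-1]=(\o,\e,\o)$, so $e(W[1])=e(W[h])=1$ and $\sum_{i=2}^{h-1}e(W[i])=e(W)-2=0$, while atomicity gives $|W[i]|_\o\ge 1$, i.e. $e(W[i])\in\{-1,0,1\}$, for every interior row. If not every interior row had excess $0$, the vanishing of the sum would force a full interior row; let $j$ be the topmost such row. The degree of $W[j,2]$ forces $W[j-1,2]=W[j+1,2]=\e$, and if both of $W[j+1,1],W[j+1,3]$ (resp. both of $W[j-1,1],W[j-1,3]$) were $\o$, the degrees of $W[j,1]$ and $W[j,3]$ would force $W[j-1]=\e^3$ (resp. $W[j+1]=\e^3$), contradicting atomicity; so $|W[j-1]|_\o=|W[j+1]|_\o=1$, atomicity again excluding the value $0$. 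Since $W[2]$ and $W[h-1]$ have two cells by (i), necessarily $4\le j\le h-3$; as rows $2,\dots,j-1$ are interior and non-full by minimality of $j$, we get $e(W\DI{1,j-1})=1+\sum_{i=2}^{j-1}e(W[i])\le 1+e(W[j-1])=0$, hence $e(W\DI{j,h})\ge 2=\emax(h-j+1,3)$ and $W\DI{j,h}$ is $2$-full; but then (i) forces its second row $W[j+1]$ to be $(\o,\e,\o)$, contradicting $|W[j+1]|_\o=1$. Therefore every interior row has exactly two cells.

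For (iii) and (iv), which follow quickly: by (i) and (ii), $W[1]=W[h]=(\o,\o,\o)$, $W[2]=W[h-1]=(\o,\e,\o)$, and all remaining rows have exactly two cells. Since rows $W[2]$ and $W[h-1]$ contain no two horizontally adjacent $\o$-cells, any $2\times 2$ filled square lies in interior rows $W[i],W[i+1]$ with $3\le i\le h-3$, and, up to left--right symmetry, $W[i]=W[i+1]=(\o,\o,\e)$; the degrees of $W[i,1]$ and $W[i,2]$ then force $W[i-1,1]=W[i-1,2]=\e$, impossible because $W[i-1]$ has its first cell filled (if $i-1\le 2$) or is an interior row with two cells that would have to lie entirely in column $3$. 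This proves (iii). For (iv), if $W[i,1]=W[i+1,1]=\e$ then, since $W[1,1]=W[2,1]=W[h-1,1]=W[h,1]=\o$, the rows $W[i],W[i+1]$ are interior with two cells and empty first cell, so equal to $(\e,\o,\o)$, and then $W[i,2],W[i,3],W[i+1,2],W[i+1,3]$ form a $2\times 2$ filled square, contradicting (iii); the argument for $W^t[3]$ is symmetric. The step I expect to be the main obstacle is (ii): one has to squeeze exactly the right local information out of the degree constraints around a full interior row and combine it with atomicity to pin down $|W[j\pm 1]|_\o$, then convert this into a global excess contradiction, all while keeping the small-height and boundary subcases (e.g.\ $j\in\{3,h-2\}$, or heights $\le 2$ when invoking Proposition~\ref{prop3xhfssize}) under control.
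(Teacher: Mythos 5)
Your proof is correct and rests on the same two ingredients as the paper's: the bound $e_{max}(h',3)=2$ from Proposition~\ref{prop3xhfssize} combined with additivity of the excess, and local degree forcing around full rows. The differences are organizational but real. For (i), the paper argues by minimal counterexample over atomic words only (checking $h\le 6$ by inspection) and dismisses the non-atomic case with an unproved claim that every $2$-full word is a shuffle of $2$-full atomic words with empty rows; your induction from the single base case $h=3$ treats all $2$-full words uniformly and so avoids that gap. For (ii), the paper takes the first index $i$ with $|W[i]|_\o\neq 2$ and splits into $|W[i]|_\o=3$ (forcing an empty adjacent row) and $|W[i]|_\o=1$ (forcing, via (i) applied to the $2$-full suffix $W\DI{i+1,h}$, a row of the wrong weight); you instead use the symmetric form of (i) to see that the interior excesses sum to zero, locate the topmost full interior row $j$, pin its neighbours down to weight $1$ by degree counting plus atomicity, and reach the same kind of contradiction from the $2$-fullness of $W\DI{j,h}$. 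Parts (iii) and (iv) are essentially identical to the paper's.
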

\begin{proof} $i)$ By minimal counterexample. Observe that we only need to prove this statement for atomic words because if it is true for atomic words then it is also true for any $h\times 3$ word which is a "shuffle" product of 2-full atomic words with empty rows $(\e,\e,\e)$. 
We know by inspection that the statement is true for $h\leq 6$ 
So assume that $W$ is atomic and $h>6$ is minimal such that $W[1]\neq (\o,\o,\o)$. 
If $|W[1]|_\o\leq 1$ then $e(W\DI{2,h})\geq 3$ which is impossible. If $|W[1]|_\o=2$ then $e(W\DI{2,h})=2$ and $W\DI{2,h}$ is 2-full atomic and by minimality hypothesis we have 
$W[2]= (\o,\o,\o)$ and since $|W[1]|_\o=2$ we must have $W[3]= (\e,\e,\e)$ which is impossible. So we must have 
$W[1]= (\o,\o,\o)$ \\
Now if $|W[2]|_\o=1$ then $e(W\DI{3,h})=2$ and $W\DI{3,h}$ is 2-full atomic so by minimality hypothesis $W[3]= (\o,\o,\o)$ and $W[4]= (\o,\e,\o)$ which implies that $|W[2]|_\o=0$ contradicting the hypothesis. So we must have $W[2]= (\o,\e,\o)$. \\
$ii)$ By contradiction. Assume that $W$ is atomic and there exists $1<i<h$ such that  $|W[i]|_\o\neq2$. Let $i$ be minimal such $|W[i]|_\o\neq2$. If $W[i]=(\o,\o,\o)$ then $W[i+1]=(\e,\e,\e)$ which is forbidden.  If $|W[i]|_\o=1$ then $e(W\DI{1,i})=0$ and  $e(W\DI{i+1,h})=2$ which implies from $i)$ that $W[i+1]=(\o,\o,\o)$ and $W[i+2]=(\o,\e,\o)$ in contradiction with $|W[i]|_\o=1$. So we must have  $|W[i]|_\o=2$ for all $1<i<h$. 
\\
$iii)$ Suppose that $W$ is atomic and there exists $1<i<h-1$ such that $W\DI{i,i+1}=(\o,\o,\e)\vcat(\o,\o,\e)$. Then $|W[i-1]|_\o\leq 1$ in contradiction with $ii)$.\\
$iv)$ If there are two consecutive empty cells in $W^t[1]$ or $W^t[3]$ then, by $ii)$, on the rows of these two cells  there is  a filled $2\times 2$ square in contradiction with $iii)$. 

\end{proof}

\begin{proposition} \label{prop3xhmax_bench1} 
Let $h\geq 3$ and let $W\in \MWordsdhwvarshort{2}{h}{3}$ be an atomic word.
Then for some integers $t_1,t_2\geq 3$, there is a $t_1$-bench with its seat on  $W^t[1]$ and a $t_2$-bench with its seat on $W^t[3]$.
\end{proposition}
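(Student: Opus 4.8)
The plan is to exploit the rigid local structure of atomic $2$-full $h\times 3$ words. I will first reduce the two assertions to one: the horizontal reflection $(\mu W)[i,j]=W[i,4-j]$ maps $\MWordsdhwvarshort{2}{h}{3}$ onto itself, preserves atomicity, and takes an east $t$-bench with seat on column $1$ to a west $t$-bench with seat on column $3$. So it is enough to show that every atomic $W\in\MWordsdhwvarshort{2}{h}{3}$ contains, as a factor, an east $t$-bench with $t\geq 3$ occupying columns $1$ and $2$, its seat lying on $W^t[1]$.

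The second step is to pin down the possible rows. By Lemma~\ref{lem3xhmax_r1r_2}(i) together with the vertical reflection $W\mapsto(W[h],\dots,W[1])$ (which also fixes $\MWordsdhwvarshort{2}{h}{3}$ and atomicity), the rows $1$ and $h$ equal $(\o,\o,\o)$ --- abbreviated $F$ --- and the rows $2$ and $h-1$ equal $(\o,\e,\o)$; by Lemma~\ref{lem3xhmax_r1r_2}(ii) every other row has exactly two $\o$'s and hence equals one of $L=(\o,\o,\e)$, $M=(\o,\e,\o)$, $R=(\e,\o,\o)$. Using only the degree bound $\le 2$ and Lemma~\ref{lem3xhmax_r1r_2}(ii), I will then establish three local facts. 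First, no two consecutive rows are both $L$, nor both $R$: two consecutive $L$'s force the row just above to be empty in columns $1$ and $2$, hence to carry at most one $\o$, contradicting the preceding line. Second, whenever $W[i]=R$, exactly one of $W[i-1],W[i+1]$ equals $L$ and the other equals $M$: they are neither $R$ (by the first fact) nor $F$ (by position); if both were $M$ the cell $(i,3)$ would have three filled neighbours, and if both were $L$ the cell $(i,2)$ would. Third, inside any maximal block of consecutive rows carrying $\o$ in column $1$, every row strictly between the two ends equals $M$ (its column-$1$ cell already has two filled vertical neighbours, so column $2$ must be empty there).

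The last step is a short parity argument. The rows with $\e$ in column $1$ are exactly the $R$-rows; call them the notches $N_1<\dots<N_m$ (pairwise non-adjacent by the first fact), cutting column $1$ into runs $\mathrm{Run}_0,\dots,\mathrm{Run}_m$ from top to bottom. By the third fact the interior of each run consists of $M$'s, so columns $1$ and $2$ over $\mathrm{Run}_t$ form an east bench exactly when both extreme rows of $\mathrm{Run}_t$ lie in $\{F,L\}$ (the two patterns with $\o$ in column $2$); moreover any such ``good'' run has length $\geq 3$, since a good run of length $1$ or $2$ is impossible ($F$ occurs only at rows $1,h$, each adjacent to an $M$; two consecutive $L$'s are forbidden; a single $L$ flanked by two $R$'s makes $(i,2)$ have degree $3$). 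By the second fact, each notch $N_t$ has its unique $L$-neighbour either immediately above or immediately below it; encode this as $c_t\in\{0,1\}$ ($0$ = above, $1$ = below). Then $\mathrm{Run}_0$ is good iff $c_1=0$ (its top end is row $1=F$), $\mathrm{Run}_m$ is good iff $c_m=1$, and for $0<t<m$, $\mathrm{Run}_t$ is good iff $c_t=1$ and $c_{t+1}=0$. If no run were good, then $c_1=1$, $c_m=0$, and $c_1\cdots c_m$ contains no ``$1$ followed by $0$'', forcing $c_1=\dots=c_m=1$ and hence $c_m=1$, a contradiction; and if $m=0$ the single run $[1,h]$ is good. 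Thus a good run exists and yields the required east bench on $W^t[1]$, and the reflection $\mu$ then gives the bench on $W^t[3]$. I expect the main difficulty to be the second step --- carrying out the degree bookkeeping cleanly, in particular the interactions of the patterns $L,M,R$ with the forced rows $1,2,h-1,h$ near the boundary; the combinatorial argument in the third step is then elementary.
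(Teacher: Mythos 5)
Your proof is correct. It shares its foundations with the paper's proof --- Lemma~\ref{lem3xhmax_r1r_2} and the resulting classification of interior rows into $(\o,\o,\e)$, $(\o,\e,\o)$, $(\e,\o,\o)$ --- but the closing argument is genuinely different. The paper argues by contradiction: assuming no west bench with seat on $W^t[3]$, it asserts that the rows read from top to bottom are forced into the regular pattern of Equation~\eqref{eqefs3b}, and that the mandatory last row $(\o,\o,\o)$ then completes a bench at the bottom. You instead reduce everything to column $1$ via the horizontal reflection and give a direct existence argument: the $(\e,\o,\o)$-rows cut column $1$ into runs whose interiors are forced to be $(\o,\e,\o)$, each notch carries a bit recording the side of its unique $(\o,\o,\e)$-neighbour, and if no run were good the bit string would have to start with $1$, end with $0$, yet avoid the factor $10$, which is impossible. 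Your route buys rigour and self-containment: the pattern in Equation~\eqref{eqefs3b} is asserted without derivation, and as printed its $3\times 2$ block places the row $(\e,\o,\o)$ above $(\o,\o,\e)$, so that the first maximal pillar of column $3$ it generates, of shape $(\o,\o,\o)(\o,\e,\o)^{+}(\e,\o,\o)$, is itself already a west bench with seat on $W^t[3]$ --- the two rows of that block are evidently intended in the opposite order. Your argument sidesteps this automaton bookkeeping entirely, needs no separate verification for small $h$, and explicitly enforces the requirement $t\geq 3$ by ruling out good runs of length $1$ or $2$, a point the paper's proof passes over in silence.
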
 
\begin{proof}
This is verified by observation for $h\in\{3,4,5\}$. Assume that there exists an atomic $W\in \MWordsdhwvarshort{2}{h}{3}$ such that  there is no West bench with its seat on  column $W^t[3]$.
The reading of the rows of  $W$ from  top to bottom must satisfy the following rules : 

{\small 
\begin{align} \label{eqefs3b}
LP_{3}&=
\left(
\begin{pmatrix}
\o \\ \o \\ \o
\end{pmatrix}
\begin{pmatrix}
\o \\ \e \\ \o
\end{pmatrix}^{+}
\left\{
\begin{pmatrix}
\e&\o \\ \o&\o \\ \o&\e
\end{pmatrix}
\begin{pmatrix}
\o \\ \e \\ \o
\end{pmatrix}
^{+}
\right\}
^{*}
\begin{pmatrix}
\o \\ \o \\ \o
\end{pmatrix}
\right)^t
\end{align}
}
But then, the row $W[h]=(\o,\o,\o)$ forces $W$ to contain a West bench with its seat in $W^t[3]$ at the bottom of $W$ contradicting the hypothesis.
\end{proof}

\begin{corollary}\label{corol:hequiv1,2}
For integers $h\geq 4$ such that $h\equiv 0,1 \pmod 3$, atomic 2-full words 
$W\in \MWordsdhwvarshort{2}{(h+1)}{3}$ of height $(h+1)$ are obtained by the insertion of a row $W[i]=(\o,\e,\o)$ adjacent to an already existing row  $W[i-1]=W'[i-1]=(\o,\e,\o)$ in an atomic word $W'\in \MWordsdhwvarshort{2}{h}{3}$ of height $h$.
\end{corollary}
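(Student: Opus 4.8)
The plan is to determine the exact shape of atomic $2$-full words of width $3$ and then finish with a counting argument modulo $3$. Write $n=h+1$; since ``$h\equiv_3 0$ or $h\equiv_3 1$'' is the same as $3\nmid n$, it suffices to show that every atomic $W\in\MWordsdhwvarshort{2}{n}{3}$ with $3\nmid n$ has two consecutive rows equal to $(\o,\e,\o)$. Granting this, choose $i$ with $W[i-1]=W[i]=(\o,\e,\o)$ (with $2\le i-1$ and $i\le n-1$, which will be automatic) and let $W'$ be obtained from $W$ by deleting row $i$. Deleting one of two identical consecutive rows changes no degree: only the cells in rows $i-1$ and $i+1$ (old numbering) can see a new neighbour; for a cell of row $i-1$ its lower neighbour switches from $W[i,c]$ to $W[i+1,c]$, but in columns $1$ and $3$ the discarded value $W[i,c]$ is $\o$, so the degree cannot increase, while in column $2$ the cell $W[i-1,2]=\e$ carries no constraint; for a cell of old row $i+1$ its upper neighbour switches from $W[i,c]$ to $W[i-1,c]$, and these are equal since $W[i]=W[i-1]$. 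Hence $W'\in\Wordsdhwvarshort{2}{h}{3}$, and $|W'|_\o=|W|_\o-2=(2n+2)-2=2h+2=\maxdhw{2}{h}{3}$ by Proposition~\ref{prop3xhfssize}, so $W'$ is $2$-full; it is atomic since deleting a nonempty row creates no empty row. As $W'[i-1]=W[i-1]=(\o,\e,\o)$ and $W$ is recovered from $W'$ by inserting the row $(\o,\e,\o)$ at position $i$ next to $W'[i-1]$, this is precisely the asserted description.

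For the structural statement, fix an atomic $W\in\MWordsdhwvarshort{2}{n}{3}$. By Lemma~\ref{lem3xhmax_r1r_2}(i) and its vertical mirror, $W[1]=W[n]=(\o,\o,\o)$ and $W[2]=W[n-1]=(\o,\e,\o)$; by Lemma~\ref{lem3xhmax_r1r_2}(ii) every interior row has exactly two $\o$-cells, so it is $(\o,\e,\o)$ (a \emph{gap} row) or one of the \emph{bent} rows $(\o,\o,\e)$, $(\e,\o,\o)$. By Lemma~\ref{lem3xhmax_r1r_2}(iii), $W$ has no two consecutive bent rows of the same type, and since positions $1,2,n-1,n$ are not bent, every bent row sits at a position $p$ with $3\le p\le n-2$, so its two neighbours are interior. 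I claim the bent rows occur only as $2$-row blocks $\binom{(\o,\o,\e)}{(\e,\o,\o)}$ or $\binom{(\e,\o,\o)}{(\o,\o,\e)}$, each flanked above and below by a gap row. Indeed, consider a maximal run of consecutive bent rows: its topmost row has a non-bent interior neighbour above, hence a gap row above. Now a row $(\o,\o,\e)$ with a gap row above has, at its column-$1$ cell, two $\o$-neighbours already (above and to the right), which forces the row below it to carry $\e$ in column $1$, i.e.\ to equal $(\e,\o,\o)$; and a row $(\e,\o,\o)$ whose upper neighbour is $(\o,\o,\e)$ has, at its column-$2$ cell, two $\o$-neighbours already (above and to the left), which forces the row below it to carry $\e$ in column $2$, i.e.\ to be a gap row; the two remaining cases are column-flips of these. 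Reading these rules downward from the top of the run shows it has length exactly $2$ and is followed by a gap row, proving the claim. Consequently, top to bottom,
\[
W=(\o,\o,\o)\vcat R_0\vcat B_1\vcat R_1\vcat\cdots\vcat B_k\vcat R_k\vcat(\o,\o,\o),
\]
where $k\ge0$, each $B_j$ is a bent block of height $2$, and each $R_j$ is a run of $a_j\ge1$ gap rows ($a_0,a_k\ge1$ because $W[2]=W[n-1]=(\o,\e,\o)$, and $a_j\ge1$ for $0<j<k$ because no two bent blocks are adjacent).

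The count finishes the argument. From the decomposition, $n=2+\sum_{j=0}^{k}a_j+2k$. If $W$ had no two consecutive gap rows, then every $a_j=1$ and $n=2+(k+1)+2k=3(k+1)$, i.e.\ $3\mid n$; contrapositively, $3\nmid n$ forces some $a_j\ge2$, i.e.\ two consecutive rows $(\o,\e,\o)$. Such a pair lies in $R_0$ (rows $2,3$), in $R_k$ (rows $n-2,n-1$), or strictly between, so it can always be taken as $W[i-1]=W[i]$ with $2\le i-1$ and $i\le n-1$, which legitimises the deletion step of the first paragraph and completes the proof. The only delicate point in this plan is the structural claim on bent rows — the small but careful degree bookkeeping confining every bent row to a $2$-row $\binom{(\o,\o,\e)}{(\e,\o,\o)}$- or $\binom{(\e,\o,\o)}{(\o,\o,\e)}$-block between two gap rows; the mod-$3$ count and the deletion argument are routine.
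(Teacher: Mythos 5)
Your proof is correct and follows essentially the same route as the paper: you derive the block structure of atomic $2$-full words of width $3$ (full row, runs of gap rows $(\o,\e,\o)$, two-row bent blocks) that the paper encodes as the rule in Equation~\eqref{eqefs3b} via Proposition~\ref{prop3xhmax_bench1}, and you then make explicit the mod-$3$ count and the delete/insert argument that the paper leaves implicit. One trivial slip: in your second degree computation the two $\o$-neighbours of the column-$2$ cell of $(\e,\o,\o)$ are above and to the \emph{right}, not to the left; the conclusion is unaffected.
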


\begin{proof}
This is a consequence of Proposition \ref{prop3xhmax_bench1} and the rule given by equation \eqref{eqefs3b}. 
\end{proof}

\begin{proposition} \label{prop3xhe(W[4])} 
Let $W\in \Wordsdhwvarshort{2}{h}{4}$ such that $W^t\DI{1,3}\in \MWordsdhwvarshort{2}{h}{3}$ is 
a 2-full atomic subword of $W$. Then there is no $3$-pillar in the interior of $W^t[4]$. 
\end{proposition}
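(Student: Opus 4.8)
The plan is to argue by contradiction. Suppose the fourth column $W^t[4]$ contains a vertical $3$-pillar strictly in its interior, occupying rows $i,i+1,i+2$ with $2\le i$ and $i+2\le h-1$, and write $W'=W^t\DI{1,3}\in\MWordsdhwvarshort{2}{h}{3}$ for the atomic $2$-full subword formed by the first three columns. The first step is to read off the local shape forced by the degree bound. The middle cell $W[i+1,4]$ already has the two $\o$-neighbours $W[i,4]$ and $W[i+2,4]$ of the pillar, so $\deg_{\o,W}(i+1,4)\le 2$ forces $W[i+1,3]=\e$. By Lemma~\ref{lem3xhmax_r1r_2}(ii), row $i+1$ of $W'$ has exactly two $\o$-cells, so $W[i+1,1]=W[i+1,2]=\o$. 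By Lemma~\ref{lem3xhmax_r1r_2}(iv) applied to $W^t[3]$, the empty cell $W[i+1,3]$ cannot be flanked above and below by empty cells, so $W[i,3]=W[i+2,3]=\o$.

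The second step is to pin down rows $i$ and $i+2$ of $W'$. By Lemma~\ref{lem3xhmax_r1r_2}(ii) each of these rows has two $\o$-cells, one of which lies in the third column, so exactly one of the first two columns is filled. I would rule out the case that it is the second column. If $W[i,2]=\o$, then row $i$ of $W'$ is $(\e,\o,\o)$; now $\deg_{\o,W}(i,2)\le 2$ together with the $\o$-neighbours $W[i+1,2]$ and $W[i,3]$ forces $W[i-1,2]=\e$, and $\deg_{\o,W}(i,3)\le 2$ together with the $\o$-neighbours $W[i,2]$ and $W[i,4]$ forces $W[i-1,3]=\e$; hence row $i-1$ of $W'$ contains at most one $\o$-cell, contradicting Lemma~\ref{lem3xhmax_r1r_2}(ii) when $i-1>1$ and Lemma~\ref{lem3xhmax_r1r_2}(i) when $i-1=1$. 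The mirror argument, using row $i+3$ of $W'$ and the fact that $W'[h]=(\o,\o,\o)$ (Lemma~\ref{lem3xhmax_r1r_2}(i) applied to the vertical reflection of $W'$, which is again atomic and $2$-full), rules out $W[i+2,2]=\o$. Therefore rows $i$ and $i+2$ of $W'$ are both $(\o,\e,\o)$.

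The contradiction is now immediate: since $W[i,1]=W[i+1,1]=W[i+2,1]=\o$ and $W[i+1,2]=\o$, the cell $W[i+1,1]$ has three $\o$-neighbours, violating $\deg_{\o,W}(i+1,1)\le 2$. Hence no such interior $3$-pillar exists. The only delicate point is the bookkeeping at the ends of the word: one must verify that the auxiliary rows $i-1$ and $i+3$ either fall in the range $\{2,\dots,h-1\}$, where Lemma~\ref{lem3xhmax_r1r_2}(ii) applies, or coincide with row $1$ or row $h$, where the all-$\o$ structure of Lemma~\ref{lem3xhmax_r1r_2}(i) and its vertical mirror applies; this is immediate from $2\le i\le h-3$. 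Apart from this, the proof is a short combination of the ``every interior row of an atomic $2$-full $h\times 3$ word has exactly two $\o$-cells'' and ``no two consecutive empty cells in the outer columns'' properties with the degree bound, so there is no real obstacle.
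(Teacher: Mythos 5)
Your proof is correct and follows essentially the same strategy as the paper's: both arguments use Lemma~\ref{lem3xhmax_r1r_2} together with the degree bound to propagate forced cells around the interior $3$-pillar and reach a contradiction. The only cosmetic difference is that the paper fixes one configuration up to symmetry and propagates downward until a row of $W^t\DI{1,3}$ with at most one filled cell appears, whereas you first exclude the $(\e,\o,\o)$ option for rows $i$ and $i+2$ and then obtain a degree-$3$ cell at position $(i+1,1)$.
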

\begin{proof} 
Assume $W\DI{i,i+2}$  is a $3$-pillar in the interior of  $W^t[4]$ (purple cells in Figure \ref{4xhc4_3pilier}). Then, up to symmetry and from Lemma \ref{lem3xhmax_r1r_2} the red cells are mandatory and $W[i+4]$ contains at most one cell in contradiction with Lemma \ref{lem3xhmax_r1r_2} $ii)$. 
\begin{figure*}[h!]
\centering
\begin{subfigure}[t]{0.2\textwidth}
\centering
\includegraphics[height=0.6in]{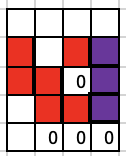}
\caption{}
\end{subfigure}%
\caption{ $3$-pillar in  $W^t[4]$}
\label{4xhc4_3pilier}
\end{figure*}
\end{proof}

\paragraph{\textbf{Proof of Corollary \ref{cor4xhn(c4)} $i)$}}

\begin{proof} Observe that if the statement is true for atomic words  $W^t\DI{1,3}\in \MWordsdhwvarshort{2}{h}{3}$ then it is also true for non atomic words $W^t\DI{1,3}\in \MWordsdhwvarshort{2}{h}{3}$. So we only need to prove the corollary for 2-full atomic words. For $i\in\{ 0,1,2\} $,Let $h=3k+i$.
Factorize $W$ as product of $t=2+\lfloor\frac{h-2}{3} \rfloor$ factors $W=W[1] W\DI{2,4}\cdots 
W\DI{h-4-(i+1\mod 3),h-1}W[h]$
such that  the interior factors have $3$ rows except possibly the penultimate factor which has $3+(i+1\mod 3)$ rows. 
From Proposition \ref{prop3xhe(W[4])} and Corollary \ref{lem3xhmax_r1r_2} $i)$, the intersection of each of the $t$ factors  with $W^t[4]$ contains an empty cell. Moreover, since there is a West bench on the right side of $W^t\DI{1,3}$, at least one intersection of an  interior factor with $W^t[4]$ must contain at least one more empty cell. The proof is complete.
\end{proof}

\begin{lemma}\label{lem:hx3_n+1bench}
Let $h\geq 3$ and $W\in \MWordsdhwvarshort{2}{h}{3}$ an atomic 2-full word.
If there are $n$ solitary vertical pillars in $W^t[3]$ 
then there are precisely $n+1$ West benches with their seat in $W^t[3]$.
\end{lemma}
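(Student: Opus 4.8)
The plan is to read $W$ column by column, leaning on the structural information in Lemma~\ref{lem3xhmax_r1r_2}. Since $W \in \MWordsdhwvarshort{2}{h}{3}$ and is atomic, Lemma~\ref{lem3xhmax_r1r_2}(i) and its top--bottom mirror give $W[1]=W[h]=(\o,\o,\o)$ and $W[2]=W[h-1]=(\o,\e,\o)$; Lemma~\ref{lem3xhmax_r1r_2}(ii) says every interior row has exactly two filled cells, hence is one of $(\o,\e,\o)$, $(\o,\o,\e)$, $(\e,\o,\o)$; and Lemma~\ref{lem3xhmax_r1r_2}(iv) says $W^t[3]$ has no two consecutive $\e$'s. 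As $W[1,3]=W[h,3]=\o$, this lets me write $W^t[3]=\o^{a_1}\e\o^{a_2}\e\cdots\e\o^{a_k}$ with each $a_i\geq1$; write $p_1,\dots,p_k$ for the corresponding maximal vertical pillars. The $k-1$ rows on which $W^t[3]$ is empty are all interior, so (two filled cells, column $3$ empty) each equals $(\o,\o,\e)$; I call these the \emph{gap rows}. For a pillar $p$ let $t(p),b(p)\in\{\o,\e\}$ denote the entry of column $2$ at the top row and at the bottom row of $p$ (for a $1$-pillar these coincide).

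Next I record some local facts. (a) On a row strictly between the top and bottom of a pillar, column $2$ is $\e$: the corresponding $\o$-cell of column $3$ already has two vertical $\o$-neighbours, so degree $\leq2$ forces its left neighbour to be $\e$. (b) $t(p_1)=b(p_k)=\o$ (from $W[1]=W[h]=(\o,\o,\o)$), and $a_1,a_k\geq2$ (else $W[2,3]$ or $W[h-1,3]$ would be $\e$). (c) No pillar with $t(p)=b(p)=\o$ has $a_p\leq2$: if $a_p=1$ and that cell is $\o$, then the row is $(\e,\o,\o)$ while both adjacent rows are gap rows $(\o,\o,\e)$, giving that $\o$-cell three $\o$-neighbours; if $a_p=2$, then either $p$ is $p_1$ or $p_k$, where $b(p_1)=W[2,2]=\e$ or $t(p_k)=W[h-1,2]=\e$, or else $t(p)=b(p)=\o$ yields an interior filled $2\times2$ square, against Lemma~\ref{lem3xhmax_r1r_2}(iii). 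From (a), (c) and the shape of a bench: a West $n$-bench with seat in $W^t[3]$ must occupy columns $2,3$ over a full maximal pillar (if it stopped inside a pillar, an extreme row would be an interior row with column $2$ empty, contradicting the bench pattern), with its column~$2$ forced to be $\o\e^{\,n-2}\o$; conversely, any pillar with $(t(p),b(p))=(\o,\o)$ has $a_p\geq3$ and column $2$ equal to $\o\e^{\,a_p-2}\o$ over its rows, hence is the seat of a West $a_p$-bench. So the number of West benches with seat in $W^t[3]$ equals $\beta:=\#\{p:(t(p),b(p))=(\o,\o)\}$. Dually, a maximal pillar is solitary precisely when $(t(p),b(p))=(\e,\e)$: its vertical outside-neighbours are empty by maximality (and $p_1,p_k$ are never solitary since $W[1,2]=W[h,2]=\o$), while its horizontal outside-neighbours are empty on interior rows by (a) and at the two ends iff $t(p)=b(p)=\e$; so the number of solitary pillars in $W^t[3]$ is $\sigma:=\#\{p:(t(p),b(p))=(\e,\e)\}$.

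The crux is the \emph{cross-gap rule}: for consecutive pillars $p_i,p_{i+1}$ with gap row $g$ in between, $b(p_i)\neq t(p_{i+1})$. Here $W[g]=(\o,\o,\e)$. The degree bound on $W[g,2]=\o$, which already has $\o$-neighbour $W[g,1]$, forces at most one of $W[g-1,2]=b(p_i)$ and $W[g+1,2]=t(p_{i+1})$ to be $\o$. The degree bound on $W[g,1]=\o$, which already has $\o$-neighbour $W[g,2]$, forces at most one of $W[g-1,1]$ and $W[g+1,1]$ to be $\o$; but row $g-1$ is an interior row with column $3$ equal to $\o$, hence has exactly one of columns $1,2$ filled, so $W[g-1,1]=\o$ iff $b(p_i)=\e$, and likewise $W[g+1,1]=\o$ iff $t(p_{i+1})=\e$ — thus at least one of $b(p_i),t(p_{i+1})$ is $\o$. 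Combining, exactly one of them is $\o$. I expect this double-estimate, together with keeping the degenerate small-pillar cases under control in fact~(c), to be the main obstacle.

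Finally I conclude by a signed count. Encode $\o$ as $+1$ and $\e$ as $-1$, writing $\varepsilon(\cdot)$ for this encoding; a pillar with $(t,b)=(\o,\o)$ contributes $+2$ to $\sum_{i=1}^k(\varepsilon(t(p_i))+\varepsilon(b(p_i)))$, one with $(\e,\e)$ contributes $-2$, and all others contribute $0$, so that sum equals $2(\beta-\sigma)$. Regrouping, $\sum_{i=1}^k(\varepsilon(t(p_i))+\varepsilon(b(p_i))) = \varepsilon(t(p_1)) + \sum_{i=1}^{k-1}(\varepsilon(b(p_i))+\varepsilon(t(p_{i+1}))) + \varepsilon(b(p_k)) = 1 + 0 + 1 = 2$, using $t(p_1)=b(p_k)=\o$ and the cross-gap rule (each middle summand is $(+1)+(-1)=0$). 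Hence $\beta-\sigma=1$: $W$ has exactly $n+1$ West benches with seat in $W^t[3]$ whenever it has $n$ solitary vertical pillars there.
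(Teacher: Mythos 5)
Your proof is correct, but it takes a genuinely different route from the paper's. The paper argues by alternation: starting from the first solitary pillar it checks, by inspecting the few admissible row distributions above and below it, that a West bench must occur between any two consecutive solitary pillars and beyond the extreme ones, and conversely that a solitary pillar must occur between any two consecutive benches; benches and solitary pillars therefore interleave, which yields exactly $n+1$ benches. You instead classify each maximal pillar of $W^t[3]$ by the pair $(t(p),b(p))$ of column-$2$ entries at its two ends, identify bench seats with the $(\o,\o)$ pillars and solitary pillars with the $(\e,\e)$ ones (your fact (c) correctly disposes of the short degenerate pillars via Lemma~\ref{lem3xhmax_r1r_2}(iii) and the forced shape of $W[2]$ and $W[h-1]$), and then compute $\beta-\sigma$ by a telescoping signed sum whose interior terms vanish by the cross-gap rule $b(p_i)\neq t(p_{i+1})$. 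That rule is exactly the right local invariant, and both halves of it check out: the degree bound at $W[g,2]$ gives ``at most one'', and the degree bound at $W[g,1]$ combined with Lemma~\ref{lem3xhmax_r1r_2}(ii) gives ``at least one'', with the boundary cases $g-1\geq 2$ and $g+1\leq h-1$ guaranteed by $a_1,a_k\geq 2$. What your approach buys is a single global identity in place of the paper's figure-driven case analysis, and it is arguably tighter and easier to verify; what it does not deliver is the explicit alternating bench/pillar pattern that the paper's argument makes visible, though that extra structure is not needed for the statement being proved.
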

\begin{proof}
Let $p_{1}$ be the first solitary vertical pillar  in  $W^t[3]$  from the top of $W$.
(purple cell in Figure \ref{3xhfssolpillara}). Then the three rows of $W$ below and above $p_{1}$  are mandatory  (red cells in Figure \ref{3xhfssolpillara}).  If there is a second solitary pillar $p_{2}$  in  $W^t[3]$ below $p_{1}$ then there must exist a West bench with its seat in $W^t[3]$ between $p_{1}$ and $p_{2}$. This is proved by inspecting each possible row distribution below the red cells:

a row $(\e,\o,\o)$ (purple cells in Figure \ref{3xhfssolpillara}) gives immediately a bench. 
A row $(\o,\o,\e)$ (purple cells in Figure \ref{3xhfssolpillarb}) 
is immediately followed by $(\e,\o,\o)$ and $(\o,\e,\o)$ (green cells in Figure \ref{3xhfssolpillarb})  which leads to a West bench with its seat on $W^t[3]$. A sequence of rows $(\o,\e,\o)$ (purple cells in Figure \ref{3xhfssolpillarc}) similarly lead to a West bench. 
With the same argument, there also exists a bench above $p_{1}$. The repetition of  this argument for every maximal pillar in $W^t[3]$ shows that there are at least  $n+1$ West benches with their seat in $W^t[3]$. \\
Now suppose that there are two consecutive benches with their seat in $W^t[3]$ (purple cells in Figure \ref{3xhconseqbencha}). The two rows below the first bench and the two rows above the second bench 
(red cells in Figure \ref{3xhconseqbencha}) are mandatory.
We claim that there is a solitary pillar between these two benches. We prove this claim by considering the three possible row distributions below the two top red rows (Figures \ref{3xhconseqbencha},\ref{3xhconseqbenchb},\ref{3xhconseqbenchc}). 
If all rows are $(\o,\e,\o)$ (purple cells in Figure \ref{3xhconseqbencha}), they form a solitary pillar. If the first row below the two mandatory red cells is $(\o,\o,\e)$  (purple cells in Figure \ref{3xhconseqbenchb}) then the  mandatory red row above it contains a solitary pillar. 
If the first row below the two mandatory red cells is $(\e,\o,\o)$  (purple cells in Figure \ref{3xhconseqbenchc}) then its two adjacent rows are $(\o,\o,\e), (\o,\e,\o)$ (green cells in Figure \ref{3xhconseqbenchc}) which starts a cyclic disposition that must end with a solitary pillar because of the second bench. 
This proves that benches with their seats in $W^t[3]$ and solitary pillars in $W^t[3]$ alternate  and  the number of benches in $W^t[3]$ is $n+1$.

\begin{figure*}[h!]
\begin{subfigure}[t]{0.15\textwidth}
\centering
\includegraphics[height=1. in]{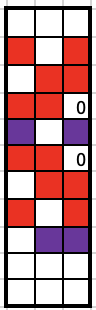}
\caption{}
\label{3xhfssolpillara}
\end{subfigure}%
~ 
\begin{subfigure}[t]{0.15\textwidth}
\centering
\includegraphics[height=1. in]{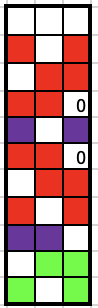}
\caption{}
\label{3xhfssolpillarb}
\end{subfigure}%
~ 
\begin{subfigure}[t]{0.15\textwidth}
\centering
\includegraphics[height=1. in]{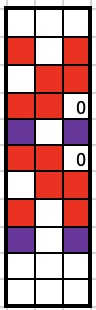}
\caption{}
\label{3xhfssolpillarc}
\end{subfigure}
\begin{subfigure}[t]{0.15\textwidth}
\centering
\includegraphics[height=1. in]{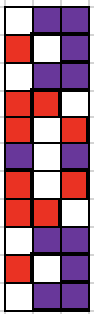}
\caption{}
\label{3xhconseqbencha}
\end{subfigure}%
~ 
\begin{subfigure}[t]{0.15\textwidth}
\centering
\includegraphics[height=1.1 in]{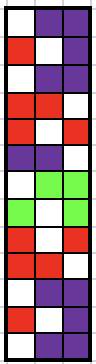}
\caption{}
\label{3xhconseqbenchb}
\end{subfigure}%
~ 
\begin{subfigure}[t]{0.15\textwidth}
\centering
\includegraphics[height=1.1 in]{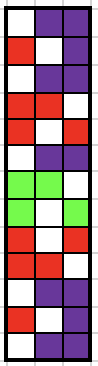}
\caption{}
\label{3xhconseqbenchc}
\end{subfigure}%
\caption{Solitary pillar and consecutive benches in $W^t[3]$}
\label{3xhfs3-bancvspillier}
\end{figure*}

\end{proof}

\paragraph{\textbf{Proof of Corollary \ref{cor4xhn(c4)} $ii)$ and $iii)$}}
\begin{proof}
We already know from Corollary \ref{cor4xhn(c4)} that for all $h\geq 3$, we have $|W[4]|_\e \geq\lfloor  \frac{h-2}
{3}\rfloor +3$. From Corollary \ref{corol:hequiv1,2} we know that $(h+i)\times 3,i\in\{ 1,2\}$ 2-full atomic words $W$ are obtained from $(h+i-1)\times 3$ 2-full atomic words $W'$ by inserting a row $(\o,\e,\o)$ in 
$W'$. We need to observe that the insertion of $(\o,\e,\o)$ in $W'$  imposes a supplementary empty cell in 
$W^t[4]$. Assume without loss of generality that $W^t\DI{1,3}[i]=(\o,\e,\o)$ is inserted below  $W'^t\DI{1,3}[i-1]=(\o,\e,\o)$. If $W'^t\DI{1,3}[i-2]=(\e,\o,\o)$ or $W'^t\DI{1,3}[i]=(\e,\o,\o)$
(purple cells in Figures \ref{hx3[101]a} and \ref{hx3[101]b})
then it is immediate that the insertion of  $(\o,\e,\o)]$ increases by one the number of cells of degree $2$ in $W'^t[3]$ and therefore, the number of empty cells in $W^t[4]$. If $W'^t\DI{1,3}[i-2]=(\o,\o,\e)$ and $W'^t\DI{1,3}[i]=(\o,\o,\e)$ then $W'[i-1,3]$ contains a solitary $1$-pillar (Figure \ref{hx3_101}) and 
from Lemma  \ref{lem:hx3_n+1bench}, there is a bench with its seat in $W'^t[3]$ above and below $W'[i]$ so that   the minimal number of  empty cells in $W'^t[4]$ increases by one. This completes the proof. 
\begin{figure*}
\centering 
\begin{subfigure}[t]{0.2\textwidth}
\centering
\includegraphics[height=0.4 in]{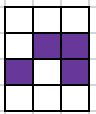}
\caption{}
\label{hx3[101]a}
\end{subfigure}%
~ 
\begin{subfigure}[t]{0.15\textwidth}
\centering
\includegraphics[height=.4 in]{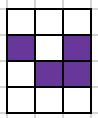}
\caption{}
\label{hx3[101]b}
\end{subfigure}%
~ 
\begin{subfigure}[t]{0.15\textwidth}
\centering
\includegraphics[height=.4 in]{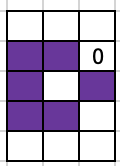}
\caption{}
\label{hx3_101}
\end{subfigure}%
~

\label{fig:hx3_101}
\caption{Insertion of $[1,0,1]$ in $W'^t\DI{1,3}$}
\end{figure*}

\end{proof}
\subsubsection{  $e(W^t[2])>0$.}
\label{secAut3.1}

Let $W\in \Wordsdhwvarshort{2}{h}{3}$  and consider a maximal $r$-pillar $W^t[2]\cap W\DI{j,j+r-1}$ as shown in Figure \ref{3xh_centr_pillar}. For $r\geq 2$, we have
\begin{align}\label{c1c2}
|W^t[1]\cap W\DI{j,j+r-1}|_\e + |W^t[3]\cap W\DI{j,j+r-1}|_{\e}=|W\DI{j,j+r-1}|_\e \geq 2(r-1)
\end{align}
(i.e. there are at least $2(r-1)$ empty cells in $W\DI{j,j+r-1}$ located in columns $W^t[1]$ and $W^t[3]$.)
\begin{figure*}[h!]
\begin{subfigure}[t]{0.5\textwidth}
\centering
\includegraphics[height=0.7 in]{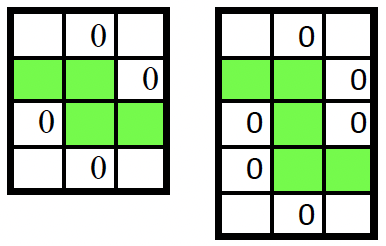}
\caption{} \label{3xh_centr_pillar}
\end{subfigure}%
\begin{subfigure}[t]{0.2\textwidth}
\centering
\includegraphics[height=.9 in]{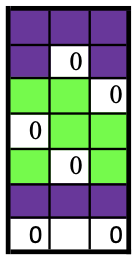}
\caption{}
\label{3x3km2}
\end{subfigure}%
\caption{$r$-pillar in the central column $W^t[2]$ }
\label{3xhcpill}
\end{figure*}

The sequence of lengths of maximal pillars in a  column $W^t[j]$ of a word $W$ read from top to bottom 
forms a composition of the integer $|W^t[j]|_\o$ and we are interested in the corresponding partition obtained from that composition denoted $\lambda(W^t[j])$. In the following, we will use the additive notation for partitions 
$\lambda=\lambda_1,\lambda_2,\ldots,\lambda_k$ with $\lambda_1\geq \lambda_2\geq, \ldots, \lambda_k$
and $\lambda_1+ \lambda_2,+\ldots + \lambda_k=|W^t[j]|_\o$ as well as the multiplicative notation 
$\lambda=1^{m_1}2^{m_2}\cdots n^{m_n}$ where $m_i$ is the number of occurrences of $i$ in $\lambda$. Also 

$$
\ell(\lambda)=\sum_i m_i
$$
is called the length of $\lambda$ and
$$
|W^t[j]|_\o=\sum_i im_i
$$
is called the weight of $|W^t[j]|_\o$.
\begin{lemma}
Let $W\in \Wordsdhwvarshort{2}{h}{3}$   and let $\lambda(W^t[2])=1^{m_1}2^{m_2}\cdots $
be the partition obtained from the distribution of maximal pillars in $W^t[2]$. Then 
\begin{align}
\label{no(c1)n0(c3)}
|W^t[1]|_\e +|W^t[3]|_\e
\geq \sum_{i\geq 2} 2(i-1)m_i+2max\{0,\lfloor \frac{m_1-1}2\rfloor\}
\end{align}
\end{lemma}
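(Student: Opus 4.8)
The plan is to split the empty cells of $W^t[1]\cup W^t[3]$ into those forced by the maximal pillars of length $\geq 2$ in $W^t[2]$ and those forced by the $m_1$ maximal $1$-pillars, and to show these two families are disjoint. First I would decompose $W^t[2]$ into its maximal pillars. For each maximal $r$-pillar with $r\geq 2$, occupying the rows $\DI{j,j+r-1}$, inequality \eqref{c1c2} gives $|W^t[1]\cap W\DI{j,j+r-1}|_\e+|W^t[3]\cap W\DI{j,j+r-1}|_\e\geq 2(r-1)$, and these empty cells lie entirely in the rows of that pillar. Since distinct maximal pillars occupy disjoint sets of rows, summing over all pillars of length $\geq 2$ yields at least $\sum_{i\geq 2}2(i-1)m_i$ empty cells of $W^t[1]\cup W^t[3]$, all located in the set $R$ of rows covered by pillars of length $\geq 2$.

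Next I would handle the $m_1$ maximal $1$-pillars. A maximal $1$-pillar at row $a$ satisfies $W[a,2]=\o$ and $W[a-1,2]=W[a+1,2]=\e$ whenever these rows exist. If $2\leq a\leq h-1$ I claim it forces an empty cell of $W^t[1]$ in $\{a-1,a,a+1\}$: if $W[a,1]=\e$ this is immediate, and if $W[a,1]=\o$ then, since $W[a,2]=\o$ and $\deg_{\o,W}(a,1)\leq 2$, at most one of $W[a-1,1],W[a+1,1]$ equals $\o$, so at least one equals $\e$; the same argument gives an empty cell of $W^t[3]$ in $\{a-1,a,a+1\}$. I would fix a rule that assigns to each interior $1$-pillar one such empty cell in column $1$ and one in column $3$. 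Two facts then make the counting go through: (a) the rows $a-1,a,a+1$ of a $1$-pillar are disjoint from $R$, since a row of $R$ has $\o$ in $W^t[2]$, so $a-1\in R$ (resp. $a+1\in R$) would force $W[a-1,2]=\o$ (resp. $W[a+1,2]=\o$), contradicting maximality of the $1$-pillar at $a$; and (b) any empty cell $(r,c)$ with $c\in\{1,3\}$ is assigned by at most two interior $1$-pillars, because the $1$-pillar rows among $\{r-1,r,r+1\}$ number at most two (they are pairwise at distance $\geq 2$, and if $r$ is one then $r\pm1$ are not). Since at most two $1$-pillars are non-interior (those possibly at row $1$ and at row $h$), there are at least $\max(0,m_1-2)$ interior ones, so columns $1$ and $3$ each contain at least $\lceil\max(0,m_1-2)/2\rceil=\max(0,\lfloor(m_1-1)/2\rfloor)$ empty cells coming from $1$-pillar windows. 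By (a) these are disjoint from the empty cells counted in the first step, so adding the two counts yields exactly \eqref{no(c1)n0(c3)}.

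The main obstacle, apart from the routine floor/ceiling bookkeeping, is getting the two disjointness facts (a) and (b) clean and watertight, since everything else reduces to the elementary degree bound $\deg_{\o,W}\leq 2$ and to \eqref{c1c2}. I would also take care with the boundary $1$-pillars at rows $1$ and $h$: there the degree bound need not force any empty cell (e.g. a top row $\o\o\o$ with the cell below the centre empty), and this is precisely why the guaranteed contribution of the $m_1$ $1$-pillars degrades from roughly $m_1$ down to $2\lfloor(m_1-1)/2\rfloor$; I would phrase the argument so that dropping up to two boundary $1$-pillars (and then halving, with the correct rounding) reproduces the stated bound, and also record that for $h\leq 2$ one has $m_1\leq 1$ so the statement is trivial.
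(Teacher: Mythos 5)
Your proof is correct and follows essentially the same two-part strategy as the paper: summing inequality \eqref{c1c2} over the maximal pillars of length at least $2$, and then extracting $\max\{0,\lfloor (m_1-1)/2\rfloor\}$ further empty cells per side column from the at least $m_1-2$ interior maximal $1$-pillars, with the factor-of-two loss accounting for adjacent $1$-pillars sharing a forced empty cell. Your double-counting formulation (each forced empty cell lies in the window of at most two $1$-pillars) and your disjointness observation (a) are just a more explicit rendering of the paper's terser argument, which groups adjacent interior $1$-pillars into pairs and leaves the disjointness from the rows of the longer pillars implicit.
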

\begin{proof}
The first part of the right-hand side of  \eqref{no(c1)n0(c3)} is the result of inequality \eqref{c1c2}. The second part comes from the fact that when $m_1>2$, there are $m_1-2$ maximal $1$-pillars in the interior of $W^t[2]$. Every pair of adjacent maximal $1$-pillars   and every non adjacent maximal $1$-pillars in the interior of $W^t[2]$ is adjacent orthogonally or diagonally to at least one empty cell in each column $W^t[1],W^t[3]$ so that there are at least 
$2max\{0,\lfloor \frac{m_1-1}2\rfloor\}$ empty cells produced by the maximal $1$-pillars.
\end{proof}

\begin{lemma}\label{lem:ellc2}
For any integer  $h\geq 1$ and $W\in \Wordsdhwvarshort{2}{h}{1}$  

We have 
\begin{align}
\ell(\lambda(W^t))\leq 
min\{ |W^t|_\e +1,|W^t|_\o\} 
\end{align}
\end{lemma}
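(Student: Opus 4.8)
The plan is a direct double-counting argument on maximal pillars. First I would note that, since $W$ has width $1$, every $\o$-cell of $W$ has at most two neighbours (the cells immediately above and below it), so the degree-$\leq 2$ condition is vacuous; hence $\Wordsdhwvarshort{2}{h}{1}$ consists of all binary columns of height $h$. Write $b = \ell(\lambda(W^t))$, which is precisely the number of maximal pillars of the one-dimensional word $W^t$ --- equivalently, the number of maximal runs of consecutive $\o$-cells in $W$ --- and set $z = |W^t|_\e$ and $o = |W^t|_\o$. The two bounds $b \leq o$ and $b \leq z+1$ will be proved separately, and the lemma then follows by taking the minimum.

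For the bound $b \leq o$: each maximal pillar has length at least $1$, hence contains at least one $\o$-cell, and distinct maximal pillars are disjoint by maximality; therefore the $o$ $\o$-cells split into at least $b$ nonempty groups, so $b \leq o$. (If $o = 0$ there is no pillar, $b = 0$, and the inequality holds trivially.)

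For the bound $b \leq z+1$: if $b \leq 1$ it is immediate since $z \geq 0$, so assume $b \geq 2$ and list the maximal pillars $P_1, \ldots, P_b$ from top to bottom. For each $i \in \{1,\ldots,b-1\}$ let $c_i$ be the cell directly below the last cell of $P_i$; it exists, since otherwise $P_i$ would be the bottom pillar, forcing $i = b$. If $c_i$ were a $\o$-cell, then $P_i$ together with $c_i$ would be a longer run of $\o$-cells, contradicting the maximality of $P_i$; hence $c_i$ is an $\e$-cell. Moreover $c_i$ lies strictly between $P_i$ and $P_{i+1}$ (it is immediately below $P_i$ and it is not the first cell of $P_{i+1}$, being empty), so the cells $c_1,\ldots,c_{b-1}$ are pairwise distinct. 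Therefore $z \geq b-1$, i.e.\ $b \leq z+1$.

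Combining, $b \leq \min\{z+1, o\} = \min\{|W^t|_\e + 1, |W^t|_\o\}$, as claimed. I do not foresee any real obstacle: the only subtleties are dispatching the degenerate cases ($o = 0$ or $b \leq 1$) and making explicit that width-$1$ words automatically respect the degree bound, so that no structural hypothesis beyond ``binary column'' is being used.
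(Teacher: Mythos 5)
Your proof is correct and follows the same counting observation the paper relies on: the number of maximal pillars is at most the number of filled cells (each pillar contains at least one, and pillars are disjoint) and at most the number of empty cells plus one (consecutive pillars are separated by at least one empty cell). The paper states this as a one-line observation; you have simply made the two bounds and the degenerate cases explicit, which is a faithful elaboration rather than a different route.
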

\begin{proof}
This inequality is the result of the observation that the maximum length $\ell(\lambda(c))$ is upper bounded by the number of empty cells in $W^t$ plus one unless there are not enough filled cells in which case $\ell(\lambda(c))$ is bounded by the number of filled cells. 

\end{proof}

\begin{proposition}\label{propc1c2c3e(c2)>0}
Let $h=3k+1$ and $W\in \Wordsdhwvarshort{2}{h}{3}$. Then
\begin{align*}
e(W^t[1])+e(W^t[3]) \leq 2(-k -1/3+ \ell(\lambda(W^t[2]))-e(W^t[2])).
\end{align*}

\end{proposition}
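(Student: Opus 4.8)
The plan is to reduce the claim to the already established inequality~\eqref{no(c1)n0(c3)} by a direct computation. Write $c_j = W^t[j]$ for $j \in \{1,2,3\}$; each $c_j$ is a word of dimensions $h \times 1$ with $h = 3k+1$, so by Definition~\ref{def:excess} one has $e(c_j) = |c_j|_\o - 2h/3$. Since $|c_j|_\o + |c_j|_\e = h$, this gives $e(c_j) = h/3 - |c_j|_\e = (k + \tfrac13) - |c_j|_\e$, hence
\[
e(c_1) + e(c_3) = 2k + \tfrac23 - \bigl(|c_1|_\e + |c_3|_\e\bigr).
\]

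Next I would rewrite the right-hand side of the asserted inequality using the partition $\lambda(c_2) = 1^{m_1} 2^{m_2} \cdots$. Its weight equals $|c_2|_\o = \sum_{i \geq 1} i\,m_i$ and its length equals $\ell(\lambda(c_2)) = \sum_{i \geq 1} m_i$, so $\ell(\lambda(c_2)) - |c_2|_\o = -\sum_{i \geq 1} (i-1)\,m_i = -\sum_{i \geq 2} (i-1)\,m_i$. Combining this with $e(c_2) = |c_2|_\o - 2h/3$ and with the identity $-k - \tfrac13 = -h/3$ (which is exactly where the hypothesis $h = 3k+1$ enters), one gets
\[
-k - \tfrac13 + \ell(\lambda(c_2)) - e(c_2) = -\tfrac{h}{3} + \tfrac{2h}{3} - \sum_{i\geq 2}(i-1)m_i = \tfrac{h}{3} - \sum_{i\geq 2}(i-1)m_i,
\]
and therefore $2\bigl(-k - \tfrac13 + \ell(\lambda(c_2)) - e(c_2)\bigr) = 2k + \tfrac23 - \sum_{i\geq 2} 2(i-1)m_i$.

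Putting the two displays together, the inequality to be proved is equivalent to
\[
|c_1|_\e + |c_3|_\e \;\geq\; \sum_{i\geq 2} 2(i-1)\,m_i,
\]
which is precisely~\eqref{no(c1)n0(c3)} after discarding the nonnegative summand $2\max\{0, \lfloor (m_1 - 1)/2\rfloor\}$ on its right-hand side. I do not expect any genuine obstacle here: the whole argument is a bookkeeping exercise translating between the weight and length of $\lambda(c_2)$ and the counts of filled and empty cells in the three columns, and the only point worth flagging is that the congruence hypothesis $h \equiv_3 1$ is used solely to make the constant $-k - \tfrac13$ equal to $-h/3$, which is what aligns the two sides of the inequality.
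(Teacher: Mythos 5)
Your proposal is correct and follows essentially the same route as the paper: both arguments reduce the claim, via the identities $e(c_j)=|c_j|_\o-2h/3$ and $|c_2|_\o-\ell(\lambda(c_2))=\sum_{i\geq 2}(i-1)m_i$, to the pillar-counting inequality $|c_1|_\e+|c_3|_\e\geq\sum_{i\geq 2}2(i-1)m_i$, which is exactly the content of \eqref{c1c2} (equivalently, \eqref{no(c1)n0(c3)} with its nonnegative extra term discarded). Your bookkeeping is in fact a bit cleaner than the paper's, which takes a detour through solving for $m_1$, but the underlying argument is the same.
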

\begin{proof}
Let $\lambda(W^t[2]))=\lambda_1\geq \lambda_2, \ldots , =1^{m_1}2^{m_2}\cdots $. 
Recall that
$$\sum_{i\geq 1}im_i=|W^t[2]|_\o =2k+2/3+e(W^t[2])$$
by definition of excess  and 
$$\sum_{i\geq 1}m_i=\ell(\lambda(W^t[2])).$$
We have 
\begin{align*}
\sum_{i\geq 1} im_i -2\sum_{i\geq 1} m_i&=-m_1+\sum_{i\geq 2} (i-2)m_i,\\
\Rightarrow m_1&=\sum_{i\geq 2} (i-2)m_i -2k-2/3-e(W^t[2])+2\ell(\lambda(W^t[2]))
\end{align*}
From \eqref{c1c2} we know
\begin{align*}
|W^t[1]|_\e  + |W^t[3]|_\e &\geq \sum_{i\geq 1}2(i-1)m_i. \\
&\geq 2\left( \sum_{i\geq 2} (i-2) m_i +\sum_{i\geq 2} m_i \right),\\
\Rightarrow |W^t[1]|_\e +|W^t[3]|_\e &\geq 
2\left(  m_1+2k+2/3-2\ell(\lambda(W^t[2]))+e(W^t[2])+\sum_{i\geq 2} m_i\right),\\
& \geq  2\left( 2k+2/3 -\ell(\lambda(W^t[2]))+e(W^t[2])\right),\\
\end{align*}
Since 
\begin{align*}
|W^t[1]|_\o +|W^t[3]|_\o &= 2(3k+1)-(|W^t[1]|_\e +|W^t[3]|_\e),\\
& \leq 2k+2/3 +2\ell(\lambda(W^t[2]))-2e(W^t[2]),
\end{align*}
We obtain
\begin{align*}
|W^t[1]|_\o +|W^t[3]|_\o \leq &2k+2/3+2\ell(\lambda(W^t[2]))-2e(W^t[2]),\\
\Rightarrow e(W^t[1])+e(W^t[3])\leq &2\left( -k-1/3+\ell(\lambda(W^t[2]))-e(W^t[2])\right) 
\end{align*}
which is what we wanted.
\end{proof}

\begin{proposition}\label{prop3xhlambda(c2)}
Let  $W\in \Wordsdhwvarshort{2}{3}{3k+1}$  
with $e(W[1])=1/3,e(W[2])=1/3,e(W[3])=1/3$. Then 
$$\lambda(W[2])\in\{2^k1,\; 32^{k-2}1^2 \}.$$
\end{proposition}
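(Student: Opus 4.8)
The statement concerns a word $W \in \Wordsdhwvarshort{2}{3}{3k+1}$ — i.e., a word of height $3$ and width $3k+1$ — whose three rows each have excess $1/3$. By transposing, equivalently, we have a $(3k+1)\times 3$ word $W^t$ whose three columns $W^t[1], W^t[2], W^t[3]$ each have excess $1/3$; note $|W^t[i]|_\o = (3k+1)\cdot\tfrac13 + \tfrac13 = k+1$ for each $i$, so the central column $W^t[2]$ has exactly $k+1$ filled cells distributed in maximal pillars, and $\lambda := \lambda(W^t[2])$ is a partition of $k+1$. The goal is to pin down $\lambda$ to exactly the two possibilities $2^k1$ and $32^{k-2}1^2$ (which indeed are partitions of $2k+1 \neq k+1$ — so I first need to recheck the weight; in fact the relevant partition must have weight $|W[2]|_\o = |W^t[2]|_\o$, and $2^k 1$ has weight $2k+1$, forcing $|W^t[2]|_\o = 2k+1$, i.e.\ $e(W^t[2]) = 2k+1 - (2(3k+1)/3) = 1/3$ — wait, that gives $e = 2k+1 - 2k - 2/3 = 1/3$, consistent. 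So the column has height $3k+1$, not $3$; I must read $W[2]$ as a row of the width-$(3k+1)$ word, which is a $1\times(3k+1)$ strip, and $\lambda(W[2])$ is its pillar-length partition, weight $2k+1$.) So the plan is: work with the row $W[2]$ of length $3k+1$ with $|W[2]|_\o = 2k+1$, hence $|W[2]|_\e = k$, so $\lambda$ is a partition of $2k+1$ into at most $k+1$ parts (since $k$ empty cells create at most $k+1$runs).

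\textbf{Key steps.} First I would apply Proposition~\ref{propc1c2c3e(c2)>0} in the transposed setting: with $h = 3k+1$ and the three columns being $W^t[1], W^t[2], W^t[3]$ of the $(3k+1)\times 3$ word, it gives $e(W^t[1]) + e(W^t[3]) \leq 2(-k - 1/3 + \ell(\lambda(W^t[2])) - e(W^t[2]))$. Plugging in the hypotheses $e(W^t[1]) = e(W^t[3]) = e(W^t[2]) = 1/3$ yields $2/3 \leq 2(-k - 1/3 + \ell(\lambda) - 1/3)$, i.e.\ $\ell(\lambda) \geq k + 1$. On the other hand, $\ell(\lambda) \leq |W[2]|_\e + 1 = k+1$ by Lemma~\ref{lem:ellc2} applied to the $1\times(3k+1)$ strip $W[2]$ (viewing it as a width-$1$ word after transposing). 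Hence $\ell(\lambda) = k+1$ exactly. Now $\lambda$ is a partition of $2k+1$ into exactly $k+1$ parts; writing $\lambda = 1^{m_1}2^{m_2}3^{m_3}\cdots$ we get $m_1 + m_2 + m_3 + \cdots = k+1$ and $m_1 + 2m_2 + 3m_3 + \cdots = 2k+1$, so subtracting, $m_2 + 2m_3 + 3m_4 + \cdots = k$.

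\textbf{Narrowing to the two partitions.} Pure counting still allows many partitions (e.g.\ $2^k 1$, $3 \cdot 2^{k-2}\cdot 1^2$, $4 \cdot 2^{k-3} 1^3$, $3^2 2^{k-4} 1^3$, \dots), so the extra work is to rule out all but the two claimed ones using the degree-$2$ constraint and the equality $e(W^t[1]) = e(W^t[3]) = 1/3$. Here I would go back to the inequality chain inside the proof of Proposition~\ref{propc1c2c3e(c2)>0}: equality $\ell(\lambda) = k+1$ combined with $e(W^t[1])+e(W^t[3]) = 2/3$ forces the displayed chain of inequalities to be tight, in particular $|W^t[1]|_\e + |W^t[3]|_\e = \sum_{i\geq 1} 2(i-1)m_i$ (no slack from the $m_1$ term of \eqref{no(c1)n0(c3)}, so $\lfloor(m_1-1)/2\rfloor = 0$, i.e.\ $m_1 \leq 2$) and the inequality $|W^t[1]|_\e + |W^t[3]|_\e \geq \sum 2(i-1)m_i$ from \eqref{c1c2} is also tight, meaning each maximal $r$-pillar ($r\geq2$) in $W^t[2]$ contributes exactly $2(r-1)$ empty cells in the two side columns — no more. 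A pillar of length $r \geq 4$ in the central column would, by the degree constraint and a short local analysis of how the side columns must look beside it, force strictly more than $2(r-1)$ empty side-cells (the "overhang" at the ends of a long central pillar propagates), ruling out any part $\geq 4$; and having two parts $\geq 3$ (i.e.\ $m_3 + m_4 + \cdots \geq 2$) similarly over-produces side emptiness. This leaves $m_1 \leq 2$, all parts $\leq 3$, at most one part equal to $3$: together with $m_1 + m_2 + m_3 = k+1$, $m_2 + 2m_3 = k$, the only solutions are $(m_1,m_2,m_3) = (1,k,0)$ giving $\lambda = 2^k1$, and $(m_1,m_2,m_3) = (2,k-2,1)$ giving $\lambda = 32^{k-2}1^2$.

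\textbf{Main obstacle.} The routine part is the counting; the real work — and the step I expect to be delicate — is the local structural argument that a central pillar of length $\geq 4$, or two central pillars of length $\geq 3$, is incompatible with $e(W^t[1]) = e(W^t[3]) = 1/3$. This requires a careful case analysis of the cells adjacent to a long central pillar (using that interior cells of such a pillar already have degree $2$, so their side-neighbours must be empty, and then examining the forced pattern at the pillar's endpoints, much as in Lemmas~\ref{lem3xhmax_r1r_2} and \ref{lem:hx3_n+1bench}). I would structure this as: (1) reduce to the tightness conditions; (2) a lemma "no part $\geq 4$"; (3) a lemma "at most one part $\geq 3$"; (4) finish by the Diophantine bookkeeping above.
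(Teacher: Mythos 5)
Your proposal is correct and follows essentially the same route as the paper: both obtain $\ell(\lambda(W[2]))=k+1$ by combining Proposition~\ref{propc1c2c3e(c2)>0} with Lemma~\ref{lem:ellc2}, then bound $m_1\leq 2$ via the extra empty cells forced by interior maximal $1$-pillars (the $2\max\{0,\lfloor(m_1-1)/2\rfloor\}$ term of~\eqref{no(c1)n0(c3)}), and finish by counting. One remark: the steps you single out as the main obstacle --- ruling out a part $\geq 4$ and ruling out two parts $\geq 3$ by a delicate local analysis --- are unnecessary, since $\sum_i m_i=k+1$ and $\sum_i i\,m_i=2k+1$ give $m_1=1+\sum_{i\geq 3}(i-2)m_i$, so $m_1\leq 2$ alone already forces $\lambda=2^k1$ (when $m_1=1$) or $\lambda=32^{k-2}1^2$ (when $m_1=2$), which is exactly how the paper concludes.
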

\begin{proof}
From the hypothesis, Lemma \ref{lem:ellc2} and Proposition \ref{propc1c2c3e(c2)>0}, we obtain that $\ell(\lambda(W[2]))=k+1$ which implies that $m_1\geq 1$. \\
If $m_1>2$ then there is at least one  maximal $1$-pillar in the interior of $W[2]$ which imposes more empty cells in $W[1] \cup W[3] $

so that the hypothesis $e(W[1])=1/3,e(W[2])=1/3,e(W[3])=1/3$ becomes impossible. Therefore $1\leq m_1\leq 2$. If $m_1=1$ then $\lambda=2^k1$ and if $m_1=2$ then $\lambda=32^{k-2}1^2$.

\end{proof}

\begin{proposition} \label{prop3xhmax-1_n(ri)} 
Let $W\in \Wordsdhwvarshort{2}{3}{h}$ be atomic with $e(W)=1$.  
$$|W^t[1]|_\o = 2\Rightarrow \forall \; i:\; 2\leq i\leq h-1,\; |W^t[i]|_\o=2 \mbox{ and } |W^t[h]|_\o=3$$
\end{proposition}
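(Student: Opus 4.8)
The plan is to argue column by column (this is the column version of Lemma~\ref{lem3xhmax_r1r_2}). Set $c_j = |W^t[j]|_{\o}$ for $1\le j\le h$. Since $W$ is atomic, no column $W^t[j]$ is empty, so $c_j\ge 1$; and since $e(W)=1$ we have $\sum_{j=1}^{h}(c_j-2)=1$, i.e. $\sum_j c_j = 2h+1$. In particular the $c_j$ are not all equal to $2$, so there is a least index $i$ with $c_i\ne 2$; the hypothesis $c_1=2$ forces $2\le i\le h$, and $c_j=2$ for $1\le j\le i-1$, hence $e(W^t\DI{1,i-1})=0$ and $e(W^t\DI{1,i}) = c_i-2$. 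It suffices to prove $c_i=3$ and $i=h$, for then columns $1,\dots,h-1$ each carry two filled cells and $c_h=3$, which is the assertion.

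First I would rule out $c_i\le 1$. If $c_i=0$ then $e(W^t\DI{1,i})=-2$, so by additivity $e(W^t\DI{i+1,h})=3$, which is impossible since a $3\times m$ word has excess at most $2$ (Proposition~\ref{prop3xhfssize} for $m\ge 3$, and at most $1$ for $m\le 2$ by inspection). If $c_i=1$ then $e(W^t\DI{i+1,h})=2$; the widths $h-i\in\{1,2\}$ are excluded directly (they force $c_h=4$, resp. two adjacent full columns, which create a cell of degree $3$), so $h-i\ge 3$ and $W^t\DI{i+1,h}$ attains the maximal excess $2$, hence is $2$-full. Applying Lemma~\ref{lem3xhmax_r1r_2}(i) to its transpose gives $W^t[i+1]=(\o,\o,\o)^t$ and $W^t[i+2]=(\o,\e,\o)^t$. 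Now the degree bound finishes this case: the middle cell $W[2,i+1]$ of the full column is adjacent to the two filled cells above and below it, so $W[2,i]$ and $W[2,i+2]$ are empty; as $c_i=1$, the unique filled cell of column $i$ is $W[1,i]$ or $W[3,i]$, say $W[1,i]$ up to a vertical flip; then $W[1,i+1]$ is already adjacent to $W[2,i+1]$ and $W[1,i]$, forcing $W[1,i+2]$ empty, whence $c_{i+2}\le 1$, contradicting $W^t[i+2]=(\o,\e,\o)^t$.

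It remains to show $i=h$. Assume $i<h$, so $c_i=3$ and column $i$ is full. Its middle cell $W[2,i]$ forces $W[2,i-1]$ and $W[2,i+1]$ empty, and combined with $c_{i-1}=2$ this yields $W^t[i-1]=(\o,\e,\o)^t$. Then $W[1,i]$ is adjacent to the two filled cells $W[2,i]$ and $W[1,i-1]$, so $W[1,i+1]$ is empty; symmetrically $W[3,i+1]$ is empty; and $W[2,i+1]$ is empty. Hence column $i+1$ is entirely empty, contradicting atomicity. Therefore $i=h$ and $c_h=3$, completing the proof. I expect the $c_i=1$ step to be the main obstacle: it requires first identifying the forced shape of the $2$-full right factor via Lemma~\ref{lem3xhmax_r1r_2}(i) and then squeezing out the contradiction from the degree constraint around the singleton column $i$; handling the small widths $h-i\le 2$ and the vertical-flip reduction are the routine points to be careful about.
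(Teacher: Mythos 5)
Your proof is correct and follows essentially the same route as the paper's: an induction (phrased as a least-index argument) over the columns, ruling out $|W^t[i]|_\o=1$ by showing the right factor would be $2$-full and hence, via Lemma~\ref{lem3xhmax_r1r_2}(i), force column $i$ empty, and ruling out an interior full column because it forces the next column empty against atomicity. You merely spell out the edge cases ($h-i\le 2$, the vertical flip) that the paper's terser argument leaves implicit.
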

\begin{proof}
By induction. Assume that for all $2\leq i\leq r<h-1$, we have $|W^t[i]|_\o=2$. Then 
\begin{align*}
|W^t[r+1]|_\o = 1 \Rightarrow e(W^t\DI{1,r+1}) \leq -1 &\Rightarrow e(W^t\DI{r+2,h})\geq 2
\end{align*}
which, from Proposition \ref{prop3xhfssize} is in contradiction with $|W^t[r+1]|_\o = 1$.  Moreover
\begin{align*}
|W^t[r+1]|_\o = 3 \Rightarrow |W^t[r+2]|_\o= 0
\end{align*}
in contradiction with the fact that $W$ is atomic.
So we must have $|W^t[r]|_\o=2$ for all $r\leq h-1$ and $|W^t[h]|_\o=3$.
\end{proof}

\begin{proposition}\label{prop3xh2^k1}
Let $W\in \Wordsdhwvarshort{2}{5}{(3k+1)}$ with $e(W[2])=1/3,e(W[3])=1/3,e(W[4])=1/3$ and
$\lambda(W[3])=2^k1$. Then $e(W[1])<-2/3 \mbox{ or }e(W[5])<-2/3$.
\end{proposition}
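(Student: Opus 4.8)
Since every row has width $3k+1$, the hypotheses say $|W[2]|_\o=|W[3]|_\o=|W[4]|_\o=2k+1$, and the conclusion is equivalent to the assertion that we cannot have both $|W[1]|_\o\geq 2k$ and $|W[5]|_\o\geq 2k$. So I would argue by contradiction and assume both inequalities. The first step is to extract the exact shape of $W[3]$ from $\lambda(W[3])=2^k1$: row $3$ has $k$ empty cells and $k+1$ maximal $\o$-pillars of lengths $2,\dots,2,1$, and since the $k$ empty cells are the only possible separators of $k+1$ pillars, $W[3]$ must read $B_1\e B_2\e\cdots\e B_{k+1}$, with each gap a single $\e$ and no $\e$ on the boundary. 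Every cell of a $2$-pillar of $W[3]$ then has horizontal $\o$-degree exactly $1$, so at most one of its two vertical neighbours (in rows $2$ and $4$) is $\o$, while the solitary cell, say $W[3,m]$, has horizontal degree $0$.

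The crucial second step is a double count. Summing $W[2,c]+W[4,c]$ (counting $\o$'s) over all columns $c$, each of the $2k$ columns covered by a $2$-pillar of $W[3]$ contributes at most $1$, while the solitary column and the $k$ gap columns contribute at most $2$; the total is at most $2k+2k+2=4k+2=|W[2]|_\o+|W[4]|_\o$, so equality holds in every column. Hence rows $2$ and $4$ are both $\o$ on every gap column and on the solitary column, and on each $2$-pillar of $W[3]$ exactly one column is $\o$ in row $2$ and the complementary one is $\o$ in row $4$. From this I get a first restriction for free: if the solitary pillar were neither the first nor the last pillar of $W[3]$, the two gap columns flanking it together with the solitary column would be three consecutive columns all carrying $\o$ in row $2$, so $W[2,m]$ would be interior to a horizontal run and also adjacent to $W[3,m]=\o$, hence have degree $\geq 3$ --- impossible. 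So the solitary pillar is $B_1$ or $B_{k+1}$; by left--right symmetry I may assume it is $B_1$, i.e. column $1$ carries the solitary cell (for $k=1$ the middle case is vacuous and there is nothing to reduce).

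In the final step I would close the argument with a run/degree count on rows $2$ and $4$. By the previous step, $W[2]$ is $\o$ on every column except the $k$ ``row-$4$ halves'' of the $2$-pillars; write $S_4$ for that set of $k$ empty columns of $W[2]$, and $S_2$ for the complementary set of $2$-pillar columns (the empty columns of $W[4]$). Counting degrees in row $2$: its degree sum is at most $2(2k+1)$ and contains exactly $k+1$ vertical $\o\o$-edges to row $3$, so the number of vertical $\o\o$-edges to row $1$ is at most $3k+1-2H_2$, where $H_2$ is the number of horizontal $\o\o$-edges of $W[2]$; since any $\o$-cell of $W[1]$ not matched to such a vertical edge can only lie on the $k$ columns of $S_4$, this gives $|W[1]|_\o\leq 4k+1-2H_2$. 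Thus $|W[1]|_\o\geq 2k$ forces $H_2\leq k$, i.e. $W[2]$ has at least $k+1$ maximal runs; but $W[2]$ has only $k$ empty cells, hence at most $k+1$ maximal runs, with equality only when no two empty cells of $W[2]$ are adjacent and neither column $1$ nor column $3k+1$ is empty in $W[2]$. Column $1$ is $\o$ in $W[2]$ automatically (it is the solitary column), so equality forces each $2$-pillar to put exactly one column in $S_4$ and forces $3k+1\notin S_4$. The mirror argument, applied to row $4$ (which plays the role of row $2$ under the reflection fixing row $3$), forces from $|W[5]|_\o\geq 2k$ that $3k+1\notin S_2$, i.e. $3k+1\in S_4$ --- contradicting $3k+1\notin S_4$. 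Therefore $e(W[1])<-2/3$ or $e(W[5])<-2/3$.

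The main obstacle I anticipate is the equality analysis of the double count in the second step --- pinning rows $2$ and $4$ down completely --- together with the boundary bookkeeping in the last step: the whole argument hinges on the fact that a row with $k$ empty cells attains $k+1$ maximal runs only when neither of its two boundary columns is empty, and it is precisely the opposite requirements this places on column $3k+1$ for rows $2$ and $4$ that produce the contradiction. Everything else is routine degree counting.
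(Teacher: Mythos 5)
Your argument is correct, but it reaches the conclusion by a genuinely different route than the paper. Both proofs begin from the same structural facts: row $W[3]$ must read $B_1\e B_2\e\cdots\e B_{k+1}$ with exactly one singleton block, each cell of a $2$-pillar of $W[3]$ admits at most one filled vertical neighbour, and hence (your double count, versus the paper's appeal to its Proposition~\ref{prop3xhmax-1_n(ri)} on $3\times h$ words) rows $2$ and $4$ are saturated column by column and the singleton block sits at an end. From there the paper normalizes via benches (an East/West $3$-bench on $W\DI{2,4}$, then a North/South $3$-bench), cuts $W\DI{2,4}$ into one $3\times 2$ block, $k-1$ blocks of size $3\times 3$ and another $3\times 2$ block, and counts degree-$2$ cells of $W[4]$ block by block (invoking Lemma~\ref{lem:hx3_n+1bench} for the solitary-pillar case) to get the stronger one-sided bound $|W[5]|_\e\geq k+2$, i.e.\ $e(W[5])\leq -5/3$. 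You instead assume both $|W[1]|_\o\geq 2k$ and $|W[5]|_\o\geq 2k$, convert each via the edge identity $\sum_c\deg(2,c)=2H_2+V_{12}+V_{23}$ and $V_{23}=k+1$ into the bound $H_2\leq k$, hence into the statement that $W[2]$ (resp.\ $W[4]$) attains the maximal $k+1$ runs, and localize the contradiction at column $3k+1$, which the two run-maximality conditions force to be simultaneously filled and empty in row $4$. Your version is more self-contained (it does not lean on the auxiliary $3\times h$ and bench lemmas) and proves exactly the stated disjunction; the paper's version yields the sharper $-5/3$ on one side, which is what it actually uses later. One small imprecision: the clause ``on each $2$-pillar exactly one column is $\o$ in row $2$ and the complementary one in row $4$'' does not follow from the equality of the double count alone --- equality only gives that each $2$-pillar column carries exactly one of the two; ruling out both columns of a pillar pointing the same way needs the extra observation that the column adjacent to a gap column would then have degree $\geq 3$ in row $2$ (or row $4$). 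This is harmless, since nothing you use before the run-count step depends on the within-pillar distribution, and the run-count equality re-derives it, but the sentence should either be justified or weakened.
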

\begin{proof}
If $\lambda(W[3])=2^k1$ and $e(W[2])=e(W[3])=e(W[4])=1/3$ then there is an East $3$-bench at the left or a  West $3$-bench at the right of $W\DI{2,4}$. Assume without loss of generality that  there is  a West $3$-bench at the right of  $W\DI{2,4}$ (Figure \ref{3x(3k+1)2_k1a}) so that  from the hypothesis $\lambda(W[3])=2^k1$, 
the cells $W[3,1]$ and $W[3,2]$ are filled and 
we have
$|W^t[1]\cap W\DI{2,4}|_\o=2$  which implies, from Proposition \ref{prop3xhmax-1_n(ri)}, that every interior column $W^t[i]\cap W\DI{2,4}$ of $W$ has two $\o$ cells on $W\DI{2,4}$. This also implies that there is a South or North $3$-bench at the right of $W$. Assume without loss of generality that there is a North $3$-bench at the right of $W$ with its seat in $W[4]$ (Figure \ref{3x(3k+1)2_k1a}). 

We claim that $|W[5]|_\e \geq k+2$. Starting on the right, factorize $W\DI{2,4}$ as product of one $3\times 2$  
factor plus $k-1$ $3\times 3$ factors plus one $3\times 2$  rectangle at the right. $W[4]$ contains  at least one cell of degree $2$ in each  $3\times 3$ rectangle because it contains a $2$-pillar in $W[3]$. $W[4]$ contains two other cells of degree two in the right $3\times 2$ factor and either one cell of degree two in the left $3\times 2$  factor (Figure \ref{3x(3k+1)2_k1b}) or a maximal $1$-pillar in the interior of $W[4]$ that generates a cell of degree zero in $W[5]$ by Lemma \ref{lem:hx3_n+1bench} (Figure \ref{3x(3k+1)2_k1c}) so that $|W[5]|_\e\geq (k-1)+2+1=k+2$. This proves the proposition. 
\begin{figure*}[h!]
\centering
\begin{subfigure}[t]{0.3\textwidth}
\centering
\includegraphics[height=.6in]{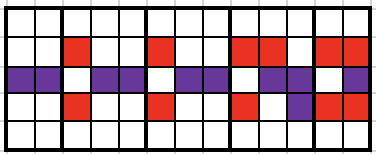}
\caption{}
\label{3x(3k+1)2_k1a}
\end{subfigure}%
~ 
\begin{subfigure}[t]{0.3\textwidth}
\centering
\includegraphics[height=.6in]{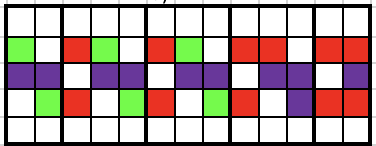}
\caption{}
\label{3x(3k+1)2_k1b}
\end{subfigure}%
~ 
\begin{subfigure}[t]{0.3\textwidth}
\centering
\includegraphics[height=.6in]{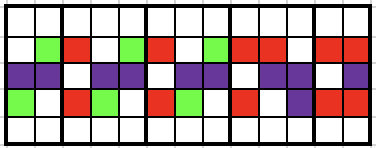}
\caption{}
\label{3x(3k+1)2_k1c}
\end{subfigure}%
~ 
\caption{  $\lambda(W[2])=2^k1$}
\label{3xhc1c2c32^k1}
\end{figure*}
\end{proof}

\begin{proposition}\label{prop3xh_32^(k-2)1^2}
Let $W\in \Wordsdhwvarshort{2}{5}{(3k+1)}$ with $e(W[2])=e(W[3])=e(W[4])=1/3$ and
$\lambda(W[3])=32^{k-2}1^2$. Then $e(W[1])<-2/3 \mbox{ or }e(W[5])<-2/3$.
\end{proposition}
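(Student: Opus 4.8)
The plan is to mirror the proof of Proposition~\ref{prop3xh2^k1}, the only new ingredient being the horizontal $3$-pillar carried by $W[3]$. Assume for contradiction that $e(W[1])\geq -2/3$ and $e(W[5])\geq -2/3$; since every row has $3k+1$ cells, this means $|W[1]|_\e\leq k+1$ and $|W[5]|_\e\leq k+1$. The rows $W[2],W[3],W[4]$ all have excess $1/3$, so $W\DI{2,4}$ is an atomic $3\times(3k+1)$ word (each of its rows has $2k+1\geq 3$ filled cells) with $e(W\DI{2,4})=1$. Since $\lambda(W[3])=32^{k-2}1^2$ has $k+1$ pillars while $|W[3]|_\e=k$, the $k$ gaps between consecutive pillars of $W[3]$ all have size exactly one and there is no padding at either end; in particular $W[3,1]$ and $W[3,3k+1]$ are filled.

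First I would rerun the degree-$2$ argument of Proposition~\ref{prop3xh2^k1} along $W[3]$: combined with $e(W[2])=e(W[3])=e(W[4])=1/3$, it forces an East $3$-bench at the left of $W\DI{2,4}$ or a West $3$-bench at the right of $W\DI{2,4}$, and by the left--right symmetry of the statement we may assume the latter, which yields $|W^t[1]\cap W\DI{2,4}|_\o=2$. Then I would apply Proposition~\ref{prop3xhmax-1_n(ri)} to $W\DI{2,4}$ to conclude that every interior column of $W\DI{2,4}$ contains exactly two filled cells. This is where the $3$-pillar forces a case split: the middle cell of the $3$-pillar of $W[3]$ already has degree $2$ from its two horizontal neighbours, so the cells of $W[2]$ and $W[4]$ directly above and below it are empty and the corresponding column of $W\DI{2,4}$ has only one filled cell; hence that column is not interior, i.e. the $3$-pillar occupies columns $1,2,3$ or columns $3k-1,3k,3k+1$.

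In either of these two boundary positions, the empty column through the middle of the $3$-pillar together with the endpoints of the pillar forces a North $3$-bench or a South $3$-bench of $W$ adjacent to the pillar, and by the up--down symmetry we may take it to be a North $3$-bench with seat in $W[4]$. It then remains to count the empty cells of $W[5]$ as in Proposition~\ref{prop3xh2^k1}: factorise $W\DI{2,4}$ along the pillars of $W[3]$ into the block carrying the $3$-pillar, $k-2$ width-$3$ blocks each carrying a $2$-pillar, two width-$2$ blocks carrying the $1$-pillars, and the boundary $3\times 2$ factors. Each width-$3$ block contains a cell of $W[4]$ whose degree inside $W\DI{2,4}$ is already $2$, so the cell of $W[5]$ directly below it is empty; the boundary $3\times 2$ factors and the two solitary $1$-pillars each contribute a further empty cell of $W[5]$, the latter via Lemma~\ref{lem:hx3_n+1bench}. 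Totalling, $|W[5]|_\e\geq k+2$, so $e(W[5])=|W[5]|_\o-2(3k+1)/3\leq(2k-1)-2(3k+1)/3=-5/3<-2/3$, contradicting the assumption; hence $e(W[1])<-2/3$ or $e(W[5])<-2/3$.

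The hard part will be the case split created by the $3$-pillar: it manufactures a column of $W\DI{2,4}$ with a single filled cell, which is incompatible with the conclusion of Proposition~\ref{prop3xhmax-1_n(ri)} unless the pillar sits at the boundary, and one must then verify carefully that both boundary placements still force the North (or, after reflection, South) $3$-bench and that the resulting bookkeeping of empty cells in $W[5]$ (or in $W[1]$) reaches $k+2$. A subsidiary subtlety, already present in Proposition~\ref{prop3xh2^k1}, is the bench-forcing step when no column of $W\DI{2,4}$ has exactly two filled cells; since $e(W\DI{2,4})=1$, in that situation $W\DI{2,4}$ has exactly one column with three filled cells and all others with two, and the same conclusion is reached after a left--right reflection. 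The treatment of the two $1$-pillars through Lemma~\ref{lem:hx3_n+1bench} is routine, exactly as in the proof of Proposition~\ref{prop3xh2^k1}.
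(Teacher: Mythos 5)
Your reduction to the counting step at the end is in the right spirit (the target bound $|W[5]|_\e\geq k+2$, hence $e(W[5])\leq -5/3$, is exactly what the paper proves), but the structural analysis that is supposed to set up that count is broken, and it lands you in the configuration opposite to the true one. You argue: the West-bench assumption gives $|W^t[1]\cap W\DI{2,4}|_\o=2$, so by Proposition~\ref{prop3xhmax-1_n(ri)} every interior column of $W\DI{2,4}$ has two filled cells; the column through the middle of the $3$-pillar has only one; ``hence that column is not interior, i.e.\ the $3$-pillar occupies columns $1,2,3$ or $3k-1,3k,3k+1$.'' This last inference is not available: the interior columns in Proposition~\ref{prop3xhmax-1_n(ri)} are columns $2$ through $3k$, and the middle column of any $3$-pillar contained in a row of length $3k+1$ lies in that range no matter where the pillar sits. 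Placing the pillar on columns $1,2,3$ still leaves its middle column (column $2$) interior with a single filled cell, so your case split resolves nothing; what your argument actually shows is that the hypothesis you imported from Proposition~\ref{prop3xh2^k1} --- that an end column of $W\DI{2,4}$ has exactly two filled cells --- is untenable here. That hypothesis was specific to $\lambda(W[3])=2^k1$, where one end of $W[3]$ carries a $2$-pillar; for $\lambda(W[3])=32^{k-2}1^2$ the paper shows the two $1$-pillars occupy the two ends of $W[3]$ (precisely because an end placement of the $3$-pillar creates columns that are too sparse), the end columns are then seats of an East and a West $3$-bench and contain \emph{three} filled cells, and the $3$-pillar together with all $2$-pillars is interior. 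So your configuration is the reverse of the correct one, and is in fact self-contradictory.

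Because of this, the final bookkeeping is performed on a picture that cannot occur. The paper's count instead factors $W$ into a $5\times 2$ block at each end (contributing two and one degree-$2$ cells of $W[4]$ respectively, via the benches sitting on the two end $1$-pillars) plus $k-1$ interior width-$3$ blocks each containing a $2$- or $3$-pillar of $W[3]$ and hence at least one degree-$2$ cell of $W[4]$, for a total of $k+2$ forced empty cells in $W[5]$. To repair your proof you would need to replace the transplanted ``$|W^t[1]\cap W\DI{2,4}|_\o=2$'' step by the correct end analysis (both ends of $W[3]$ are $1$-pillars capped by benches) and redo the block decomposition accordingly.
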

\begin{proof}
If $\lambda(W[3])=32^{k-2}1^2$ and $e(W[2])=e(W[3])=e(W[4])=1/3$ then the two maximal $1$-pillars are at the left and right of $W\DI{2,4}$ because otherwise if a $3$-pillar is, say, at the left side of $W[3]$ then the first and second columns of $W\DI{2,4}$ each contain at most one cell in contradiction with Proposition \ref{prop3xhmax-1_n(ri)}.
There is an East $3$-bench at the left and a West $3$-bench at the right of $W\DI{2,4}$ (Figure \ref{3xhc1c2c3_32^(k-2)1^2}). 
There is an North or South $3$-bench  at the left and at the right of $W\DI{2,4}$. 
Assume without loss of generality that there is a North $3$-bench at the right of $W\DI{2,4}$ (Figure \ref{3xhc1c2c3_32^(k-2)1^2}). 
We claim that $|W[5]|_\e \geq k+2$. Starting at the right, factorize $W$ as the product of one $5\times 2$  
factor plus $k-1$ $5\times 3$ factors plus one $5\times 2$  factor at the left. $W[4]$ contains at least one cell of degree $2$ in each  $5\times 3$ factor because  $W[3]$  contains a $2$-pillar or a $3$-pillar in each factor. $W[4]$ contains  two cells of degree two in the right $5\times 2$ factor and one cell of degree two in the left $5\times 2$  factor (Figure \ref{3xhc1c2c3_32^(k-2)1^2}). This yields that the number of degree 2 cells in $W[4]$ is greater or equal to $k+2$  which implies  $|W[5]|_\e \geq k+2$. This proves the proposition. 
\begin{figure*}[h!]
\centering
\begin{subfigure}[t]{0.4\textwidth}
\centering
\includegraphics[height=.6in]{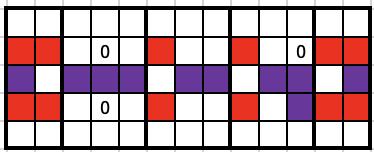}
\caption{}
\end{subfigure}%
~ 
\caption{  $\lambda(W[3])=32^{k-2}1^2$}
\label{3xhc1c2c3_32^(k-2)1^2}
\end{figure*}
\end{proof}

\subsubsection{Properties of words $W\in \MWordsdhwvarshort{2}{h}{4}$.}
\begin{proposition}\label{prop:Rmaxn(r1)}Let $W\in \MWordsdhwvarshort{2}{h}{4}$. 
\begin{align}
h\equiv_3 1 &\Rightarrow |W[1]|_\o \geq 3 \mbox{ and } |W\DI{1,2}|_\o \geq 6,\\
h\equiv_3 0 &\Rightarrow |W[1]|_\o\geq 3 \mbox{ and } |W\DI{1,2}|_\o\geq 5.
\end{align}
\end{proposition}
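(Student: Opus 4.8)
The plan is to argue by excess accounting. The tools are: additivity of excess under vertical concatenation (Proposition~\ref{prop:additivity}); the already proved identity $e_{max}(h',4)=\emax(h',4)$ (Proposition~\ref{prop4xhsc} for $h'\ge4$, and Lemma~\ref{lem:basecases} via symmetry for $h'\le3$, giving $e_{max}(1,4)=4/3$, $e_{max}(2,4)=2/3$ and $e_{max}(3,4)=2$); the explicit values in~\eqref{eq:h=4}; the trivial identities $e(W[i])=|W[i]|_\o-8/3$ and $e(W\DI{1,2})=|W\DI{1,2}|_\o-16/3$; and the fact that every factor of a word of degree at most $2$ has degree at most $2$. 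Since $W$ is $2$-full, $e(W)=\emax(h,4)$, i.e.\ $e(W)=4/3$ when $h\equiv_3 1$ and $e(W)=1$ when $h\equiv_3 0$; I take $h\ge4$, the only relevant range.

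Each inequality is proved by contradiction: if the indicated top row, or pair of top rows, carried fewer $\o$-cells than claimed, the corresponding top block would have strongly negative excess, and additivity would force the complementary bottom block $W\DI{k,h}$ to have excess strictly larger than $e_{max}(h-k+1,4)$. Three of the four cases are completely routine. If $h\equiv_3 1$ and $|W\DI{1,2}|_\o\le5$, then $e(W\DI{1,2})\le-1/3$, hence $e(W\DI{3,h})\ge5/3$, which contradicts $e_{max}(h-2,4)=\emax(h-2,4)=2/3$ since $h-2\equiv_3 2$. If $h\equiv_3 0$ and $|W\DI{1,2}|_\o\le4$, then $e(W\DI{1,2})\le-4/3$, hence $e(W\DI{3,h})\ge7/3$, which contradicts $e_{max}(h-2,4)=4/3$ since $h-2\equiv_3 1$. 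If $h\equiv_3 0$ and $|W[1]|_\o\le2$, then $e(W[1])\le-2/3$, hence $e(W\DI{2,h})\ge5/3$, which contradicts $e_{max}(h-1,4)=2/3$ since $h-1\equiv_3 2$; the subcase $|W[1]|_\o\le1$ is even more comfortable.

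The inequality $h\equiv_3 1\Rightarrow|W[1]|_\o\ge3$ is the one I expect to demand real work. If $|W[1]|_\o\le1$ then $e(W\DI{2,h})\ge3$, contradicting $e_{max}(h-1,4)\le2$. If $|W[1]|_\o=2$ then $e(W\DI{2,h})\ge2$; for $h\ge7$ this contradicts $\emax(h-1,4)=1$ (since $h-1\equiv_3 0$ and $h-1\ge6$), but at $h=4$ the value $\emax(3,4)=2$ leaves no slack, so the height $h=4$ must be treated structurally. There $W\DI{2,4}$ is a $2$-full word of dimensions $3\times4$, so applying Lemma~\ref{lem3xhmax_r1r_2}(i) to $(W\DI{2,4})^t\in\MWordsdhwvarshort{2}{4}{3}$ and to its vertical flip pins down $(W\DI{2,4})^t$ as having first and last rows $(\o,\o,\o)$ and both middle rows $(\o,\e,\o)$; equivalently $W[2]=(\o,\o,\o,\o)$. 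Every cell of this full second row already has degree $2$ within $W\DI{2,4}$ (the two interior cells from their horizontal neighbours, the two end cells from one horizontal neighbour together with the filled cell directly below), so $W[1]=(\e,\e,\e,\e)$, contradicting $|W[1]|_\o=2$. Alternatively, the single size $h=4$ can be dispatched by inspecting the finitely many $4\times4$ words of area $12$.

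In short, the proof is mostly bookkeeping with excesses and the known values $e_{max}(h',4)=\emax(h',4)$; the only step that is not automatic is the tight height $h=4$ of the first inequality, where one exploits the rigidity of $2$-full $3\times4$ words furnished by Lemma~\ref{lem3xhmax_r1r_2}.
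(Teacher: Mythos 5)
Your proof is correct and follows essentially the same route as the paper's: convert each hypothesis on $|W[1]|_\o$ or $|W\DI{1,2}|_\o$ into an upper bound on the excess of the top block, then use additivity and the known values $e_{max}(h',4)=\emax(h',4)$ to force the complementary block to exceed its maximal excess. The one place you go beyond the paper is worth keeping: at $h=4$ with $|W[1]|_\o=2$ the generic excess count only yields $e(W\DI{2,4})\geq 2=e_{max}(3,4)$, which is not by itself a contradiction, and your structural argument (the $3\times4$ word of area $10$ is the cycle, whose full top row consists of degree-$2$ cells, forcing $W[1]$ empty) correctly closes this case, which the paper's own proof passes over by citing Proposition~\ref{prop4xhsc} outside its stated range $h\geq 4$.
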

\begin{proof}
Assume that $h\equiv_3 1$.
If $|W[1]|_\o\leq 2$, then $e(W[1])\leq -2/3$, which implies $e(W\DI{2,h})\geq 4/3+2/3$, contradiction with Proposition \ref{prop4xhsc}.
If $|W\DI{1,2}|_\o\leq 5$, then $e(W\DI{1,2})\leq -1/3$, which implies $e(W\DI{3,h})\geq 4/3+1/3$, contradiction with Proposition \ref{prop4xhsc}.

Now, assume that $h\equiv_3 0$.
If $W[1]|_\o \leq 2$, then $e(W[1])\leq -2/3$, which implies $e(W\DI{2,h} )\geq 1+2/3$, contradiction with Proposition \ref{prop4xhsc}.
If $|W\DI{1,2}|_\o \leq 4$, then $e(W\DI{1,2})\leq -4/3$, which implies $e(W\DI{3,h})\geq 1+4/3$, contradiction with Proposition \ref{prop4xhsc}.
\end{proof}

\begin{proposition} \label{propleq3geq2} 
Let $W\in \MWordsdhwvarshort{2}{h}{4}$ and $1<i<h$.
\begin{align*}
a)\; h\equiv_3 0,1 &\Rightarrow |W[i]|_\o\leq 3, \\
b) \; h\equiv_3 1 &\Rightarrow  |W[i]|_\o\geq 2,
\end{align*}
\end{proposition}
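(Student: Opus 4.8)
The plan is to prove both inequalities by contradiction, combining the additivity of the excess (Proposition~\ref{prop:additivity}), the equality $e_{max}(h',4)=\emax(h',4)$ for all $h'\ge 4$ (Proposition~\ref{prop4xhsc}), the small‑height values $e_{max}(1,4)=4/3$, $e_{max}(2,4)=2/3$, $e_{max}(3,4)=2$ extracted from Lemma~\ref{lem:basecases} via the symmetry $e_{max}(h',4)=e_{max}(4,h')$, and the boundary‑row bounds of Proposition~\ref{prop:Rmaxn(r1)}. The single local fact used repeatedly is this: if $|W[i]|_\o=4$ then $W[i]=\o\o\o\o$, so $W[i,2]$ and $W[i,3]$ already have two filled horizontal neighbours, forcing $W[i-1,2]=W[i-1,3]=W[i+1,2]=W[i+1,3]=\e$, while $W[i,1]$ and $W[i,4]$ each have exactly one filled horizontal neighbour, so column $1$ (resp.\ column $4$) contains at most one filled cell among the two rows $i-1,i+1$. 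Hence $|W[i-1]|_\o+|W[i+1]|_\o\le 2$, so the height‑$3$ factor $W\DI{i-1,i+1}$ has excess $e(W\DI{i-1,i+1})\le 6-8=-2$.

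For part (a) I would assume $|W[i]|_\o=4$ for some $1<i<h$, with $h\equiv_3 0$ or $h\equiv_3 1$. The case $i=2$ is immediate: the local fact forces $|W[1]|_\o\le 2$, contradicting $|W[1]|_\o\ge 3$ from Proposition~\ref{prop:Rmaxn(r1)}; the vertical flip of $W$, which is again $2$‑full, disposes of $i=h-1$. For $3\le i\le h-2$ I take $W$ of minimal height and split $W=P\vcat W\DI{i-1,i+1}\vcat S$ with $P,S$ of heights $h_1=i-2\ge 1$ and $h_2=h-i-1\ge 1$, so $h_1+h_2=h-3$. Additivity and $e(W\DI{i-1,i+1})\le-2$ give $e(P)+e(S)\ge\emax(h,4)+2$. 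Since $\emax(h,4)\ge 1$ in these residue classes, every $e_{max}(h',4)$ is $\le 4/3$ unless $h'=3$ (where it is $2$), and a $3\times 4$ word has excess $>4/3$ only when it equals $\o\o\o\o\,/\,\o\e\e\o\,/\,\o\o\o\o$ (rows separated by slashes; its uniqueness follows by transposition from Lemma~\ref{lem3xhmax_r1r_2} and Corollary~\ref{corol:hequiv1,2}), the estimate is impossible unless $h_1=3$ or $h_2=3$ and the corresponding factor is exactly that word. But then the row of that factor abutting $W\DI{i-1,i+1}$ consists entirely of degree‑$2$ cells, forcing the adjacent row of $W\DI{i-1,i+1}$ to be empty; re‑running the decomposition with this extra information (which pins the excess of the relevant width‑$4$, height‑$4$ block to $-2/3$) contradicts $e(W)=\emax(h,4)$ in every residue, the finitely many smallest heights being checked by hand.

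Part (b) is analogous for $h\equiv_3 1$. If $|W[i]|_\o\le 1$ with $i=2$ then $|W\DI{1,2}|_\o\le|W[1]|_\o+1\le 5<6$, contradicting Proposition~\ref{prop:Rmaxn(r1)}; the flip settles $i=h-1$. For $3\le i\le h-2$ I write $W=W\DI{1,i-1}\vcat W[i]\vcat W\DI{i+1,h}$; from $e(W[i])\le 1-8/3=-5/3$ and additivity one gets $e(W\DI{1,i-1})+e(W\DI{i+1,h})\ge\emax(h,4)+5/3=3$, with the two heights $\ge 2$ and summing to $h-1\equiv_3 0$. Running through the residues of the two heights against the catalogue of $e_{max}$‑values yields a contradiction except when one factor has height $3$ and excess $2$ — once more the word $\o\o\o\o\,/\,\o\e\e\o\,/\,\o\o\o\o$ — whose full row forces the neighbouring row of $W[i]$ to be empty; feeding this back and splitting off the resulting width‑$4$, height‑$4$ block of excess $-2/3$ again contradicts the value of $e_{max}$ of the remaining part, except for the smallest heights which are handled directly. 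The genuine obstacle in both parts is precisely this recurring maximal $3\times 4$ factor: plain excess bookkeeping is too weak against it, and one must invoke the explicit description of maximal $3\times 4$ words (equivalently, the column structure of maximal $4\times 3$ words coming from Lemma~\ref{lem3xhmax_r1r_2}--Corollary~\ref{corol:hequiv1,2}) and exploit that its two full rows annihilate any row placed next to them.
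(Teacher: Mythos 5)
Your strategy is the same as the paper's: the local degree argument gives $e(W\DI{i-1,i+1})\le -2$ in (a) and $e(W[i])\le -5/3$ in (b), and one then splits $W$ into three vertical factors and plays excess additivity against the catalogue of values $e_{max}(h',4)$. Where you go beyond the paper is in noticing that this bookkeeping is not by itself conclusive: the paper closes each case with ``which is impossible,'' silently overlooking that a flanking factor of height $3$ can have excess $2$ (the $10$-cell word with rows $(\o,\o,\o,\o)$, $(\o,\e,\e,\o)$, $(\o,\o,\o,\o)$), with which the required totals $10/3$ in (a) and $3$ in (b) are attainable. Isolating that factor and feeding back the empty row it forces is the right move, and for $h\ge 10$ your patched argument does close the case the paper leaves open.

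The gap is in your last clause, ``the finitely many smallest heights being checked by hand.'' At $h=7$ that check does not yield a contradiction, because the statement itself fails there: let $C$ denote the $10$-cell $3\times 4$ word above and set $W=C\vcat \e^{1\times 4}\vcat C$. Every filled cell of $W$ has degree exactly $2$, and $|W|_\o=20=\maxdhw{2}{7}{4}$, so $W\in\MWordsdhwvarshort{2}{7}{4}$; yet $|W[3]|_\o=|W[5]|_\o=4$ and $|W[4]|_\o=0$, violating (a) and (b) at the interior indices $i=3,4,5$. Tracing your own reduction on this $W$ (say for $i=5$ in (a)): $P=W\DI{1,3}$ is the height-$3$ cycle of excess $2$, the forced empty row gives the height-$4$ block $W\DI{1,4}$ of excess $-2/3$, and the remaining part $W\DI{5,7}$ has height $3$ and excess $2=e_{max}(3,4)$ --- no contradiction, precisely because $e_{max}(3,4)=2$ escapes the bound $4/3$ you use for all other heights. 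So neither your argument nor the paper's (which has the same unacknowledged hole, and indeed asserts in Proposition~\ref{prop:h=3k+1n(ri)} that the case $k=2$ is ``verified by inspection'') can be completed as stated; the proposition needs $h=7$ excluded, after which your handling of the exceptional cycle factor is exactly what is required.
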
 
\begin{proof}
$a)$ Suppose  $1<i<h$ and $|W[i]|_\o= 4$. Then $|W\DI{i-1,i+1}|_\o\leq 6$ and 
$e(W\DI{i-1,i+1})\leq -2$ . Factor $W=W_tW\DI{i-1,i+2}W_b$ as product of three factors. Then 
\begin{align*}
h\equiv_3 1 &\Rightarrow  e(W_t)+ e(W_b) \geq 4/3+2 \mbox{ which is impossible},&\\
h\equiv_3 0 &\Rightarrow  e(W_t)+ e(W_b) \geq 1+2 \mbox{ which is impossible},&
\end{align*}
$b)$ Suppose $|W[i]|_\o \leq 1$ implying $e(W[i])\leq -5/3$. Factor $W=W_tW[i]W_b$ as product of three factors. We have
$$e(W_t)+ e(W_b) \geq 4/3+5/3$$
which is impossible and the proof is complete. 
\end{proof}
\begin{proposition}\label{prop:hx4cycle} Let $W\in \MWordsdhwvarshort{2}{h}{4}$.
The unique $3\times 4$ 2-full word of area $10$ is a cycle and\\
$a)$  $h\equiv_3 0,1 \Rightarrow |W\DI{1,3}|_\o \leq 9$.\\
$b)$ There is no 2-full $3\times 4$ factor of area $10$  in the interior of $W$
\end{proposition}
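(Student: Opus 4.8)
The plan is to prove the three claims of Proposition~\ref{prop:hx4cycle} in sequence, leaning on the row-distribution bounds already established for $\MWordsdhwvarshort{2}{h}{4}$, namely Propositions~\ref{prop:Rmaxn(r1)}, \ref{propleq3geq2} and the base-case value $e_{max}(3,4) = \emax(3,4) = 1$ from Proposition~\ref{prop4xhsc} (equivalently $\maxdhw{2}{3}{4} = 10$). First I would settle the \emph{classification of the $3\times 4$ $2$-full words}: since $\maxdhw{2}{3}{4} = 10$, any $W \in \MWordsdhwvarshort{2}{3}{4}$ has exactly $10$ filled cells, i.e. exactly $2$ empty cells. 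A short exhaustive check — placing the two $\e$-cells among the $12$ positions and discarding every placement that leaves an $\o$-cell of degree $\geq 3$ — shows that, up to the symmetries of the rectangle, the only survivor is the word whose geometric representation is a single $12$-cell cycle running around the border of the $3\times 4$ rectangle (the two empty cells being the centre of the middle row). This establishes the uniqueness statement and that this word is a cycle; I would present the enumeration compactly (the two empty cells cannot be adjacent to three filled cells, which forces them into the interior positions $(2,2),(2,3)$).

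For part $(a)$, argue by contradiction: suppose $|W\DI{1,3}|_\o = 10$ for some $W \in \MWordsdhwvarshort{2}{h}{4}$ with $h \equiv_3 0$ or $h\equiv_3 1$. Then $W\DI{1,3}$ is the cycle word just classified, so its bottom row $W[3]$ is a full row $\o\o\o\o$ in which every cell already has degree $2$ (two horizontal neighbours inside $W\DI{1,3}$, and for the two interior cells one vertical neighbour coming from the filled cell above). Hence $W[4]$ must be entirely empty, $|W[4]|_\o = 0$, contradicting Proposition~\ref{propleq3geq2}$(b)$ when $h\equiv_3 1$ (which gives $|W[i]|_\o \geq 2$ for interior rows); when $h \equiv_3 0$ one instead observes $W[4] = \e^4$ forces $e(W\DI{1,4}) = e(W\DI{1,3}) = e(W[3])+e(W\DI{1,2})$, and more directly $e(W\DI{1,3}) = 10 - 8 = 2$ while $W[4]=\e^4$ gives $e(W\DI{4,h}) = e(W) - e(W\DI{1,3}) \geq 1 - 2$ is too small only after we also use that $e(W[4]) = -8/3$; cleaner is: $W\DI{1,3}$ being the cycle makes $W[3]$ a full degree-$2$ row, so $W\DI{4,h}$ sits on top of — i.e. below — that row and its first row is empty, whence $e(W\DI{4,h}) \le e(W\DI{5,h}) - 8/3 \le \emax(h-4,4) - 8/3$, and combined with $e(W\DI{1,3}) = 2$ this violates $e(W) = \emax(h,4) \in \{1, 4/3\}$. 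I would streamline this into a single displayed inequality once the exact congruence-case values of $\emax(h,4)$ from~\eqref{eq:h=4} are plugged in.

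Part $(b)$ is then immediate from $(a)$ together with the same "full degree-$2$ row" observation, applied on both sides: if a $2$-full $3\times 4$ factor $W\DI{i,i+2}$ of area $10$ sat in the interior of $W$ (so $i > 1$ and $i+2 < h$), it would again be the cycle word, both of its extremal rows $W[i]$ and $W[i+2]$ would be full rows of degree-$2$ cells, forcing $W[i-1] = W[i+3] = \e^4$; then writing $W = W\DI{1,i-2} \vcat W[i-1] \vcat W\DI{i,i+2} \vcat W[i+3] \vcat W\DI{i+4,h}$ and using additivity of the excess (Proposition~\ref{prop:additivity}) gives $e(W) \le \emax(i-2,4) - 8/3 + 2 - 8/3 + \emax(h-i-3,4)$, and bounding each $\emax(\cdot,4) \le 4/3$ (part $(iv)$ of the unnumbered lemma after Proposition~\ref{prop4xhsc}, valid since the relevant heights are never $3$ — if one of them were $3$ the corresponding extremal row of the cycle would already force an empty row contradicting $e=2$, which we handle as a small side case) yields $e(W) \le 4/3 - 16/3 + 2 + 4/3 = -4/3$, contradicting $e(W) = \emax(h,4) \geq 2/3$. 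The main obstacle I anticipate is purely bookkeeping: making the exhaustive classification of $3\times 4$ $2$-full words airtight and handling the low-height corner cases ($i-2 = 3$ or $h-i-3 = 3$, and $h$ small) where the generic bound $\emax(\cdot,4)\le 4/3$ needs the supplementary argument that a full degree-$2$ boundary row of the cycle is incompatible with an adjacent $3\times 4$ factor of excess $1$; everything else is a direct excess computation.
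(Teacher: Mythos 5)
Your overall strategy coincides with the paper's: identify the unique $3\times 4$ $2$-full word of area $10$ as the border cycle, observe that its extremal rows consist entirely of degree-$2$ cells so the adjacent rows must be empty, and then derive a contradiction from additivity of the excess (the paper factorizes $W=W_t\vcat W\DI{i-1,i+3}\vcat W_b$ with $e(W\DI{i-1,i+3})=-10/3$, which is exactly your five-block decomposition regrouped). Your enumeration of the cycle and the generic excess computation are fine, up to the harmless arithmetic slip $4/3-16/3+2+4/3=-2/3$, not $-4/3$.

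The genuine gap is the corner case you defer as ``bookkeeping'': when a flanking factor has height $3$, the bound $\emax(\cdot,4)\leq 4/3$ must be replaced by $\emax(3,4)=2$, and your proposed patch --- that the cycle's extremal row ``would already force an empty row contradicting $e=2$'' --- does not close it, because the row it forces to be empty is precisely the separating row that is already known to be empty, so no contradiction arises. Concretely, for $(a)$ with $h=7$ the word made of two border cycles separated by an empty row has area $20=\maxdhw{2}{7}{4}$, hence lies in $\MWordsdhwvarshort{2}{7}{4}$ and has $|W\DI{1,3}|_\o=10$; for $(b)$ with $h=11$ three cycles separated by empty rows give area $30=\maxdhw{2}{11}{4}$ with a $2$-full $3\times 4$ factor of area $10$ in the interior. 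These cases cannot be argued away: the statement needs $h=7$ (resp.\ $h=11$) excluded or some additional structural input. Be aware that the paper's own proof has the same blind spot (it asserts, e.g., ``$e(W\DI{5,h})=4/3+2/3$ which is impossible'' without checking the height-$3$ subcase where $\emax=2$), so your proposal reproduces the published argument faithfully, gap included; but as written your supplementary side-argument is not a proof of the missing cases.
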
 
\begin{proof}
$a)$ Let  $|W\DI{1,3}|_\o =10$. Then $|W[4]|_\o=0$ and $e(W\DI{1,4})=-2/3$ so that 
\begin{align*}
h\equiv_3 1 &\Rightarrow e(W\DI{5,h} )=4/3+2/3 \mbox{ which is impossible},\\
h\equiv_3 0 &\Rightarrow e(W\DI{5,h} )=1+2/3 \mbox{ which is impossible},
\end{align*}
$b)$ Let  $|W\DI{i,i+2}|_\o =10$ then $|W[i-1]|_\o =|W[i+3]|_\o =0$ and  $e(W\DI{i-1,i+3})=-10/3$. 
Factorize $W=W_t W\DI{i-1,i+3} W_b$, as product of three factors. Then 
\begin{align*}
h\equiv_3 1 &\Rightarrow e(W_t)+e(W_b) = 4/3+10/3 \mbox{ which is impossible},\\
h\equiv_3 0 &\Rightarrow e(W_t)+e(W_b) = 1+10/3 \mbox{ which is impossible},\\
h\equiv_3 2 &\Rightarrow e(W_t)+e(W_b) = 2/3+10/3 \mbox{ which is impossible}. 
\end{align*}
\end{proof}

\begin{proposition}\label{prop:Rmaxequiv1bench} Let $k\in \{3,4 \}$ and $W\in \MWordsdhwvarshort{2}{h}{4}$  with $h\equiv_3 1$. 
Then the $3\times 4$ factors at the bottom and top of $W$ have $9$ filled cells and there is a North $k$-bench at the bottom and a South $k$-bench at the top of $W$. 
\end{proposition}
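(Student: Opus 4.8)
The plan is to prove the two assertions — that $|W\DI{1,3}|_\o=9$ (for $h\ge 7$) and that $W\DI{1,2}$ contains a South $k$-bench for some $k\in\{3,4\}$ — simultaneously, by strong induction on $h$ over $h\equiv_3 1$, $h\ge 4$; the statements for $W\DI{h-2,h}$ and a North $k$-bench then follow because $\MWordsdhwvarshort{2}{h}{4}$ is closed under vertical reflection. Since $h\equiv_3 1$, equation~\eqref{eq:h=4} gives $\emax(h,4)=4/3$, so by Proposition~\ref{prop4xhsc} ``$W$ is $2$-full'' means exactly ``$e(W)=4/3$''. Throughout I would use: $|W[1]|_\o\ge 3$ and $|W\DI{1,2}|_\o\ge 6$, hence also $|W\DI{h-1,h}|_\o\ge 6$ by vertical symmetry (Proposition~\ref{prop:Rmaxn(r1)}); $2\le |W[i]|_\o\le 3$ for $1<i<h$ (Proposition~\ref{propleq3geq2}); and $|W\DI{1,3}|_\o\le 9$ (Proposition~\ref{prop:hx4cycle}~$a)$).

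For the bench I would split on $|W[1]|_\o\in\{3,4\}$. If $|W[1]|_\o=4$ then $W[1]=(\o,\o,\o,\o)$, so $W[1,2]$ and $W[1,3]$ already have two filled horizontal neighbours, forcing $W[2,2]=W[2,3]=\e$; thus $|W[2]|_\o\le 2$, and with $|W[2]|_\o\ge 2$ we get $W[2]=(\o,\e,\e,\o)$, so $W\DI{1,2}=\substack{\o\o\o\o\\\o\e\e\o}$ is a South $4$-bench. If $|W[1]|_\o=3$ then $|W\DI{1,2}|_\o\ge 6$ forces $|W[2]|_\o=3$, and inspecting the finitely many pairs $(W[1],W[2])$ the degree bound leaves only $W[1]=(\o,\o,\o,\e)$, $W[2]=(\o,\e,\o,\o)$ up to horizontal symmetry, whose columns $1$–$3$ form the South $3$-bench $\substack{\o\o\o\\\o\e\o}$. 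The delicate exclusion is the ``isolated interior hole'' $W[1]=(\o,\o,\e,\o)$: for each admissible $W[2]$ one either produces a degree-$3$ cell, or forces at least three empty cells in row $3$ or row $4$ (contradicting $|W[3]|_\o\ge 2$, resp.\ $|W[4]|_\o\ge 2$; when $h=4$ one uses $|W\DI{h-1,h}|_\o\ge 6$ here), or ends with $e(W\DI{1,3})=0$, whereupon $W\DI{4,h}$ is $2$-full and the inductive hypothesis furnishes a South bench in $W\DI{4,5}$ whose seat row $W[4]$ is incompatible with the already-fixed cells of rows $3$ and $4$ (a forced degree-$3$ cell, or no room for a seat of length $\ge 3$).

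For the count (with $h\ge 7$), write $W=W\DI{1,3}\vcat W\DI{4,h}$; since $h-3\equiv_3 1$, Proposition~\ref{prop4xhsc} gives $e(W\DI{4,h})\le\emax(h-3,4)=4/3$, hence $e(W\DI{1,3})\ge 0$, i.e.\ $|W\DI{1,3}|_\o\ge 8$, so $|W\DI{1,3}|_\o\in\{8,9\}$. Suppose $|W\DI{1,3}|_\o=8$. Then $e(W\DI{4,h})=4/3=\emax(h-3,4)$, so $W\DI{4,h}\in\MWordsdhwvarshort{2}{h-3}{4}$ with $4\le h-3<h$, and the inductive hypothesis puts a South $k$-bench in its top two rows $W\DI{4,5}$, with seat $W[4]$. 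Whether $k=3$ or $k=4$, at least three cells of $W[4]$ have degree $2$ already within $W\DI{4,5}$, so $\deg_{\o,W}(4,j)\ge 2+\indic(W[3,j]=\o)$ forces $W[3,j]=\e$ in those columns, giving $|W[3]|_\o\le 1$ and contradicting $|W[3]|_\o\ge 2$. Hence $|W\DI{1,3}|_\o=9$, and the bottom case is symmetric. The base case $h=4$ requires only the bench claim, already established above (for $h=4$, $W\DI{4,h}$ is the single row $W[4]$ with $e(W[4])=4/3$, i.e.\ $W[4]=(\o,\o,\o,\o)$, which settles the one subcase that used the inductive hypothesis).

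I expect the main obstacle to be the bench step: organizing the finite-but-tedious degree bookkeeping, in particular eliminating the pattern $(\o,\o,\e,\o)$ as $W[1]$, which requires propagating forced emptiness two and three rows downward and appealing to the inductive hypothesis on $W\DI{4,h}$, together with the edge effects at $h=4$, where Proposition~\ref{propleq3geq2} no longer bounds $|W[4]|_\o$ and one falls back on the boundary bound $|W\DI{h-1,h}|_\o\ge 6$ of Proposition~\ref{prop:Rmaxn(r1)} (or simply enumerates the $2$-full $4\times 4$ words).
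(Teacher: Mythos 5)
Your proposal is correct in substance but organized quite differently from the paper's argument. The paper runs a single minimal-counterexample induction in which the two conclusions are entangled: it first forces $|W\DI{1,3}|_\o=9$ (if it were $8$, minimality gives a South bench atop the $2$-full word $W\DI{4,h}$, killing row $3$), and then disposes of ``$9$ cells but no bench'' by inspecting the nine-cell $3\times 4$ configurations without a bench (Figure~\ref{3x4n=9nobench}), showing each forces $|W[4]|_\o\leq 1$ and hence $e(W\DI{5,h})\geq 2$, which fails only for $h\geq 10$ -- this is why the paper must check $h=7$ as a separate base case. You instead derive the bench \emph{first}, purely locally, from $|W[1]|_\o\geq 3$, $|W\DI{1,2}|_\o\geq 6$ and the degree bound on rows $1$--$2$, and only then use the bench (via the induction on $W\DI{4,h}$) to pin the count; your exclusion of $W[1]=(\o,\o,\e,\o)$ replaces the paper's inspection of Figure~\ref{3x4n=9nobench}, and I checked that each of the four candidate rows $W[2]$ does die as you indicate (degree-$3$ cell, $|W[3]|_\o\leq 1$, or $e(W\DI{1,3})=0$ followed by the incompatible bench below). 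What your route buys is uniformity -- the inductive step works for all $h\geq 7$ with the single base $h=4$, with no special pleading at $h=7$ -- and it also exposes an edge case the paper glosses over: the $4\times 4$ frame $\substack{\o\o\o\o\\\o\e\e\o\\\o\e\e\o\\\o\o\o\o}$ is $2$-full with $|W\DI{1,3}|_\o=8$, so the nine-cell count genuinely fails at $h=4$ (only the bench claim survives there), whereas the paper asserts the full claim for $h\in\{4,7\}$. Your restriction of the count to $h\geq 7$ is therefore not a gap but a necessary correction; since every later use of the proposition (e.g.\ Propositions~\ref{prop:n=9int} and \ref{prop:h=3k+1n(ri)}) concerns $h\geq 7$ or only invokes the bench, nothing downstream is affected. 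The one place your write-up is overcompressed is the sentence claiming the degree bound alone reduces the $|W[1]|_\o=3$ case to $(\o,\o,\o,\e)$ -- as you yourself note a sentence later, eliminating $(\o,\o,\e,\o)$ needs rows $3$--$4$ and the inductive hypothesis, so that sentence should be softened.
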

\begin{proof}
By minimal counterexample. We know that the claim is true for $h\in\{ 4,7\}$. Let $h$ be minimal such that  $|W\DI{1,3}|_\o \leq 8$ or ($|W\DI{1,3}|_\o=9$ and  $W\DI{1,3}$ has no South $k$-bench at the top of $W$). If $|W\DI{1,3}|_\o =9$ and  $W\DI{1,3}$ has no south $k$-bench (Figure \ref{3x4n=9nobench})  then by inspection,  $|W[4]|_\o \leq 1$ and $W\DI{1,4}|_\o \leq 10, e(W\DI{1,4})\leq -2/3\Rightarrow  e(W\DI{5,h})\geq 4/3+2/3$ which is impossible for $h\geq 10$.\\
If $|W\DI{1,3}|_\o \leq 8$ then $|W\DI{1,3}|_\o= 8$ for otherwise $e(W\DI{4,h})>4/3$ which is impossible. But then $W\DI{4,h}$ is 2-full and by minimality hypothesis  there is a South $k$-bench at the top of  $W\DI{4,h}$ which implies that $|W[3]|_\o \leq 1$ in contradiction with $|W\DI{1,3}|_\o =8$. The proof is complete

\begin{figure*}[h!]
\begin{subfigure}[t]{.45\textwidth}
\centering
\includegraphics[height=0.4in]{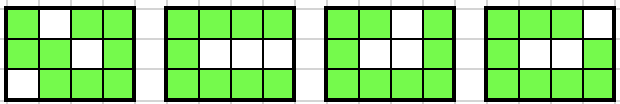}
\caption{$3\times 4$ words of area 9 with no south bench }
\label{3x4n=9nobench}
\end{subfigure}%
~
\begin{subfigure}[t]{.45\textwidth}
\centering
\includegraphics[height=0.5in]{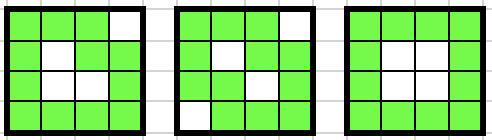}
\caption{$4\times 4$ words of area 12}
\label{4x4n=12}
\end{subfigure}%
\caption{}
\label{3x4_4x4}
\end{figure*}
\end{proof}

\begin{corollary}
Let $W\in \MWordsdhwvarshort{2}{h}{4}$ with $h\equiv_3 1$. 
Then $|W[4]|_\o =2$.
\end{corollary}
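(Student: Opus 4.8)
The plan is to reduce the statement to excluding a single value of $|W[4]|_\o$, and then to kill that value by applying Proposition~\ref{prop:Rmaxequiv1bench} a second time together with a one-line degree argument. Throughout I assume $h\ge 7$, the only residues $\equiv_3 1$ for which $W[4]$ (and $W[5]$) are interior rows of $W$; this convention should be stated, since for $h=4$ one has $|W[4]|_\o=3$ instead. First I would record the basic facts: since $W$ is $2$-full and $h\equiv_3 1$, Proposition~\ref{prop4xhsc} and Equation~\eqref{eq:h=4} give $e(W)=\emax(h,4)=4/3$; Proposition~\ref{prop:Rmaxequiv1bench} gives that the top $3\times4$ block satisfies $|W\DI{1,3}|_\o=9$; and Proposition~\ref{propleq3geq2} gives $2\le|W[4]|_\o\le 3$. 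So everything comes down to ruling out $|W[4]|_\o=3$.

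Assume $|W[4]|_\o=3$. Then $|W\DI{1,4}|_\o=9+3=12$, so $e(W\DI{1,4})=12-\tfrac{2\cdot4\cdot4}{3}=\tfrac{4}{3}=\emax(4,4)$; since $W\DI{1,4}\in\Wordsdhwvarshort{2}{4}{4}$ this means $W\DI{1,4}$ is $2$-full, i.e.\ $W\DI{1,4}\in\MWordsdhwvarshort{2}{4}{4}$. Now I apply Proposition~\ref{prop:Rmaxequiv1bench} to $W\DI{1,4}$ (of height $4\equiv_3 1$): it contains a North $k$-bench, $k\in\{3,4\}$, at its bottom, whose seat is the bottom row $W[4]$ and whose legs lie in $W[3]$. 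Because $|W[4]|_\o=3<4$ the seat has only three cells, so $k=3$; thus $W[4]$ has its three $\o$-cells in consecutive columns — up to the horizontal reflection of $W$, columns $1,2,3$ with $W[4,4]=\e$ — and the legs force $W[3,1]=W[3,3]=\o$, $W[3,2]=\e$.

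The final step is to propagate the degree bound one row downward. Each of $W[4,1],W[4,2],W[4,3]$ already has two filled neighbours among rows $3$ and $4$, namely $\{W[3,1],W[4,2]\}$, $\{W[4,1],W[4,3]\}$ and $\{W[3,3],W[4,2]\}$ respectively, so $\deg_{\o}\le 2$ forces $W[5,1]=W[5,2]=W[5,3]=\e$, whence $|W[5]|_\o\le 1$. This contradicts Proposition~\ref{propleq3geq2}, which gives $|W[5]|_\o\ge 2$ because $1<5<h$. Therefore $|W[4]|_\o=2$.

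I do not expect a genuine obstacle once Proposition~\ref{prop:Rmaxequiv1bench} is in hand: the argument is essentially bookkeeping. The points that need care are (i) fixing the orientation of the North bench so that its seat really is $W[4]$ and its legs are in $W[3]$; (ii) the split $k=3$ versus $k=4$, which is exactly where $|W[4]|_\o=3$ is used, together with the observation that a $3$-bench seat in a width-$4$ word is, up to reflection, columns $1,2,3$; and (iii) checking that every cell of the seat $W[4]$ has degree $2$ using only rows $3$ and $4$, so that the cell directly beneath it is forced empty.
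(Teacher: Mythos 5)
Your proof is correct and follows the same basic strategy as the paper's: both arguments reduce the claim to excluding $|W[4]|_\o=3$, show that this value would force $|W[5]|_\o\leq 1$, and then derive a contradiction. The differences are in the justifications, and they are to your advantage. Where the paper obtains $|W[5]|_\o\leq 1$ ``by inspection'' of the figure listing the $4\times 4$ words of area $12$, you obtain it by noting that $W\DI{1,4}$ is then a $2$-full word of $\MWordsdhwvarshort{2}{4}{4}$ and re-applying Proposition~\ref{prop:Rmaxequiv1bench} to it, so that the forced North $3$-bench (a $4$-bench being excluded by $|W[4]|_\o=3$) pins down rows $3$ and $4$ up to reflection and the degree bound empties $W[5,1],W[5,2],W[5,3]$; this is a cleaner derivation of the same fact. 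Your endgame also differs slightly: you contradict Proposition~\ref{propleq3geq2}$(b)$ applied to the interior row $W[5]$, whereas the paper runs an excess computation on $W\DI{6,h}$ (and, for the case $|W[4]|_\o\leq 1$, on $W\DI{5,h}$, where it must add the caveat ``for $h\geq 10$'' because $\emax(3,4)=2$); your route avoids that caveat and works uniformly for all $h\equiv_3 1$ with $h\geq 7$. Finally, your observation that $h=4$ must be excluded from the statement is a legitimate catch (the paper tacitly assumes $h$ large), though the value there can be $4$ as well as $3$ depending on which $2$-full $4\times 4$ word one takes.
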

\begin{proof} Observe that $|W\DI{1,4}|_\o = 12$ is impossible because then $|W[5]|_\o \leq 1$ (Figure \ref{4x4n=12}), $e(W\DI{1,5}) \leq  -1/3$ and  $h(W\DI{6,h}) \equiv_3 2$ so that $e(W\DI{6,h}) \geq 4/3+1/3$ which is impossible.\\
Since $|W\DI{1,3}|_\o =9$ from Proposition \ref{prop:Rmaxequiv1bench} and $|W\DI{1,4}|_\o \leq 11$, 
if $|W[4]|_\o \leq 1$ then $e(W\DI{1,4} )\leq -2/3$ which implies $e(W\DI{5,h})\geq 4/3+2/3$ which is impossible for $h\geq 10$. So we must have $|W[4]|_\o =2$. 
\end{proof}

\begin{proposition}\label{prop:n=9int} Let $W\in \MWordsdhwvarshort{2}{h}{4}$. Then
\begin{align*}
a) &\;  h\equiv_3 1,0  \Rightarrow  \mbox{ there is no $3\times 4$ factor of area $9$ in the interior of }W,\\
b) &\; h\equiv_3 1 \Rightarrow \mbox{ there is no $3\times 4$ factor of area $7$ or less in the interior of }W
\end{align*}
\end{proposition}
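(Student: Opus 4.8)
The plan is to combine the additivity of the excess (Proposition~\ref{prop:additivity}) with the exact value $e_{max}(h,4)=\emax(h,4)$ of Proposition~\ref{prop4xhsc} and the structural results of Propositions~\ref{prop:Rmaxn(r1)}, \ref{propleq3geq2}, \ref{prop:hx4cycle} and \ref{prop:Rmaxequiv1bench}. Since $W$ is $2$-full we have $e(W)=\emax(h,4)$, which equals $4/3$ when $h\equiv_3 1$ and $1$ when $h\equiv_3 0$; I will also use that $e_{max}(n,4)\le 4/3$ for every $n\ne 3$ (with $e_{max}(0,4)=0$ for the empty word), and that by Proposition~\ref{prop:hx4cycle}(a) and the vertical flip of $W$ the top and bottom $3\times 4$ sub-factors of $W$ have excess at most $1$, never $2$. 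In each case the scheme is to assume that the forbidden interior factor $U=W\DI{i,i+2}$ exists, to write $W$ as a vertical concatenation of $U$ (possibly enlarged by one or two adjacent rows) with the block above it and the block below it, and to turn additivity into a numerical contradiction, leaving only a few ``tight'' configurations to be killed by the structural lemmas.

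For part $(a)$ one uses that, up to symmetry, the $3\times 4$ words of degree at most $2$ with exactly $9$ filled cells are the four words listed in the proof of Lemma~\ref{lem:proper}$(ii)$, and that each of them has at least three cells of degree $2$ in its top or its bottom row; hence, for such a $U$, the adjacent row $W[i-1]$ or $W[i+3]$ has at most one filled cell, and for two of the four words that adjacent row is even empty. If $h\equiv_3 1$ this is immediately absurd, since every row of $W$ has at least $2$ filled cells (interior rows by Proposition~\ref{propleq3geq2}$(b)$; $W[1]$ and $W[h]$, with at least $3$, by Proposition~\ref{prop:Rmaxn(r1)} and its flip). If $h\equiv_3 0$, the short adjacent row cannot be $W[1]$ or $W[h]$; when it is empty, splitting $W$ at that row and using that an empty $1\times 4$ row has excess $-8/3$ makes the excesses of the two blocks sum to $e(W)+8/3=11/3$, exceeding the maximum $8/3$ compatible with their heights summing to $\equiv_3 2$ — a contradiction; when the factor forces \emph{both} adjacent rows to have at most one cell, enlarging $U$ to the surrounding $5\times 4$ factor, of excess $\le -7/3$, yields the same kind of contradiction; the only remaining possibility is the one $3\times 4$ word having a $3$‑pillar in a boundary column, together with one adjacent row carrying a single cell in its only position not forced empty, and here one must argue with the $4\times 4$ extension, the column containing the $3$-pillar, and the North $k$-bench that Proposition~\ref{prop:Rmaxequiv1bench} puts at the bottom of the block above $U$ when that block happens to be $2$-full.

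For part $(b)$, assume $h\equiv_3 1$ and $|U|_\o\le 7$, so $e(U)\le -1$. Writing $W=P\vcat U\vcat S$ with $P,S$ nonempty, additivity gives $e(P)+e(S)=e(W)-e(U)\ge 7/3$; on the other hand the heights of $P$ and $S$ sum to $h-3\equiv_3 1$, so their residues mod $3$ form the pair $\{0,1\}$ (the pair $(2,2)$ would give $e(P)+e(S)\le 4/3$), and using $e_{max}(n,4)\le 4/3$ together with the bound $\le 1$ for a height-$3$ top/bottom block we get $e(P)+e(S)\le 1+4/3=7/3$. Hence equality holds everywhere: $e(U)=-1$ (so $|U|_\o=7$), and the block among $P,S$ whose height is $\equiv_3 1$ has excess $4/3$, i.e. is $2$-full. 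Applying Proposition~\ref{prop:Rmaxequiv1bench} to that $2$-full block places a bench along the edge it shares with $U$; the two leg cells of this bench, lying in the row of the block next to $U$, already have degree $2$, so the corresponding columns of the neighbouring row of $U$ are forced empty, and checking the two possible shapes of a $3$-leg bench shows in fact that this row of $U$ has at most one filled cell, contradicting Proposition~\ref{propleq3geq2}$(b)$. This part is therefore essentially complete once the reductions are set up; the genuinely delicate point, and the main obstacle, is the last configuration of part $(a)$: it survives every purely numerical (excess-counting) argument, and ruling it out requires a careful column-by-column analysis of degrees that marries the $3$-pillar of the offending $3\times 4$ word to the bench and row-bound structure of the $2$-full blocks surrounding $U$.
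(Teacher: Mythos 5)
Your part $(b)$ and the $h\equiv_3 1$ half of part $(a)$ are sound. For $(b)$ you follow essentially the same route as the paper: split $W=P\vcat U\vcat S$, observe that $e(P)+e(S)\ge 7/3$ forces the residue pair $(0,1)$ with the height-$\equiv_3 1$ block $2$-full, and let the bench supplied by Proposition~\ref{prop:Rmaxequiv1bench} empty out all but one cell of the adjacent row of $U$, contradicting Proposition~\ref{propleq3geq2}$(b)$. For $(a)$ with $h\equiv_3 1$ your argument is cleaner than the paper's: since each of the four $9$-cell words has a row containing three cells of degree $2$, some row of $W$ adjacent to $U$ has at most one filled cell, which already contradicts the fact that every row of a $2$-full word of height $\equiv_3 1$ carries at least two cells. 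The paper instead argues uniformly through the claim $|W\DI{i-1,i+3}|_\o\le 12$ followed by a double-bench contradiction, without separating the four words.

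The genuine gap is the one you concede yourself: for $h\equiv_3 0$ the fourth $9$-cell word (rows $\o\o\o\e$, $\o\e\o\o$, $\o\o\e\o$, with a $3$-pillar in its first column) is never actually excluded. Its bottom row has only one cell of degree $2$, so the row below it may carry three filled cells; the surrounding $5\times 4$ factor can then hold $13$ cells locally, its excess is only $-1/3$, and, as you observe, every excess-additivity split is then consistent with $e(W)=1$. Saying that one "must argue with the $4\times 4$ extension, the $3$-pillar and the bench of Proposition~\ref{prop:Rmaxequiv1bench}" describes a strategy, not a proof: in this residue class the block above $U$ is not forced to be $2$-full by the bookkeeping, so the bench is not automatically available, and no column-by-column degree analysis is actually carried out. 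Until this configuration is eliminated, part $(a)$ for $h\equiv_3 0$ is unproven. It is worth noting that your instinct about where the difficulty lies is accurate --- the paper's own inspection bound $|W\DI{i-1,i+3}|_\o\le 12$ is precisely what fails locally for this word --- but identifying the hard case is not the same as closing it.
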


\begin{proof}
a) By minimal counterexample. Assume that $|W\DI{i,i+2}|_\o =9$ for $1<i<h$. By inspection, we observe that $|W\DI{i-1,i+3}|_\o \leq 12$ so that  $e(W\DI{i-1,i+3})\leq -4/3$. Factorize $W=W_tW\DI{i-1,i+3}W_b$ as product of three. factors. Then
\begin{align}\label{eqh=1n=9}
h\equiv_3 1 \Rightarrow e(W_t)+e(W_b)\geq 4/3+4/3 
\end{align}
and since $h(W_t)+h(W_b)\equiv_3 2$, we have $(h(W_t), h(W_b))\equiv_3 (1,1) \mbox{ or }(0,2)$. 
We discard $(0,2)$ because then $e(W_t)>\emax(h(W_t),4)$ or $e(W_b)>\emax(h(W_b),4)$ in contradiction with the minimal hypothesis. 
If $(h(W_t),h(W_b))\equiv_3 (1,1)$ then $W_t$ and $W_b$ are both 2-full and from Proposition \ref{prop:Rmaxequiv1bench} they both have benches adjacent to $W[i-1]$ and $W[i+4]$ and 
$$|W\DI{i-1,i+3}|_\o =12 \Rightarrow |W[i-1]|_\o \geq 2 \mbox{ or } |W[i+4]|_\o \geq 2$$
in contradiction with the fact that a row adjacent to a $3$-bench in $W$ has at most one filled cell. Now
\begin{align}\label{eqhequiv0}
h\equiv_3 0 \Rightarrow  e(W_t)+e(W_b)\geq 1+4/3 
\end{align}
and since $h(W_t)+h(W_b)\equiv_3 1$, we have $(h(W_t),h(W_b))\equiv_3 (2,2) \mbox{ or }(0,1)$. 
But $(h(W_t),h(W_b))\equiv_3 (2,2) \Rightarrow e(W_t)+e(W_b)\leq 2/3+2/3$
in contradiction with \eqref{eqhequiv0}. Also  
$(h(W_t),h(W_b))\equiv_3 (0,1) \Rightarrow e(W_t)+e(W_b)\leq 1+4/3$. $W_t$ and $W_b$ are then both 2-full and from Proposition \ref{prop:Rmaxequiv1bench} they both have benches adjacent to $W[i-1]$ and $W[i+4]$ which is again impossible. \\
b) Assume that $h\equiv_3 1$ and $|W\DI{i,i+2}|_\o \leq 7$ so that $e(W\DI{i,i+2})\leq -1$. Let $W=W_tW\DI{i,i+2}W_b$. Then  
\begin{align}
\label{eq_1hequiv1} e(W_t)+e(W_b)\geq 4/3+1
\end{align}
and $(h(W_t),h(W_b))\equiv_3 (0,1) \mbox{ or } (2,2)$. If $(h(W_t),h(W_b))\equiv_3 (2,2)$ then 
$e(W_t)+e(W_b)\geq 2/3+2/3$ in contradiction with \eqref{eq_1hequiv1}. So we must have $(h(W_t),h(W_b))\equiv_3 (0,1)$ and $W_b$ is 2-full with $h(W_b)\equiv_3 1$ so that $W_b$ has a South bench at its top and $|W[i+2]|_\o\leq 1$ in contradiction with Proposition \ref{propleq3geq2}. The proof is complete.

\end{proof}

\begin{proposition}\label{prop:h=3k+1n(ri)} 
For integers $k\geq 2$,  $h=3k+1$ and $W\in \MWordsdhwvarshort{2}{h}{4}$ we have 
\begin{align*}\label{}
a)\; \forall t=1\dots k-1: \;  |W[3t+1]|_\o =2, \\
b)\;  \forall t=1\dots k-1: \;  |W[3t+2]|_\o =3,\\
c) \; \forall t=1\dots k-2: \;  |W[3t+3]|_\o =3.
\end{align*}
\end{proposition}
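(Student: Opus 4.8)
The plan is to derive the entire row distribution from two facts already established for words in $\MWordsdhwvarshort{2}{h}{4}$: first, that every $3\times4$ factor lying in the interior of $W$ has \emph{exactly} $8$ filled cells, and second, that $|W[4]|_\o=2$. The first fact is almost immediate: if $2\leq i\leq h-3$ then $W\DI{i,i+2}$ is an interior $3\times4$ factor whose three rows are all interior rows of $W$, so by Proposition~\ref{propleq3geq2} each of them has $2$ or $3$ filled cells, hence $6\leq|W\DI{i,i+2}|_\o\leq9$; Proposition~\ref{prop:n=9int}(a) excludes area $9$ and Proposition~\ref{prop:n=9int}(b) excludes area $\leq7$, leaving area $8$. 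The second fact is the corollary to Proposition~\ref{prop:Rmaxequiv1bench}; in the base case $h=7$ it also follows from the count $|W[4]|_\o=|W|_\o-|W\DI{1,3}|_\o-|W\DI{5,7}|_\o=20-9-9=2$, using Proposition~\ref{prop4xhsc} to get $|W|_\o=8k+4$ and Proposition~\ref{prop:Rmaxequiv1bench} together with symmetry for the two end blocks.

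First I would record, via Proposition~\ref{prop4xhsc} and the value $\emax(h,4)=4/3$ for $h\equiv_3 1$, that $e(W)=4/3$, hence $|W|_\o=\tfrac{2}{3}(3k+1)\cdot4+\tfrac{4}{3}=8k+4$. Then I would prove the ``interior $3\times4$ factors have area $8$'' lemma as above and deduce the key periodicity statement: for $2\leq j\leq h-4$, both $W\DI{j,j+2}$ and $W\DI{j+1,j+3}$ are interior $3\times4$ factors of area $8$, so subtracting yields $|W[j]|_\o=|W[j+3]|_\o$. Thus the sequence $\bigl(|W[i]|_\o\bigr)_i$ is $3$-periodic on the rows $2,3,\dots,h-1$.

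Next I would pin down the three residue classes. Since $2\leq4\leq h-3$ for $k\geq2$, the factor $W\DI{4,6}$ is interior, so $|W[4]|_\o+|W[5]|_\o+|W[6]|_\o=8$; as $|W[4]|_\o=2$ and rows $5,6$ are interior (hence have at most $3$ filled cells by Proposition~\ref{propleq3geq2}), this forces $|W[5]|_\o=|W[6]|_\o=3$. Feeding the seeds $|W[4]|_\o=2$, $|W[5]|_\o=3$, $|W[6]|_\o=3$ into the period-$3$ relation then propagates each value along its residue class up to row $h-1$: it gives $|W[3t+1]|_\o=2$ for $t=1,\dots,k-1$ (the largest index $\equiv 1 \pmod 3$ that is $\leq h-1$ being $3k-2$), which is $(a)$; $|W[3t+2]|_\o=3$ for $t=1,\dots,k-1$ (largest index $\equiv 2 \pmod 3$ being $3k-1$), which is $(b)$; and $|W[3t+3]|_\o=3$ for $t=1,\dots,k-1$, in particular for $t=1,\dots,k-2$, which is $(c)$.

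I do not expect a genuine obstacle: once the two inputs are in hand the proposition is pure bookkeeping, and indeed slightly more than $(a)$--$(c)$ drops out. The only thing requiring care is the range arithmetic --- one must verify that each $3\times4$ window invoked really satisfies $2\leq i\leq h-3$ (this is where $k\geq2$, i.e. $h\geq7$, enters; for $h=7$ the only interior windows are $W\DI{2,4}$, $W\DI{3,5}$, $W\DI{4,6}$, and $(c)$ is vacuous), and that the period-$3$ chain started from each of rows $4,5,6$ really reaches every row named in $(a)$, $(b)$, $(c)$. A minor secondary point is that the cited corollary giving $|W[4]|_\o=2$ is argued only for $h\geq10$, so the case $h=7$ must be dispatched by the counting argument noted above.
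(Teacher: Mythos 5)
Your overall plan --- force every interior $3\times 4$ window to have a fixed area and then propagate a period-$3$ row distribution from the seeds $|W[4]|_\o=2$, $|W[5]|_\o=|W[6]|_\o=3$ --- is close in spirit to what the paper does, but it rests on a claim that is false: it is \emph{not} true that every interior $3\times4$ factor of a word in $\MWordsdhwvarshort{2}{h}{4}$ with $h=3k+1$ has area exactly $8$. The two extreme interior windows $W\DI{2,4}$ and $W\DI{h-3,h-1}$ can have area $7$. A concrete counterexample is the $2$-full $7\times4$ word of Figure~\ref{fig:full-words}(c), whose row distribution is $(3,3,3,2,3,2,4)$, so that $|W\DI{4,6}|_\o=7$ and $|W[6]|_\o=2$; for larger $k$ the construction of Lemma~\ref{lem:4xh} with $h\equiv_6 1$ has row distribution $(4,2,3,2,3,3,2,\ldots,2,3,3,2,3,2,4)$, where both $W\DI{2,4}$ and $W\DI{h-3,h-1}$ have area $7$. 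This is exactly why part $(c)$ of the proposition stops at $t=k-2$: your ``bonus'' conclusion $|W[3k]|_\o=3$ is simply wrong. The statement you lean on, Proposition~\ref{prop:n=9int}$(b)$, cannot be applied at the positions $i=2$ and $i=h-3$: in its proof the complementary factor of height $\equiv_3 1$ may be a single row, and a full $1\times4$ row is $2$-full of excess $4/3$ yet carries no South bench and only forces $|W[i+2]|_\o\leq 2$, which does not contradict Proposition~\ref{propleq3geq2}$(b)$.

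This breaks your argument in two concrete places. First, for $k=2$ your seed computation $|W[4]|_\o+|W[5]|_\o+|W[6]|_\o=8$ is applied to $W\DI{4,6}=W\DI{h-3,h-1}$, which is precisely one of the bad windows, so you never actually obtain $|W[5]|_\o=3$ (the fix is to use $W\DI{3,5}$ instead, whose complementary factors both have height $2\equiv_3 2$ and hence total excess at most $4/3$). Second, your periodicity relation $|W[j]|_\o=|W[j+3]|_\o$ is only justified when neither of the two windows involved is $W\DI{2,4}$ or $W\DI{h-3,h-1}$, i.e.\ for $3\leq j\leq h-5$; the chains needed for $(a)$, $(b)$ and for $(c)$ restricted to $t\leq k-2$ do stay in this range, so for $k\geq3$ the argument can be salvaged once the area-$8$ lemma is restricted to $3\leq i\leq h-4$ --- but as written your proof asserts and uses the unrestricted version and therefore proves a false statement. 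The paper avoids all of this by inducting on $t$ from the top: it determines $|W[3t+1]|_\o$ via Proposition~\ref{prop:n=9int}$(a)$ and Proposition~\ref{propleq3geq2}, and then pins down $|W[3t+2]|_\o$ and $|W[3t+3]|_\o$ by comparing the cumulative excess $e(W\DI{1,\cdot})$ with $\emax$ of the remaining tail, invoking the South bench of a $2$-full tail (Proposition~\ref{prop:Rmaxequiv1bench}) only when that tail has height at least $4$; this is what makes the exclusion of $t=k-1$ in $(c)$ come out automatically.
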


\begin{proof}
by induction on $t$. The statement is verified by inspection for $k=2$. Assume that the statement is true for 
$k\geq 3$ and $t<k-1$. \\
a)  We have that $|W[3t+1]|_\o=3$ is impossible because we would then have 
$|W\DI{3t-1,3t+1}|_\o =9$ in contradiction with Proposition \ref{prop:n=9int}. 
Since $|W[3t+1]|_\o\leq 1$ is also impossible from Proposition \ref{propleq3geq2}, we must have  $|W[3t+1]|_\o=2$. \\
b) If  $|W[3t+2]|_\o \leq 2$ then $|W\DI{3t+3,h}|_\o \geq 4/3+1/3$ which is impossible. Since $|W[3t+2]|_\o =4$ is also impossible from Proposition \ref{propleq3geq2}, we must have $|W[3t+2]|_\o=3$. \\
c) If  $|W[3t+3]|_\o \leq 2$ then $|W\DI{3t+4,h}|_\o \geq 4/3$ which is possible only when $3t+3=3k$. Otherwise we must also have  $|W[3t+3]|_\o =3$ and the proof is complete.
\end{proof}

\begin{proposition}\label{prop:2x2bottom} 
Let $h\equiv_3 1$ and $W\in \MWordsdhwvarshort{2}{h}{4}$. There is no $2\times 2$ factor of area $4$ at the bottom and top of $W$.
\end{proposition}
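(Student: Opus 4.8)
The plan is to treat the top boundary only — the statement at the bottom then follows because $\MWordsdhwvarshort{2}{h}{4}$ is closed under reversing the order of rows (this preserves the grid graph, hence the degree condition, and obviously the area), so a $2\times 2$ all-$\o$ factor at the bottom of $W$ is a $2\times 2$ all-$\o$ factor at the top of the vertically mirrored word. So I would argue by contradiction: assume some $W\in\MWordsdhwvarshort{2}{h}{4}$ contains a $2\times 2$ all-$\o$ factor in rows $1$ and $2$, occupying columns $\{c,c+1\}$ with $c\in\{1,2,3\}$ (and note $h=1$ is vacuous, so $h\geq 4$), and aim to contradict the known value of $|W\DI{1,3}|_\o$.

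First I would record the local consequences of the degree bound. Each of the four cells of the block already has $\o$-degree $2$ coming from the two other block cells adjacent to it; since $W\in\Wordsdhwvarshort{2}{h}{4}$, every remaining neighbour of every block cell must be $\e$. Reading this off gives $W[3,c]=W[3,c+1]=\e$ in all cases, together with $W[1,c-1]=W[2,c-1]=\e$ when $c\geq 2$ and $W[1,c+2]=W[2,c+2]=\e$ when $c\leq 2$. Substituting $c=1,2,3$ in turn and bounding each of the three rows of $W\DI{1,3}$ then yields, in every case, $|W\DI{1,3}|_\o\leq 8$; in fact $|W\DI{1,3}|_\o\leq 6$ when $c=2$, since there rows $1$ and $2$ are both forced to $(\e,\o,\o,\e)$.

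It remains to contradict $|W\DI{1,3}|_\o\leq 8$. For $h\geq 7$, i.e.\ $h=3k+1$ with $k\geq 2$, Proposition~\ref{prop:Rmaxequiv1bench} gives $|W\DI{1,3}|_\o=9$, which is the contradiction; this disposes of all $h\equiv_3 1$ with $h\geq 7$. The only remaining case is $h=4$, which I would handle directly: there $|W|_\o=\maxdhw{2}{4}{4}=12$, so $|W[4]|_\o=12-|W\DI{1,3}|_\o\geq 4$, forcing $W[4]=\o^4$ and $|W\DI{1,3}|_\o=8$; applying the degree bound to the interior cells $W[4,2]$ and $W[4,3]$ of this full bottom row forces $W[3,2]=W[3,3]=\e$, hence $|W[3]|_\o\leq 2$ and $|W\DI{1,2}|_\o\geq 6$. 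Comparing with the row bounds from the previous paragraph then closes every sub-case: for $c\in\{1,3\}$ we have $|W[1]|_\o,|W[2]|_\o\leq 3$, so $|W\DI{1,2}|_\o=6$ and $|W[3]|_\o=2$, yet three columns of row $3$ are now forced empty, giving $|W[3]|_\o\leq 1$; and for $c=2$ we have $|W\DI{1,2}|_\o=4$, forcing $|W[3]|_\o=4$ while two of its cells are empty. The step I expect to require the most care is not conceptual but bookkeeping: tracking exactly which cells are forced empty in the three positions of the block, and, in the $h=4$ case, the additional emptiness propagated from the necessarily-full bottom row; the rest reduces to one-line area counts.
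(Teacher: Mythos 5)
Your proof is correct and follows essentially the same route as the paper: the degree bound around the full $2\times 2$ block forces $|W\DI{1,3}|_\o\leq 8$ in the corner positions and $\leq 6$ in the central one, which contradicts the value $|W\DI{1,3}|_\o=9$ supplied by Proposition~\ref{prop:Rmaxequiv1bench}, the bottom case following by vertical symmetry. Your only deviation is to settle $h=4$ by a direct count rather than by citing Proposition~\ref{prop:Rmaxequiv1bench} there; this is in fact prudent, since for $h=4$ the $2$-full $4\times 4$ cycle has only $8$ filled cells in its top three rows, so that proposition is delicate at this base case and your separate argument closes it cleanly.
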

\begin{proof}
Suppose that there is a $2\times 2$ square at a top corner of $W$. Then $|W\DI{1,3}|_\o\leq 8$  in contradiction with Proposition \ref{prop:Rmaxequiv1bench}. 
If there is a $2\times 2$ square at the top center of $W$, then $|W\DI{1,3}|_\o \leq 6$  in contradiction with Proposition \ref{prop:Rmaxequiv1bench}. 
\end{proof}

\begin{proposition}\label{prop:2x2int} Let $k\geq 2,\; h=3k+1$ and $W\in \MWordsdhwvarshort{2}{h}{4}$. Then
\begin{align*}
a) \; k\geq 3 &\Rightarrow \nexists \; 2\times 2 \mbox{ inner factor of $W$ of area $4$ in columns } W^t\DI{2,3},\\
b)\; k\geq 4 &\Rightarrow \nexists \;2\times 2 \mbox{ inner factor  of $W$ of area $4$ in columns } W^t\DI{1,2}.
\end{align*}
\end{proposition}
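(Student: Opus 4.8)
The plan is to argue by contradiction in each part, using the degree-$2$ constraint locally around the hypothetical $2\times 2$ square to pin down the neighbouring rows, and then contradicting the structural facts already established for $2$-full $h\times 4$ words with $h\equiv_3 1$: the boundary-row estimates (Propositions~\ref{prop:Rmaxn(r1)} and~\ref{prop:Rmaxequiv1bench}), the interior row bounds (Proposition~\ref{propleq3geq2}), the ban on light interior $3\times 4$ factors (Proposition~\ref{prop:n=9int}(b)), and the exact interior row distribution (Proposition~\ref{prop:h=3k+1n(ri)}). In particular, applying Proposition~\ref{prop:h=3k+1n(ri)} both to $W$ and to its vertical mirror, for $W\in\MWordsdhwvarshort{2}{h}{4}$ with $h=3k+1$ and every index $j$ with $3\le j\le 3k-1$ one has $|W[j]|_\o=2$ if $j\equiv_3 1$ and $|W[j]|_\o=3$ otherwise; I will call this the \emph{interior distribution}.

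\textbf{Part (a).} Suppose $W[i,2]=W[i,3]=W[i+1,2]=W[i+1,3]=\o$ with $1<i$, $i+1<h$. Since $W[i,2]$ already has the filled neighbours $W[i,3]$ and $W[i+1,2]$, bounding its degree by $2$ forces $W[i,1]=W[i-1,2]=\e$; treating $W[i,3],W[i+1,2],W[i+1,3]$ the same way gives $W[i]=W[i+1]=(\e,\o,\o,\e)$ and $W[i-1,2]=W[i-1,3]=W[i+2,2]=W[i+2,3]=\e$. Hence the $3\times 4$ factor $W\DI{i-1,i+1}$ has area at most $2+2+2=6$. If $i=2$ this contradicts Proposition~\ref{prop:Rmaxequiv1bench} (the top $3\times 4$ factor has $9$ filled cells); if $3\le i\le h-2$ the factor is interior and area $\le 7$ contradicts Proposition~\ref{prop:n=9int}(b). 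This proves (a).

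\textbf{Part (b): reduction.} By the left--right symmetry it suffices to treat columns $1$ and $2$, so suppose $W[i,1]=W[i,2]=W[i+1,1]=W[i+1,2]=\o$ with $1<i$, $i+1<h$, $k\ge 4$. The same degree analysis yields $W[i]=W[i+1]=(\o,\o,\e,\star)$ and $W[i-1,1]=W[i-1,2]=W[i+2,1]=W[i+2,2]=\e$. If $i=2$ then $|W[1]|_\o\le 2$ and if $i=h-2$ then $|W[h]|_\o\le 2$, each contradicting Proposition~\ref{prop:Rmaxn(r1)} (and its vertical mirror); so $3\le i\le h-3$. Then $W[i-1],W[i+2]$ are interior rows, so by Proposition~\ref{propleq3geq2} they have exactly $2$ filled cells, forcing $W[i-1]=W[i+2]=(\e,\e,\o,\o)$. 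If $W[i,4]=\e$ or $W[i+1,4]=\e$, then one of $W[i],W[i+1]$ has only $2$ filled cells, so one of the interior $3\times 4$ factors $W\DI{i-1,i+1}$, $W\DI{i,i+2}$ has area at most $2+2+3=7$, contradicting Proposition~\ref{prop:n=9int}(b).

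\textbf{Part (b): the remaining case $W[i,4]=W[i+1,4]=\o$ (the main obstacle).} Now $W[i]=W[i+1]=(\o,\o,\e,\o)$, and bounding the degrees of $W[i-1,4]$ and $W[i+2,4]$ forces $W[i-2,4]=W[i+3,4]=\e$, so column $4$ carries a solitary $4$-pillar on rows $i-1,\dots,i+2$. First dispose of $i=3$: then $|W\DI{1,2}|_\o\le|W[1]|_\o+2\le 6$ forces $W[1]=(\o,\o,\o,\o)$ by Proposition~\ref{prop:Rmaxn(r1)}, whence $W[2,4]$ has the three filled neighbours $W[1,4],W[3,4],W[2,3]$, impossible. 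For $4\le i\le h-3$, the interior distribution applied to $W[i-1]$ (which has $2$ filled cells and index $i-1$ in $[3,3k-1]$) forces $i-1\equiv_3 1$, so $i\equiv_3 2$ and hence $5\le i\le h-5$. The key step is then to propagate the degree constraints one further row: from $W[i+3,4]=\e$ and the interior distribution, $W[i+3]$ has $3$ filled cells, so $W[i+3]=(\o,\o,\o,\e)$; bounding the degrees of $W[i+3,2]$ and $W[i+3,3]$ gives $W[i+4,2]=W[i+4,3]=\e$, so $|W[i+4]|_\o\le 2$; symmetrically $W[i-2]=(\o,\o,\o,\e)$ and $|W[i-3]|_\o\le 2$. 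Since $i+4\equiv_3 0$ and $i-3\equiv_3 2$, whichever of $i+4,\ i-3$ lies in $[3,3k-1]$ must carry $3$ filled cells, a contradiction. For $i\le h-6$ the downward chain applies (reaching row $i+4\le h-2=3k-1$), and for the single remaining value $i=h-5$ the upward chain applies (reaching row $i-3=h-8\ge 5$, which is legitimate precisely because $k\ge 4$ gives $h\ge 13$). This exhausts all cases. The technical heart is this last propagation: one must check carefully that the degree bounds leave no freedom in rows $i\pm 2,\ i\pm 3$ and that the row finally reached still lies in the range controlled by Proposition~\ref{prop:h=3k+1n(ri)}; the hypothesis $k\ge 4$ is exactly what guarantees this.
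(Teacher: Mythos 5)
Your proof is correct and follows essentially the same strategy as the paper's: use the degree-$2$ constraint to force the rows around the hypothetical $2\times 2$ square, then contradict the established area and row-distribution facts for $2$-full $h\times 4$ words (the paper contradicts Proposition~\ref{prop:h=3k+1n(ri)} via the $4\times 4$ window $W\DI{i-1,i+2}$ and then rows $i+3,i+4$, where you use the $3\times 4$ windows with Propositions~\ref{prop:Rmaxequiv1bench} and~\ref{prop:n=9int} before propagating to rows $i\pm 3,\ i\pm 4$). Your version is somewhat more careful than the paper's in tracking which indices actually lie in the range controlled by Proposition~\ref{prop:h=3k+1n(ri)} and in handling the positions near the top and bottom, which is where the hypothesis $k\ge 4$ is genuinely needed.
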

\begin{proof}
a) Suppose that there is a $2\times 2$ factor at the intersection of columns $W^t\DI{2,3}$ and rows $W\DI{i,i+1}, 1<i<h-1$. Then $|W\DI{i-1,i+2}|_\o \leq 8$ in contradiction with Proposition \ref{prop:h=3k+1n(ri)}. \\
b)  Suppose that $k\geq 4$ and  there is a $2\times 2$ factor at the intersection of  columns  $W^t\DI{1,2}$ 
and rows $W\DI{i,i+1}, 1<i<h-1$. Then $|W\DI{i-1,i+2}|_\o \leq 10$ and in fact $|W\DI{i-1,i+2}|_\o =10$ so that  $e(W\DI{i-1,i+2})=-2/3$. But then $|W[i+2]|_\o=2$,  $|W[i+3]|_\o=3$ and  $|W[i+4]|_\o\leq 2$ (Figure \ref{hx4_2x2}) in contradiction with Proposition
\ref{prop:h=3k+1n(ri)}.
\begin{figure*}[h!]
\centering
\includegraphics[height=0.5in]{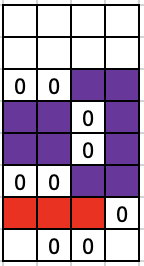}
\caption{$2\times 2$ inner square}
\label{hx4_2x2}
\end{figure*}
\end{proof}

\begin{proposition}\label{prop:4xhbench} Let $k\geq 2$,   $h=3k+1$ and $W\in \MWordsdhwvarshort{2}{h}{4}$. Then for some $t_1,t_2\geq 3$, $W$ contains at least one East $t_1$-bench with its seat on column $W^t[1]$  and one West $t_2$-bench with its seat on column $W^t[4]$. 
\end{proposition}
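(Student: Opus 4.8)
The plan is to use the left--right reflection, under which the class $\MWordsdhwvarshort{2}{h}{4}$ is closed and which carries an East bench with seat on $W^t[1]$ to a West bench with seat on $W^t[4]$; thus it suffices to prove that $W$ has an East bench with seat on $W^t[1]$, the West bench then following by applying this to the reflection of $W$. I will lean on the very rigid shape of $W$ supplied by the preceding results: by Proposition~\ref{prop:Rmaxequiv1bench} the three top rows of $W$ sum to $9$ and carry a South bench with seat $W[1]$, symmetrically the three bottom rows sum to $9$ and carry a North bench with seat $W[h]$, and by Proposition~\ref{prop:h=3k+1n(ri)} every interior block $B_t=W\DI{3t+1,3t+3}$ with $1\le t\le k-1$ has row sizes $(2,3,3)$.

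The key step is a normal form for the first column: $W^t[1]$ is a disjoint union of maximal vertical pillars, each of length at least $3$, and $W^t[2]$ is empty along the strict interior of every one of them. The emptiness along a pillar's interior is immediate --- an interior cell $W[j,1]$ of a pillar already has both of its vertical neighbours filled, so $W[j,2]=\e$ by the degree bound. For the shape, one shows that in each interior block $B_t$ the column-$1$ entry can equal $\e$ only in the first (size-$2$) row of the block: if a size-$3$ row of $B_t$ had $\e$ in column $1$, its three cells would all lie in columns $2,3,4$, and combining this with the adjacent size-$2$ row, with the other size-$3$ row of $B_t$, and with the degree bound forces either a filled $2\times 2$ square in columns $W^t\DI{2,3}$ (excluded by Propositions~\ref{prop:2x2bottom} and~\ref{prop:2x2int}) or a $3\times 4$ interior factor of area $9$ or $\le 7$ (excluded by Proposition~\ref{prop:n=9int}); a similar argument shows moreover that no two vertically consecutive blocks are both empty in column $1$ in their size-$2$ rows, so that every column-$1$ pillar has length $\ge 3$. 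Together with the South and North bench shapes at the two ends, this establishes the normal form.

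It remains to single out a column-$1$ pillar $P$ that is an East bench, i.e.\ such that the cell of column $2$ just east of each extreme cell of $P$ is filled. Since $G[W]$ has maximum degree $2$ and the only possibly filled neighbours of an extreme cell of $P$ are the adjacent cell of $P$ in column $1$ and the cell of column $2$ east of it, such an extreme cell has degree $2$ when that column-$2$ cell is filled and degree $1$ otherwise, while every interior cell of $P$ has degree $2$; hence $P$ is an East bench precisely when neither of its extreme cells is a degree-$1$ (chain-end) vertex of $G[W]$. The top extreme cell of the topmost pillar is backed by a filled cell of column $2$, since that cell is forced by the South bench at the top of $W$; symmetrically, the bottom extreme cell of the bottommost pillar is backed by a filled cell of column $2$, forced by the North bench. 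If $W^t[1]$ is a single pillar we are done; otherwise I would run along the interior blocks that are empty in column $1$ in their size-$2$ rows and argue, case by case on the admissible $(2,3,3)$-blocks and their vertical adjacencies, that the degree-$1$ vertices of $G[W]$ cannot occupy an extreme cell of every maximal pillar of column $1$, leaving at least one pillar that is an East bench. The small cases $k=2,3$, where Proposition~\ref{prop:2x2int}(b) is vacuous, are settled directly from the short list of admissible words, equivalently from the explicit maximal words of Lemma~\ref{lem:4xh}. Reflecting then yields the West $t_2$-bench with seat on $W^t[4]$.

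The main obstacle is the bookkeeping, in the spirit of the rest of this section: the column-$1$ normal form requires enumerating the admissible $(2,3,3)$-blocks and the admissible ways of stacking them (subject to the degree bound and the forbidden $2\times 2$ and $3\times 4$ configurations), and the final step requires tracking, through that stacking, which ``turning'' cells can coincide with a chain end of $G[W]$ and checking that they cannot exhaust every column-$1$ pillar. I expect an argument of roughly the same length and flavour as the proof of the width-$3$ analogue, Proposition~\ref{prop3xhmax_bench1}, together with the block analyses of Propositions~\ref{prop:h=3k+1n(ri)}--\ref{prop:2x2int}.
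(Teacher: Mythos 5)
Your reduction by left--right reflection is fine (and is what the paper implicitly does: its proof only derives a contradiction from the absence of an East bench with seat on $W^t[1]$), and your structural observations are individually correct: an interior cell of a column-$1$ pillar forces the adjacent column-$2$ cell to be empty by the degree bound, and a column-$1$ pillar of length at least $3$ is the seat of an East bench exactly when both of its extreme cells have their east neighbours filled. But the proof as written stops exactly where the real work begins. Both of your load-bearing claims --- the ``normal form'' (every maximal pillar of $W^t[1]$ has length at least $3$, column $1$ can be empty only in the size-$2$ row of a $(2,3,3)$-block, no two consecutive blocks are empty there) and, above all, the final assertion that the chain-ends of $G[W]$ cannot occupy an extreme cell of every column-$1$ pillar --- are announced (``one shows\dots'', ``I would run along the interior blocks and argue, case by case\dots'') but never carried out, and the second one \emph{is} the content of the proposition. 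A degree-counting shortcut (a snake has only two chain-ends, so at most two pillars can fail to be benches) is not available either: at this stage you do not know that a $2$-full word induces a single chain, and the number of pillars could a priori be as small as one or two.

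The paper closes this gap with a short forcing argument rather than a classification of column $1$: assuming no East bench with seat on $W^t[1]$, the top $3\times 4$ factor (which has $9$ cells and a South bench by Proposition~\ref{prop:Rmaxequiv1bench}) must carry a West $3$-bench on $W^t[4]$; then, using $|W[4]|_\o = 2$ and the uniqueness up to symmetry of two adjacent rows of $3$ cells each, only one of the three candidates for $W[4]$ survives (one is incompatible with the forced $(3,3)$ pair, one immediately creates an East bench), and the surviving choice propagates downward as a unique periodic $3\times 4$ tile which is forced to terminate, at the bottom of $W$, in exactly the East bench that was assumed absent. If you keep your pillar-based formulation you will still have to run essentially this same enumeration of admissible $(2,3,3)$-blocks and their stackings; the reflection reduction and the bench characterisation by themselves do not shorten it.
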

\begin{proof}
Assume that $W$ contains no East $t_1$-bench with its seat on column $W^t[1]$.  Then, since $W\DI{1,3}$ contains either a West $3$-bench or an East $3$-bench, there is a West $3$-bench with its seat on 
$W^t[4]$ (Figure \ref{4xhbenchtop}). 
Knowing that $|W[4]|_\o =2$ and that there is, up to symmetry, a unique disposition of two adjacent rows of $3$ cells each  which appear in Figure \ref{4x2(3,3)}, we have three choices for $W[4]$ : one is discarded  (Figure \ref{4x4r_4imp} because it cannot be adjacent to Figure \ref{4x2(3,3)}. 
The second is also discarded because it leads to an East bench with its seat in $W^t[1]$. 
Only the third choice remains (Figure \ref{4xhbenchtop}) and this choice must be repeated periodically because it is always adjacent on its bottom to a row of unique degree distribution. But this unique $3\times 4$ tile that is stacked on the bottom of itself to avoid an East bench with its seat on $W^t[1]$ is sentenced to terminate with an East bench with its seat on $W^t[1]$ at the bottom of $W$ as shown in Figure \ref{4xhbenchtop}.
\begin{figure*}[h!]
\begin{subfigure}[t]{0.2\textwidth}
\centering
\includegraphics[height=0.16in]{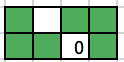}
\caption{ }
\label{4x2(3,3)}
\end{subfigure}%
~ 
\begin{subfigure}[t]{0.24\textwidth}
\centering
\includegraphics[height=0.32in]{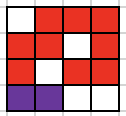}
\caption{ $W[4]$ impossible}
\label{4x4r_4imp}
\end{subfigure}%
~ 
\begin{subfigure}[t]{0.24\textwidth}
\centering
\includegraphics[height=0.48in]{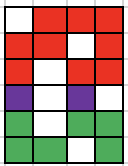}
\caption{  bench on the left}
\label{4x6bench}
\end{subfigure}%
~ 
\begin{subfigure}[t]{0.24\textwidth}
\centering
\includegraphics[height=.88in]{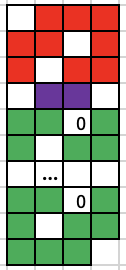}
\caption{  bench at the top}
\label{4xhbenchtop}
\end{subfigure}%
\caption{}
\label{4xhbench}
\end{figure*}
\end{proof}

\paragraph{\textbf{Proof of Corollary \ref{cor4xh_n2(c1)}}}

\begin{proof}
This is a consequence of he fact that when reading rows of $W$ starting from the top and 
$$ W=W_1W_2\cdots W_k$$
is factorized as product of $k-1$ $4\times 3$ rectangles plus one $4\times 4$ factor at the bottom of $W$,
then from Proposition \ref{prop:h=3k+1n(ri)} and Figure \ref{4x2(3,3)}, in each intersection  $W^t[1]\cap W_i$  there is  at least one cell of degree $2$. So there are at least $k$ cells of of degree $2$ in $W^t[1]$.  Moreover since, from Proposition \ref{prop:4xhbench}, there is at least one $t$-bench, $t\geq 3$ with its seat in $W^t[1]$,  one of the rectangles $W_i$ must contain $3$ cells of degree $2$. This proves the claim. 
\end{proof}

\subsubsection{Properties of words $W\in \MWordsdhwvarshort{2}{h}{5}$.}

\begin{proposition}\label{prop:5xhunicity}
For integers $k\geq2$,	2-full words $W \in \Wordsdhwvarshort{2}{(3k+2)}{5}$ are unique up to symmetry.
\end{proposition}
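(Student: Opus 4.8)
The plan is to pin $W$ down by working inward from its two ends, feeding the structural corollaries already proved for width-$5$ words. Since $W$ is $2$-full, $e(W)=e_{max}(3k+2,5)=\emax(3k+2,5)=4/3$ by Corollary~\ref{cor:w=5}, and every proper factor of $W$ -- necessarily of width $5$ with fewer rows -- has excess at most its own maximal excess, again by Corollary~\ref{cor:w=5} (and by Lemma~\ref{lem:basecases} for factors with at most four rows). Thus $W$ meets the hypotheses of Corollaries~\ref{cor:5xhTile} and~\ref{cor:5xhTileH}: we get $e(W\DI{1,5})=1/3$, $e(W\DI{3k,3k+2})=1$, and, combining parts (i) and (iii) of Corollary~\ref{cor:5xhTile} with Proposition~\ref{prop:5xhmin2factor}, that every interior tile of $W$ has excess exactly $0$. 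Running the same statements on the vertical mirror of $W$, which is again $2$-full, shows in addition that $e(W\DI{1,3})=1$. Hence both end-triples $W\DI{1,3}$ and $W\DI{3k,3k+2}$ are $3\times 5$ factors of excess $1$ occurring at an end of a $2$-full word, so by Corollary~\ref{cor:5xhTileH} each equals, up to symmetry, the single factor $F$ of Figure~\ref{subfig:5x3H}; applying a global symmetry we fix $W\DI{3k,3k+2}=F$ in one orientation.

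Next I would propagate $F$ upward row by row. The degree-$2$ cells in the top row of $F$ immediately force two entries of $W[3k-1]$ to be empty, so $|W[3k-1]|_\o\leq 3$, and -- since the last interior tile $W\DI{3k-3,3k-1}$ has exactly $10$ filled cells, two consecutive full rows are impossible, and no $3\times5$ factor has excess $-2$ (Proposition~\ref{prop:5xhmin2factor}) -- a bounded check on the shapes of $W[3k-1]$, then $W[3k-2]$, then $W[3k-3]$ leaves exactly one possibility $U_0$ for that tile; the same check also forces the row $W[3k-4]$ just above $U_0$ into a shape compatible both with $U_0$ and directly with the top row of $F$. Then $W':=W\DI{1,3k-4}\vcat W\DI{3k,3k+2}\in\Wordsdhwvarshort{2}{(3(k-1)+2)}{5}$ (no cell acquires degree $>2$ across the new junction), and $e(W')=e(W)-e(U_0)=4/3$, so $W'$ is $2$-full of height $3(k-1)+2$. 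By the induction hypothesis $W'$ is the periodically extended word of Figure~\ref{subfig:5x8}, and since $W$ is obtained from $W'$ by reinserting the repeating block $U_0$ directly above the bottom copy of $F$, $W$ is determined up to symmetry. The base case $k=2$ is the uniqueness of the $28$-cell word of Figure~\ref{subfig:5x8}, checked directly; one may also dispatch $k=3,4,5$ directly and start the induction at $k\geq 6$, as with the earlier width-$5$ lemmas.

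The only laborious ingredient is the bounded case analysis of the second paragraph: the degree bound never fixes an entire row on its own, so at each step one must combine it with the global bookkeeping -- each interior tile has excess exactly $0$, and the top $5\times5$ block has excess exactly $1/3$ -- and with the catalogue of admissible short factors already assembled in Figures~\ref{fig:5x3} and~\ref{fig:5x3probleme} and in the proof of Corollary~\ref{cor:5xhTile}, to check that exactly one continuation survives. An essentially equivalent presentation, bypassing the tile-removal reduction, is to prove by downward induction on the row index that $W[i]$ is forced for $i=3k+2,3k+1,\dots,2$ -- yielding the period-$3$ row pattern of Figure~\ref{subfig:5x8} -- and then note that $|W\DI{1,5}|_\o=17$ leaves $(\o,\e,\o,\o,\o)$ as the only legal first row. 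Either way the proposition reduces to the established corollaries plus a finite verification, so I expect the main difficulty to be organizing that verification without gaps rather than requiring any new idea.
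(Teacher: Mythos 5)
Your plan follows the paper's proof in all essentials: both arguments pin down the extremal $3\times 5$ factors via Corollaries~\ref{cor:5xhTile} and~\ref{cor:5xhTileH} (applied to $W$ and to its mirror), force the remaining rows one at a time by combining the degree bound with the excess bookkeeping (every interior tile has excess exactly $0$), and conclude from the resulting period-$3$ structure, so your induction-on-$k$ packaging via removal of one interior block is only a cosmetic variant of the paper's observation that $W[6]$ must replicate $W[3]$. Be aware, however, that the ``bounded check'' you defer is where essentially the entire written proof lives --- the elimination of the candidate shapes for $W[4]$, $W[5]$ and $W\DI{5,6}$, and of the degree-$3$ configuration at the far end --- so what you have is a correct and faithful plan rather than a complete proof.
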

\begin{proof}
We know from Corollary \ref{cor:5xhTile} that
$$e(W\DI{4,6}) = e(W\DI{7,9}) = \dots = e(W\DI{3(k-2)+1,3(k-1)}) = 0.$$
and
$$|W\DI{4,6}|_{\o} = |W\DI{7,9}|_{\o} = \dots = |W\DI{3(k-2)+1,3(k-1)}|_{\o} = 10 $$
From Corollary \ref{cor:5xhTileH}, 
we also know the exact configuration of $W\DI{1,3}$. In particular, we know $W[3]$ has two  cells of degree $2$
either on its 1\textsuperscript{st} and 4\textsuperscript{th} cells or on its 2\textsuperscript{nd} and 5\textsuperscript{th} cells. Assume without loss of generality that it is on its 1\textsuperscript{st} and 4\textsuperscript{th} cells (Figure \ref{fig:5xhnbcell}). This means that $|W[4]|_{\o} \leq 3$. But $|W[4]|_{\o}<2$ implies $|W\DI{5,6}|>8$ which is impossible. If $|W[4]|_{\o} =2$ then $|W\DI{5,6}|_{\o} = 8$. Figure \ref{fig:5xh2cellsr4} shows that no such configuration is possible. So we must have $|W[4]|_{\o} = 3$ and $W[4]$ must have a degree 2 cell on it's 2\textsuperscript{nd} cell (Figure \ref{subfig:5xhfirst4}). This means $|W[5]|_{\o} \leq 4$. If $|W[5]|_{\o} =4$, then $W[5]$ has 3 degree 2 cells which implies $|W[6]|_{\o} \leq 2$ however in order to have $|W\DI{4,6}|_{\o} = 10$, we need $|W[6]|_{\o}=3$ so  $|W[5]|_{\o} =4$ is impossible. If $|W[5]|_{\o}<2$ then  $|W[6]|_{\o}>5$ which is obviously impossible. If $|W[5]|_{\o}=2$ then  $|W[6]|_{\o}=5$ which is only possible if the 2 $\o$ cells of $W[5]$ are at both ends (Figure \ref{subfig:5xh5bench}) so that $W[6]$ has 5 degree 2 cells which forces $W[7]$ to be empty. If $7 = 3(k-1)+1$, then from Corollary \ref{cor:5xhTile} we know that $e(W\DI{7,11}) = 1/3$ which is impossible if $W[7]$ is empty. If $7 < 3(k-1)+1$, then $e(W\DI{7,9}) = 0$ is  impossible if $W[7]$ is empty. Thus we cannot have $|W[5]|_{\o}=2$. This means $|W[5]|_{\o}=3$ which in part 
means that $|W[6]|_{\o}=4$. Figure \ref{fig:5xhtuiles} shows all possible configurations for $W\DI{5,6}$. The configuration on Figure \ref{subfig:5xhtuilesA} is rejected as it forces $W[7]$ to have at most 1 $\o$ cell which is impossible. Indeed, if $7 = 3(k-1)+1$, then from Corollary \ref{cor:5xhTile} we know $e(W\DI{7,11}) = 1/3$ which is impossible if $|W[7]|_{\o}\leq 1$  and if $7 < 3(k-1)+1$, then $e(W\DI{7,9}) = 0$ which is also impossible if $|W[7]|_{\o}\leq 1$. The configuration on Figure \ref{subfig:5xhtuilesB} forces $W[6]$ to contain 3 degree 2 cells. This implies that $|W[7]|_{\o}\leq 2$. If $7 = 3(k-1)+1$, then we know from Corollary \ref{cor:5xhTile} that $e(W\DI{7,11}) = 1/3$ and from Corollary \ref{cor:5xhTileH}, we know the configuration of $W\DI{9,11}$. This configuration forces 6 cells of $W\DI{8,11}$ to be empty. We also know 3 cells of $W[7]$ are forced to be empty as well. So 9 cells of $W\DI{7,11}$ are forced to be empty in contradiction with $e(W\DI{7,11}) = 1/3$ (Figure \ref{subfig:5xhtuilesB1}). If $7 < 3(k-1)+1$, then we have $e(W\DI{7,9}) = 0$. This  forces $|W[7]|_{\o}=2$ but with a degree 2 cell at the extremity (Figure \ref{subfig:5xhtuilesB2}) This leaves the configuration on Figure \ref{subfig:5xhtuilesC} as the only valid one.

\begin{figure}
\centering
\tikz\node[yscale=-1,inner sep=0,outer sep=0]{\input{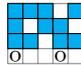}};
\caption{The unique configuration  of $W\DI{1,3}$ up to symmetry. "o"'s represent empty forced cells of $W[4]$.}
\label{fig:5xhnbcell}
\end{figure}

\begin{figure}
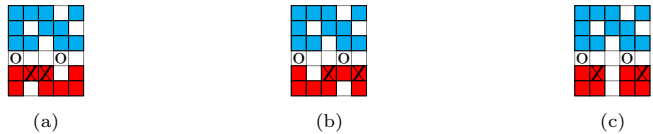

\centering
\begin{subfigure}[t]{0.3\textwidth}\centering
\tikz\node[yscale=-1,inner sep=0,outer sep=0]{\input{tikz/5xh2cellsr4A.tikz}};
\caption{}
\label{subfig:5xh2cellsr4A}
\end{subfigure}
\begin{subfigure}[t]{0.3\textwidth}\centering
\tikz\node[yscale=-1,inner sep=0,outer sep=0]{\input{tikz/5xh2cellsr4B.tikz}};
\caption{}
\label{subfig:5xh2cellsr4B}
\end{subfigure}
\begin{subfigure}[t]{0.3\textwidth}\centering
\tikz\node[yscale=-1,inner sep=0,outer sep=0]{\input{tikz/5xh2cellsr4C.tikz}};
\caption{}
\label{subfig:5xh2cellsr4C}
\end{subfigure}
\caption{ Joint configurations of $W[4]$ and $W[5]$:  Two mandatory empty cells in $W[4]$ plus two other empty cells in $W[4]$ forced by two cells of degree two in in $W[5]$ marked with "X"'}
\label{fig:5xh2cellsr4}
\end{figure}

\begin{figure}
\centering
\begin{subfigure}[t]{0.47\textwidth}\centering
\tikz\node[yscale=-1,inner sep=0,outer sep=0]{\input{tikz/5xhfirst4.tikz}};
\caption{The unique configuration  of $W\DI{1,4}$ up to symmetry.}
\label{subfig:5xhfirst4}
\end{subfigure}
\begin{subfigure}[t]{0.47\textwidth}\centering
\tikz\node[yscale=-1,inner sep=0,outer sep=0]{\input{tikz/5xh5bench.tikz}};
\caption{Unique configuration when $|W[5]|_{\o} = 2$. "o"'s mark mandatory empty cells.}
\label{subfig:5xh5bench}
\end{subfigure}
\caption{}
\label{fig:5xhfirstfew}
\end{figure}

\begin{figure}
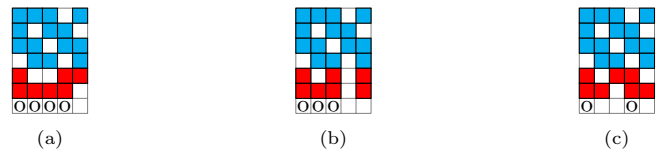

\centering
\begin{subfigure}[t]{0.3\textwidth}\centering
\tikz\node[yscale=-1,inner sep=0,outer sep=0]{\input{tikz/5xhtuilesA.tikz}};
\caption{}
\label{subfig:5xhtuilesA}
\end{subfigure}
\begin{subfigure}[t]{0.3\textwidth}\centering
\tikz\node[yscale=-1,inner sep=0,outer sep=0]{\input{tikz/5xhtuilesB.tikz}};
\caption{}
\label{subfig:5xhtuilesB}
\end{subfigure}
\begin{subfigure}[t]{0.3\textwidth}\centering
\tikz\node[yscale=-1,inner sep=0,outer sep=0]{\input{tikz/5xhtuilesC.tikz}};
\caption{}
\label{subfig:5xhtuilesC}
\end{subfigure}
\caption{All configurations of $W\DI{1,6}$. Cells of $W\DI{5,6}$ are in red. "o"'s mark mandatory empty cells.}
\label{fig:5xhtuiles}
\end{figure}

\begin{figure}
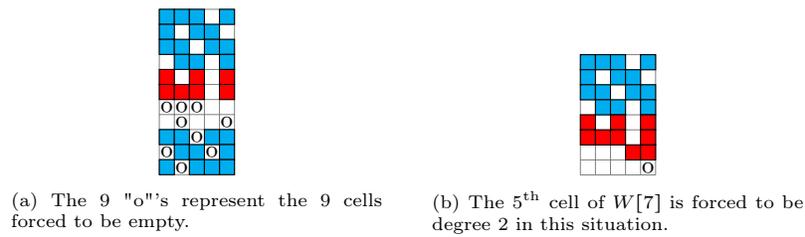

\centering
\begin{subfigure}[t]{0.45\textwidth}\centering
\tikz\node[yscale=-1,inner sep=0,outer sep=0]{\input{tikz/5xhtuilesB1.tikz}};
\caption{The 9 "o"'s represent the 9 cells forced to be empty.}
\label{subfig:5xhtuilesB1}
\end{subfigure}
\begin{subfigure}[t]{0.45\textwidth}\centering
\tikz\node[yscale=-1,inner sep=0,outer sep=0]{\input{tikz/5xhtuilesB2.tikz}};
\caption{The 5\textsuperscript{th} cell of $W[7]$ is forced to be degree 2 in this situation.}
\label{subfig:5xhtuilesB2}
\end{subfigure}
\caption{The two possible outcomes when $W\DI{5,6}$ are of the configuration in Figure \ref{subfig:5xhtuilesB}.}
\label{fig:5xhtuilesB}
\end{figure}
Now observe that $W[6]$ is forced to have the exact same configuration as $W[3]$ with both degree 2 cells at te exact same place. Thus the above arguments may be repeated  to show that if $W\DI{7,9}$ is an interior tile, then it has the same configuration as $W\DI{4,6}$ and so on for all interior factors
$$W\DI{4,6}, W\DI{7,9}, \dots,W\DI{3(k-2)+1,3(k-1)}.$$
Finally, we have that $W\DI{3(k-1)-1,3(k-1)}$ has the same configuration as $W\DI{2,3}$ and from Corollary \ref{cor:5xhTileH}, there are only two possible situations for $W\DI{3(k-1)+1,3k+2}$ (Figure \ref{fig:5xhfin}). The configuration in Figure \ref{subfig:5xhfinA} is rejected as it contains $\o$ cells of degree 3. This means the configuration in Figure \ref{subfig:5xhfinB} is forced and $W$ is  unique up to vertical and horizontal symmetry.

\begin{figure}
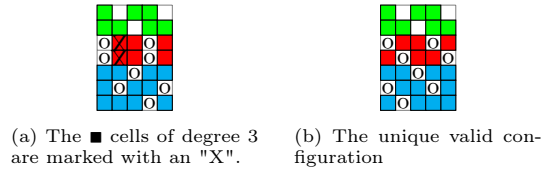

\centering
\begin{subfigure}[t]{0.3\textwidth}\centering
\tikz\node[yscale=-1,inner sep=0,outer sep=0]{\input{tikz/5xhfinA.tikz}};
\caption{The $\o$ cells of degree 3 are marked with an "X".}
\label{subfig:5xhfinA}
\end{subfigure}
\begin{subfigure}[t]{0.3\textwidth}\centering
\tikz\node[yscale=-1,inner sep=0,outer sep=0]{\input{tikz/5xhfinB.tikz}};
\caption{The unique valid configuration}
\label{subfig:5xhfinB}
\end{subfigure}
\caption{The two possible situations for $W\DI{3(k-1)+1,3k+2}$. In green is the factor $W\DI{3(k-1)-1,3(k-1)}$ that is forced to have the same configuration as $W\DI{2,3}$. In cyan is the factor $W\DI{3k,3k+2}$ which can only be in those two configurations from Corollary \ref{cor:5xhTileH}. "o"'s mark forced empty  cells. In red is the factor $W\DI{3(k-1)+1,3(k-1)+2}$ in which all cells that are not forced to be empty are "$\o$" in order to respect $e(W\DI{3(k-1)+1,3k+2})=1/3$.}
\label{fig:5xhfin}
\end{figure}
\end{proof}

\paragraph{\textbf{Proof of Corollary \ref{cor:5xhsature}}}

\begin{proof}
The unique 2-full $W \in \Wordsdhwvarshort{2}{8}{5}$ is represented on Figure \ref{fig:5x8unique}.  We can see that $W^t[5]$'s 3 bottom cells are degree 2. It also has one degree 2 cell in the factor in red. Here, $k=2$ so $W^t[5]$ has $4=k+2$ degree 2 cells. As we get $W_1 \in \Wordsdhwvarshort{2}{3(k+1)+2}{5}$ from $W_2 \in \Wordsdhwvarshort{2}{(3k+2)}{5}$ by stacking a copy of the red factor on top itself, $W_1^t[5]$ always has exactly 1 more degree 2 cell than $W_2^t[5]$, the number of degree 2 cells in $W' \in \Wordsdhwvarshort{2}{(3k+2)}{5}$ being equal to $k+2$ always stays true. The argument is entirely symmetric for $W^t[1]$.
\end{proof}

\begin{figure}
\centering
\input{tikz/5x8.tikz}
\caption{The unique 2-full $W \in \MWordsdhwvarshort{2}{8}{5}$.}
\label{fig:5x8unique}
\end{figure}

\subsection{Proof of Case 6. $(3k_1+1 \times 3k_2+2)$}

We need the following results.

\begin{corollary}\label{corole(ri)<4/3}
Let $W\in \Wordsdhwvarshort{2}{(3k_2+2)}{(3k_1+1)}$ with $e(W)=5/3$.
Then for all $i$ such that $4\leq i \leq h-3$ we have
$$e(W[i])\leq 1/3$$
\end{corollary}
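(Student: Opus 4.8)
The plan is to argue by contradiction on the existence of an interior row of large area. Write $h = 3k_2+2$ and $w = 3k_1+1$. Since $|W[i]|_\o$ is an integer and $e(W[i]) = |W[i]|_\o - 2w/3$, one has $e(W[i])\le 1/3$ if and only if $|W[i]|_\o\le 2k_1+1$. So I would assume, for some $i$ with $4\le i\le h-3$, that $|W[i]|_\o\ge 2k_1+2$, equivalently that $W[i]$ has at most $k_1-1$ empty cells, and derive a contradiction. As everywhere in this section, $W$ is treated as a minimal counterexample to Theorem~\ref{thm:d=2}, so every proper factor $W'$ of $W$ satisfies $e(W')\le\emax(W')$, and I would freely use the already-established values of $\emax$ on sub-rectangles.

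The first step is a degree count bounding the $3\times w$ factor $U := W\DI{i-1,i+1}$. The empty cells of $W[i]$ cut it into $r\le |W[i]|_\e+1 = w-|W[i]|_\o+1\le k_1$ maximal horizontal pillars. Inside a pillar of length $\ell\ge 3$ the $\ell-2$ non-extremal cells have horizontal degree $2$, so the cells of $W[i-1]$ and $W[i+1]$ directly above/below them are empty; an endpoint cell of a pillar of length $\ge 2$ has horizontal degree $1$, so at most one of its two vertical neighbours is filled; the lone cell of a pillar of length $1$ forces nothing and contributes at most $2$ filled cells in its column. Hence a pillar of length $\ell$ accounts for at most $\ell+2$ filled cells of $U$ and an empty column of $W[i]$ for at most $2$, so, using $r\le w-|W[i]|_\o+1$ and $|W[i]|_\o\ge 2k_1+2$,
\[
|U|_\o \le |W[i]|_\o + 2r + 2\bigl(w-|W[i]|_\o\bigr) \le 4w+2-3|W[i]|_\o \le 6k_1 = 2w-2,
\]
and therefore $e(U) = |U|_\o-2w \le -2$.

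Next I would split $W = W\DI{1,i-2}\vcat U\vcat W\DI{i+2,h}$ and treat first the case $5\le i\le h-4$ (which is the relevant range for $k_2\ge 3$; for $k_2=2$ it is empty). Then the outer factors $P=W\DI{1,i-2}$ and $S=W\DI{i+2,h}$ have heights $\ge 3$ summing to $h-3=3k_2-1\equiv_3 2$, so by additivity (Proposition~\ref{prop:additivity}) and $e(U)\le -2$ we get $e(P)+e(S)=e(W)-e(U)\ge 5/3+2=11/3$. On the other hand, for width $w=3k_1+1\ge 7$ one checks $\emax(m,w)=2$ for $m=3$, $\emax(m,w)\le 4/3$ for $m\equiv_3 1$ with $m\ge 4$, $\emax(m,w)=1$ for $m\equiv_3 0$ with $m\ge 6$, and $\emax(m,w)=2/3$ for $m\equiv_3 2$ with $m\ge 5$. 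Running through the only possible residue pairs $(0,2),(2,0),(1,1)$ of the heights of $P$ and $S$ modulo $3$ gives $e(P)+e(S)\le\emax(P)+\emax(S)\le\max\{2+2/3,\,1+2/3,\,4/3+4/3\}=8/3<11/3$, a contradiction.

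The main obstacle is the two endpoint indices $i=4$ and $i=h-3$, which reduce to one another by the top–bottom reflection and which are the only indices left when $k_2=2$; there the outer factor has height $2$ and $\emax(2,w)$ is too large ($\sim w/6$) for the bookkeeping above. For $i=4$ I would instead take $W=W\DI{1,5}\vcat W\DI{6,h}$: since $W\DI{3,5}$ is a factor of $W\DI{1,5}$ of excess $\le -2$ and $\emax(5,w)=2/3$, additivity forces $e(W\DI{1,5})\le 2/3$, hence $e(W\DI{6,h})\ge 1$; when $k_2\ge 3$ the factor $W\DI{6,h}$ has height $3(k_2-1)\equiv_3 0$ with $\emax=1$, so both factors must be $2$-full, in particular $W\DI{1,5}$ is a $2$-full $5\times(3k_1+1)$ word. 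A contradiction should then follow from the structure of $2$-full words of small width, in the spirit of the analysis behind Corollary~\ref{cor:5xhsature}: the degree-$2$ cells forced along row $W[5]$ make $W[6]$ too sparse to be consistent with $W\DI{6,h}$ being $2$-full. The residual case $k_2=2$ ($h=8$) would be closed by directly inspecting $2$-full $2\times(3k_1+1)$ and $3\times(3k_1+1)$ words. I expect essentially all the difficulty to be concentrated in making this last structural clash precise; the rest is the degree count together with additivity and the $\emax$ bookkeeping.
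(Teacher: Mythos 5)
Your main line of argument is correct and is essentially the paper's own: your pillar-by-pillar degree count giving $e(W\DI{i-1,i+1})\le -2$ is exactly what the paper obtains by combining Lemma~\ref{lem:ellc2} with Proposition~\ref{propc1c2c3e(c2)>0} applied to the transposed $3\times(3k_1+1)$ factor, and your subsequent split $W=W\DI{1,i-2}\vcat W\DI{i-1,i+1}\vcat W\DI{i+2,h}$ with the residue-by-residue $\emax$ bookkeeping ($e(W_t)+e(W_b)\ge 11/3$ versus a maximum of $8/3$) is the paper's concluding step verbatim. For $5\le i\le h-4$ your proof is complete and correct.

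The genuine gap is the pair of endpoint indices $i=4$ and $i=h-3$ (which exhaust the statement's range when $k_2=2$). You correctly diagnose why the generic bookkeeping fails there --- the outer factor $W\DI{1,2}$ has height $2$ and $\emax(2,3k_1+1)=(3k_1+1)/6+1/2$ grows linearly in $k_1$, so $e(W_t)+e(W_b)\ge 11/3$ is not immediately absurd --- but you do not close the case, and you say so yourself. Worse, the repair you sketch is misdirected: the chain ``$e(W\DI{1,5})\le 2/3$, hence $e(W\DI{6,h})\ge 1$, hence both factors are $2$-full'' uses only $e(W)=5/3$ and the minimality hypothesis, not the assumption $e(W[4])\ge 4/3$ you are trying to refute, so completing it along those lines would require showing outright that no word of excess $5/3$ exists --- i.e., the whole of Case~6 --- rather than the single row estimate at hand. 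The hypothesis actually buys you $e(W\DI{1,2})=e(W\DI{1,5})-e(W\DI{3,5})\ge 8/3$, which is the constraint one should exploit (a $2\times(3k_1+1)$ word of excess $8/3$ is forced to contain many filled $2\times 2$ blocks, each of which empties the cell of $W[3]$ beneath it); your sketch never extracts this. Moreover, the structural input you invoke is not available: Corollary~\ref{cor:5xhsature} and the unicity results behind it concern $2$-full words of \emph{width} $5$, whereas here you would need the structure of $2$-full words of \emph{height} $5$ and width $3k_1+1$, which the paper establishes only under additional hypotheses on the individual row excesses. To be fair, the paper's own justification (``impossible for any $h(W_t),h(W_b)\ge 3$'') silently excludes exactly these two indices, so you have put your finger on a real soft spot of the argument; but flagging the difficulty and deferring ``essentially all the difficulty'' to an unwritten structural clash is not a proof of the stated range $4\le i\le h-3$.
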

\begin{proof}
Suppose that there exists $4\leq i \leq h-3$ such that $e(W[i])\geq 4/3$.  Then we have  $|W[i]|_\o\geq 2k_1+2,
|W[i]|_\e\leq k_1-1$ and from Lemma \ref{lem:ellc2} we obtain
$$
\ell(\lambda(W[i]))\leq k_1.
$$
Now  factorize $W=W_tW\DI{i-1,i+1}W_b$. From Proposition \ref{propc1c2c3e(c2)>0} we have 
\begin{align*}
e(W\DI{i-1,i+1})\leq 2(-k_1-1/3+k_1-4/3)+4/3=-2,\\
\Rightarrow e(W_t)+e(W_b)\geq 5/3+2
\end{align*}
which is impossible for any $h(W_t),h(W_b)\geq 3$ such that $h(W_t)+h(W_b)\equiv_3 2$.

\end{proof}

\begin{corollary}\label{corolhx3maxhx4}
Let $W\in \Wordsdhwvarshort{2}{h}{4}$ be a $h\times 4$ word such that  $e(W\DI{1,3})=2$. 
Then $|W[4]|_\o\leq\lfloor  \frac{2(h-1)}{3}\rfloor -1$. In particular 
\begin{align}
e(W[4])\leq - 8/3
\end{align}
\end{corollary}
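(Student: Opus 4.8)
The plan is to obtain this as a restatement, in terms of excess, of Corollary~\ref{cor4xhn(c4)}, applied to the decomposition of $W$ into its first three columns and its fourth column.

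First I would reformulate the hypothesis. Let $W^t\DI{1,3}$ be the $3\times h$ word formed by the first three columns of $W$. By Proposition~\ref{prop3xhfssize} a word of dimensions $h\times 3$ has excess $2$ exactly when it is $2$-full, so $e(W^t\DI{1,3})=2$ is equivalent to $W^t\DI{1,3}\in\MWordsdhwvarshort{2}{h}{3}$; this is precisely the hypothesis of Corollary~\ref{cor4xhn(c4)}, so all three of its parts apply and bound from below the number $|W^t[4]|_\e$ of empty cells in the last column of $W$.

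For the first inequality, part~(i) of Corollary~\ref{cor4xhn(c4)} gives $|W^t[4]|_\e\ge\lfloor (h-2)/3\rfloor+3$, hence
\[
  |W^t[4]|_\o \;=\; h-|W^t[4]|_\e \;\le\; h-\Bigl\lfloor\tfrac{h-2}{3}\Bigr\rfloor-3 .
\]
A short check in each congruence class of $h$ modulo $3$ shows $h-\lfloor (h-2)/3\rfloor-3=\lfloor 2(h-1)/3\rfloor-1$, which is the asserted bound.

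For the excess statement, $e(W^t[4])=|W^t[4]|_\o-2h/3$, and here I would use the sharper parts of Corollary~\ref{cor4xhn(c4)}. If $h\equiv_3 1$, part~(ii) gives $|W^t[4]|_\e\ge\lfloor (h-2)/3\rfloor+4=(h+8)/3$, so $|W^t[4]|_\o\le (2h-8)/3$ and $e(W^t[4])\le -8/3$; if $h\equiv_3 2$, part~(iii) gives $|W^t[4]|_\o\le (2h-13)/3$, so $e(W^t[4])\le -13/3\le -8/3$. (For $h\equiv_3 0$ part~(i) alone only yields $e(W^t[4])\le -2$; that congruence does not occur in the place where the corollary is invoked, where $h$ is of the form $3k+1$.) There is no genuine obstacle here: the argument is simply a translation of Corollary~\ref{cor4xhn(c4)} into the language of excess, and the only mild point is keeping track of the congruence class of $h$ when passing from the empty-cell bound to the sharp value $-8/3$.
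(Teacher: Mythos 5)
Your proof is correct and follows the same route as the paper, whose entire proof of this corollary is the single line that it is an immediate consequence of Corollary~\ref{cor4xhn(c4)}: after identifying $e(W\DI{1,3})=2$ with $2$-fullness via Proposition~\ref{prop3xhfssize}, part (i) gives the cardinality bound $\lfloor 2(h-1)/3\rfloor-1$ in every congruence class, and parts (ii) and (iii) give $e(W[4])\le -8/3$ for $h\equiv_3 1$ and $h\equiv_3 2$ respectively. Your caveat that $h\equiv_3 0$ only yields $e(W[4])\le -2$ is a legitimate observation that the paper's one-line proof glosses over; as you note, the corollary is only ever invoked with width of the form $3k_1+1$, so the discrepancy is harmless where it is used.
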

\begin{proof} 
This is an immediate consequence of Corollary \ref{cor4xhn(c4)}.
\end{proof}

\begin{proposition}\label{prop:3x(3k+1)}
Let $W\in \Wordsdhwvarshort{2}{(3k_2+2)}{(3k_1+1)}$
Assume that $e(W) = 5/3$ and that all proper subwords $W'$ of $W$ satisfy $e(W') \leq \emax(W')$.
For $1\leq i\leq 3k_2$,  there is no factor $W\DI{i,i+2}$ of $W$ with $e(W\DI{i,i+2})=2$. 
\end{proposition}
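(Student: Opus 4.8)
The plan is to argue by contradiction. Suppose $e(W\DI{i,i+2})=2$ for some $i$ with $1\le i\le 3k_2$, and set $h:=3k_2+2$. Since the excess is invariant under transposition and $\emax(3,3k_1+1)=2$ (Proposition~\ref{prop3xhfssize}), the factor $W\DI{i,i+2}$ is then a $2$-full word of dimensions $3\times(3k_1+1)$. A vertical flip of $W$ preserves every hypothesis (it fixes the degree bound, $e(W)$, and the excess of each factor) and sends the window at position $i$ to the one at position $3k_2+1-i$, so I may assume $i\le 3$ or $4\le i\le 3k_2-3$. I would use two facts: (a) by Corollary~\ref{corole(ri)<4/3}, $e(W[j])\le 1/3$ for every row index $4\le j\le h-3$; and (b) whenever a $3$-row factor $W\DI{a,a+2}$ of $W$ is $2$-full, each existing neighbour $W[a-1]$, $W[a+3]$ has excess $\le -8/3$. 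For (b): transposing the $4$-row window around $W\DI{a,a+2}$ turns $W\DI{a,a+2}$ into the first three columns of a $(3k_1+1)\times 4$ word, and since $3k_1+1\equiv_3 1$, Corollary~\ref{cor4xhn(c4)}(ii) (and its horizontal mirror) forces at least $\lfloor(3k_1-1)/3\rfloor+4=k_1+3$ empty cells in the fourth column, i.e.\ in that neighbouring row, whence its excess is at most $(2k_1-2)-(2/3)(3k_1+1)=-8/3$. Additivity of the excess (Proposition~\ref{prop:additivity}) is used freely, as is the hypothesis that every proper subword $W'$ of $W$ obeys $e(W')\le\emax(W')$; all the auxiliary factors below are proper since $k_1,k_2\ge 2$ gives $h\ge 8$.

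For the interior range $4\le i\le 3k_2-3$, the rows $i,i+1,i+2$ all lie in $[4,h-3]$, so (a) gives $e(W\DI{i,i+2})\le 1$, contradicting $e(W\DI{i,i+2})=2$. For $i=1$: writing $W=W\DI{1,3}\vcat W[4]\vcat W\DI{5,h}$ and using $\emax(3k_2-2,3k_1+1)=4/3$ (both dimensions are $\equiv_3 1$), one gets $e(W[4])=e(W)-e(W\DI{1,3})-e(W\DI{5,h})\ge 5/3-2-4/3=-5/3$, contradicting $e(W[4])\le -8/3$ from (b). For $i=2$: writing $W=W[1]\vcat W\DI{2,h}$ and using $\emax(3k_2+1,3k_1+1)=4/3$, one gets $e(W[1])\ge 5/3-4/3=1/3$, again contradicting $e(W[1])\le -8/3$ from (b). For $i=3$: the $5$-row factor $W\DI{1,5}$ is proper, so $e(W\DI{1,5})\le\emax(5,3k_1+1)=2/3$ (the value is $2/3$ because $3k_1+1\equiv_3 1$), hence $e(W\DI{1,2})=e(W\DI{1,5})-e(W\DI{3,5})\le 2/3-2=-4/3$; on the other hand $W\DI{3,h}$ is a proper subword of height $3k_2\equiv_3 0$, so $e(W\DI{3,h})\le\emax(3k_2,3k_1+1)=1$, and therefore $e(W\DI{1,2})=e(W)-e(W\DI{3,h})\ge 5/3-1=2/3>-4/3$. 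Each case yields the desired contradiction, which proves the proposition.

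The only genuinely structural ingredient is fact (b); everything else is bookkeeping with excess additivity and with the congruence-dependent values of $\emax$. Accordingly, I expect the main obstacle to be the careful setup of (b): getting the transposition and the horizontal mirror of Corollary~\ref{cor4xhn(c4)}(ii) right, and exploiting that $3k_1+1\equiv_3 1$ is precisely what upgrades the generic estimate of three extra empty cells to the sharp bound $k_1+3$, which is what makes the neighbouring-row excess small enough to beat the coarse estimates used for $i=1,2$. Once (b) is available, together with the $\emax$-values listed above, the three boundary positions and the interior range all follow from the one-line excess computations indicated.
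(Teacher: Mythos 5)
Your proof is correct, and it rests on the same structural ingredient as the paper's: the bound $e\leq -8/3$ for a row adjacent to a $2$-full $3\times(3k_1+1)$ factor, which you rederive from Corollary~\ref{cor4xhn(c4)}(ii) via transposition exactly as the paper does in Corollary~\ref{corolhx3maxhx4} (your arithmetic $k_1+3$ empty cells, hence at most $2k_1-2$ filled, hence excess $-8/3$, is the correct one; the displayed formula $\lfloor 2(h-1)/3\rfloor-1$ in that corollary is off by one, but its stated conclusion $-8/3$ is what both of you use). Where you diverge is the case split. The paper handles $i=1$ with the $-8/3$ bound and then lumps all of $1<i<3k_2$ into a single five-row-window argument in which both neighbours of the window contribute $-8/3$, forcing $e(W\DI{1,i-2})+e(W\DI{i+4,h})\geq 5$. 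You instead kill the truly interior windows $4\leq i\leq 3k_2-3$ with Corollary~\ref{corole(ri)<4/3} (three rows of excess at most $1/3$ cannot sum to $2$) and give separate one-line computations for $i=2$ and $i=3$. This buys genuine robustness: at $i=3$ the paper's outer factor $W\DI{1,i-2}$ is a single row of width $3k_1+1$, whose maximal excess is $(3k_1+1)/3=k_1+1/3$, so for $k_1\geq 4$ the paper's claim that neither outer factor can reach excess $8/3$ is not literally justified there; your $i=3$ argument via $\emax(5,3k_1+1)=2/3$ and $\emax(3k_2,3k_1+1)=1$ closes that soft spot. Two small caveats: you assume $k_1,k_2\geq 2$, which the proposition does not state but which holds in the only context where it is invoked (Case~6); and your symmetry reduction plus the three boundary cases does exhaustively cover $\{1,\dots,3k_2\}$, as one checks that $\{3k_2-2,3k_2-1,3k_2\}$ is the mirror image of $\{3,2,1\}$.
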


\begin{proof}By contradiction. 
Suppose that $W\DI{i,i+2}$ exists with $e(W\DI{i,i+2})=2$. 
If $i=1$ and $W\DI{i,i+2}$ is at the top of $W$, we know from Corollary \ref{corolhx3maxhx4} that $e(W[4])\leq -8/3$ which implies that $e(W\DI{5,h})\geq 5/3-2+8/3=7/3$ which is impossible by minimality hypothesis. $i=3k_2$ is also impossible by symmetry.
If $W\DI{i,i+2}$ is an interior factor of $W$ (i.e. $1<i<3k_2$), then partition $W=W\DI{1,i-2}W\DI{i-1,i+3}W\DI{i+4,h}$ as a product of three factors.  We have
\begin{align*}
e(W[i-1])\leq -8/3 \mbox{ and } e(W[i+3])& \leq -8/3 \\
\Rightarrow e(W\DI{1,i-2})+e(W\DI{i+4,h})&\geq 5/3-2+8/3+8/3=15/3
\end{align*}
so that either $e(W\DI{1,i-2})\geq 8/3$ or $e(W\DI{i+4,h})\geq 8/3$
which, again, is impossible.
\end{proof}

\begin{proposition}Let $W\in \Wordsdhwvarshort{2}{(3k_2+2)}{(3k_1+1)}$.
Assume that $e(W) = 5/3$ and that all proper factors $W'$ of $W$ satisfies $e(W')\leq \emax(W')$. 
There is no factor $W\DI{i,i+2}$  of $W$ with $e(W\DI{i,i+2})<-1$. 
\end{proposition}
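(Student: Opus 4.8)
The plan is to argue by contradiction. Suppose some factor $T := W\DI{i,i+2}$ has $e(T) < -1$. Since $T$ is a $3\times(3k_1+1)$ word, $e(T) = |T|_{\o} - 2(3k_1+1)$ is an integer, so in fact $e(T) \le -2$. Split $W = P \vcat T \vcat S$ with $P = W\DI{1,i-1}$ (empty when $i=1$) and $S = W\DI{i+3,3k_2+2}$ (empty when $i+2 = 3k_2+2$). By Proposition~\ref{prop:additivity}, $e(P) + e(S) = e(W) - e(T) \ge \tfrac53 + 2 = \tfrac{11}{3}$, while $h(P) + h(S) = 3k_2 - 1 \equiv_3 2$. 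As $P$ and $S$ are proper subwords, the hypothesis gives $e(P) \le \emax(h(P),3k_1+1)$ and $e(S) \le \emax(h(S),3k_1+1)$ (with $\emax(0,\cdot)=0$). From Theorem~\ref{thm:rephrased} and the symmetry $\emax(h',w) = \emax(w,h')$ one records, for the fixed width $w = 3k_1+1 \ge 7$: if $h' \ge 4$ then $\emax(h',w) = \tfrac43, 1, \tfrac23$ according to $h' \equiv_3 1, 0, 2$; while $\emax(3,w) = 2$, $\emax(2,w) \le \tfrac{w}{6}+\tfrac12$ and $\emax(1,w) = \tfrac{w}{3}$.

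First I would dispatch the generic case $h(P), h(S) \ge 4$. Since the two heights sum to $\equiv_3 2$, the residue pair $(h(P)\bmod 3, h(S)\bmod 3)$ is $(1,1)$, $(0,2)$ or $(2,0)$, hence $e(P)+e(S) \le \max\{\tfrac43+\tfrac43,\ 1+\tfrac23\} = \tfrac83 < \tfrac{11}{3}$, a contradiction. This settles every $i$ with $5 \le i \le 3k_2-4$. For positions one step further inside one can alternatively use the split $W = W\DI{1,i-2}\vcat W\DI{i-1,i+3}\vcat W\DI{i+4,3k_2+2}$: when $i-1$ and $i+3$ both lie in $\DI{4,h-3}$, Corollary~\ref{corole(ri)<4/3} gives $e(W[i-1]),e(W[i+3]) \le \tfrac13$, so $e(W\DI{i-1,i+3}) \le \tfrac23 + e(T) \le -\tfrac43$, whence the complement carries excess $\ge \tfrac53+\tfrac43 = 3$; but its two parts have heights summing to $3k_2-3 \equiv_3 0$ and, being of height $\ge 4$, excesses summing to at most $\max\{1+1,\tfrac43+\tfrac23\}=2<3$.

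There remain only finitely many positions near the two ends, and by the vertical mirror of $W$ (which preserves $e(W)=\tfrac53$, the minimality hypothesis, and badness of a factor) we may assume $i \in \{1,2,3,4\}$, with $i \in \{4,5\}$ needing separate attention only when $h=8$. The case $i=1$ is immediate: $e(S) \ge \tfrac{11}{3}$ but $h(S)=3k_2-1\equiv_3 2$ forces $e(S) \le \tfrac23$; and for $i=4$ with $k_2\ge 3$, $e(P)\le\emax(3,w)=2$ gives $e(S)\ge\tfrac53$ while $h(S)=3k_2-4\ge 5$ is $\equiv_3 2$, so $e(S)\le\tfrac23$. The genuinely hard cases are $i\in\{2,3\}$ (and, when $h=8$, their mirrors $\{5,4\}$), where the bounds above become nearly tight: $i=2$ forces $e(W[1]) \ge \tfrac{11}{3}-\tfrac43 = \tfrac73$, i.e. $W[1]$ is almost entirely filled, and $i=3$ forces $e(W\DI{1,2})$ to within $\tfrac13$ of $\emax(2,w)$. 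Here I would invoke the rigidity of narrow near-$2$-full words — Corollary~\ref{corolhx3maxhx4}, the $h\times 3$ results (Propositions~\ref{prop3xhfssize}, \ref{propc1c2c3e(c2)>0} and the bench/pillar lemmas such as Proposition~\ref{prop3xhmax_bench1}), and Proposition~\ref{prop:w=2} — to argue that so much fullness at the top forces, via the degree $\le 2$ constraint, the row of $W$ adjacent to that block to contain only a bounded number of $\o$-cells, hence to have excess $\le -\tfrac83$ (the analogue of the bound used in the proof of Proposition~\ref{prop:3x(3k+1)}); deleting that row and re-splitting $W$ then exhibits a proper subword whose excess exceeds its $\emax$, the final contradiction. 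The smallest instance $(h,w)=(8,7)$ can be checked directly if needed.

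The main obstacle is exactly this last family, $i\in\{2,3\}$. For a part of height $1$ or $2$ the crude estimate $e \le \emax$ is useless — a single full row already has excess $w/3$ — so one genuinely has to exploit the structure theory of $2$-full words of width $2$ and $3$ developed earlier in this appendix, together with the rigid configurations that the saturated excess budget $5/3$ imposes on the top rows of $W$, rather than mere counting; this is where essentially all of the bookkeeping lives.
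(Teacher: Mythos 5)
Your decomposition $W = P \vcat T \vcat S$ together with additivity and the $\emax$ table is exactly the paper's argument: the paper writes $W = W_t \vcat W\DI{i,i+2} \vcat W_b$, deduces $e(W_t)+e(W_b)\geq 11/3$ with $h(W_t)+h(W_b)\equiv_3 2$, and concludes that one part must violate its $\emax$ bound. For all positions where both outer parts have height at least $3$ (in fact $4$ for the cleanest count) your treatment is correct and coincides in substance with the paper's.

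The gap is in the cases $i\in\{2,3\}$ and their mirrors, which you correctly isolate but do not prove. For $i=2$ the count only yields $e(W[1])\geq 7/3$, i.e.\ $|W[1]|_\o\geq 2k_1+3$, and since $\emax(1,3k_1+1)=k_1+1/3$ and $\emax(2,3k_1+1)=k_1/2+2/3$ grow with $k_1$, no contradiction follows from counting. Your proposed fix --- that the near-full top block forces the adjacent row to have excess at most $-8/3$ --- is asserted, not established, and the obvious elementary route does not deliver it: each interior cell of a horizontal pillar of $W[1]$ forces an empty cell in $W[2]$, giving only $|W[2]|_\e\geq |W[1]|_\o-2(|W[1]|_\e+1)\geq 5$, hence $e(W[2])\leq k_1-14/3$, which is not $\leq -8/3$ once $k_1\geq 3$; the resulting bound $e(W\DI{1,2})\leq 2k_1-13/3$ likewise fails to exceed $\emax(2,3k_1+1)$ only when $k_1\leq 3$. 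So the hard cases genuinely require the structural machinery you merely allude to, and the proposal stops exactly where that work begins. For what it is worth, the paper's own proof is no better on this point: its closing claim that ``$e(W_b)\geq 2$ or $e(W_t)\geq 2$ is impossible by minimality'' is simply not true when the relevant part has height $1$ or $2$, since its $\emax$ then exceeds $2$; you have at least diagnosed the difficulty that the paper passes over in silence, but you have not resolved it.
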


\begin{proof}
By contradiction. Suppose that $W\DI{i,i+2}$ exists with $e(W\DI{i,i+2})=-2$. It is immediate that  $W\DI{i,i+2}$ is not a top or bottom factor of $W$. Partition $W$ as product of three factors $W=W_tW\DI{i,i+2}W_b$. Then
\begin{align*}
e(W_b)+e(W_t)>5/3+2=11/3 \mbox{ and } h(W_b)+h(W_t)\equiv_3 2 
\end{align*}
so that $e(W_b)\geq 2$ or $e(W_t)\geq 2$. This is impossible by minimality hypothesis. 
\end{proof}

\begin{proposition}\label{prop{3k_2+2}{3k_1+1}-5/3}
Let $W\in \Wordsdhwvarshort{2}{(3k_2+2)}{(3k_1+1)}$.
Assume that $e(W) = 5/3$ and that all proper factor $W'$ of $W$ satisfies $e(W')\leq \emax(W')$. Then 
$e(W[i])> -5/3$ for all $1\leq i\leq h$.
\end{proposition}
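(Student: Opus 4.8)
The plan is to argue by contradiction. Write $h=3k_2+2$, $w=3k_1+1$ (with $k_1,k_2\ge 2$). Since a row $W[i]$ is a $1\times w$ word, $e(W[i])=|W[i]|_\o-2w/3=|W[i]|_\o-2k_1-2/3$, so $e(W[i])>-5/3$ is equivalent to $|W[i]|_\o\ge 2k_1$. Assume for contradiction that some row $W[i_0]$ satisfies $|W[i_0]|_\o\le 2k_1-1$, i.e.\ $e(W[i_0])\le-5/3$. If $i_0\in\{1,h\}$, say $i_0=1$ (the other case being symmetric), write $W=W[1]\vcat W\DI{2,h}$; then $h(W\DI{2,h})=3k_2+1\equiv_3 1\equiv_3 w$, so the proper-factor hypothesis gives $e(W\DI{2,h})\le\emax(3k_2+1,3k_1+1)=4/3$ and hence $e(W[1])=5/3-e(W\DI{2,h})\ge 1/3$, a contradiction. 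So $2\le i_0\le h-1$.

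For the interior rows, write $W=W_t\vcat W[i_0]\vcat W_b$ with $a:=i_0-1\ge1$, $b:=h-i_0\ge1$, $a+b=3k_2+1$, so $e(W_t)+e(W_b)=5/3-e(W[i_0])\ge 10/3$. Suppose $a\ge3$ and $b\ge3$. The congruence $a+b\equiv_3 1$ forces $(a\bmod 3,b\bmod 3)\in\{(0,1),(1,0),(2,2)\}$; for width $w\equiv_3 1$ the proper-factor hypothesis bounds the excess of a factor of height $\equiv_3 0$ by $1$, of height $\equiv_3 1$ by $4/3$, and of height $\equiv_3 2$ by $2/3$ --- the only exception being height exactly $3$, where $\emax(3,w)=2$, but then the factor is a $3\times w$ window and Proposition \ref{prop:3x(3k+1)} rules out excess $2$, so its excess is at most $1$. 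In all three congruence cases $e(W_t)+e(W_b)\le\max\{1+4/3,\ 2/3+2/3\}=7/3<10/3$, contradiction.

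It remains to handle $\min(a,b)\le 2$, i.e.\ (by the vertical symmetry) $i_0\in\{2,3\}$. Here $W\DI{1,3}$ is a $3\times w$ window, so $e(W\DI{1,3})\le\emax(3,w)=2$ and, by Proposition \ref{prop:3x(3k+1)}, $e(W\DI{1,3})\ne 2$, hence $e(W\DI{1,3})\le 1$; since $h(W\DI{4,h})=3k_2-1\equiv_3 2$ we also get $e(W\DI{4,h})\le\emax(3k_2-1,w)=2/3$. As $e(W\DI{1,3})+e(W\DI{4,h})=5/3$, both are equalities: $e(W\DI{1,3})=1$ (so $|W[1]|_\o+|W[2]|_\o+|W[3]|_\o=6k_1+3$) and $W\DI{4,h}$ realises the maximal excess of a $(3k_2-1)\times w$ word. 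Now $|W[i_0]|_\o\le 2k_1-1$ forces the remaining two of the top three rows to be heavy (their sizes summing to $\ge 4k_1+4$), which, together with the bounds from splitting $W$ directly above or below rows $2,3,4$ (using $\emax(3k_2,w)=1$, $\emax(2,w)=\emax(3k_1+1,2)$ and Corollary \ref{corole(ri)<4/3}), is already incompatible with the degree bound when $k_1$ is small --- a nearly-full row forces the cells of the neighbouring row directly below its interior to be empty. For larger $k_1$ one must argue more carefully: propagate the cells forced empty beneath the heavy row(s) of $W\DI{1,3}$ into row $W[4]$, which (combined with the proposition immediately preceding this one, stating that no $3\times w$ window of $W$ has excess below $-1$) drives $e(W[4])$ so negative that $e(W\DI{5,h})>\emax(3k_2-2,w)=4/3$, contradicting the proper-factor hypothesis. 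This last bookkeeping mirrors the degree arguments underlying Propositions \ref{propc1c2c3e(c2)>0}--\ref{prop3xh_32^(k-2)1^2}.

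The main obstacle is exactly this near-boundary case for large $k_1$: a $2\times w$ factor can legitimately carry excess close to $k_1/2$, so the size deficit of $W[i_0]$ does not by itself contradict anything, and one genuinely needs the rigidity of the extremal factor $W\DI{4,h}$ (equivalently $W\DI{1,3}$) together with the degree constraint along the row these two factors share, rather than pure excess counting.
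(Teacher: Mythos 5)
Your reduction of the statement to $|W[i_0]|_\o\ge 2k_1$, your treatment of $i_0\in\{1,h\}$, and your handling of the interior split when both pieces have height at least $3$ are all correct; indeed your congruence bookkeeping ($a+b=3k_2+1\equiv_3 1$, hence $(0,1)$ or $(2,2)$ up to symmetry, with the $\emax(3,3k_1+1)=2$ exception neutralized by Proposition~\ref{prop:3x(3k+1)}) is more careful than the printed proof, which performs the same single split at the deficient row but applies the generic bounds uniformly. Up to that point the two arguments coincide in strategy: cut out the row of excess at most $-5/3$ and bound the two remaining factors by their maximal excess.

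There is, however, a genuine gap in the remaining case $i_0\in\{2,3,h-1,h-2\}$, where one of the two pieces has height $1$ or $2$. You correctly observe that pure excess counting fails there --- $\emax(1,3k_1+1)=k_1+1/3$ and $\emax(2,3k_1+1)\approx k_1/2$ grow with $k_1$, so the proper-factor hypothesis places no useful bound on the short piece --- but what you offer in its place is not a proof. The claims that the heavy rows of $W\DI{1,3}$ are ``incompatible with the degree bound when $k_1$ is small'' and that for larger $k_1$ one can ``propagate the cells forced empty'' until $e(W[4])$ drops below $-1$ (hence $e(W\DI{5,h})>4/3$) are strategy statements: no count of forced empty cells is carried out (a row with $2k_1+3$ filled cells forces only about five empty cells in the row below, which does not by itself contradict $|W[i_0]|_\o\le 2k_1-1$ once $k_1\ge 3$), no contradiction is exhibited for any concrete $k_1$, and the appeal to the rigidity of the extremal factor $W\DI{4,h}$ would require structural input of the kind developed only in Propositions~\ref{prop6x(3k+1)}--\ref{prop5x(3k+1)2} and Corollary~\ref{cor4xh_n2(c1)}, which you neither prove nor cite precisely. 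As written, your argument establishes the proposition for $i_0\in\{1,h\}$ and $4\le i_0\le h-3$ only; the four near-boundary rows remain open, and that is exactly the part of the statement that cannot be obtained by excess counting alone.
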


\begin{proof}By contradiction. 
Suppose that for some $i$, $e(W[i])\leq -5/3$. Then write $W=W_tW[i]W_b$ as product of three factors 
so that $e(W_t)+e(W_b)\geq 5/3+5/3$ and $h(W_t)+h(W_b)\equiv_3  0$ so that
$(h(W_t),h(W_b))\equiv_3  (0,0) or (1,2)$ up to symmetry. But the two inequalities 
\begin{align*}
(h(W_t),h(W_b))&\equiv  (0,0) \Mod 3 \Rightarrow e(W_t)+e(W_b)\leq 1+1\\
(h(W_t),h(W_b))&\equiv  (1,2) \Mod 3 \Rightarrow e(W_t)+e(W_b)\leq 4/3+2/3
\end{align*}
are in contradiction with  $e(W_t)+e(W_b)\geq 10/3$. This completes the proof. 
\end{proof}

\begin{proposition}\label{prop6x(3k+1)}
Let $t<k_2$ $W\in \Wordsdhwvarshort{2}{(3k_2+2)}{(3k_1+1)}$ and let $W\DI{1,3t}$ be the $3t\times (3k_1+1)$ upper factor of $W$.
Assume that $e(W) = 5/3$ and that all proper factor $W'$ of $W$ satisfies $e(W')\leq \emax(W')$. Then 
i) $e(W\DI{1,3})=1$, ii) $e(W\DI{1,3t})=1$,  and  iii) $e(W\DI{3t+1,3(t+1)})=0$ for all $1\leq t\leq k_2-2$.
\end{proposition}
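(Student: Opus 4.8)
Since we are within Case~6, $k_1,k_2 \ge 2$, so the width $3k_1+1 \ge 7$. The plan is to derive the three assertions from the additivity of the excess (Proposition~\ref{prop:additivity}) together with the minimality hypothesis $e(W') \le \emax(W')$ on proper factors, after pinning down the values of $\emax$ on horizontal factors $W\DI{i,i'}$ of $W$. First I would record, using the symmetry of $\emax$ and the closed formula of Theorem~\ref{thm:d=2}: $\emax(3m+2,3k_1+1) = 2/3$ for every integer $m \ge 1$; $\emax(3m,3k_1+1) = 1$ for every integer $m \ge 2$; and $e_{max}(3,3k_1+1) = e_{max}(3k_1+1,3) = 2$ by Proposition~\ref{prop3xhfssize}. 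I would also note that whenever a factor of $W$ has height divisible by $3$ its excess is an integer, because then $2hw/3 \in \mathbb{Z}$.

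For part i), write $W = W\DI{1,3} \vcat W\DI{4,h}$. The bottom factor has height $3(k_2-1)+2$ with $k_2-1 \ge 1$, hence $e(W\DI{4,h}) \le 2/3$, and additivity gives $e(W\DI{1,3}) = e(W) - e(W\DI{4,h}) \ge 5/3 - 2/3 = 1$. Conversely $e(W\DI{1,3}) \le e_{max}(3,3k_1+1) = 2$, and $e(W\DI{1,3}) \ne 2$ by Proposition~\ref{prop:3x(3k+1)} applied with $i = 1$; since this excess is an integer, $e(W\DI{1,3}) \le 1$, and therefore $e(W\DI{1,3}) = 1$.

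For part ii) with $1 \le t \le k_2-1$, the case $t = 1$ is i). For $t \ge 2$, the factor $W\DI{1,3t}$ is proper of height $3t$, so $e(W\DI{1,3t}) \le \emax(3t,3k_1+1) = 1$; and writing $W = W\DI{1,3t} \vcat W\DI{3t+1,h}$, whose bottom factor has height $3(k_2-t)+2$ with $k_2-t \ge 1$, yields $e(W\DI{1,3t}) \ge 5/3 - 2/3 = 1$, so $e(W\DI{1,3t}) = 1$. For part iii) with $1 \le t \le k_2-2$, both $t$ and $t+1$ are less than $k_2$, so ii) gives $e(W\DI{1,3t}) = e(W\DI{1,3(t+1)}) = 1$; applying additivity to $W\DI{1,3(t+1)} = W\DI{1,3t} \vcat W\DI{3t+1,3(t+1)}$ then forces $e(W\DI{3t+1,3(t+1)}) = 0$.

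The hard part is the sole non-routine step, the upper bound $e(W\DI{1,3}) \le 1$ in part i): a factor of height $3$ can a priori carry excess $2$, unlike factors of height $3m$ with $m \ge 2$, so here one genuinely needs Proposition~\ref{prop:3x(3k+1)} (no $3 \times (3k_1+1)$ factor of excess $2$) combined with integrality of the excess. Everything else is bookkeeping; the one recurring subtlety is to read off $\emax$ from Theorem~\ref{thm:d=2} while respecting the convention $h \ge w$, i.e. to pass through symmetry whenever a factor is shorter than it is wide — in particular for the height-$3$ factor and for the factors of height $3m$ with $3m < 3k_1+1$.
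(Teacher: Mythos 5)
Your proof is correct and rests on exactly the same ingredients as the paper's: additivity of the excess, the minimality hypothesis applied to proper factors, the values of $\emax$ for heights $3m$ and $3m+2$, and Proposition~\ref{prop:3x(3k+1)} (plus integrality) to cap the height-$3$ top factor at excess $1$. The only difference is organizational — the paper establishes iii) first by an induction on the successive $3$-row blocks and then reads off ii), whereas you sandwich $e(W\DI{1,3t})$ directly between the upper bound $\emax(3t,3k_1+1)=1$ and the lower bound $5/3-2/3=1$ and obtain iii) by subtraction — which is a slightly more direct presentation of the same argument.
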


\begin{proof}
i) From Proposition \ref{prop:3x(3k+1)} we know that $e(W\DI{1,3})\leq 1$. But $e(W\DI{1,3})\leq 0$ implies $e(W\DI{4,h})\geq 5/3$ in contradiction with the minimality hypothesis. So we must have $e(W\DI{1,3})=1$. \\
ii) If $e(W\DI{4,6}) \geq 1$ then $e(W\DI{1,6}) \geq 2$ which is impossible. If $e(W\DI{4,6}) \leq -1$ then $e(W\DI{7,h}) \geq 5/3$ in contradiction with the hypothesis. So we must have $e(W\DI{4,6}) =0$ and $e(W\DI{1,6}) = 1$. We can repeat the same argument and prove that $e(W\DI{3t+1,3t+3}) =0$ for all $1\leq t \leq k_2-2$ so that $e(W\DI{1,3t}) =1$ for all $t\leq k_2-1$.\\
iii) This was proved in $ii)$.
\end{proof}

\begin{proposition}\label{prop5x(3k+1)}
Let $t<k_2-1$, $W\in \Wordsdhwvarshort{2}{(3k_2+2)}{(3k_1+1)}$ and let $W\DI{1,3t+2}$ be the $(3t+2)\times (3k_1+1)$ upper factor of $W$.
Assume that $e(W) = 5/3$ and that all proper factor $W'$ of $W$ satisfies $e(W')\leq \emax(W')$. Then 
i) $e(W\DI{1,3t+2})=2/3$,
ii) $e(W[3t])=1/3$ for all $2\leq t\leq k_2-1$. 
\end{proposition}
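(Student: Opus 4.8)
The statement to prove is Proposition~\ref{prop5x(3k+1)}: for $W \in \Wordsdhwvarshort{2}{(3k_2+2)}{(3k_1+1)}$ with $e(W) = 5/3$ and all proper factors below $\emax$, the top $(3t+2)$-row factor has excess $2/3$ for $t < k_2-1$, and $e(W[3t]) = 1/3$ for $2 \le t \le k_2-1$. My plan is to leverage Proposition~\ref{prop6x(3k+1)}, which already establishes the ``period-3'' structure from the top: $e(W\DI{1,3t}) = 1$ for $t \le k_2-1$ and $e(W\DI{3t+1,3t+3}) = 0$.

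\medskip
\noindent\textbf{Part (i).} I would write $W\DI{1,3t+2} = W\DI{1,3t} \vcat W\DI{3t+1,3t+2}$, so $e(W\DI{1,3t+2}) = 1 + e(W\DI{3t+1,3t+2})$ by additivity (Proposition~\ref{prop:additivity}) and Proposition~\ref{prop6x(3k+1)}(ii). It then suffices to show $e(W\DI{3t+1,3t+2}) = -1/3$. The upper bound $e(W\DI{3t+1,3t+2}) \le -1/3$: if $e(W\DI{3t+1,3t+2}) \ge 0$ (it is a $2 \times (3k_1+1)$ factor so its excess lies in $\tfrac13\Z$; actually $|W\DI{3t+1,3t+2}|_\o \le 2(3k_1+1)$ forces it, but the relevant cut-off is that $e(W\DI{1,3t+2})$ cannot exceed $\emax(3t+2, 3k_1+1) = 2/3$ by the minimality/hypothesis assumption), then $e(W\DI{1,3t+2}) > 2/3 = \emax(3t+2,3k_1+1)$, contradicting the hypothesis that proper factors respect $\emax$. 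For the lower bound: by additivity $e(W) = e(W\DI{1,3t+2}) + e(W\DI{3t+3,3k_2+2})$, so $e(W\DI{3t+3,3k_2+2}) = 5/3 - e(W\DI{1,3t+2})$; since $W\DI{3t+3,3k_2+2}$ has height $\equiv_3 0$ and is a proper factor (as $t < k_2-1$), $e(W\DI{3t+3,3k_2+2}) \le \emax(3(k_2-t),3k_1+1) = 1$ — wait, I need to be careful whether this factor is the whole of $W$ when $t$ is as small as possible; for $t \ge 1$ it is proper and has height $3(k_2-t) \ge 6$, hence bounded by $\emax = 1$. This gives $e(W\DI{1,3t+2}) \ge 5/3 - 1 = 2/3$. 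Combined with the upper bound, $e(W\DI{1,3t+2}) = 2/3$.

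\medskip
\noindent\textbf{Part (ii).} For $e(W[3t]) = 1/3$ with $2 \le t \le k_2-1$: from part (i) (or directly from Proposition~\ref{prop6x(3k+1)}), $e(W\DI{1,3t}) = 1$ and $e(W\DI{1,3(t-1)}) = 1$, so by additivity $e(W\DI{3(t-1)+1,3t}) = 0$, i.e. $e(W\DI{3t-2,3t}) = 0$. Also from part (i), $e(W\DI{1,3t-1}) = e(W\DI{1,3(t-1)+2}) = 2/3$ (valid since $t-1 < k_2-1$, i.e. $t < k_2$, which holds), hence $e(W\DI{3t-2,3t-1}) = e(W\DI{1,3t-1}) - e(W\DI{1,3t-3}) = 2/3 - 1 = -1/3$. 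Therefore $e(W[3t]) = e(W\DI{3t-2,3t}) - e(W\DI{3t-2,3t-1}) = 0 - (-1/3) = 1/3$.

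\medskip
\noindent The main obstacle will be bookkeeping the boundary ranges of $t$ carefully so that every factor I invoke an $\emax$-bound or minimality bound on is genuinely a \emph{proper} subword of $W$ (the hypothesis only controls proper factors), and checking that the congruence classes of the heights match the stated $\emax$ values — in particular that $\emax(3(k_2-t), 3k_1+1) = 1$ requires $k_2 - t \ge 2$, consistent with $t \le k_2-2$; for $t = k_2-1$ the factor $W\DI{3t+3,3k_2+2}$ has only $h=3$ rows and part (i) is not claimed there anyway (the hypothesis is $t < k_2-1$). I should double-check the edge case $t=1$ in part (ii): there $W\DI{1,3(t-1)} = W\DI{1,0}$ is empty with excess $0$, so the argument degenerates to $e(W\DI{1,3}) = 1$, $e(W\DI{1,2}) = e(W\DI{1,3(0)+2}) = 2/3$, giving $e(W[3]) = 1 - 2/3 = 1/3$ — but the statement restricts to $t \ge 2$, so this is moot; still, it's reassuring that it works. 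Once the index ranges are pinned down, everything reduces to additivity of excess plus the two standing hypotheses, with no combinatorial case analysis required.
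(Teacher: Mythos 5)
Your proof is correct and follows essentially the same route as the paper: both parts reduce to additivity of excess (Proposition~\ref{prop:additivity}) together with Proposition~\ref{prop6x(3k+1)} and the standing hypothesis that proper factors respect $\emax$. The only cosmetic difference is in (i), where the paper invokes the (symmetric) exact value $e(W\DI{3t+3,3k_2+2})=1$ and subtracts, while you sandwich $e(W\DI{1,3t+2})$ between its own $\emax$ bound of $2/3$ and $5/3-\emax(3(k_2-t),3k_1+1)=2/3$ from the complementary bottom factor --- the same computation, with the index-range caveats ($t\geq 1$, $k_2-t\geq 2$) correctly tracked.
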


\begin{proof}
i)  From Proposition \ref{prop6x(3k+1)} we have $e(W\DI{3t+3,3k_2+2})=1$ which implies $e(W\DI{1,3t+2})=2/3$.
ii) Since  from Proposition \ref{prop6x(3k+1)} $e(W\DI{1,3t})=1$ and from i) $e(W\DI{1,3t-1})=2/3$, we must have $e(W[3t])=e(W\DI{1,3i}) -e(W\DI{1,3t-1})= 1/3$ and
\begin{align}
|W[3t]|_\o =2k_1+1 \; \mbox{ for all } 2\leq t\leq k_2-1.
\end{align}
\end{proof}

\begin{proposition}\label{prop5x(3k+1)2}
Let $W\in \Wordsdhwvarshort{2}{(3k_2+2)}{(3k_1+1)}$ and $W\DI{1,4}$ the $4\times (3k_1+1)$ upper factor of $W$.  
Assume that $e(W) = 5/3$ and that all proper factor $W'$ of $W$ satisfies $e(W')\leq \emax(W')$. Then 
\begin{align}
i) \;\{e(W\DI{1,4}),e(W\DI{5,h})\}&=\{1/3,4/3\},\\
ii) \;e(W\DI{1,4})=1/3 &\Rightarrow e(W[5])=1/3 \mbox{ and } e(W[4])=-2/3,\\
e(W\DI{1,4})=4/3 &\Rightarrow e(W[5])=-2/3 \mbox{ and } e(W[4])=1/3
\end{align}
\end{proposition}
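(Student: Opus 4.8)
The plan is to cut $W$ horizontally at row $4$, writing $W=W\DI{1,4}\vcat W\DI{5,h}$ with $h=3k_2+2$, and to exploit additivity of the excess (Proposition~\ref{prop:additivity}) together with the fact that every proper factor obeys $e\le\emax$. For (i), both pieces have height $\equiv_3 1$ and width $3k_1+1\equiv_3 1$, with both dimensions at least $4$ (indeed $3k_1+1\ge 7$); by the definition of $\emax$, read with its symmetry in the two arguments, any such $(3m+1)\times(3k_1+1)$ word has $\emax=4/3$. Hence $e(W\DI{1,4})\le 4/3$ and $e(W\DI{5,h})\le 4/3$, and since their sum is $e(W)=5/3$ each is also $\ge 1/3$. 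The area of a $(3m+1)\times(3k_1+1)$ word is $\equiv_3 1$, so its excess lies in $\Z+\tfrac13$, and the only members of $\Z+\tfrac13$ in $[\tfrac13,\tfrac43]$ are $\tfrac13$ and $\tfrac43$; as the two sum to $\tfrac53$, one equals $\tfrac13$ and the other $\tfrac43$, which is (i).

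For (ii) I would determine $e(W\DI{1,3})$ and $e(W\DI{1,5})$ and then recover the single rows by subtraction. By Proposition~\ref{prop6x(3k+1)}(i), $e(W\DI{1,3})=1$. (A self-contained proof of this: $W\DI{4,h}$ has height $\equiv_3 2$, so $e(W\DI{4,h})\le\emax(3(k_2-1)+2,3k_1+1)=2/3$, forcing $e(W\DI{1,3})\ge 1$; also $e(W\DI{1,3})\le\emax(3,3k_1+1)=2$ and this excess is an integer; if it were $2$ then the transpose of $W\DI{1,3}$ would be a $2$-full $(3k_1+1)\times 3$ word, so Corollary~\ref{cor4xhn(c4)}(ii), applied to the transpose of $W\DI{1,4}$ whose height $3k_1+1$ satisfies $3k_1+1\equiv_3 1$, would give $|W[4]|_\e\ge k_1+3$, i.e. $e(W[4])\le -8/3$ and $e(W\DI{1,4})\le -2/3$, contradicting (i); hence $e(W\DI{1,3})=1$.) Therefore $e(W[4])=e(W\DI{1,4})-1$. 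Next, $W\DI{1,5}$ has height $5\equiv_3 2$, so $e(W\DI{1,5})\le\emax(5,3k_1+1)=2/3$; and $W\DI{6,h}$ has height $3(k_2-1)$ with $e(W\DI{6,h})\le 1$ — this equals $\emax(3(k_2-1),3k_1+1)$ when $k_2\ge 3$, while for $k_2=2$ the factor $W\DI{6,8}$ has height $3$ and Proposition~\ref{prop:3x(3k+1)} (with $i=6=3k_2$) forbids excess $2$, so its integer excess is still $\le 1$. Consequently $e(W\DI{1,5})=e(W)-e(W\DI{6,h})\ge 2/3$, hence $e(W\DI{1,5})=2/3$ and $e(W[5])=e(W\DI{1,5})-e(W\DI{1,4})=2/3-e(W\DI{1,4})$. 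Plugging in the two alternatives $e(W\DI{1,4})\in\{1/3,4/3\}$ from (i) yields exactly the two asserted conclusions.

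Essentially all of this is a sequence of table lookups for the values of $\emax$ (being consistent about applying $\emax$ symmetrically to factors wider than they are tall) followed by subtractions, so the reasoning is short once Propositions~\ref{prop6x(3k+1)}, \ref{prop:3x(3k+1)} and Corollary~\ref{cor4xhn(c4)} are available. The one point that needs genuine attention is the boundary case $k_2=2$ ($h=8$): there the bottom three-row block $W\DI{6,8}$ has $\emax=2$ rather than $1$, so one cannot simply read off $e(W\DI{6,h})\le 1$ from the $\emax$ table and must instead invoke Proposition~\ref{prop:3x(3k+1)} to exclude excess $2$; the quantization of the excess ($\Z$ for a $3\times(3k_1+1)$ factor, $\Z+\tfrac13$ for $1\times(3k_1+1)$ and $4\times(3k_1+1)$ factors) is then what turns the various inequalities into the claimed equalities.
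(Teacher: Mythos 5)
Your proof is correct and follows essentially the same route as the paper: split $W$ at row $4$, bound both pieces by $\emax=4/3$ via the minimality hypothesis, use the quantization of the excess in $\Z+\tfrac13$ to pin down $\{1/3,4/3\}$, and then obtain the row excesses by subtracting $e(W\DI{1,3})=1$ and $e(W\DI{1,5})=2/3$. Your treatment is somewhat more careful than the paper's, in particular in handling the boundary case $k_2=2$ where the bottom block $W\DI{6,8}$ has $\emax=2$ and Proposition~\ref{prop:3x(3k+1)} is needed to exclude that value, a point the paper's appeal to Proposition~\ref{prop5x(3k+1)} glosses over.
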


\begin{proof}
i) This is due to the fact that  $e(W\DI{1,4})\leq 4/3$, $\{e(W\DI{5,h})\leq 4/3$ and $e(W\DI{1,4})+e(W\DI{5,h}=5/3$.
ii) If $e(W\DI{1,4})=1/3$ then from Proposition \ref{prop6x(3k+1)}i) $e(W[4])=-2/3$ and from $e(W\DI{1,5})=2/3$ we obtain $e(W[5])=1/3 $. 
Similarly if  $e(W\DI{1,4})=4/3$ then from Proposition \ref{prop6x(3k+1)}i) $e(W[4])=1/3$
and from $e(W\DI{1,5})=2/3$ we obtain $e(W[5])=-2/3$.
\end{proof}

\subsubsection{ Hypothesis  $e(W\DI{1,4})=4/3$}
We show that the hypothesis $e(W\DI{1,4})=4/3$ leads to a contradiction.  

\begin{proposition}  \label{prop_e(w[5])-5/3}
For $k_1\geq 2$ , let $W\in \Wordsdhwvarshort{2}{5}{(3k_1+1)}$. 
$$e(W\DI{1,4})=4/3 \mbox{ and } e(W[4])=1/3 \mbox{ and } e(W[3])=1/3 \Rightarrow e(W[5])\leq -5/3
$$ 
\end{proposition}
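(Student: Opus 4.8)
The plan is to observe that the hypothesis $e(W\DI{1,4}) = 4/3$ alone already forces $W\DI{1,4}$ to be $2$-full, after which the conclusion is immediate from Corollary~\ref{cor4xh_n2(c1)}.

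First I would argue that $W\DI{1,4}$ is $2$-full. Since $k_1 \geq 2$, we have $3k_1 + 1 \geq 7 > 4$ and $3k_1 + 1 \equiv_3 1$; hence Equation~\eqref{eq:h=4} gives $\emax(3k_1+1, 4) = 4/3$, and Proposition~\ref{prop4xhsc} gives $e_{max}(3k_1+1, 4) = \emax(3k_1+1, 4) = 4/3$. As both the excess of a word and the property of being $2$-full are invariant under transposition, $e_{max}(4, 3k_1+1) = 4/3$ as well, so the hypothesis $e(W\DI{1,4}) = 4/3$ says exactly that the $4 \times (3k_1+1)$ word $W\DI{1,4}$ attains the maximal excess, i.e. (up to transposition) $W\DI{1,4} \in \MWordsdhwvarshort{2}{3k_1+1}{4}$.

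Then I would apply Corollary~\ref{cor4xh_n2(c1)} with $k = k_1$ to the transpose $W^t \in \Wordsdhwvarshort{2}{(3k_1+1)}{5}$: its hypothesis is that $(W^t)^t\DI{1,4}$ be a $2$-full $(3k_1+1)\times 4$ word, which is exactly what was just shown since $(W^t)^t\DI{1,4} = W\DI{1,4}$. The corollary then gives $e\big((W^t)^t[5]\big) \leq -5/3$, and since $(W^t)^t[5] = W[5]$ this is precisely $e(W[5]) \leq -5/3$, the desired conclusion. The hypotheses $e(W[3]) = e(W[4]) = 1/3$ are not needed for this argument; they hold in the context (Case~6) in which the proposition is invoked, and one could alternatively route the proof through the structure theory of $3 \times (3k+1)$ words (Propositions~\ref{prop3xhlambda(c2)}, \ref{prop3xh2^k1}, \ref{prop3xh_32^(k-2)1^2}), but the shortcut above is cleaner.

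The main point requiring attention is the transpose/index bookkeeping: Corollary~\ref{cor4xh_n2(c1)} is phrased for a word with $3k+1$ rows and $5$ columns whereas $W$ has $5$ rows and $3k_1+1$ columns, so one must consistently work with $W^t$ and appeal to transposition-invariance of $2$-fullness and of the excess. Beyond that there is no genuine combinatorial difficulty here, since all the substance is already carried by Proposition~\ref{prop4xhsc} and Corollary~\ref{cor4xh_n2(c1)}.
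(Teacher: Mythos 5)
Your proposal is correct and follows essentially the same route as the paper, which also derives the statement directly from Corollary~\ref{cor4xh_n2(c1)}; you merely make explicit the step the paper leaves implicit, namely that $e(W\DI{1,4})=4/3$ together with Proposition~\ref{prop4xhsc} forces $W\DI{1,4}$ to be $2$-full, plus the transposition bookkeeping. Your observation that the hypotheses $e(W[3])=e(W[4])=1/3$ are not actually used is consistent with the paper's own one-line proof.
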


\begin{proof} This is a consequence of Corollary \ref{cor4xh_n2(c1)} that implies that $|W[5]|_\e\geq k+2$.
\end{proof}

\begin{proposition}  \label{prop4xh4/3a} For $k_1\geq 2$ , let $W\in \Wordsdhwvarshort{2}{(3k_2+2)}{(3k_1+1)}$ with $e(W)=5/3$. 
Then $e(W\DI{1,4})=4/3$ is impossible.
\end{proposition}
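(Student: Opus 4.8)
The plan is to derive a contradiction from two incompatible estimates for the excess of the fifth row $W[5]$: one coming from the structure of $2$-full $4\times(3k_1+1)$ words (Corollary~\ref{cor4xh_n2(c1)}), and one coming from Proposition~\ref{prop5x(3k+1)2}(ii). As in the other propositions of this subsection, $W$ is taken to be minimal, so $e(W')\leq\emax(W')$ for every proper factor $W'$ of $W$; note also that $h(W)=3k_2+2\geq 8$, so the factors $W\DI{1,4}$ and $W\DI{1,5}$ make sense.

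First I would observe that the hypothesis $e(W\DI{1,4})=4/3$ forces $W\DI{1,4}$ to be $2$-full. Indeed, $W\DI{1,4}$ has dimensions $4\times(3k_1+1)$ with $3k_1+1\geq 7>4$ and $3k_1+1\equiv_3 1$, so Equation~\eqref{eq:h=4} (together with Proposition~\ref{prop4xhsc}) gives $\emax(3k_1+1,4)=4/3$; hence $e(W\DI{1,4})=4/3=\emax(W\DI{1,4})$, i.e.\ $W\DI{1,4}\in\MWordsdhwvarshort{2}{4}{3k_1+1}$. Next I would pass to the transposed word $\widetilde{W}:=(W\DI{1,5})^t\in\Wordsdhwvarshort{2}{(3k_1+1)}{5}$ (degree at most $2$ since $W$ has degree at most $2$). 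By construction $\widetilde{W}^t=W\DI{1,5}$, so $\widetilde{W}^t\DI{1,4}=W\DI{1,4}$ is $2$-full; thus Corollary~\ref{cor4xh_n2(c1)} applies with $k=k_1\geq 2$ and yields $e(\widetilde{W}^t[5])\leq -5/3$. Since $\widetilde{W}^t[5]=W[5]$, this gives $e(W[5])\leq -5/3$.

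On the other hand, by minimality every proper factor of $W$ has excess at most $\emax$, so Proposition~\ref{prop5x(3k+1)2}(ii) applies and, from $e(W\DI{1,4})=4/3$, gives $e(W[5])=-2/3$. But $-2/3>-5/3$, contradicting the bound of the previous step. Hence $e(W\DI{1,4})=4/3$ is impossible, which is the claim.

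The argument is short because the substantive work is already packaged in Corollary~\ref{cor4xh_n2(c1)} (controlling a fifth row placed against a $2$-full $4$-wide band, via a $t$-bench with its seat in the first column) and in Proposition~\ref{prop5x(3k+1)2}. The only delicate point, and the one I would be most careful about, is the transposition bookkeeping: checking that $\emax(3k_1+1,4)=4/3$ exactly, so that excess $4/3$ is genuinely maximal and $W\DI{1,4}$ is genuinely $2$-full, and matching the shapes so that the first four columns of $\widetilde{W}=(W\DI{1,5})^t$ are precisely the $2$-full band $W\DI{1,4}$, which is exactly the hypothesis demanded by Corollary~\ref{cor4xh_n2(c1)}.
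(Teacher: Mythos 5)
Your proposal is correct and follows essentially the same route as the paper: both derive $e(W[5])\leq -5/3$ from Corollary~\ref{cor4xh_n2(c1)} applied to the $2$-full band $W\DI{1,4}$ (the paper packages this as Proposition~\ref{prop_e(w[5])-5/3}), and both obtain $e(W[5])=-2/3$ from the forced value $e(W\DI{1,5})=2/3$, yielding the same contradiction. Your extra care with the transposition and with verifying $\emax(3k_1+1,4)=4/3$ is a welcome tightening of bookkeeping the paper leaves implicit.
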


\begin{proof} Assume  $e(W\DI{1,4})=4/3$. Then on one hand $e(W[5])\leq -5/3$ from Proposition \ref{prop_e(w[5])-5/3}. On the other hand we know that $e(W\DI{1,5})=2/3$ which implies that 
$e(W[5])=e(W\DI{1,5})-e(W\DI{1,4})=2/3-4/3=-2/3$. We have a contradiction and $e(W\DI{1,4})\neq 4/3$\end{proof}

\subsubsection{ Hypothesis  $e(W\DI{1,4})=1/3$}
We prove  that the hypothesis $e(W\DI{1,4})=1/3$ leads to a contradiction and that words 
$W\in \Wordsdhwvarshort{2}{(3k_2+2)}{(3k_1+1)}$ with $e(W)=5/3$ are impossible.

\begin{corollary}\label{cor5xh_e(c1)e(c5)}
Let $W\in \Wordsdhwvarshort{2}{5}{(3k+1)}$ with $e(W[2])=e(W[3])=e(W[4])=1/3$.
Then 
\begin{align*}
e(W[1])\leq -5/3 \mbox{ or } e(W[5])\leq -5/3. 
\end{align*}
\end{corollary}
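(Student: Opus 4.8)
The plan is to deduce Corollary~\ref{cor5xh_e(c1)e(c5)} directly from Propositions~\ref{prop3xhlambda(c2)}, \ref{prop3xh2^k1} and~\ref{prop3xh_32^(k-2)1^2}, using the partition $\lambda(W[3])$ of the maximal horizontal runs of $\o$-cells in the central row as the bridge between them.

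First I would pass to the $3\times(3k+1)$ factor $W\DI{2,4}$ of $W$. Deleting the top and bottom rows cannot raise any cell degree, so $W\DI{2,4}\in\Wordsdhwvarshort{2}{3}{3k+1}$; moreover the excess of a single row depends only on its number of $\o$-cells, so the hypothesis $e(W[2])=e(W[3])=e(W[4])=1/3$ passes unchanged to the three rows of $W\DI{2,4}$. Proposition~\ref{prop3xhlambda(c2)} then applies to $W\DI{2,4}$ and gives $\lambda(W[3])\in\{2^k1,\;3\,2^{k-2}1^2\}$ (we are in the regime $k\geq 2$ in which these partitions, and the propositions invoked below, are meaningful, which is also the only regime in which this corollary is used). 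Note that $\lambda(W[3])$ is the same whether computed inside $W$ or inside $W\DI{2,4}$, since it only depends on the row $W[3]$ itself.

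Next I would split on the two possibilities for $\lambda(W[3])$. If $\lambda(W[3])=2^k1$, then Proposition~\ref{prop3xh2^k1} — whose hypotheses (membership in $\Wordsdhwvarshort{2}{5}{(3k+1)}$, the three middle-row excesses equal to $1/3$, and this value of $\lambda(W[3])$) are exactly what we have — yields $e(W[1])<-2/3$ or $e(W[5])<-2/3$. If $\lambda(W[3])=3\,2^{k-2}1^2$, then Proposition~\ref{prop3xh_32^(k-2)1^2} yields the same alternative. Finally, since $|W[1]|_\o$ and $|W[5]|_\o$ are nonnegative integers, the excess of an outer row is of the form $n-(2k+2/3)$ with $n\in\Z_{\geq 0}$, i.e.\ it lies in $1/3+\Z$; hence $e(W[i])<-2/3$ is equivalent to $e(W[i])\leq -5/3$. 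This upgrades the alternative to exactly the statement of the corollary.

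There is essentially no obstacle here: every step is an appeal to an already-established result. The only point deserving a line of care is the very first one — checking that restricting to the three middle rows $W\DI{2,4}$ simultaneously preserves the degree-$\leq 2$ property (which is monotone under deletion of rows) and leaves the row-excess hypotheses intact — so that the three-row Proposition~\ref{prop3xhlambda(c2)} is legitimately applicable.
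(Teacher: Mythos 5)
Your proof is correct and follows exactly the paper's route: the paper's own proof is the one-line citation of Propositions~\ref{prop3xhlambda(c2)}, \ref{prop3xh2^k1} and~\ref{prop3xh_32^(k-2)1^2}, and you have simply made explicit the restriction to $W\DI{2,4}$, the case split on $\lambda(W[3])$, and the integrality upgrade from $e<-2/3$ to $e\leq -5/3$. No issues.
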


\begin{proof}
This is a direct consequence of Propositions \ref{prop3xhlambda(c2)}, \ref{prop3xh2^k1},\ref{prop3xh_32^(k-2)1^2}.  
\end{proof}

\begin{corollary}\label{cor{(3k_2+2)}{(3k_1+1)}}
There exists no word $W\in \Wordsdhwvarshort{2}{(3k_2+2)}{(3k_1+1)}$ with $e(W)=5/3$.
\end{corollary}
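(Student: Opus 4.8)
The plan is to finish Case~6, hence Theorem~\ref{thm:d=2} (and thereby Theorem~\ref{thm:main}), by eliminating the two possibilities left open by Proposition~\ref{prop5x(3k+1)2}(i). As throughout this subsection, take $W\in\Wordsdhwvarshort{2}{(3k_2+2)}{(3k_1+1)}$ with $e(W)=5/3$ and minimal, so that every proper subword $W'$ of $W$ satisfies $e(W')\leq\emax(W')$; this is exactly the running hypothesis of Propositions~\ref{prop:3x(3k+1)}, \ref{prop6x(3k+1)}, \ref{prop5x(3k+1)}, \ref{prop5x(3k+1)2}, \ref{prop4xh4/3a} and \ref{prop{3k_2+2}{3k_1+1}-5/3}. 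First, Proposition~\ref{prop4xh4/3a} rules out $e(W\DI{1,4})=4/3$ outright, so by Proposition~\ref{prop5x(3k+1)2}(i) we must have $e(W\DI{1,4})=1/3$ and $e(W\DI{5,h})=4/3=\emax(3(k_2-1)+1,3k_1+1)$, i.e. $W\DI{5,h}$ is $2$-full.

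In this case Proposition~\ref{prop5x(3k+1)2}(ii) gives $e(W[4])=-2/3$ and $e(W[5])=1/3$, Proposition~\ref{prop5x(3k+1)}(ii) gives $e(W[6])=1/3$, and Propositions~\ref{prop6x(3k+1)}(iii) and \ref{prop5x(3k+1)}(ii) force $e(W[7])+e(W[8])=-1/3$ (whence also $e(W\DI{5,8})=1/3$ and $e(W\DI{9,h})=1$, so $W\DI{9,h}$ is $2$-full as well). The key step is to upgrade this to $e(W[7])=1/3$: I would argue that the three topmost rows $W[5],W[6],W[7]$ of the $2$-full factor $W\DI{5,h}$, with $e(W[5])=e(W[6])=1/3$ already forced and $W\DI{9,h}$ $2$-full below, leave room only for $|W[7]|_\o=2k_1+1$, using the bench/column-counting analysis underlying Corollary~\ref{cor4xh_n2(c1)} and Propositions~\ref{prop3xh2^k1}, \ref{prop3xh_32^(k-2)1^2} applied to $W\DI{5,9}$; for small $k_2$ this also drops out of the symmetric bookkeeping, since the vertical flip of Proposition~\ref{prop5x(3k+1)2}(ii) and Proposition~\ref{prop5x(3k+1)}(ii) pins $e(W[h-3])=-2/3$, $e(W[h-4])=e(W[h-5])=1/3$, and for $k_2=3$ one has $h-4=7$ directly. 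Granting $e(W[7])=1/3$, the factor $W\DI{4,8}\in\Wordsdhwvarshort{2}{5}{(3k_1+1)}$ has its second, third and fourth rows all of excess $1/3$, so Corollary~\ref{cor5xh_e(c1)e(c5)} gives $e(W[4])\leq-5/3$ or $e(W[8])\leq-5/3$; as $e(W[4])=-2/3>-5/3$, we get $e(W[8])\leq-5/3$, contradicting Proposition~\ref{prop{3k_2+2}{3k_1+1}-5/3}.

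The main obstacle is precisely that middle step: pinning the individual excess $e(W[7])$ (equivalently $e(W[8])$) rather than merely the sum $e(W[7])+e(W[8])=-1/3$, since Propositions~\ref{prop6x(3k+1)} and \ref{prop5x(3k+1)} only constrain the pairs $e(W[3t+1])+e(W[3t+2])$ and the isolated rows $e(W[3t])$. Pure excess bookkeeping leaves $e(W[7])\in\{-2/3,-1/3,0,1/3\}$ open, so this requires the finer structural information about $2$-full narrow-width words — benches with their seats on the boundary columns and the partition of maximal pillars — developed in the "Properties of $d$-full words" section. Once $e(W\DI{1,4})=1/3$ is eliminated this way, combining with Proposition~\ref{prop4xh4/3a} shows no $W\in\Wordsdhwvarshort{2}{(3k_2+2)}{(3k_1+1)}$ with $e(W)=5/3$ exists, completing the last case of Lemma~\ref{lem:w>6upbound} and the proof of Theorem~\ref{thm:d=2}.
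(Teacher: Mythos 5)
Your overall strategy is the right one --- produce three consecutive rows of excess $1/3$, feed the surrounding $5\times(3k_1+1)$ factor to Corollary~\ref{cor5xh_e(c1)e(c5)}, and contradict Proposition~\ref{prop{3k_2+2}{3k_1+1}-5/3} --- and your reduction to $e(W\DI{1,4})=1/3$ via Proposition~\ref{prop4xh4/3a} is exactly the paper's first step. But the step you yourself flag as the ``main obstacle'' is a genuine gap, and it cannot be closed in the form you propose, because the claim $e(W[7])=1/3$ is not forced by anything you have established. Propositions~\ref{prop6x(3k+1)} and~\ref{prop5x(3k+1)} give $e(W\DI{1,3i})=1$ and $e(W\DI{1,3i+2})=2/3$, hence $e(W\DI{1,3i+1})=1+e(W[3i+1])$; since the excess of a single row of width $3k_1+1$ is congruent to $1/3$ modulo $1$ and Proposition~\ref{prop{3k_2+2}{3k_1+1}-5/3} excludes $e(W[3i+1])\leq -5/3$, the only possibilities are $e(W[3i+1])\in\{-2/3,\,1/3\}$ (not the four values you list), i.e.\ $e(W\DI{1,3i+1})\in\{1/3,\,4/3\}$. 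Both values at $i=2$ are consistent with every constraint derived so far --- the ``jump'' from $1/3$ to $4/3$ in the prefix excesses can occur at any index --- so no local analysis of rows $5$--$8$, however refined (the bench/pillar machinery of Corollary~\ref{cor4xh_n2(c1)} and Propositions~\ref{prop3xh2^k1}, \ref{prop3xh_32^(k-2)1^2} included), can pin $e(W[7])=1/3$ for general $k_2$. Your remark that the vertical flip settles $k_2=3$ is correct precisely because then $i=2$ is the only possible jump location, but this does not extend.

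The paper closes the gap with a transition argument in place of your fixed-location one. Set $a_i=e(W\DI{1,3i+1})\in\{1/3,4/3\}$ for $i=1,\dots,k_2-1$. Proposition~\ref{prop4xh4/3a} gives $a_1=1/3$; applied to the vertical flip of $W$ together with Proposition~\ref{prop5x(3k+1)2}(i) it gives $e(W\DI{h-3,h})=1/3$, hence $a_{k_2-1}=5/3-1/3=4/3$. Therefore there is a minimal $i$ with $a_i=1/3$ and $a_{i+1}=4/3$, and at that index $e(W[3i+2])=2/3-a_i=1/3$, $e(W[3i+3])=1-2/3=1/3$ and $e(W[3i+4])=a_{i+1}-1=1/3$. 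Corollary~\ref{cor5xh_e(c1)e(c5)} applied to $W\DI{3i+1,3i+5}$ then forces $e(W[3i+1])\leq-5/3$ or $e(W[3i+5])\leq-5/3$, contradicting Proposition~\ref{prop{3k_2+2}{3k_1+1}-5/3}. Replacing your rows $5$--$8$ by this ``first jump'' location is the missing idea; the rest of your argument then goes through unchanged.
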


\begin{proof}
We know from Proposition \ref{prop4xh4/3a} that the only possible excess of $W\DI{1,4}$ is $e(W\DI{1,4})=1/3$.
Using symmetry of $W$, we know that there exists an integer $i<k_2$ such that $e(W\DI{1,3i+1})=4/3$
Assume without loss of generality that the integer $i$ is minimal such that $e(W\DI{1,3i+1})=1/3$ and $e(W\DI{1,3i+4})=4/3$. 
Then we have $e(W[3i+2])=e(W[3i+3])=e(W[3i+4])=1/3$ which implies from Corollary \ref{cor5xh_e(c1)e(c5)} that 
$e(W[3i+5])\leq -5/3$ or   $e(W[3i+1])\leq -5/3$ in contradiction with Proposition \ref{prop{3k_2+2}{3k_1+1}-5/3}
We have thus proved that $e(W\DI{1,4})=1/3$ is impossible and that  $e(W)=5/3$ is also impossible.
\end{proof}


\end{document}